\newcommand{\union}[2]{\text{$\underset{#1}{\bigcup} #2$}}
\newcommand{\intervEnt}[2]{\text{$\llbracket #1,#2\rrbracket$}}
\newcommand{\prodend}[2]{\text{${\underset{#1}{\prod}} #2$}}
\newcommand{\integrale}[4]{\text{$\int_{#1}^{#2} #3 d#4$}}
\newcommand{\norme}[1]{\text{$\left\Vert #1\right\Vert$}}
\newcommand{\abso}[1]{\text{$\left\vert #1\right\vert$}}
\newcommand{\reciproque}[1]{\text{${#1}^{-1}$}}
\newcommand{\intInd}[3]{\text{$\int_{#1} #2 d#3$}}
\newtheorem{definition}{Definition}[section]
\newtheorem{theorem}{Theorem}[section]
\newtheorem{lemme}{Lemma}[section]
\newtheorem{proposition}{Proposition}[section]
\newcounter{numberSection}[section]
\newcounter{numberSubSection}[subsection]
\begin{document}
\title{Keplerian shear for Chacon transformations}
\author{Arthur Boos \and Benoit Saussol}
\address{Aix-Marseille Université, Institut de Mathématiques de Marseille, CNRS, Luminy case 907, 13288 Marseille}
\date{\today}
\maketitle
\begin{abstract}
The concept of keplerian shear was introduced by Damien Thomine recently. It is useful for non ergodic systems, and can be seen as strong mixing conditionally on invariant fibers. The notion is particularly interesting when a.e. fiber is not strongly mixing.

We develop here an approach appropriate for systems such that a.e. fiber is weakly mixing, and apply it to a family of rank one  transformations. Each transformation is a kind of Chacon map, built with a random number of spacers at each step of the Rochklin tower. We prove that this new dynamical system exhibits keplerian shear. The method relies on a version of a local limit theorem for time dependent Birkhoff sums along the fullshift.
\end{abstract}
\tableofcontents
\section{Introduction}

Consider a dynamical system $(X,\mathcal{T},\mu,T)$.
We note $\mathcal{I}$ the invariant $\sigma-$algebra by the transformation $T$.
This dynamical system exhibits a keplerian shear if
\[\forall f_1,f_2\in \mathbb{L}^2_\mu(X),\mathbb{E}_\mu(\overline{f_1}f_2\circ T^n)\xrightarrow[n\to+\infty]{}\mathbb{E}_\mu\left(\overline{\mathbb{E}_\mu(f_1\vert \mathcal{I})}\mathbb{E}_\mu(f_2\vert \mathcal{I})\right).\]
In other words,\[\mathbb{E}_\mu(Cov(f_1,f_2\circ T^n\vert\mathcal{I}))\xrightarrow[n\to+\infty]{}0.\]
The keplerian shear can be seen as the strong-mixing conditionally to an invariant $\sigma-$algebra. We can see this notion for the first time in the Damien Thomine's article \cite{DaTho}.
We recall the definition of mixing system. $(X,\mathcal{T},\mu,T)$ is strong-mixing if for $A,B\in \mathcal{T}$, \[\mu(A\cap T^{-n}(B))-\mu(A)\mu(B)\xrightarrow[n\to+\infty]{}0.\]
Strong-mixing implies weak-mixing i.e
\begin{equation}\label{wm}\frac{1}{n}\sum_{j=0}^{n-1}\abso{\mu(A\cap T^{-j}(B))-\mu(A)\mu(B)}\xrightarrow[n\to+\infty]{}0.\end{equation}
More precisely, the strong-mixing is equivalent to 
\[\forall f_1,f_2\in \mathbb{L}^2_\mu(X),\mathbb{E}_\mu(\overline{f_1}f_2\circ T^n)\xrightarrow[n\to+\infty]{}\overline{\mathbb{E}_\mu(f_1)}\mathbb{E}_\mu(f_2).\]
We remark that when the system is ergodic, the invariant $\sigma-$algebra is the trivial $\sigma-$algebra $\{\emptyset,X\}$ and then, for $f\in \mathbb{L}^2_\mu(X),$ \[\mathbb{E}_\mu(f\vert\mathcal{I})=\mathbb{E}_\mu(f).\]
Thus, we deduce that if the system exhibit keplerian shear and it is ergodic, thus it is strongly mixing. Reciprocally, the strong-mixing property implies the keplerian shear and the ergodicity of the system.

\begin{proposition}\cite{Eins} $T$ is weak mixing if and only if for all  $A,B\in\mathcal{T}$, there exists $J_{A,B}\subset \mathbb{N}$ with density zero such that \[\mu(A\cap T^{-n}(B))-\mu(A)\mu(B)\xrightarrow[n\to+\infty\ and\ n\notin J_{A,B}]{}0\] 
\end{proposition}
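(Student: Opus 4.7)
The statement is essentially a soft real-analysis fact about Cesàro convergence of bounded non-negative sequences, applied to $a_n := \abso{\mu(A\cap T^{-n}(B))-\mu(A)\mu(B)}$. Since $0\leq a_n\leq 1$, the weak mixing property \eqref{wm} is exactly $\frac{1}{n}\sum_{j=0}^{n-1}a_j\to 0$. I would therefore isolate and prove the following abstract lemma: for a bounded non-negative sequence $(a_n)_{n\in\mathbb{N}}$, $\frac{1}{n}\sum_{j=0}^{n-1}a_j\to 0$ if and only if there exists $J\subset\mathbb{N}$ of density zero with $a_n\to 0$ along $n\notin J$. Applying it to the sequence above produces $J_{A,B}$ and gives both directions of the proposition.

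The easy direction ($\Leftarrow$) is a direct estimate. Assuming $a_n\to 0$ off a density zero set $J$, split
\[\frac{1}{n}\sum_{j=0}^{n-1}a_j=\frac{1}{n}\sum_{j<n,\,j\in J}a_j+\frac{1}{n}\sum_{j<n,\,j\notin J}a_j.\]
The first sum is bounded by $\norme{a}_\infty\cdot\abso{J\cap [0,n)}/n\to 0$, and the second is a Cesàro average of a sequence tending to zero (padding by zeros on $J$), hence also vanishes.

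For the non-trivial direction ($\Rightarrow$), I would run a diagonal argument on level sets. Set $E_k:=\ensembleDp{n\in\mathbb{N}}{a_n>1/k}$, so $E_1\subset E_2\subset\cdots$. Markov's inequality gives $\abso{E_k\cap [0,n)}\leq k\sum_{j<n}a_j$, hence each $E_k$ has density zero. Choose an increasing sequence $N_1<N_2<\cdots$ so that $\abso{E_k\cap [0,n)}/n<1/k$ for every $n\geq N_k$, and define
\[J:=\bigcup_{k\geq 1}E_k\cap \intervEnt{N_k}{N_{k+1}-1}.\]
For $n\in\intervEnt{N_k}{N_{k+1}-1}$, one has $n\notin J\Leftrightarrow n\notin E_k$, i.e.\ $a_n\leq 1/k$, which yields $a_n\to 0$ for $n\notin J$. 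Moreover, using $E_j\subset E_k$ for $j\leq k$, any such $n$ satisfies $J\cap [0,n)\subset E_k\cap [0,n)$, so $\abso{J\cap [0,n)}/n<1/k$; letting $k\to\infty$ shows $J$ has density zero.

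The main (mild) obstacle is the bookkeeping in the diagonal construction: one must arrange the thresholds $N_k$ so that, simultaneously, the union $J$ remains density-zero and the complement still forces $a_n$ below every $1/k$ eventually. Cutting $E_k$ to the window $\intervEnt{N_k}{N_{k+1}-1}$ and exploiting the monotonicity $E_j\subset E_k$ for $j\leq k$ gives both properties at once. Everything else is a direct translation into the language of the proposition, with $J_{A,B}$ being the set produced by the lemma applied to $(a_n)$ defined above.
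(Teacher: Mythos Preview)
Your proof is correct and is the standard Koopman--von Neumann argument; the paper itself does not supply a proof of this proposition but simply cites \cite{Eins}, where precisely this diagonal construction on the level sets $E_k=\{n:a_n>1/k\}$ appears.
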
 
We call $J_{A,B}$ an exceptional set.

We emphasize that the speed of weak-mixing has been studied recently in  \cite{Avila,Moll, Solom} in the context of Interval exchange transformations, Chacon transformation and substitution $\mathbb{Z}-$actions.
In these works, the authors provide an upper bound  for the time average in \eqref{wm}. Such quantitative estimates, and the methods involved to get them, give an exceptional set from a calculus lemma. Unfortunately the only information that we can extract from this is an upper bound on the cardinality of $J_{A,B}\cap\intervEnt{0}{N}$. In particular, we have no information about where the exceptional set is located. 

In this article, considering the probabilistic space $(X,\mathcal{S},\eta)$, we construct a family of applications $T_\alpha$ such that for $\eta$-almost every $\alpha\in X$, the map $T_{\alpha}$ is not strongly mixing but weakly mixing.  
The system $(\Omega,\mathcal{S}^',\mu,T)$ fibred in $X$ with $T : (\alpha,x)\in \Omega\mapsto (\alpha,T_\alpha(x))$ and $\mu=d\nu_\alpha(x)d\eta(\alpha)$ is obviously not ergodic, still for almost every $\alpha\in X,(Y_\alpha,\mathcal{T}_\alpha,\nu_\alpha,T_\alpha)$ is weakly-mixing.
In our case, we suppose that there exists a probabilistic space $(Y,\mathcal{T},\nu)$ such that for all $\alpha\in X,Y_\alpha\subset Y$ and $\left\{T^n(A\times B) : A\in \mathcal{S},n\in\mathbb{N}\text{ and }B\in \bigcap_{\alpha\in X}\mathcal{T}_\alpha\right\}$ generate the $\sigma-$algebra $\mathcal{S}^'$.
 Then, for measurable sets $A,B\in \bigcap_{\alpha\in X}\mathcal{T}_\alpha$ and for $n\in\mathbb{N}$, we define $W_{A,B,n}=\{\alpha\in X : n\in J_{A,B}^\alpha\}.$ 
By Lebesgue dominated convergence theorem we have immediately
\begin{proposition}
Suppose that for all measurable sets $A,B$ we have $\eta(W_{A,B,n})\to0$ as $n\to\infty$. Then the system exhibits keplerian shear.
\end{proposition}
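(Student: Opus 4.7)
First, I would identify the invariant $\sigma$-algebra. Because $T(\alpha,x)=(\alpha,T_\alpha(x))$, the pullback $\mathcal{S}\otimes\{\emptyset,Y\}$ is contained in $\mathcal{I}$. Conversely, any $T$-invariant set $E$ has $T_\alpha$-invariant fibers $E_\alpha$, and the weak-mixing hypothesis on a.e.\ fiber forces $\nu_\alpha(E_\alpha)\in\{0,1\}$. Hence $\mathcal{I}$ coincides modulo $\mu$ with the pullback of $\mathcal{S}$, and
\[
\mathbb{E}_\mu(f\vert\mathcal{I})(\alpha,x)=\int_Y f(\alpha,y)\,d\nu_\alpha(y)\qquad\text{for }f\in\mathbb{L}^2_\mu.
\]

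Next I would reduce to a generating family. By sesquilinearity and $\mathbb{L}^2$-continuity of both sides of the keplerian shear identity, it suffices to establish the desired convergence on indicators of sets in the announced generating family; since $T$-translates merely shift the time index by a constant that is absorbed in the limit, matters reduce to $f_i=\mathbf{1}_{A_i\times B_i}$ with $A_i\in\mathcal{S}$ and $B_i\in\bigcap_\alpha\mathcal{T}_\alpha$. Plugging into the formula for the conditional expectation, the keplerian shear identity becomes
\[
\int_{A_1\cap A_2}\bigl[\nu_\alpha(B_1\cap T_\alpha^{-n}B_2)-\nu_\alpha(B_1)\nu_\alpha(B_2)\bigr]\,d\eta(\alpha)\xrightarrow[n\to\infty]{}0.
\]

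Denote by $g_n(\alpha)$ the absolute value of the bracket; then $0\le g_n\le 1$ and the goal is $\int g_n\,d\eta\to 0$. I would split $X=W_{B_1,B_2,n}\cup W_{B_1,B_2,n}^c$: the hypothesis $\eta(W_{B_1,B_2,n})\to 0$ handles the first piece directly, while on the second piece $n\notin J^\alpha_{B_1,B_2}$ and the weak-mixing convergence of each fiber along the complement of its exceptional set makes $g_n(\alpha)$ small for large $n$. These two facts together give $g_n\to 0$ in $\eta$-probability, and Lebesgue's bounded convergence theorem (for uniformly bounded sequences converging in probability) then yields $\int g_n\,d\eta\to 0$.

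The only subtle point is fixing a measurable version of $\alpha\mapsto J^\alpha_{B_1,B_2}$ so that the hypothesis $\eta(W_{B_1,B_2,n})\to 0$ can be coherently combined with the fiberwise estimates. A natural choice is to define $J^\alpha_{B_1,B_2}$ through level-set thresholds of the form $\{n:|a_n^\alpha|>\varepsilon_k\}$ for a sequence $\varepsilon_k\downarrow 0$, which makes $\alpha\mapsto J^\alpha_{B_1,B_2}$ jointly measurable; once that is arranged, the plan reduces to the routine application of dominated convergence announced by the authors.
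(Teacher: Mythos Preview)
Your argument is correct and follows the same dominated-convergence route the paper invokes in a single line; you have simply filled in the details the paper omits (identification of $\mathcal{I}$ via fiberwise ergodicity, reduction to product sets in the generating family, and the split of the integral over $W_{B_1,B_2,n}$ and its complement). The only superfluous part is your final paragraph: the hypothesis $\eta(W_{A,B,n})\to 0$ already presupposes that a fixed measurable choice of exceptional sets $J^\alpha_{A,B}$ has been made, so there is nothing further to construct.
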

We will use this approach\footnote{For technical reasons we will use a slightly different version of this result.} to prove keplerian shear for a parametrized family rank 1 transformations, generalizing Chacon transformations, and described below. We recall that the original Chacon transformation is weakly mixing but not strongly mixing \cite{Chacon}.
The Figure \ref{rock_tour_step1} shows us the illustration of its construction at step $0$. 
\begin{figure}
\label{rock_tour_step1}
        \centering
        	\begin{tikzpicture}
		
		\draw[thick, blue] (0,0) -- (6,0);
		
		\foreach \x/\label in {0/0, 2/\frac{1}{3}, 4/\frac23, 6/1, 8/\frac43, 8.33/\frac{13}{9}, 9/\frac32} {
			\draw[thick, blue] (\x,0.1) -- (\x,-0.1) node[below, blue] {$\label$};
		}
		
		\draw[ultra thick, red] (6,0) -- (8,0);
		\draw[ultra thick, dotted, red] (8,0)--(9,0);
		
		\draw[->, cyan, thick] (1,0.3) to[out=90, in=180] (2,0.8) to[out=0, in=90] (3,0.3);
		\draw[->, cyan, thick] (3,-0.3) to[out=-90, in=180] (5,-0.8) to[out=0, in=-90] (7,-0.3);
		\draw[->, cyan, thick] (7,0.3) to[out=90, in=0] (6,0.8) to[out=-180, in=90] (5.3,0.3);
		\draw[->, cyan, thick] (5,0.3) -- (5,1);
		\node[cyan] at (2.2,4) {\textbf{?}};
		\node[cyan] at (5.2,1.1) {\textbf{?}};
		
		\node[above, red] at (7.8,0.2) {Spacers};
		
		\draw[thick, blue] (1,2) -- (3,2);
		\draw[thick, blue] (1,2.5) -- (3,2.5);
		\draw[thick, blue] (1,3.5) -- (3,3.5);
		
		\draw[ultra thick, red] (1,3) -- (3,3);
		
		\draw[->, cyan, thick] (2,2.1) -- (2,2.4);
		\draw[->, cyan, thick] (2,2.6) -- (2,2.9);
		\draw[->, cyan, thick] (2,3.1) -- (2,3.4);
		\draw[->, cyan, thick] (2,3.6) -- (2,3.9);
		
		\draw[decorate, decoration={brace, mirror, amplitude=10pt}, thick] (3.2,1.9) -- (3.2,3.6) node[midway, right, xshift=0.3cm] {$h_1 = 3+\alpha_0$ with $\alpha_0=1$};
		
	\end{tikzpicture}
        \caption{First step of the Rockhlin tower for the $d=3$-Chacon transformation}
        \label{fig:enter-label}
    \end{figure}
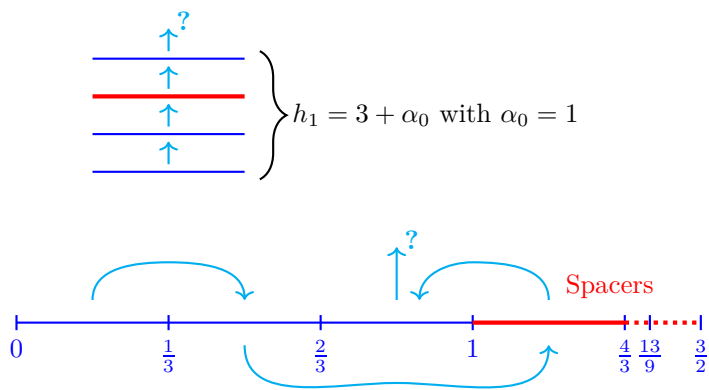

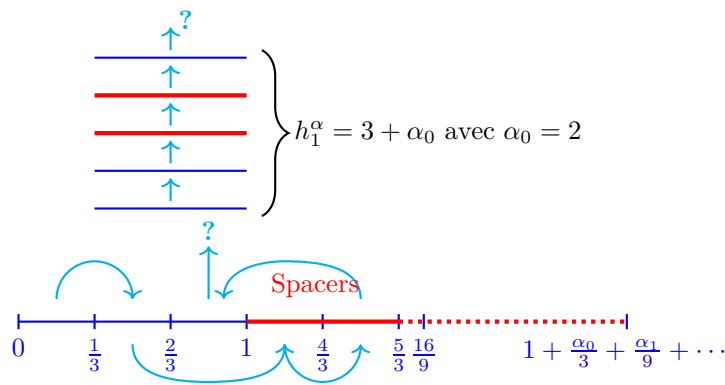
\begin{figure}
\label{rock_tour_step1}
        \centering
        	\begin{tikzpicture}
		
		\draw[thick, blue] (0,0) -- (5,0);
		
		\foreach \x/\label in {0/0, 1/\frac{1}{3}, 2/\frac23, 3/1, 4/\frac43,5/\frac53, 5.33/\frac{16}{9}, 8/1+\frac{\alpha_0}{3}+\frac{\alpha_1}{9}+\cdots} {
			\draw[thick, blue] (\x,0.1) -- (\x,-0.1) node[below, blue] {$\label$};
		}
		
		\draw[ultra thick, red] (3,0) -- (5,0);
		\draw[ultra thick, dotted, red] (5,0)--(8,0);
		
		\draw[->, cyan, thick] (0.5,0.3) to[out=90, in=180] (1,0.8) to[out=0, in=90] (1.5,0.3);
		\draw[->, cyan, thick] (1.5,-0.3) to[out=-90, in=180] (2.5,-0.8) to[out=0, in=-90] (3.5,-0.3);
		\draw[->, cyan, thick] (3.5,-0.3) to[out=-90, in=180] (4,-0.8) to[out=0, in=-90] (4.5,-0.3);
		\draw[->, cyan, thick] (4.5,0.3) to[out=90, in=0] (3.5,0.8) to[out=-180, in=90] (2.7,0.3);
		\draw[->, cyan, thick] (2.5,0.3) -- (2.5,1);
		\node[cyan] at (2.2,4) {\textbf{?}};
		\node[cyan] at (2.5,1.2) {\textbf{?}};
		
		\node[above, red] at (3.9,0.2) {Spacers};
		
		\draw[thick, blue] (1,1.5) -- (3,1.5);
		\draw[thick, blue] (1,2) -- (3,2);
		\draw[thick, blue] (1,3.5) -- (3,3.5);
		
		\draw[ultra thick, red] (1,3) -- (3,3);
		\draw[ultra thick, red] (1,2.5) -- (3,2.5);
		
		\draw[->, cyan, thick] (2,1.6) -- (2,1.9);
		\draw[->, cyan, thick] (2,2.1) -- (2,2.4);
		\draw[->, cyan, thick] (2,2.6) -- (2,2.9);
		\draw[->, cyan, thick] (2,3.1) -- (2,3.4);
		\draw[->, cyan, thick] (2,3.6) -- (2,3.9);
		
		\draw[decorate, decoration={brace, mirror, amplitude=10pt}, thick] (3.2,1.4) -- (3.2,3.6) node[midway, right, xshift=0.3cm] {$h_1^\alpha = 3+\alpha_0$ avec $\alpha_0=2$};
		
	\end{tikzpicture}
        \caption{Second step of the Rockhlin tower for $d=3$ and $\alpha_0=2$.}
        \label{fig:enter-label}
    \end{figure}

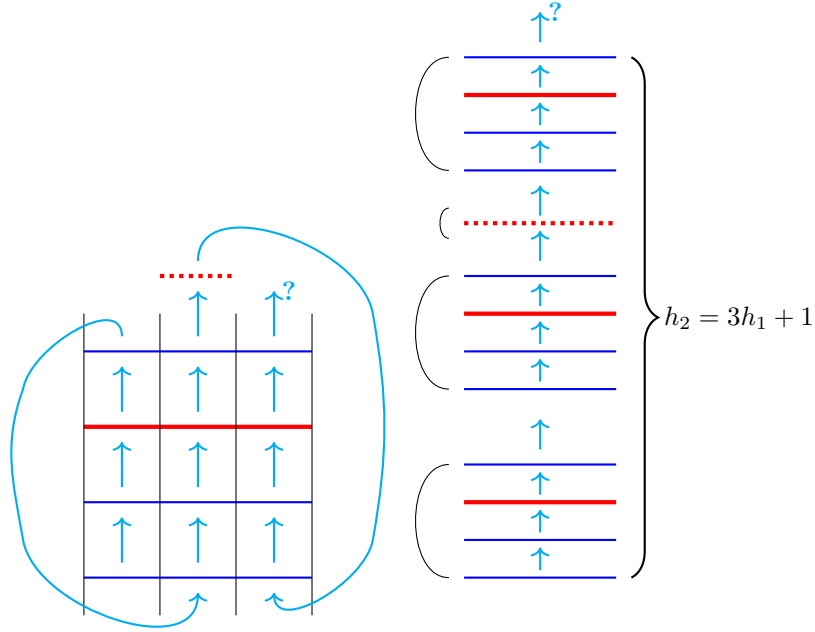
\begin{figure}
    \centering
    \begin{tikzpicture}
		\draw[thick, blue] (0,0) -- (3,0);
		\draw[thick, blue] (0,1) -- (3,1);
		\draw[ultra thick, red] (0,2) -- (3,2);
		\draw[thick, blue] (0,3) -- (3,3);
		
		\draw[black] (0,-0.5) -- (0,3.5);
		\draw[black] (1,-0.5) -- (1,3.5);
		\draw[black] (2,-0.5) -- (2,3.5);
		\draw[black] (3,-0.5) -- (3,3.5);
		
		\draw[ultra thick, dotted, red] (1,4)--(2,4);
		
		\foreach \x in {0.5,1.5,2.5} {
		 \foreach \y in {0.2,1.2,2.2} {
		 \draw[->, cyan, thick] (\x,\y) -- (\x,\y+0.6);
		 }}
	 	\draw[->, cyan, thick] (1.5,3.2) -- (1.5,3.8);
	 	\draw[->, cyan, thick] (2.5,3.2) -- (2.5,3.8); \node[cyan] at (2.7,3.8) {\textbf{?}};
	 	\draw[->, cyan, thick] (0.5,3.2) to [out=90, in=80] (-0.8,2.5) to [out=250, in=100] (-0.8,0.5) to [out=280, in=-90] (1.5,-0.2);
	 	\draw[->, cyan, thick] (1.5,4.2) to [out=90, in=100] (3.8,3.5) to [out=280, in=80] (3.8,0.5) to [out=260, in=-90] (2.5,-0.2);

	 	\foreach \a in {0,2.5,5.4} {
	 	\draw[thick, blue] (5,\a) -- (7,\a); 
	 	\draw[thick, blue] (5,\a+0.5) -- (7,\a+0.5);
	 	\draw[ultra thick, red] (5,\a+1) -- (7,\a+1);
	 	\draw[thick, blue] (5,\a+1.5) -- (7,\a+1.5);
	 	\draw[->, cyan, thick] (6,\a+0.1) -- (6,\a+0.4);
	 	\draw[->, cyan, thick] (6,\a+0.6) -- (6,\a+0.9);
	 	\draw[->, cyan, thick] (6,\a+1.1) -- (6,\a+1.4);
	 	\draw[black] (4.8,\a) to [out=180, in=180] (4.8,\a+1.5);
	 	}
	 	\foreach \a in {1.5, 4, 4.6, 6.9} {
	 		\draw[->, cyan, thick] (6,\a+0.2) -- (6,\a+0.6);
	 	}
	 	\node[cyan] at (6.2,7.5) {\textbf{?}};

	 	\draw[ultra thick, dotted, red] (5,4.7) -- (7,4.7);
	 	\draw[black] (4.8,4.5) to [out=180, in=180] (4.8,4.9);
	 	
	 	\draw[decorate, decoration={brace, mirror, amplitude=10pt}, thick] (7.2,0) -- (7.2,6.9) node[midway, right, xshift=0.3cm] {$h_2 = 3 h_1+1$};
	 	
\end{tikzpicture}
    \caption{Second step of the Rockhlin tower for the $d=3$-Chacon Transformation}
        \label{fig:enter-label}
\end{figure}

In our case, for $d\geq 7$ an odd number, we will study the $d$-Chacon transformation with $d$ intervals instead of $3$ intervals.
In \cite{Park}, for $k\in\mathbb{N}$, exceptional sets $J_{A_k,A_k}$ with\footnote{We use the notation $[a,b[$ to represent the half open interval $[a,b)$.} $A_k=\left[0,\frac{1}{d^k}\right[$ are defined explicitly, and suffice to define $J_{A,B}$ for all $A,B$. We will write $J_k$ instead of $J_{A_k,A_k}$.
Now, we introduce a sequence of random i.i.d parameters $(\alpha_j)_{j\in\mathbb{N}}\in\{1,2\}^\mathbb{N}$ with $\alpha_j$ the number of spacers at the $j-th$ step.
We write with a little abuse of language
\begin{equation}\label{abus_alpha}\alpha=\sum_{j\in\mathbb{N}}\alpha_jd^{-(j+1)}.\end{equation}
Then, let \begin{align}\label{def_set}\Omega := \union{\alpha\in \{1,2\}^\mathbb{N}}{(\{\alpha\}\times [0,1+\alpha[)}.\end{align}
We will write $\alpha=(\alpha_j)_{j\in\mathbb{N}}$ and note $T_\alpha$ the Chacon transformation associated to $\alpha$. See définition \ref{def_trans} for the precise definition.
We define now the transformation \[T : (\alpha,x)\in \Omega\mapsto (\alpha,T_\alpha(x)).\]
We consider the dynamical system $(\Omega,\mathcal{B}(\Omega),\mu ,T)$ with $\mu=d(\mathbb{L}eb_\alpha)(x)d\mathcal{H}(\alpha)$, $\mathcal{H}$ the countable product of the uniform measure on $\{1,2\}$ and $\mathbb{L}eb_\alpha$ the normalized Lebesgue measure on $[0,1+\alpha[$.
As in \cite{Park}, for each $\alpha$ and each $k\in\mathbb{N}$, we can identify an exceptional set $J_{k,\alpha}$ for $T_\alpha$. In this case, the exceptional set depends on $\alpha$.
Then, we can define the bad set of $\alpha$ for $n\in\mathbb{N}$ as\[W_{k,n} :=\{\alpha\in \{1,2\}^\mathbb{N}\ :\ n\in J_{k,\alpha}\}.\]

We will prove (see Section \ref{exception} for precise statements) that under our hypothesis 
\begin{equation}\label{first_main_prop}
    \mathcal{H}(W_{k,n})\xrightarrow[n\to+\infty]{}0.
\end{equation}

We now describe the content of the paper. 
First, in Section \ref{sec_chacon} we define precisely the parametrized family of Chacon transformations. 
In Section~\ref{sec_temps} we develop the induction procedure, and express the self correlations in terms of distribution of return time in Section~\ref{sec_return}.
In Section~\ref{exception} we define two exceptional sets for $A_k$.
Section~\ref{sec_llt} is devoted to a local limit theorem for the fiberwise distribution of return times.
Finally Section~\ref{sec_keplerian} concludes the proof of the main result.\\

\noindent{\it Acknowledgements.}
We thank Sébastien Gouezel and Thierry De La Rue for numerous suggestions and comments that improve the manuscript.

\section{Chacon transformation}\label{sec_chacon}

First of all, we define our family of Chacon transformations.

We call spacers for $m\in\mathbb{N}$, $\alpha\in\{1,2\}^\mathbb{N}$ and $q\in\intervEnt{0}{\alpha_m-1}$ the sets of the form \begin{align}\label{spacer_def}\sum_{j=0}^{m-1}\alpha_jd^{-(j+1)}+qd^{-(m+1)}+\left[0,d^{-(m+1)}\right[.\end{align}

We note for an integer $m$, \[\alpha^{(m)}:=\sum_{j=0}^m\alpha_jd^{-(j+1)}, \quad \alpha^{(-1)}=0.\]
    The Chacon transformation is the same as the original Chacon transformation with one or two spacers for each step $j$ according to the value of $\alpha_j$.
    Let note \begin{equation}\label{indice_val}Z_n=\left[1-\frac{1}{d^{n}},1-\frac{1}{d^{n+1}}\right[\cup\left[1+\alpha^{(n-1)},1+\alpha^{(n)}\right[.\end{equation}
    \begin{definition}[Chacon transformation]
    \label{def_trans}
    Let $\tau_{n,\alpha} : Z_n\to\left[0,1+\alpha\right[$ such that,
    \[\tau_{n,\alpha}(x)=\begin{cases}x-\left(1-\frac{1}{d^{n}}\right)+\frac{1}{d^{n+1}}\ &if\ x\in \left[1-\frac{1}{d^{n}},1-\frac{2}{d^{n+1}}\right[\\
    x+\frac{2}{d^{n+1}}+\alpha^{(n-1)}\ &if\ x\in\left[1-\frac{2}{d^{n+1}},1-\frac{1}{d^{n+1}}\right[\\
    x-\left(1+\alpha^{(n)}-\frac{1}{d^{n+1}}\right)+\frac{d-1}{d^{n+1}}\ &if\ x\in \left[1+\alpha^{(n)}-\frac{1}{d^{n+1}},1+\alpha^{(n)}\right[\\
    x+\frac{1}{d^{n+1}}\ &if\ \alpha_{n}=2\ and\ x\in \left[1+\alpha^{(n-1)},1+\alpha^{(n-1)}+\frac{1}{d^{n+1}}\right[.
    \end{cases}\]
    Let $x\in [0,1+\alpha[$.
    The $Z_n$ forms a partition. For $x\in [0,1+\alpha[$, we denote by the unique integer $N(x)$ such that $x\in Z_{N(x)}$.
    Thus, the Chacon transformation is given by \begin{equation}\label{index}T_\alpha(x)=\tau_{N(x),\alpha}(x).\end{equation}
\end{definition}

The height $h_k^\alpha$ of the Rockhlin tower of $k$ steps depends on $\alpha$ and $h_{k+1}^\alpha=dh_k^\alpha+\alpha_k$ with $h_0^\alpha=1$. For all $\alpha\in\{1,2\}^\mathbb{N}$, we denote by $\mathbb{L}eb_\alpha$ and $\lambda_\alpha$ the Lebesgue measure on $[0,1+\alpha[$,  normalized so that $(\alpha+1)\mathbb{L}eb_\alpha(A_k)=\frac{1}{d^k}$.

The main result of this paper is 
\begin{theorem}[Keplerian shear of Chacon transformation]
    \label{theorem_Chacon}
    The dynamical system $(\Omega,T,\mathcal{H}\otimes (\mathbb{L}eb_\alpha)_{\alpha\in\{1,2\}^\mathbb{N}})$ exhibits keplerian shear, for $d\geq 7$ odd.
\end{theorem}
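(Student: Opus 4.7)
The plan is to reduce the keplerian shear of $(\Omega, T, \mu)$ to the convergence $\mathcal{H}(W_{k,n}) \to 0$ claimed in \eqref{first_main_prop}, and then to prove that convergence through a local limit theorem for a random-coefficient sum of the i.i.d.\ parameters $\alpha_j$. First, the sets of the form $C \times A_k$ with $C$ a cylinder of $\{1,2\}^\mathbb{N}$ form a $\pi$-system generating $\mathcal{S}'$, so polarization together with $L^2_\mu$ density reduces the covariance convergence in the definition of keplerian shear to the $L^1(\mathcal{H})$ convergence
\[
\int \bigl|\mathbb{L}eb_\alpha(A_k \cap T_\alpha^{-n} A_k) - \mathbb{L}eb_\alpha(A_k)^2\bigr|\,d\mathcal{H}(\alpha) \xrightarrow[n\to\infty]{} 0.
\]
Splitting the integral over $W_{k,n}$ and its complement, the part on $W_{k,n}^c$ tends to zero pointwise by the very definition of the exceptional set and is dominated by $2d^{-k}$, while the part on $W_{k,n}$ is bounded by $2d^{-k}\mathcal{H}(W_{k,n})$; the theorem thus reduces to \eqref{first_main_prop} for each $k$.

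Second, I would exploit the cutting-and-stacking description of $T_\alpha$. The Rokhlin tower at step $k$ has height $h_k^\alpha$ with $h_{\ell+1}^\alpha = d h_\ell^\alpha + \alpha_\ell$, and the first return map of $T_\alpha$ to $A_k$ is itself a Chacon-type transformation on $A_k$. Following Park's combinatorial analysis \cite{Park} adapted to our random spacer counts, the condition $n \in J_{k,\alpha}$ rewrites in terms of the base-$d$ expansion of $n$ relative to the sequence $(h_\ell^\alpha)_\ell$: membership of $\alpha$ in $W_{k,n}$ becomes equivalent to a prescribed value of a linear sum
\[
S_n(\alpha) = \sum_{\ell \ge 0} c_\ell(n)\, \alpha_\ell,
\]
with integer coefficients $c_\ell(n)$ determined by $n$ and by the cutting-and-stacking data but independent of $\alpha$. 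This identification is precisely the content planned for Sections~\ref{sec_temps}--\ref{exception}.

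The heart of the argument is then the local limit theorem of Section~\ref{sec_llt}. Since $(\alpha_\ell)$ is i.i.d.\ uniform on $\{1,2\}$, $\mathrm{Var}(S_n) = \tfrac14 \sum_\ell c_\ell(n)^2$, and a quantitative local CLT for lattice sums of independent bounded variables yields
\[
\mathcal{H}(W_{k,n}) = O\bigl(\sigma_n^{-1}\bigr), \qquad \sigma_n^2 = \mathrm{Var}(S_n),
\]
as soon as the number of indices $\ell$ with $c_\ell(n) \neq 0$ grows with $n$ and the active coefficients do not share a common arithmetic defect. The main obstacle will be exactly this coefficient analysis: one must extract from the combinatorics of the tower a lower bound on the number of non-trivial $c_\ell(n)$, and show that the greatest common divisor of the active coefficients stays bounded uniformly in $n$, so that no parity or small-modulus obstruction forces $S_n$ onto a sparse sublattice and spoils the local CLT. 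The hypothesis $d \ge 7$ odd enters precisely here, providing enough room in the spacer construction to guarantee the required lower bound on $\sigma_n$ and to rule out such arithmetic degeneracies. Combined with the first paragraph, this gives $\mathcal{H}(W_{k,n}) \to 0$ for every $k$ and hence keplerian shear of $(\Omega, T, \mu)$ on the whole of $L^2_\mu(\Omega)$.
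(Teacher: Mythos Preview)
Your reduction in the first paragraph is essentially right and matches the paper's strategy: keplerian shear follows from the $L^1(\mathcal{H})$ convergence of the fiberwise autocorrelations of $A_k$, via a $\pi$-system argument (the paper needs a slightly larger $\pi$-system including spacer pieces, but that is a detail).

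The substantive gap is that you have inverted the role of the local limit theorem, and in doing so you have made the hard step look trivial and the easy step look hard. In the paper, the exceptional set $W_{k,n}$ is \emph{not} defined so that convergence of the correlation holds by definition on its complement; rather, it is defined so that for $\alpha\notin W_{k,n}$ (together with a second exceptional set $\check W_{k,n}$ and a large-deviation set $M_m$) the return-time cocycle ${t_\ell^\alpha}'(x)$ has variance growing linearly in $m=\lfloor\log_d n\rfloor$. The LLT of Section~\ref{sec_llt} is then a \emph{fiberwise} statement in the $x$-variable, for $\alpha$ fixed outside the bad sets: it gives the asymptotics of $d_\ell^\alpha(n)=\lambda({t_\ell^\alpha}'=n)$, and summing over $\ell$ yields $\sum_\ell d_\ell^\alpha(n)\to\lambda_\alpha(A_k)$. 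This is the content of Proposition~\ref{first_keplerian_shear}, and it is where essentially all the analytic work (complex cones, non-homogeneous transfer operators, Dolgopyat-type aperiodicity) is spent. Your sentence ``the part on $W_{k,n}^c$ tends to zero pointwise by the very definition of the exceptional set'' skips precisely this.

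Conversely, the convergence $\mathcal{H}(W_{k,n})\to 0$ is \emph{not} obtained via an LLT in the $\alpha$-variables. Membership $\alpha\in W_{k,n}$ is not equivalent to a single level-set condition $S_n(\alpha)=\text{value}$: it reads $\exists\,\ell$ with few nonzero balanced $d$-adic digits such that $|\ell d^k(\alpha+1)-n|\le Cm$, which for each $\ell$ is an \emph{interval} condition on $\alpha$, and one takes a union over such $\ell$. The paper handles this directly (Proposition~\ref{vanish_ex_set}): each such interval has $\mathcal{H}$-measure $O(\ell^{-\log_d 2})$ by the Cantor-set dimension of the $\alpha$-space, and the number of admissible $\ell$ is controlled by a binomial count on digits; no probabilistic limit theorem is involved. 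Your proposed LLT in the i.i.d.\ $\alpha_j$'s, with coefficients $c_\ell(n)$ independent of $\alpha$, does not match this structure and would not by itself establish that the correlation on good fibers converges.
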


The rest of the work is dedicated to the proof of Theorem \ref{theorem_Chacon}, using the weak mixing approach described in the introduction.

\section{Returns and induction}\label{sec_temps}

Following the idea in \cite{Park}, we consider $A_k=\left[0,\frac{1}{d^k}\right[$ for $k\in\mathbb{N}$. The quantities $$\lambda_\alpha(A_k\cap T_\alpha^{-n}(A_k))$$ are completely  understood via the induced map of $T_\alpha$ on $A_k$ and the sequences of successives returns times to $A_k$.

\begin{definition}[First return]
    We define the first return in $A_k$ of $x\in A_k$ the function \[r_{A_k,\alpha}(x)=\min\{n\in\mathbb{N}^* : T^n_\alpha(x)\in A_k\}.\]
\end{definition}

\begin{definition}[Induction] We define the induced transformation on $A_k$ by
    \[S_{A_k,\alpha}(x)=T_\alpha^{r_{A_k}(x)}(x).\]
\end{definition}

Any $x\in[0,1[$ has a symbolic representation: using the $d-$adic subdivision of $[0,1[$ with intervals $\left[\frac{i}{d^{j}},\frac{i+1}{d^{j}}\right[$ for $i\in \intervEnt{0}{d-1}$ and $j\in \mathbb{N}^*$,
we get the unique sequence $(x_j)_{j\in\mathbb{N}^*}\in \intervEnt{0}{d-1}^{\mathbb{N}^*}$ such that \[x=\sum_{j\in\mathbb{N}^*}x_jd^{-j}.\]
With a little abuse of language, we identify the  number $x$ with the sequence of its digits $(x_j)_{j\in\mathbb{N}^*}$. 
\begin{definition}
    We define the shift of $x\in[0,1[$ by \[\sigma x=\sum_{j\in\mathbb{N}^*}x_{j+1}d^{-j}.\]
    Similarly, we define the shift of $\alpha=(\alpha_j)_{j\in\mathbb{N}}$ by \[\sigma\alpha=(\alpha_{j+1})_{j\in\mathbb{N}}.\]
\end{definition}


We state a series of preparatory lemmas.

\begin{lemme}[Index property]\label{lemme_index}
    For $x\in [0,1[$, we have \[(x_1=d-1\implies N(\sigma x)=N(x)-1).\]
\end{lemme}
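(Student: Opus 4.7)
The plan is to give a direct characterization of $N(x)$ for $x\in[0,1[$ in terms of its $d$-adic digits, and then read off how the shift affects that characterization. First I would observe that the second piece of $Z_n$, namely $[1+\alpha^{(n-1)},1+\alpha^{(n)}[$, is contained in $[1,1+\alpha[$ and is therefore irrelevant when $x\in[0,1[$. So for such $x$, $N(x)=n$ is equivalent to $x\in[1-d^{-n},1-d^{-(n+1)}[$.

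Next I would identify this interval with a digit condition. Since $1-d^{-n}=\sum_{j=1}^{n}(d-1)d^{-j}$, the lower bound $x\ge 1-d^{-n}$ is equivalent to $x_1=\cdots=x_n=d-1$. The upper bound $x<1-d^{-(n+1)}=\sum_{j=1}^{n+1}(d-1)d^{-j}$ then further rules out $x_{n+1}=d-1$, since the latter would give $x\ge 1-d^{-(n+1)}$ (using the standard convention forbidding $d$-adic expansions ending in an infinite tail of $(d-1)$'s). Conversely, these two digit conditions place $x$ in $[1-d^{-n},1-d^{-(n+1)}[$. Hence $N(x)=n$ if and only if $x_1=\cdots=x_n=d-1$ and $x_{n+1}\neq d-1$.

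The lemma then follows at once. Assume $x_1=d-1$ and set $n:=N(x)$, which is therefore at least $1$. By the characterization, the digits of $x$ start with $n$ copies of $d-1$ followed by a digit $x_{n+1}\ne d-1$. The digits of $\sigma x$ are $(x_2,x_3,\dots)$, which consist of $n-1$ leading copies of $d-1$ followed by the digit $x_{n+1}\ne d-1$. Applying the same characterization to $\sigma x\in[0,1[$ gives $N(\sigma x)=n-1=N(x)-1$.

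There is no real obstacle here; the only point requiring a small verification is the convention on non-terminating $d$-adic expansions used when reading the strict inequality $x<1-d^{-(n+1)}$ as a digit condition on $x_{n+1}$, and this is handled in the second step above.
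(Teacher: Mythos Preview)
Your proof is correct, and it proceeds by a slightly different route than the paper's own argument. The paper works analytically: starting from $x\in[1-d^{-N(x)},1-d^{-N(x)-1}[$ it multiplies by $d$ and subtracts $x_1=d-1$ to locate $\sigma x$ in $[1-d^{-(N(x)-1)},1-d^{-N(x)}[$, then compares with the defining interval for $N(\sigma x)$ to squeeze out $N(\sigma x)=N(x)-1$. You instead first prove the digit characterization $N(x)=n\iff x_1=\cdots=x_n=d-1,\ x_{n+1}\neq d-1$, from which the effect of the shift is immediate. Your approach has the advantage of making explicit the meaning of $N(x)$ as the length of the leading block of $(d-1)$'s, a description that is implicitly used elsewhere in the paper (for instance in the analysis of $\varepsilon_k^\alpha$); the paper's computation is more self-contained but less reusable. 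The only delicate point you identify --- the reliance on the convention that the $d$-adic expansion has no infinite tail of $(d-1)$'s --- is indeed the one subtlety, and it is consistent with the paper's use of unique expansions on $[0,1[$.
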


\begin{proof}
    Let $x\in [0,1[$.
    By (\ref{indice_val}), $x\in Z_{N(x)}\cap [0,1[=\left[1-\frac{1}{d^{N(x)}},1-\frac{1}{d^{N(x)+1}}\right[$.
    Thus \[d-\frac{1}{d^{N(x)-1}}-x_1\leq \sigma x <d-\frac{1}{d^{N(x)}}-x_1.\]
    We get
    \[
        1-\frac{1}{d^{N(x)-1}}\leq \sigma x<1-\frac{1}{d^{N(x)}}.\]
    In addition, 
    \[
        1-\frac{1}{d^{N(\sigma x)}}\leq \sigma x<1-\frac{1}{d^{N(\sigma x)+1}}.
    \]
    Therefore \[N(x)-1<N(\sigma x)+1\ \text{and}\ N(\sigma x)<N(x).\]
    In other words,\[N(\sigma x)=N(x)-1.\]
\end{proof}

\begin{lemme}
\label{formula_A_k}
    For $k\in\mathbb{N}$, for $x\in A_k$, \[T_\alpha^{h_k^\alpha-1}(x)=x+1-\frac{1}{d^k}.\]
\end{lemme}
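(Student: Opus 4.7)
The plan is to prove the claim by induction on $k$. The base case $k=0$ is immediate: $h_0^\alpha = 1$ and $1 - 1/d^0 = 0$, so the statement reduces to $T_\alpha^0(x) = x$. For the inductive step, the geometric picture is that the step-$(k+1)$ Rokhlin tower is built from the step-$k$ one by cutting the base $A_k$ into $d$ equal sub-intervals $A_k^{(i)} := [i/d^{k+1}, (i+1)/d^{k+1})$, viewing each as the base of a column of height $h_k^\alpha$, stacking the $d$ columns in order, and inserting $\alpha_k$ spacer levels between columns $C_{d-2}$ and $C_{d-1}$. My plan is to trace $T_\alpha^{h_{k+1}^\alpha - 1}$ through these three regions in turn.

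Assume the formula at level $k$ and fix $x \in A_{k+1}$. For each $i \in \{0, \ldots, d-1\}$ the point $x + i/d^{k+1}$ lies in $A_k^{(i)} \subset A_k$, so the induction hypothesis yields $T_\alpha^{h_k^\alpha - 1}(x + i/d^{k+1}) = x + i/d^{k+1} + 1 - 1/d^k$. The next application of $T_\alpha$ is determined by $\tau_{k,\alpha}$: for $i \in \{0, \ldots, d-3\}$ the iterate lies in $[1 - 1/d^k, 1 - 2/d^{k+1})$, so the first case of $\tau_{k,\alpha}$ produces $x + (i+1)/d^{k+1}$; for $i = d-2$ it lies in $[1 - 2/d^{k+1}, 1 - 1/d^{k+1})$, so the second case produces $x + 1 + \alpha^{(k-1)}$, the bottom of the spacer block. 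Thus after $(d-1) h_k^\alpha$ iterations we reach $x + 1 + \alpha^{(k-1)}$.

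Next, $\alpha_k$ further applications of $T_\alpha$ traverse the spacers. If $\alpha_k = 1$, a single step using the third case of $\tau_{k,\alpha}$ lands at $x + (d-1)/d^{k+1}$; if $\alpha_k = 2$, one step via the fourth case advances by $1/d^{k+1}$ inside the spacer and a second step via the third case produces $x + (d-1)/d^{k+1}$. This point lies in $A_k^{(d-1)} \subset A_k$, so a final use of the induction hypothesis gives $T_\alpha^{h_k^\alpha - 1}(x + (d-1)/d^{k+1}) = x + 1 - 1/d^{k+1}$. Summing the iteration counts, $(d-1) h_k^\alpha + \alpha_k + (h_k^\alpha - 1) = d h_k^\alpha + \alpha_k - 1 = h_{k+1}^\alpha - 1$, exactly as required.

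The only real obstacle is the bookkeeping: at each transition one must verify that the current iterate falls into the intended subcase of the four-case definition of $\tau_{k,\alpha}$. The hypothesis $d \geq 3$ enters precisely to separate cases 1 and 2 of $\tau_{k,\alpha}$, so that the first $d - 2$ column-to-column transitions are uniformly handled by case 1 while the last transition (into the spacers) uses case 2; no other arithmetic check is delicate, and no further ideas beyond careful use of the piecewise definition are needed.
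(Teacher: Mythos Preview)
Your proof is correct and follows essentially the same inductive strategy as the paper: establish $T_\alpha^{jh_k^\alpha}(x)=x+j/d^{k+1}$ for $j\le d-2$ via the induction hypothesis and case~1 of $\tau_{k,\alpha}$, pass into the spacer block via case~2, traverse it in $\alpha_k$ steps, and apply the hypothesis once more at level~$k$. If anything, you are slightly more explicit than the paper in spelling out the case-3/case-4 dichotomy for the spacer traversal and in verifying the iteration count $(d-1)h_k^\alpha+\alpha_k+(h_k^\alpha-1)=h_{k+1}^\alpha-1$.
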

\begin{proof}
    We prove it by induction on $k$.
    Taking $k=0$, $x\in A_0$, we get $h_0^\alpha=1$ and then, $T_\alpha^{h_0^\alpha-1}(x)=x=x+1-\frac{1}{d^0}$.
    Now, we suppose that the property is true for some $k\in\mathbb{N}$.
    Let $x\in A_{k+1}$.
    We have $A_{k+1}=\left[0,\frac{1}{d^{k+1}}\right[\subset \left[0,\frac{d-2}{d^{k+1}}\right[$.
    Then, $x\in \left[0,\frac{d-2}{d^{k+1}}\right[$.
    Thus, by recurrence hypothesis and Definition \ref{def_trans}, \begin{equation}\label{equa_trans}\forall j\in\intervEnt{0}{d-2},T_\alpha^{jh_k^\alpha}(x)=x+\frac{j}{d^{k+1}}.\end{equation}
    Then, $T_\alpha^{(d-2)h_k^\alpha}(x)=x+\frac{d-2}{d^{k+1}}\in \left[\frac{d-2}{d^{k+1}},\frac{d-1}{d^{k+1}}\right[\subset A_k$.
    Therefore, \[T_\alpha^{(d-1)h_k^\alpha-1}(x)=x+\frac{d-2}{d^{k+1}}+1-\frac{1}{d^k}=x+1-\frac{2}{d^{k+1}}\in\left[1-\frac{2}{d^{k+1}},1-\frac{1}{d^{k+1}}\right[.\]
    We get, by Definition \ref{def_trans}\begin{align}\label{equa_final_step}T_\alpha^{(d-1)h_k^\alpha+\alpha_k}(x)=x+\frac{d-1}{d^{k+1}}\in\left[\frac{d-1}{d^{k+1}},\frac{1}{d^k}\right[\subset A_k.\end{align}
    Then, \[T_\alpha^{h_{k+1}^\alpha-1}(x)=T_\alpha^{dh_k^\alpha+\alpha_k-1}(x)=x+1-\frac{1}{d^{k+1}}.\]
\end{proof}

We want to switch on a symbolic representation which is essential for the sequel. For this, we define the symbolic return.

\begin{definition}[The symbolic first return]
    We define the function \begin{equation} \label{f_return} r_{k,h}^\alpha(x)= \begin{cases}
    h &if\ x \in \left[0,1-\frac{2}{d}\right[\\
    h+\alpha_k &if\ \left[1-\frac{2}{d},1-\frac{1}{d}\right[\\
    r_{k,h}^{\sigma\alpha}(\sigma x)\ &else
\end{cases}.\end{equation}
    Then, the symbolic first return is $r_{k,h_k^\alpha}^\alpha$ more precisely, $r_{A_k,\alpha}\circ u_k^{-1}=r_{k,h_k^\alpha}^\alpha$.
\end{definition}
Here, we introduce the parameter $h$ because $h_k^\alpha$ depends on $\alpha$, and when $x\geq 1-\frac{1}{d},$  \[r_{k,h_k^\alpha}^\alpha(x)=r_{k,h_k^\alpha}^{\sigma\alpha}(\sigma x)\ne r_{k,h_k^{\sigma\alpha}}^{\sigma\alpha}(\sigma x).\]

\begin{lemme}
\label{ident_f_return}
    For $x\in [0,1[$,\[r_{k,h}^\alpha(x)=\begin{cases}
        h\ &if\ x\in \left[1-\frac{1}{d^{N(x)}},1-\frac{2}{d^{N(x)+1}}\right[\\
        h+\alpha_{k+N(x)}\ &if\ x\in \left[1-\frac{2}{d^{N(x)+1}},1-\frac{1}{d^{N(x)+1}}\right[
    \end{cases}.\]
\end{lemme}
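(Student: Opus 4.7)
The plan is to prove this by induction on $N(x)$, exploiting the recursive definition of $r_{k,h}^\alpha$ together with the shift identity from Lemma \ref{lemme_index}.

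\textbf{Base case.} When $N(x)=0$, the definition of $Z_0$ in \eqref{indice_val} (restricted to $[0,1[$) gives $x\in[0,1-\frac{1}{d}[$, so the two target intervals are $[0,1-\frac{2}{d}[$ and $[1-\frac{2}{d},1-\frac{1}{d}[$. These are precisely the two first cases in the definition \eqref{f_return}, which return $h$ and $h+\alpha_k = h+\alpha_{k+N(x)}$ respectively. So the base case is immediate.

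\textbf{Inductive step.} Assume the claim for all $x$ with $N(x)\le n$, and take $x$ with $N(x)=n+1$. Since $x\ge 1-\frac{1}{d^{n+1}}\ge 1-\frac{1}{d}$, we are in the third (\emph{else}) case of \eqref{f_return}, so
\[r_{k,h}^\alpha(x)=r_{k,h}^{\sigma\alpha}(\sigma x).\]
Moreover $x_1=d-1$, and Lemma \ref{lemme_index} yields $N(\sigma x)=n$. Applying the induction hypothesis to $\sigma x$ with parameter $\sigma\alpha$, we know that $r_{k,h}^{\sigma\alpha}(\sigma x)$ equals $h$ or $h+(\sigma\alpha)_{k+n}=h+\alpha_{k+n+1}$ according as $\sigma x$ lies in $[1-d^{-n},1-2d^{-(n+1)}[$ or $[1-2d^{-(n+1)},1-d^{-(n+1)}[$.

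\textbf{Translation of the intervals.} Using $\sigma x = dx - (d-1)$ (valid because $x_1=d-1$), the condition $\sigma x<1-2d^{-(n+1)}$ is equivalent to $x<1-2d^{-(n+2)}$, while $\sigma x\ge 1-d^{-n}$ is equivalent to $x\ge 1-d^{-(n+1)}$. Hence the two cases for $\sigma x$ correspond exactly to $x\in[1-d^{-(n+1)},1-2d^{-(n+2)}[$ and $x\in[1-2d^{-(n+2)},1-d^{-(n+2)}[$, i.e.\ to the two intervals indexed by $N(x)=n+1$. Since in the second case $\alpha_{k+n+1}=\alpha_{k+N(x)}$, the formula follows.

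The only delicate point is the affine relation between $x$ and $\sigma x$ and the corresponding bookkeeping on the endpoints $1-2d^{-j}$ and $1-d^{-j}$; once the shift $j\mapsto j+1$ in these denominators is correctly tracked, everything reduces to routine arithmetic. I expect no genuine obstacle beyond this change of variable.
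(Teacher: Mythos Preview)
Your proof is correct and follows essentially the same approach as the paper: induction on $N(x)$, with the base case read off directly from \eqref{f_return} and the inductive step obtained by invoking the recursive branch of \eqref{f_return}, applying Lemma~\ref{lemme_index} to get $N(\sigma x)=N(x)-1$, and then using the induction hypothesis on $\sigma x$ with parameter $\sigma\alpha$. Your explicit affine translation $\sigma x = dx-(d-1)$ makes the interval bookkeeping more transparent than the paper's version, which leaves that step implicit.
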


\begin{proof}
    We prove by induction over $n$ that, for all $x$ such that $N(x)=n$, the formula \eqref{f_return} holds.
    We suppose that $n=0$.
    Let $x\in [0,1[$ such that $N(x)=n$.
    With Definition~\ref{f_return} on the symbolic first return, we get the result.
    Now, we take $n\in \mathbb{N}$ with the recurrence hypothesis.
    Let $x\in [0,1[$ such that $N(x)=n+1$.
    Then, $x\geq 1-\frac{1}{d^{n+1}}\geq 1-\frac{1}{d}$.
    Thus, $x_1=d-1$.
    We get that, $r_{k,h}^\alpha(x)=r_{k,h}^{\sigma\alpha}(\sigma x)$.
    Then, $N(\sigma x)=N(x)-1=n$.
    By induction hypothesis \[r_{k,h}^{\sigma\alpha}(\sigma x)=\begin{cases}
        h\ &if\ \sigma x\in \left[1-\frac{1}{d^{N(\sigma x)}},1-\frac{2}{d^{N(\sigma x)+1}}\right[\\
        h+(\sigma\alpha)_{k+N(\sigma x)}\ &if\ \sigma x\in \left[1-\frac{2}{d^{N(\sigma x)+1}},1-\frac{1}{d^{N(\sigma x)+1}}\right[
    \end{cases}.\]
    Then, \[r_{k,h}^{\alpha}(x)=\begin{cases}
        h\ &if\ x\in \left[1-\frac{1}{d^{N(x)}},1-\frac{2}{d^{N(x)+1}}\right[\\
        h+\alpha_{k+N(x)}\ &if\ x\in \left[1-\frac{2}{d^{N(x)+1}},1-\frac{1}{d^{N(x)+1}}\right[
    \end{cases}.\]
\end{proof}

The link between the first representation of the inducing time is  made by the map
\begin{equation} \label{homot} u_k : x\in A_k\mapsto d^{k}x\in [0,1[\end{equation}
as stated in the next lemma.

\begin{lemme}[Identification for first return]
    \label{id_f_return}
For $\alpha\in\{1,2\}^\mathbb{N}$, $k\in\mathbb{N}$ we get $r_{A_k,\alpha}=r_{k,h_k^\alpha}^\alpha\circ u_k$ on $A_k$.
\end{lemme}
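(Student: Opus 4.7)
\smallskip
The plan is to bypass the recursive Definition~\ref{f_return} and start directly from the closed form of Lemma~\ref{ident_f_return}: for $y\in[0,1[$ with $n=N(y)$, the value $r_{k,h_k^\alpha}^\alpha(y)$ takes only two possibilities, namely $h_k^\alpha$ when $y$ lies in the first half $[1-1/d^{n},1-2/d^{n+1}[$ of $Z_n\cap[0,1[$ and $h_k^\alpha+\alpha_{k+n}$ when $y$ lies in the second half $[1-2/d^{n+1},1-1/d^{n+1}[$. It is therefore enough to compute $r_{A_k,\alpha}(x)$ for $x=u_k^{-1}(y)\in A_k$ and match these two values.

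Starting from such an $x$, Lemma~\ref{formula_A_k} advances $x$ by $h_k^\alpha-1$ iterations in a single stroke: $T_\alpha^{h_k^\alpha-1}(x)=x+1-1/d^k$. Using $x=y/d^k$, a direct affine translation places this iterate in $Z_{k+n}\cap[0,1[$, in its first (resp.\ second) half precisely when $y$ sits in the first (resp.\ second) half of $Z_n\cap[0,1[$. I then apply Definition~\ref{def_trans} at the index $n'=k+n$ to finish each case. In the first-half case, the first branch of $\tau_{k+n,\alpha}$ sends $T_\alpha^{h_k^\alpha-1}(x)$ into $[1/d^{k+n+1},(d-1)/d^{k+n+1}[\subset A_{k+n}\subset A_k$, giving $r_{A_k,\alpha}(x)=h_k^\alpha$. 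In the second-half case, the second branch carries the point into the first spacer slice $[1+\alpha^{(k+n-1)},1+\alpha^{(k+n-1)}+1/d^{k+n+1}[$; then either one further application of branch~3 (if $\alpha_{k+n}=1$) or one of branch~4 followed by one of branch~3 (if $\alpha_{k+n}=2$) lands the orbit in $[(d-1)/d^{k+n+1},1/d^{k+n}[\subset A_k$, giving $r_{A_k,\alpha}(x)=h_k^\alpha+\alpha_{k+n}$.

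Finally I would check that the returns are first returns. For $1\le j<h_k^\alpha$ the iterate $T_\alpha^j(x)$ lies on level $j$ of the Rokhlin tower of base $A_k$, hence outside $A_k$; the at most two additional intermediate iterates from the second-half case sit in the spacer strip $[1,1+\alpha[$, again disjoint from $A_k\subset[0,1[$. Matching the two cases with Lemma~\ref{ident_f_return} yields $r_{A_k,\alpha}=r_{k,h_k^\alpha}^\alpha\circ u_k$. The only mildly delicate point will be the bookkeeping inside the spacer strip: the identity $\alpha^{(k+n)}=\alpha^{(k+n-1)}+\alpha_{k+n}/d^{k+n+1}$ is what makes the spacer traversal consume exactly $\alpha_{k+n}$ iterations uniformly in $\alpha_{k+n}\in\{1,2\}$, ensuring the return time is $h_k^\alpha+\alpha_{k+n}$ on the nose.
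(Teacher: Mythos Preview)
Your argument is correct and takes a genuinely different route from the paper. The paper proceeds by induction on $k$: it treats the base case $k=0$ by a direct case analysis on $N(x)$, then at the inductive step uses Lemma~\ref{formula_A_k} together with the recurrence hypothesis to pass from $A_k$ to $A_{k+1}$. You instead bypass the induction entirely by combining Lemma~\ref{formula_A_k} (which already absorbs the inductive tower structure) with the closed form of Lemma~\ref{ident_f_return}: since $x=y/d^k$ and $T_\alpha^{h_k^\alpha-1}(x)=x+1-1/d^k$, the affine change of scale sends the dichotomy on $Z_{N(y)}\cap[0,1[$ to the identical dichotomy on $Z_{k+N(y)}\cap[0,1[$, and a single application of Definition~\ref{def_trans} at index $k+N(y)$ finishes each branch. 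What your approach buys is uniformity in $k$ and a shorter computation; the minimality check (that $T_\alpha^j(x)\notin A_k$ for $1\le j<h_k^\alpha$) is cleanly handled by the Rokhlin tower disjointness, while the paper's induction repeats some of the work already packaged into Lemma~\ref{formula_A_k}. The spacer bookkeeping you flag, namely $\alpha^{(k+n)}=\alpha^{(k+n-1)}+\alpha_{k+n}d^{-(k+n+1)}$, is exactly what makes the branch-3/branch-4 traversal consume $\alpha_{k+n}$ steps, and your case split on $\alpha_{k+n}\in\{1,2\}$ is correct.
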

\begin{proof}
    We prove it by induction on $k\in\mathbb{N}$.
    We begin with $k=0$.
    Let $x \in A_0=[0,1[$.
    First, we suppose that $x\in \left[0,1-\frac{2}{d}\right[$.
    Then, $T_\alpha(x)=x+\frac{1}{d}\in \left[\frac{1}{d},1-\frac{1}{d}\right[\subset A_0$.
    We get \[r_{A_k,\alpha}(x)=1=h_0^\alpha=r_{k,h_k^{\alpha}}^\alpha \circ u_0(x).\]
    Now, we suppose that $x\in \left[1-\frac{2}{d},1-\frac{1}{d}\right[$.
    Then, by (\ref{index}), \[T_\alpha(x)=x+\frac{2}{d}+\alpha^{(-1)}=x+\frac{2}{d}\in [1,1+\alpha^{(0)}[.\]
    For $q\in \intervEnt{0}{\alpha_0-1},T^{q+1}_\alpha(x)\in [1,1+\alpha^{(0)}[$.
    Then $r_{A_k,\alpha}(x)\geq \alpha_0+1$.
    In the other side, $T^{\alpha_0+1}_\alpha(x)=x+\frac{1}{d}\in\left[1-\frac{1}{d},1\right[\subset A_0$.
    Then \[r_{A_k,\alpha}(x)= \alpha_0+1=h_k^\alpha+1=r_{k,h_k^\alpha}^\alpha\circ u_0(x).\]
    Now, we suppose that $x\in \left[1-\frac{1}{d},1\right[$.
    Then, $N(x)\geq 1$.
    In the first case, $x\in \left[1-\frac{1}{d^{N(x)}},1-\frac{2}{d^{N(x)+1}}\right[.$
    Then, $T_\alpha(x)=x-\left(1-\frac{1}{d^{N(x)}}\right)+\frac{1}{d^{N(x)+1}}\in A_0$.
    For $x\in \left[1-\frac{2}{d^{N(x)+1}},1-\frac{1}{d^{N(x)+1}}\right[$, by Lemma \ref{lemme_index}, we get \[r_{A_k,\alpha}(x)=h_k^\alpha+\alpha_{N(x)}=h_k^{\alpha}+(\sigma\alpha)_{N(\sigma x)}=r_{k,h_k^\alpha}^\alpha\circ u_0(x).\]
    Now, we take $k\in \mathbb{N}$ verifying the property.
    Let $x\in A_{k+1}\subset A_k$.
    By Lemma \ref{formula_A_k} and recurrence hypothesis, we get that $r_{A_{k+1},\alpha}(x)\geq dh_k^\alpha+\alpha_k=h_{k+1}^\alpha.$ We get \[r_{A_{k+1},\alpha}(x)=\begin{cases}
        h_{k+1}^\alpha\ &if\ x\in \left[\frac{1}{d^{k+1}}\left(1-\frac{1}{d^{N(x)}}\right),\frac{1}{d^{k+1}}\left(1-\frac{2}{d^{N(x)+1}}\right)\right[ \\
        h_{k+1}^\alpha+\alpha_{k+1+N(x)}\ &if\ x\in \left[\frac{1}{d^{k+1}}\left(1-\frac{2}{d^{N(x)+1}}\right),\frac{1}{d^{k+1}}\left(1-\frac{1}{d^{N(x)+1}}\right)\right[
    \end{cases}.\]
    By Lemma \ref{ident_f_return}, we get \[r_{A_{k+1},\alpha}(x)=r_{k+1,h_{k+1}^\alpha}^\alpha(u_{k+1}(x))\]
\end{proof}

We make the same link for the induced transformation.
Let the odometer \begin{equation}\label{odo} S(x)=\begin{cases}
    (x_1+1)\sigma x &if\ x_1\in \intervEnt{0}{d-2}\\
    0S(\sigma x) &else
\end{cases}.\end{equation}

Now, we fix $k\in\mathbb{N}$.

\begin{proposition}[Induction/Odometer]
\label{reduc_odo}
    The induced map is conjugated with the odometer \[S=u_k\circ S_{A_k,\alpha}\circ \reciproque{u_k}.\]
\end{proposition}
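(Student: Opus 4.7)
The plan is a direct case analysis at the $d$-adic level. Fix $\alpha$ and $k$, take $x\in A_k$, and set $y:=u_k(x)=d^kx\in[0,1[$. Combining Lemmas \ref{id_f_return} and \ref{ident_f_return}, the first return is
\[ r_{A_k,\alpha}(x)=\begin{cases} h_k^\alpha & \text{if } y\in\left[1-d^{-N(y)},\,1-2d^{-N(y)-1}\right[,\\
h_k^\alpha+\alpha_{k+N(y)} & \text{if } y\in\left[1-2d^{-N(y)-1},\,1-d^{-N(y)-1}\right[.\end{cases}\]

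First I would use Lemma \ref{formula_A_k} to transport $x$ to $z:=T_\alpha^{h_k^\alpha-1}(x)=x+1-d^{-k}$, and then verify from the identity $1-z=(1-y)d^{-k}$ and the definition of $N$ that $N(z)=N(y)+k$.

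In the first subcase, $z$ lies in $\left[1-d^{-N(z)},1-2d^{-N(z)-1}\right[$, so a single application of the first branch of $\tau_{N(z),\alpha}$ in Definition \ref{def_trans} yields $T_\alpha^{h_k^\alpha}(x)=z-1+d^{-N(z)}+d^{-N(z)-1}$. In the second subcase, $z$ lies in $\left[1-2d^{-N(z)-1},1-d^{-N(z)-1}\right[$; the second branch of $\tau_{N(z),\alpha}$ sends $z$ into the first spacer at level $N(z)$, the fourth branch (applied $\alpha_{N(z)}-1$ times) then walks through the remaining spacers, and the third branch finally drops the orbit back into $A_k$. An explicit but routine computation shows that in both subcases
\[ T_\alpha^{r_{A_k,\alpha}(x)}(x)=x-d^{-k}+d^{-k-N(y)}+d^{-k-N(y)-1}.\]

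To finish, I would derive from the recursive definition \eqref{odo} the closed form $S(y)=y-1+d^{-N(y)}+d^{-N(y)-1}$, valid for every $y\in[0,1[$ regardless of the $(N(y)+1)$-th digit: the $N(y)$ leading digits equal to $d-1$ trigger as many applications of the carry rule $S(y)=d^{-1}S(\sigma y)$, after which a single application of the incrementation rule adds $d^{-1}$ to $\sigma^{N(y)}y$. Dividing by $d^k$ and comparing with the previous display gives $u_k(S_{A_k,\alpha}(x))=S(y)=S(u_k(x))$, which is the required conjugacy.

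The hard part will be the bookkeeping of the passage through the spacer block at level $N(z)$ uniformly in $\alpha_{N(z)}\in\{1,2\}$; the cancellation producing the same final point in both subcases is what makes the induced transformation realise the odometer exactly, with no residual dependence on the local amount of spacers.
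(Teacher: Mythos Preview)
Your argument is correct: the explicit computation of the orbit through the tower and the spacer block at level $N(z)=N(y)+k$ goes through, and the closed form $S(y)=y-1+d^{-N(y)}+d^{-N(y)-1}$ is exactly the real-number translation of ``reset the first $N(y)$ digits to $0$ and increment the next one''.

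Your route differs from the paper's. The paper argues structurally: it asserts that, in the $u_k$-coordinates, the induced map satisfies the same recursion as the odometer in \eqref{odo}, namely ``add $1/d$ if the first digit is $\le d-2$, otherwise prepend $0$ and recurse on the shift''. This avoids any closed-form formula and any explicit passage through the spacers; the price is that the verification of the recursion is left essentially implicit. Your approach instead computes both $S_{A_k,\alpha}(x)$ and $S(u_k(x))$ explicitly via Lemma~\ref{formula_A_k} and Definition~\ref{def_trans}, then matches the two closed forms. This is more laborious (the spacer bookkeeping you flagged as ``the hard part'' really must be done twice, for $\alpha_{N(z)}\in\{1,2\}$), but it is entirely self-contained and makes the cancellation of the $\alpha$-dependence visible. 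Either approach is fine; yours would be the natural choice if one wanted to upgrade the statement to a quantitative one (e.g.\ explicit position formulas for higher returns), while the paper's recursive matching is shorter once one trusts the recursion.
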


\begin{proof}
Let $x\in A_k$.
By Lemma \ref{lemme_index} and Proposition \ref{id_f_return}, we deduce that, \[T_\alpha^{r_{A_k,\alpha}(x)}(x)=\begin{cases}
    x+\frac{1}{d} &if\ x \in \left[0,1-\frac{1}{d}\right[\\
    \frac{1}{d}T_\alpha^{r_{A_k,\sigma \alpha}(\sigma x)}(\sigma x) &else
\end{cases}.\]
We get \[S_{A_k,\alpha}(x)=T_\alpha^{r_{A_k,\alpha}(x)}(x)=u_k^{-1}\circ u_k\circ T_\alpha^{r_{A_k,\alpha}(u_k^{-1}(u_k(x))}(u_k^{-1}(u_k(x))).\]
Then, for $y\in [0,1[$,\[u_k\circ S_{A_k,\alpha}\circ \reciproque{u_k}(y)=u_k\circ T_\alpha^{r_{k,h_k^\alpha}^\alpha(y)}\circ u_k^{-1}(y)=S(y).\]

\end{proof}

Now, we can use exclusively this representation for $x\in [0,1[$
\begin{equation}\label{f_return_2}
 r_{k,h}^\alpha=h+\varepsilon_k^\alpha,\quad \text{ where } \varepsilon_k^\alpha(x)=\begin{cases}
        0\ if\ x_1\leq d-3\\
        \alpha_k\ if\ x_1=d-2\\
        \varepsilon_k^{\sigma\alpha}(\sigma x)\ else.
    \end{cases}
\end{equation}

Now, we have proved that the induction is just the odometer transformation.
Then, to continue our work, we recall properties about the odometer.
\begin{proposition}[Properties of the odometer]\label{prop:odo}
    For $x\in \Sigma_d$ and an integer $\ell$, we have \begin{enumerate}
    \item{$(S^d(x))_1=x_1$}
    \item{$\sigma S^d(x)=S(\sigma x)$}
    \item{$S^{d\ell}(x)=x_1 S^\ell(\sigma x)$}
    \end{enumerate}
\end{proposition}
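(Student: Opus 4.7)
The plan is to prove the stronger single identity $S^d(x) = x_1 \cdot S(\sigma x)$ by direct inspection of the odometer dynamics on the first coordinate, and then deduce (1), (2), and (3) as almost immediate consequences. The three claims of the proposition really describe a single phenomenon: applying $S$ a total of $d$ times increments the first digit by $d$ with a single carry, so the first digit returns to $x_1$ and the tail is shifted exactly one step in $S$.

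The core computation goes as follows. Fix $x$ and set $j = x_1 \in \llbracket 0, d-1\rrbracket$. Using the first branch of the definition \eqref{odo}, I would check by a short induction that as long as the current first digit lies in $\llbracket 0,d-2\rrbracket$, each application of $S$ simply increments the first digit and leaves the tail untouched. Starting from $x = j \cdot \sigma x$, this gives $S^k(x) = (j+k) \cdot \sigma x$ for $0 \leq k \leq d-1-j$. At $k = d-j$ the first digit $(d-1)$ triggers the second branch, producing $S^{d-j}(x) = 0 \cdot S(\sigma x)$. From this state the first branch again applies for $j$ more steps, yielding $S^{d-j+m}(x) = m \cdot S(\sigma x)$ for $0\leq m\leq j$. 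Taking $m = j$ gives exactly
\[ S^d(x) = j \cdot S(\sigma x) = x_1 \cdot S(\sigma x). \]
Reading off the first digit proves (1), and applying $\sigma$ to both sides proves (2).

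For (3), I would proceed by induction on $\ell$. The base case $\ell = 1$ is the identity just established. For the inductive step, set $y = S^{d\ell}(x)$; by the induction hypothesis, $y = x_1 \cdot S^\ell(\sigma x)$, so $y_1 = x_1$ and $\sigma y = S^\ell(\sigma x)$. Applying the base case to $y$ gives
\[ S^{d(\ell+1)}(x) = S^d(y) = y_1 \cdot S(\sigma y) = x_1 \cdot S^{\ell+1}(\sigma x), \]
which is (3) at level $\ell+1$.

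There is no real obstacle here; the only point requiring any attention is making sure the two extreme cases $x_1 = 0$ (no carry triggered until step $d$) and $x_1 = d-1$ (carry triggered at the very first step) both fit the same pattern, which they do because the intermediate first digit stays in $\llbracket 0, d-2\rrbracket$ except at the single moment of carry.
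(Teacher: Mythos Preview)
Your proof is correct and follows essentially the same route as the paper: establish $S^d(x)=x_1\,S(\sigma x)$ by tracking the first digit through $d$ increments (the paper compresses this to $S^d(x)=S^{x_1}(S^{d-x_1}(x))=S^{x_1}(0S(\sigma x))=x_1S(\sigma x)$), read off (1) and (2), and then obtain (3) by induction on $\ell$. The only cosmetic difference is that the paper starts the induction at $\ell=0$ and writes the inductive step as $S^{d(\ell+1)}=S^{d\ell}\circ S^d$, whereas you start at $\ell=1$ and use $S^{d(\ell+1)}=S^d\circ S^{d\ell}$; both are fine.
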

\begin{proof}
    Let $x\in \Sigma_d$ and an integer $\ell$.
    Then \[S^d(x)=S^{x_1}(S^{d-x_1}(x))=S^{x_1}\left(0S(\sigma x)\right)=x_1S(\sigma x).\]
    We get \[(S^d(x))_1=x_1\ and\ \sigma S^d(x)=S(\sigma x).\]
    We'll do an induction on $\ell$. Let begin with $\ell=0$. Then \[S^{d\ell}(x)=x=x_1S^0(\sigma x).\]
    Now, we suppose that is true for $\ell$. Then \[S^{d(\ell+1)}(x)=S^{d\ell}(S^d(x))=(S^d(x))_1S^\ell(\sigma S^{d}(x))=x_1S^{\ell+1}(\sigma x).\]
\end{proof}
Now, we define the $\ell-$th returns and prove main properties about this.

\begin{definition}[$\ell-$th return]
    We define $\ell-th$ returns of $x\in A_k$ the sequence of integer $t_\ell^\alpha(x)$ such that \[t_{\ell+1}^\alpha(x)=\min\{m> t_\ell^\alpha(x) : T^m_\alpha(x)\in A_k\}, t_0^\alpha(x)=0.\]
\end{definition}

We obtain immediately for $\ell\in\mathbb{N}$ and $x\in A_k$ \begin{align}
t_\ell^\alpha(x)=\sum_{j=0}^{\ell-1}r_{A_k,\alpha}(S_{A_k,\alpha}^j(x)) ={t^{\alpha}_{\ell,h_{k}^\alpha}}^'(u_k(x)),
\end{align}
where for any $x\in\Sigma_d$ and $h\in\mathbb{N}$
\begin{align}
{t_{\ell,h}^{\alpha}}^{'}(x)=\sum_{j=0}^{\ell-1}r_{k,h}^\alpha(S^j(x)).
\end{align}
This immediately gives that for any $\ell_1,\ell_2\in\mathbb{N}$
    \begin{align}
    \label{sum_ret}
    {t_{\ell_1+\ell_2,h}^\alpha}^'(x)={t_{\ell_1,h}^\alpha}^'(x)+{t_{\ell_2,h}^\alpha}^'(S^{\ell_1}(x)).
    \end{align}

The inducing times are related each other by the following inductive process.

\begin{proposition}
\label{rec_return}
    We have for $q\in\intervEnt{0}{d-1},$ \[{t_{d\ell+q,h}^\alpha}^{'}(x)=\begin{cases}((d-1)\ell +q)h+\ell\alpha_k+{t_{\ell,h}^{\sigma\alpha}}^{'}(\sigma x)\ if\ q+x_1 \leq d-2\ or\ q=0\\
((d-1)\ell +q)h+(\ell+1)\alpha_k+{t_{\ell,h}^{\sigma\alpha}}^{'}(\sigma x)\ if\ q+x_1 = d-1\ and\ q\geq 1\\
((d-1)\ell +q-1)h+(\ell+1)\alpha_k+{t_{\ell+1,h}^{\sigma\alpha}}^{'}(\sigma x)\ if\ q+x_1 \geq d\ and\ x_1\leq d-2\\
((d-1)\ell +q-1)h+\ell\alpha_k+{t_{\ell+1,h}^{\sigma\alpha}}^{'}(\sigma x)\ if\ q+x_1 \geq d\ and\ x_1= d-1\end{cases}.\]
\end{proposition}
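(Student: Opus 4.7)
The plan is to combine the additivity relation \eqref{sum_ret} with the odometer identity $S^{d\ell}(x) = x_1 S^\ell(\sigma x)$ of Proposition~\ref{prop:odo}(3) and the recursive definition \eqref{f_return_2} of $\varepsilon_k^\alpha$. I would first handle the pure block case $q=0$, namely
\[{t_{d\ell,h}^{\alpha}}^{'}(x) = (d-1)\ell h + \ell\alpha_k + {t_{\ell,h}^{\sigma\alpha}}^{'}(\sigma x),\]
by induction on $\ell$. The base case $\ell=1$ is a direct computation: over the $d$ consecutive iterates $x, S(x), \ldots, S^{d-1}(x)$ the first digit runs through each value of $\intervEnt{0}{d-1}$ exactly once, so by \eqref{f_return_2} the sum $\sum_{j=0}^{d-1}\varepsilon_k^\alpha(S^j(x))$ consists of a single $\alpha_k$ (at the step whose first digit equals $d-2$) and a single $\varepsilon_k^{\sigma\alpha}(\sigma x)$ (at the step whose first digit equals $d-1$, whose tail is $\sigma x$ thanks to Proposition~\ref{prop:odo}). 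The inductive step uses \eqref{sum_ret} to split ${t_{d(\ell+1),h}^{\alpha}}^{'}(x) = {t_{d\ell,h}^{\alpha}}^{'}(x) + {t_{d,h}^{\alpha}}^{'}(S^{d\ell}(x))$; Proposition~\ref{prop:odo}(3) identifies $\sigma S^{d\ell}(x)$ with $S^\ell(\sigma x)$, applying the $\ell=1$ formula at $S^{d\ell}(x)$ introduces ${t_{1,h}^{\sigma\alpha}}^{'}(S^\ell(\sigma x))$, and a second use of \eqref{sum_ret} on the $\sigma\alpha$ side combines this with the induction hypothesis into ${t_{\ell+1,h}^{\sigma\alpha}}^{'}(\sigma x)$.

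For general $q\in\intervEnt{1}{d-1}$, I would apply \eqref{sum_ret} once more to write
\[{t_{d\ell+q,h}^{\alpha}}^{'}(x) = {t_{d\ell,h}^{\alpha}}^{'}(x) + {t_{q,h}^{\alpha}}^{'}(S^{d\ell}(x)),\]
and analyse the $q$ additional iterates starting at $S^{d\ell}(x)$. Their first digit begins at $x_1$ by Proposition~\ref{prop:odo}(1) and then runs through $x_1, x_1+1, \ldots$ with a carry when it reaches $d-1$. Depending on whether this window of length $q$ visits none of the special digits $\{d-2, d-1\}$, only $d-2$, both (in that order, starting from $x_1 \leq d-2$), or only $d-1$ (immediately at $j=0$ when $x_1 = d-1$), we land in exactly the four regimes listed in the statement. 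Each visit to first digit $d-2$ adds one $\alpha_k$ to the $\varepsilon$-sum, and the (unique, if present) visit to first digit $d-1$ contributes $\varepsilon_k^{\sigma\alpha}(S^\ell(\sigma x))$, again by Proposition~\ref{prop:odo}(3). Combined with ${t_{\ell,h}^{\sigma\alpha}}^{'}(\sigma x)$ via \eqref{sum_ret}, this last contribution upgrades to ${t_{\ell+1,h}^{\sigma\alpha}}^{'}(\sigma x)$ while absorbing one factor of $h$, which accounts for the coefficient $(d-1)\ell + q - 1$ appearing in cases three and four.

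The only real obstacle is bookkeeping: in cases three and four one must correctly identify which $S$-iterate of $\sigma x$ appears as the tail of $S^{d\ell+j}(x)$ at the step where the first digit becomes $d-1$. The identity $S^{d\ell}(x) = x_1 S^\ell(\sigma x)$ makes this identification automatic, so beyond \eqref{sum_ret}, \eqref{f_return_2}, and Proposition~\ref{prop:odo} no further ingredient is needed.
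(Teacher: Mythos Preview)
Your proposal is correct and follows essentially the same route as the paper: establish the $q=0$ identity \eqref{mult_return} first, then split off the last $q$ iterates via \eqref{sum_ret} and sort them by the first digit using \eqref{f_return_2} and Proposition~\ref{prop:odo}. The only cosmetic difference is that for \eqref{mult_return} the paper regroups the sum $\sum_{j=0}^{d\ell-1}r_{k,h}^\alpha(S^j(x))$ directly into $\ell$ blocks of length $d$ and applies $S^{di+p}(x)=S^p(x_1S^i(\sigma x))$ in one stroke, whereas you induct on $\ell$; both arguments are equivalent and rest on the same ingredients.
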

\begin{proof} Let $x\in [0,1[$.
We claim that \begin{equation}\label{mult_return}{t_{d\ell,h}^\alpha}^{'}(x)=(d-1)\ell h+\ell\alpha_k+{t_{\ell,h}^{\sigma\alpha}}^{'}(\sigma x).\end{equation}
    
    We have that, \[{t_{d\ell,h}^\alpha}^{'}(x)=\sum_{j=0}^{d\ell-1}r_{k,h}^\alpha(S^j(x))=\sum_{i=0}^{\ell-1}\sum_{p=0}^{d-1}r_{k,h}^\alpha(S^{di+p}(x)).\]
    Thus, by Proposition \ref{prop:odo} \[{t_{d\ell,h}^\alpha}^{'}(x)=\sum_{i=0}^{\ell-1}\sum_{p=0}^{d-1}r_{k,h}^\alpha(S^p(x_1S^{i}(\sigma x)))=(d-1)\ell h+\ell\alpha_k+{t_{\ell,h}^{\sigma\alpha}}^{'}(\sigma x),\]
proving the claim.
The conclusion follows from (\ref{sum_ret}) and (\ref{mult_return}).
\end{proof}

The balanced ternary expansion was used in \cite{Park}
 to identify the exceptional set related to the mixing times.
For any $\ell\in\mathbb{N}$, there exists a unique sequence $(a_{\ell,j})_{j\in\mathbb{N}}\in \intervEnt{-\frac{d-1}{2}}{\frac{d-1}{2}}^{\mathbb{N}}$ such that
\[\ell=\sum_{j\in\mathbb{N}}a_{\ell,j}d^j.\]
We call it the balanced d-adic expansion.
 For each fiber $\alpha$, it will help us to understand the behavior of $\ell-$th returns. That allows to use return times for $T_\alpha$ as a cocycle on the shift, opening a path to operator methods.

\begin{proposition}\label{form_return}
    We have \[{t_{\ell,h}^\alpha}^'(x)=\ell h+\sum_{j\in\mathbb{N}}\left(a_{\ell,j}\left(\sum_{i=0}^{j-1}\alpha_{k+i}d^{j-(i+1)}\right)+sgn(a_{\ell,j})\varepsilon_k^{\sigma^j\alpha}(S^{\frac{q_{\ell,j}}{d^j}}(\sigma^j x))\right)\]
    with $q_{l,j}=\sum_{i\geq j+1}a_{\ell,i}d^{i}+a_{\ell,j}d^j\mathds{1}_{\mathbb{R}^*_-}(a_{\ell,j})$.
\end{proposition}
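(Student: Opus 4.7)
The plan is strong induction on $\ell$, driven by Proposition \ref{rec_return}. The base cases $\ell = 0$ (both sides vanish since every $a_{0,j}$ is zero) and $\ell = 1$ (only $a_{1,0} = 1$ is nonzero, with $q_{1,0} = 0$, so the right-hand side reduces to $h + \varepsilon_k^\alpha(x) = r_{k,h}^\alpha(x)$) follow directly from the definitions. For $\ell \geq 2$, write $\ell = d\ell_0 + q_0$ with $q_0 \in \{0, \ldots, d-1\}$ and apply Proposition \ref{rec_return}: this expresses $t'_{\ell,h}{}^\alpha(x)$ as a linear expression in $h$ and $\alpha_k$ plus $t'_{\ell^*,h}{}^{\sigma\alpha}(\sigma x)$ with $\ell^* \in \{\ell_0, \ell_0 + 1\}$. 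A quick check shows $\ell^* < \ell$ in all four cases as soon as $\ell \geq 2$ (the borderline $\ell = 1$, $\ell^* = 1$ is already covered in the base case), so the inductive hypothesis applies to $t'_{\ell^*,h}{}^{\sigma\alpha}(\sigma x)$.

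The core algebraic step is to match the balanced $d$-adic expansion of $\ell$ with that of $\ell^*$. Two subcases arise: if $q_0 \leq (d-1)/2$ then $\ell^* = \ell_0$, $a_{\ell,0} = q_0$ and $a_{\ell,j} = a_{\ell_0, j-1}$ for $j \geq 1$; if $q_0 > (d-1)/2$ then $\ell^* = \ell_0 + 1$ with $a_{\ell,0} = q_0 - d$ and $a_{\ell,j} = a_{\ell_0+1, j-1}$ for $j \geq 1$, which is precisely the carry rule for the balanced expansion. Using the identities of Proposition \ref{prop:odo}, in particular $\sigma \circ S^d = S \circ \sigma$ and $S^{d\ell}(x) = x_1 \, S^\ell(\sigma x)$, the odometer iterates transform consistently so that $q_{\ell,j}/d^j$ for $j \geq 1$ equals $q_{\ell^*, j-1}/d^{j-1}$; the arguments of every $\varepsilon_k^{\sigma^j \alpha}$ produced by the inductive hypothesis for $t'_{\ell^*,h}{}^{\sigma\alpha}(\sigma x)$ therefore coincide with those appearing in the claimed formula for $t'_{\ell,h}{}^\alpha(x)$.

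The main obstacle will be the case-by-case bookkeeping linking the four cases of Proposition \ref{rec_return} (parameterized by $(q_0, x_1)$) with the two subcases of the balanced expansion (parameterized by the sign of $q_0 - (d-1)/2$). Cases 3 and 4 of Proposition \ref{rec_return} trigger the carry $\ell^* = \ell_0 + 1$ and therefore correspond to the negative-digit subcase, while cases 1 and 2 give $\ell^* = \ell_0$ with $a_{\ell,0} \geq 0$. The extra $\alpha_k$ appearing in cases 2 and 3 must then be absorbed into the $j=0$ contribution $sgn(a_{\ell,0})\, \varepsilon_k^\alpha(S^{q_{\ell,0}}(x))$ of the claimed formula, using the recursive definition \eqref{f_return_2} of $\varepsilon_k^\alpha$ and the identity $\varepsilon_k^\alpha(x) = \varepsilon_k^{\sigma\alpha}(\sigma x)$ when $x_1 = d-1$. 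Verifying that the signs, the coefficients of $\alpha_k$, and the odometer shifts all agree in each of the eight resulting subcases is the bulk of the remaining work, but it is purely mechanical once the above dictionary between the standard and balanced expansions is set up.
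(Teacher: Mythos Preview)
Your overall induction scheme coincides with the paper's intended argument (iterate Proposition~\ref{rec_return} together with \eqref{sum_ret} and \eqref{mult_return}), and the identities $q_{\ell,j}/d^j = q_{\ell^*,j-1}/d^{j-1}$ for $j\ge1$ and the base cases are correct.

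There is, however, a genuine error in your dictionary between the four cases of Proposition~\ref{rec_return} and the sign of $a_{\ell,0}$. You assert that cases 3--4 (those producing $\ell^* = \ell_0+1$) correspond to the negative-digit subcase $a_{\ell,0} < 0$, and cases 1--2 to $a_{\ell,0} \geq 0$. This is false: which case of Proposition~\ref{rec_return} applies depends on $q_0 + x_1$, not on $q_0$ alone. For instance $q_0 = 1$ (so $a_{\ell,0} = 1 > 0$) with $x_1 = d-1$ lands in case~4 with $\ell^* = \ell_0 + 1$; conversely $q_0 = d-1$ (so $a_{\ell,0} = -1 < 0$) with $x_1 = 0$ lands in case~2 with $\ell^* = \ell_0$.

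The fix is to let the balanced digit $a_{\ell,0}$, not the $\ell^*$ handed to you by Proposition~\ref{rec_return}, decide which integer receives the inductive hypothesis. When these disagree (say $a_{\ell,0} > 0$ but $\ell^* = \ell_0 + 1$), use \eqref{sum_ret} to write ${t_{\ell_0+1,h}^{\sigma\alpha}}^'(\sigma x) = {t_{\ell_0,h}^{\sigma\alpha}}^'(\sigma x) + h + \varepsilon_k^{\sigma\alpha}(S^{\ell_0}(\sigma x))$, apply the inductive hypothesis to ${t_{\ell_0,h}^{\sigma\alpha}}^'$, and check that the extra term is precisely the $j=0$ contribution of the claimed formula (via $S^{d\ell_0}(x) = x_1\,S^{\ell_0}(\sigma x)$ from Proposition~\ref{prop:odo} and the third branch of \eqref{f_return_2}). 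The symmetric correction handles the opposite mismatch. After this adjustment your eight subcases are genuinely all in play, and the remaining verification is mechanical as you say.
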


\begin{proof}
The conclusion follows from (\ref{sum_ret}), (\ref{mult_return}) and the Proposition \ref{rec_return}.
    
\end{proof}
\section{Distribution of $\ell-$th returns}\label{sec_return}
Now, we note for $w$ a finite word $[w]$ the cylinder associated and we note $\lambda_k=\lambda_{A_k}:=\frac{{(\lambda_{\alpha})}_{\vert A_k}}{\lambda_\alpha(A_k)}=\frac{\lambda_{\vert A_k}}{\lambda(A_k)}$ the conditional measure of $\lambda_\alpha$ conditioned by $A_k$. We also see that this measure doesn't depend on $\alpha$.
And we use $\lambda:=(\lambda_k)_{u_k}=(u_k)_*\lambda_k$ the push-forward of $\lambda_k$ by $u_k$, this is the Lebesgue measure on $[0,1]$.
In this section, we find the section measures of a $\ell-$th returns.
The measure of $\ell-$th returns give a new form of correlations useful to control the convergence to zero.
Then, we can express the correlation with measures of sections.
Before, we define the cut expectation of reduced first return.
\begin{proposition}\label{expect_eps} We have for any $\kappa\in\mathbb{N}$
\[\mathbb{E}_\lambda((\varepsilon_k^\alpha)^\kappa)=\sum_{j\in\mathbb{N}}\alpha_{k+j}^\kappa d^{-(j+1)}=:\beta_k^{(\kappa)}(\alpha).\]
\end{proposition}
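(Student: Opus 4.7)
The strategy is to unravel the recursive definition \eqref{f_return_2} of $\varepsilon_k^\alpha$ so as to identify, for each $j \geq 0$, the cylinder on which $\varepsilon_k^\alpha(x) = \alpha_{k+j}$. Once these level sets are described in terms of the first few $d$-adic digits of $x$, the Lebesgue measures are immediate and the claimed sum drops out.

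Concretely, I would prove by induction on $j \in \mathbb{N}$ that
\[
\varepsilon_k^\alpha(x) = \alpha_{k+j} \iff x_1 = \cdots = x_j = d-1 \text{ and } x_{j+1} = d-2,
\]
and that $\varepsilon_k^\alpha(x) = 0$ on each cylinder $\{x_1 = \cdots = x_j = d-1,\ x_{j+1} \leq d-3\}$. The base case $j = 0$ is read off directly from the first two clauses of \eqref{f_return_2}. For the inductive step one uses the third clause: when $x_1 = d-1$ we have $\varepsilon_k^\alpha(x) = \varepsilon_k^{\sigma\alpha}(\sigma x)$, and since $\alpha_{k+j} = (\sigma\alpha)_{k+(j-1)}$, applying the induction hypothesis to $\sigma\alpha$ at level $j-1$ shows that the value equals $\alpha_{k+j}$ exactly when $(\sigma x)_1 = \cdots = (\sigma x)_{j-1} = d-1$ and $(\sigma x)_j = d-2$, i.e.\ $x_2 = \cdots = x_j = d-1$ and $x_{j+1} = d-2$.

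These cylinders are pairwise disjoint and their union covers $[0,1)$ modulo the single point with all digits equal to $d-1$, which is $\lambda$-negligible. Each cylinder $\{x_1 = \cdots = x_j = d-1,\ x_{j+1} = d-2\}$ has Lebesgue measure $d^{-(j+1)}$, so for $\kappa \geq 1$ the zero level set contributes nothing and
\[
\mathbb{E}_\lambda\bigl((\varepsilon_k^\alpha)^\kappa\bigr) = \sum_{j \in \mathbb{N}} \alpha_{k+j}^\kappa \cdot d^{-(j+1)} = \beta_k^{(\kappa)}(\alpha),
\]
with absolute convergence guaranteed by $\alpha_{k+j} \in \{1,2\}$.

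There is no genuine obstacle here: the statement is essentially a bookkeeping exercise once the recursion is unrolled. The only point requiring care is maintaining consistent indexing between the iterate $j$, the parameter $\alpha_{k+j}$, and the shift $\sigma\alpha$ in the inductive step, so that the telescoping of shifts lines up correctly.
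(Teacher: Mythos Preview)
Your proposal is correct and follows essentially the same approach as the paper: identify the level sets of $\varepsilon_k^\alpha$ as the cylinders $[(d-1)^j(d-2)]$ (where the value is $\alpha_{k+j}$) and $[(d-1)^j a]$ for $a\le d-3$ (where the value is $0$), then sum the Lebesgue measures $d^{-(j+1)}$. One phrasing nitpick: your ``$\iff$'' is not literally correct since different $j$ can give the same $\alpha_{k+j}\in\{1,2\}$; what you actually establish (and what suffices) is that $\varepsilon_k^\alpha$ is constant equal to $\alpha_{k+j}$ on the cylinder $\{x_1=\cdots=x_j=d-1,\ x_{j+1}=d-2\}$.
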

For simplicity we note $\beta_k(\alpha)=\beta_k^{(1)}(\alpha)$.
\begin{proof}
    By (\ref{f_return_2}), we get that for all $j\in\mathbb{N}$, for all $a\in \intervEnt{0}{d-3},$ and for all $x\in [0,1[,\varepsilon_k^\alpha((d-1)^jax)=r_{k,h}^\alpha(ax)-h=0$ and $\varepsilon_k^\alpha((d-1)^j(d-2)x)=\alpha_{k+j}$.
    Thus,
    \[\mathbb{E}((\varepsilon_k^\alpha)^\kappa)=\sum_{j\in\mathbb{N}}\intInd{[(d-1)^j(d-2)]}{(\varepsilon_k^\alpha(x))^{\kappa}}{x}=\sum_{j\in\mathbb{N}}\alpha_{k+j}^{\kappa}\lambda([(d-1)^j(d-2)])=\beta_k^{(\kappa)}(\alpha).\]
\end{proof}

    We denote the the probability that the $\ell-$th return is equal to $n$ by 
    \[d_\ell^\alpha(n)=\lambda_\alpha(t_\ell^\alpha=n)\ and\ {d_{\ell,h}^\alpha}^{'}(n)=\lambda\left({t_{\ell,h}^{\alpha}}^{'}=n\right).\]
We will work with  ${d_{\ell,h}^\alpha}^{'}(n)$ instead of $d_\ell^\alpha(n)$ for convenience.

It follows from Proposition \ref{rec_return} that the $d_\ell^\alpha$ satisfy some recursive relations.
\begin{proposition}
\label{rec_measure}
    For $q\in\intervEnt{0}{d-1}$,we have \begin{equation}
        {d_{d\ell+q,h}^\alpha}^{'}(n)=\begin{cases}
            {d_{\ell,h}^{\sigma\alpha}}^{'}(n-((d-1)\ell h+l\alpha_k))&if\ q=0\\
            \frac{1}{d}\left(\begin{array}{ll}(d-2){d_{\ell,h}^{\sigma\alpha}}^{'}(n-A_{(1,\ell)}^{(1,h)})\\+{d_{\ell,h}^{\sigma\alpha}}^{'}(n-A_{(1,\ell)}^{(2,h)})\\+{d_{\ell+1,h}^{\sigma\alpha}}^{'}(n-A_{(1,\ell)}^{(4,h)})\end{array}\right)&if\ q=1\\
            \frac{1}{d}\left(\begin{array}{ll}(d-1-q){d_{\ell,h}^{\sigma\alpha}}^{'}(n-A_{(q,\ell)}^{(1,h)})\\+{d_{\ell,h}^{\sigma\alpha}}^{'}(n-A_{(q,\ell)}^{(2,h)})\\+(q-1){d_{\ell+1,h}^{\sigma\alpha}}^{'}(n-A_{(q,\ell)}^{(3,h)})\\+{d_{\ell+1,h}^{\sigma\alpha}}^{'}(n-A_{(q,\ell)}^{(4,h)})\end{array}\right)&if\ 2\leq q\leq d-2\\
            \frac{1}{d}\left(\begin{array}{ll} {d_{\ell,h}^{\sigma\alpha}}^{'}(n-A_{(d-1,\ell)}^{(2,h)})\\+(d-2){d_{\ell+1,h}^{\sigma\alpha}}^{'}(n-A_{(d-1,\ell)}^{(3,h)})\\+{d_{\ell+1,h}^{\sigma\alpha}}^{'}(n-A_{(d-1,\ell)}^{(4,h)})\end{array}\right)&if\ q=d-1
        \end{cases}
    \end{equation}

    with $\begin{cases}
            A_{(q,\ell)}^{(1,h)}=(q+(d-1)\ell )h+\ell\alpha_k\\
            A_{(q,\ell)}^{(2,h)}=(q+(d-1)\ell )h+(\ell+1)\alpha_k\\
            A_{(q,\ell)}^{(3,h)}=(q-1+(d-1)\ell )h+(\ell+1)\alpha_k\\
            A_{(q,\ell)}^{(4,h)}=(q-1+(d-1)\ell )h+\ell\alpha_k
        \end{cases}$
\end{proposition}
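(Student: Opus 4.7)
The strategy is to transport Proposition \ref{rec_return} from the level of return times to the level of their distributions under $\lambda$. The key observation is that under $\lambda$ (Lebesgue on $[0,1[$), the first digit $x_1$ is uniform on $\{0,\ldots,d-1\}$, and conditionally on $\{x_1=a\}$ the shifted point $\sigma x$ is again distributed according to $\lambda$. Thus partitioning $[0,1[$ along the cylinders $[a]$, $a\in\intervEnt{0}{d-1}$, each of $\lambda$-measure $1/d$, is the natural reduction.

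First I would write, for each fixed $q\in\intervEnt{0}{d-1}$,
\[
 {d_{d\ell+q,h}^\alpha}^{'}(n) = \sum_{a=0}^{d-1}\lambda\left(\{x : {t_{d\ell+q,h}^\alpha}^{'}(x)=n\}\cap [a]\right),
\]
and on each cylinder $[a]$ replace ${t_{d\ell+q,h}^\alpha}^{'}(x)$ by the explicit formula given in Proposition \ref{rec_return}, with $x_1=a$ selecting one of the four cases. On $[a]$ this produces an identity of the form
\[
 {t_{d\ell+q,h}^\alpha}^{'}(x) = A_{(q,\ell)}^{(i,h)} + {t_{\ell^\ast,h}^{\sigma\alpha}}^{'}(\sigma x),
\]
where $i\in\{1,2,3,4\}$ depends on the case and $\ell^\ast\in\{\ell,\ell+1\}$. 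Since $(\sigma)_*(d\cdot \lambda\vert_{[a]})=\lambda$, each cylinder contributes $\frac{1}{d}{d_{\ell^\ast,h}^{\sigma\alpha}}^{'}(n-A_{(q,\ell)}^{(i,h)})$.

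Next, I would perform the case count that determines the integer coefficients in front of each term: for fixed $q$, how many digits $a\in\intervEnt{0}{d-1}$ fall into each of the four cases of Proposition \ref{rec_return}. A direct enumeration gives, for $2\le q\le d-2$, $d-1-q$ digits (namely $a\in\intervEnt{0}{d-2-q}$) in case 1, exactly $a=d-1-q$ in case 2, the $q-1$ digits $a\in\intervEnt{d-q}{d-2}$ in case 3, and $a=d-1$ in case 4; the three edge cases $q=0$, $q=1$, $q=d-1$ are handled similarly, the only subtlety being that for $q=0$ all digits sit in case 1 (producing the pure translation formula), while for $q=1$ case 3 is empty and for $q=d-1$ case 1 is empty, matching the coefficients listed in the statement. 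Multiplying by $1/d$ yields exactly the displayed recurrences.

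There is essentially no deep obstacle: the content of Proposition \ref{rec_return} does all the dynamical work, and the Lebesgue measure behaves compatibly with $\sigma$. The only real work is the bookkeeping of the four-way split as a function of $q$ and checking the edge cases $q\in\{0,1,d-1\}$, where some of the four branches collapse or vanish. I would therefore structure the proof as: (i) the cylinder decomposition and the disintegration of $\lambda$ under $\sigma$; (ii) application of Proposition \ref{rec_return} on each cylinder; (iii) the counting argument by cases in $q$; (iv) verification that the boundary values $q=0,1,d-1$ reproduce the degenerate forms in the statement.
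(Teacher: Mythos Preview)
Your proposal is correct and is exactly the argument the paper has in mind: the paper does not spell out a proof, stating only that the proposition ``follows from Proposition \ref{rec_return}'', and your cylinder decomposition together with the disintegration of $\lambda$ under $\sigma$ and the case-by-case digit count is precisely the way one unpacks that claim. Your enumeration of the four cases and the edge values $q\in\{0,1,d-1\}$ matches the coefficients in the statement.
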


\begin{definition}
    We define the set of possible $\ell-th$ returns at time $n\in\mathbb{N}$ by \begin{equation}
        P_n^\alpha=\{\ell\in\mathbb{N} : d_\ell^\alpha(n)>0\}.
    \end{equation}
\end{definition}

The probability of self return at time $n$ is then expressed by the possible $d_l^'$s.

\begin{proposition}[Formula for correlations]
    For $(k,n)\in \mathbb{N}^2,$ we have \begin{equation}
        \lambda_\alpha(A_k\cap T_\alpha^{-n}(A_k))=\sum_{\ell\in P_n^\alpha}d_\ell^\alpha(n).
    \end{equation}
\end{proposition}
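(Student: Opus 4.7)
The plan is to observe that the set $A_k\cap T_\alpha^{-n}(A_k)$ is, by the definition of successive returns, exactly the set of points in $A_k$ for which some $\ell$-th return time equals $n$, and then to sum the measures over $\ell$.

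More precisely, I would first argue the set equality
\[
A_k\cap T_\alpha^{-n}(A_k)\;=\;\bigsqcup_{\ell\in\mathbb{N}}\bigl\{x\in A_k : t_\ell^\alpha(x)=n\bigr\}.
\]
For the inclusion $\subset$, if $x\in A_k$ and $T_\alpha^n(x)\in A_k$, then $n$ is a hitting time of $x$ to $A_k$; since the sequence $(t_\ell^\alpha(x))_{\ell\geq 0}$ enumerates all such times (starting from $t_0^\alpha(x)=0$), there is some $\ell$ with $t_\ell^\alpha(x)=n$. The reverse inclusion is immediate because $T_\alpha^{t_\ell^\alpha(x)}(x)=S_{A_k,\alpha}^\ell(x)\in A_k$. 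The union is disjoint because by the very definition $t_{\ell+1}^\alpha(x)>t_\ell^\alpha(x)$, so $\ell\mapsto t_\ell^\alpha(x)$ is strictly increasing and for each $x$ at most one $\ell$ satisfies $t_\ell^\alpha(x)=n$.

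Next, I would take $\lambda_\alpha$-measure of both sides, exchange the (countable disjoint) union and the measure, and recall that by definition $d_\ell^\alpha(n)=\lambda_\alpha(t_\ell^\alpha=n)$, giving
\[
\lambda_\alpha\bigl(A_k\cap T_\alpha^{-n}(A_k)\bigr)\;=\;\sum_{\ell\in\mathbb{N}} d_\ell^\alpha(n).
\]
Finally, I would restrict the sum to $\ell\in P_n^\alpha=\{\ell\in\mathbb{N}:d_\ell^\alpha(n)>0\}$, which does not change the value since the omitted terms vanish. This yields the claimed formula.

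There is no real obstacle here: the proposition is essentially a bookkeeping statement saying that the correlation at time $n$ is the sum, over all possible return levels $\ell$, of the probability that the $\ell$-th return happens exactly at time $n$. The only point requiring a line of justification is the strict monotonicity of $\ell\mapsto t_\ell^\alpha(x)$ to ensure disjointness of the decomposition, and this is built into the definition of $t_{\ell+1}^\alpha$ as a strict minimum above $t_\ell^\alpha$.
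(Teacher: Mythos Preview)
Your proposal is correct and follows essentially the same approach as the paper: express $A_k\cap T_\alpha^{-n}(A_k)$ as the disjoint union over $\ell$ of the level sets $\{t_\ell^\alpha=n\}$, then sum the measures and drop the zero terms. Your write-up is slightly more explicit about disjointness via strict monotonicity of $\ell\mapsto t_\ell^\alpha(x)$, but the argument is identical.
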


\begin{proof}
    Let $(k,n)\in \mathbb{N}^2$ and let $x\in [0,1[$.
    We suppose that $n\geq 1$. Then,
    \[\mathds{1}_{A_k\cap T_\alpha^{(-n)}(A_k)}(x)=1\Longleftrightarrow \exists \ell\in\mathbb{N},n=t_\ell^\alpha(x).\]
    Therefore, \[\lambda_\alpha(A_k\cap T_\alpha^{-n}(A_k))=\lambda_\alpha\left(\union{\ell\in\mathbb{N}}{\{t_\ell^{\alpha}=n\}}\right)=\sum_{\ell\in\mathbb{N}}d_\ell^\alpha(n).\]
\end{proof}

We obtain then \begin{equation}
    \label{cor_trans}
    \lambda_\alpha(A_k\cap T_\alpha^{-n}(A_k))=\frac{1}{d^k(\alpha+1)}\sum_{\ell\in\mathbb{N}}{d_{\ell,h_k^\alpha}^{\alpha}}^'(n)=\lambda_\alpha(A_k)\sum_{\ell\in\mathbb{N}}{d_{\ell,h_k^\alpha}^{\alpha}}^'(n).
\end{equation}

\begin{definition}[Sum of non-zero term]
    Let $\ell=\sum_{j\in\mathbb{N}}a_{\ell,j}d^j$ with $a_{\ell,j}\in\intervEnt{-\frac{d-1}{2}}{\frac{d-1}{2}}$.
    Then, we define \[b_{\ell}=Card(\{j\in\mathbb{N}:a_{\ell,j}\ne 0\}).\]
\end{definition}


\begin{lemme}
\label{card_Pn}
    For $\alpha\in \{1,2\}^\mathbb{N}$, the set $P_n^\alpha$ is included in an interval $I_n^\alpha$ of length lower than $C_2m$ with $C_2>0$, where $m=\lfloor \log_d(n)\rfloor$.
\end{lemme}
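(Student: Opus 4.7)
The plan is to identify a ``mean return time'' $H_k^\alpha$ that linearises the cocycle $\ell\mapsto t_\ell^\alpha(x)$, so that $t_\ell^\alpha(x)=\ell H_k^\alpha+O(b_\ell)$; since $b_\ell=O(m)$ for $\ell\leq n$, this immediately pins $\ell$ to an interval of width $O(m)$. Set $H_k^\alpha:=h_k^\alpha+\beta_k(\alpha)$ where $\beta_k(\alpha)=\sum_{i\geq 0}\alpha_{k+i}d^{-i-1}$ (as in Proposition~\ref{expect_eps}).

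\emph{Step 1 (asymptotics of the heights).} The recursion $h_{k+j+1}^\alpha=d\,h_{k+j}^\alpha+\alpha_{k+j}$ gives by induction $h_{k+j}^\alpha=d^j h_k^\alpha+\beta_{k,j}$ with $\beta_{k,j}=\sum_{i=0}^{j-1}\alpha_{k+i}d^{j-i-1}$. Equivalently,
\[h_{k+j}^\alpha=d^j H_k^\alpha-\delta_{k,j}, \qquad \delta_{k,j}:=\sum_{i\geq j}\alpha_{k+i}d^{j-i-1},\]
with $|\delta_{k,j}|\leq 2/(d-1)\leq 1$ (using $d\geq 7$ and $\alpha_i\leq 2$).

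\emph{Step 2 (linearisation of the cocycle).} Gathering the first two pieces in Proposition~\ref{form_return} and using $\sum_j a_{\ell,j}d^j=\ell$,
\[\ell h+\sum_j a_{\ell,j}\beta_{k,j}=\sum_j a_{\ell,j}(d^j h+\beta_{k,j})=\sum_j a_{\ell,j}\,h_{k+j}^\alpha=\ell H_k^\alpha-\sum_j a_{\ell,j}\delta_{k,j}.\]
Both the correction $\sum_j a_{\ell,j}\delta_{k,j}$ and the $\varepsilon$-sum of Proposition~\ref{form_return} are supported on the (at most) $b_\ell$ indices where $a_{\ell,j}\neq 0$, with per-term bounds $\frac{d-1}{2}\cdot\frac{2}{d-1}=1$ and $|\varepsilon_k^{\sigma^j\alpha}|\leq 2$. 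Hence, for every $x\in[0,1[$,
\[\bigl|{t_{\ell,h}^\alpha}'(x)-\ell H_k^\alpha\bigr|\leq 3\,b_\ell.\]

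\emph{Step 3 (bound on $b_\ell$ and conclusion).} If $\ell\in P_n^\alpha$ then there exists $x$ with $t_\ell^\alpha(x)=n$; since $t_\ell^\alpha(x)\geq \ell h_k^\alpha\geq \ell$, we have $\ell\leq n\leq d^{m+1}$, so the balanced $d$-adic expansion of $\ell$ has length at most $m+2$ and $b_\ell\leq m+2$. Combining with Step~2,
\[|\ell H_k^\alpha-n|\leq 3(m+2),\]
so $P_n^\alpha$ is contained in the interval $\{\ell\in\mathbb{N}:|\ell H_k^\alpha-n|\leq 3(m+2)\}$. Its length is at most $6(m+2)/H_k^\alpha\leq 6(m+2)$, since $H_k^\alpha\geq h_k^\alpha\geq 1$. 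This proves the inclusion with a universal constant $C_2$ (e.g.\ $C_2=18$ works for $m\geq 1$).

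There is no genuine obstacle: once one recognises that $H_k^\alpha=\lim_j h_{k+j}^\alpha/d^j$ is the right ``mean return time'' and that the balanced expansion of $\ell$ has only $O(m)$ nonzero digits, the bound is purely linear bookkeeping on top of the formula in Proposition~\ref{form_return}.
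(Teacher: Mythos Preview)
Your proof is correct and follows essentially the same approach as the paper: identify the mean return time $H_k^\alpha=h_k^\alpha+\beta_k(\alpha)=d^k(1+\alpha)$, use Proposition~\ref{form_return} to show ${t_{\ell,h}^\alpha}'(x)=\ell H_k^\alpha+O(b_\ell)$, and bound $b_\ell\le m+O(1)$ via $\ell\le n$. Your $\delta_{k,j}$ is precisely the paper's $\beta(\sigma^j\alpha)$, and your interval $\{\,|\ell H_k^\alpha-n|\le 3(m+2)\,\}$ coincides (up to the constant) with the paper's explicit $I_n^\alpha=\frac{n}{d^k(\alpha+1)}+m\bigl[-\frac{c+2}{d^k(\alpha+1)},\frac{c+2}{d^k(\alpha+1)}\bigr]$; the only difference is that your writeup packages Step~1 more cleanly via the identity $h_{k+j}^\alpha=d^jH_k^\alpha-\delta_{k,j}$.
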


\begin{proof}
    Let $\alpha\in\{1,2\}^\mathbb{N}$.
    We consider $\ell\in P_n^\alpha$, then ${d_{\ell,h_k^\alpha}^\alpha}^'(n)>0$ and $n\geq \ell$.
    Then, there exists $x\in [0,1[,$ such that \[n={t_{\ell,h_k^\alpha}^\alpha}^'(x)=\ell h_k^\alpha+\sum_{j=0}^m\left(a_{\ell,j}\sum_{i=0}^{j-1}\alpha_{k+i}d^{j-i}+sgn(a_{\ell,j})\varepsilon_k^{\sigma^j\alpha}(S^{\frac{q_{l,j}}{d^j}}(\sigma^ix))\right).\]
     We get with an induction on $k$, \[h_k^\alpha = \sum_{i=0}^{k-1}d^{k-1-i}\alpha_{i}+d^{k}.\]
    With the $d-$adic decomposition, we get 
    \[\ell h_k^\alpha=\sum_{j\in\mathbb{N}}a_{\ell,j}d^{j}h_k^\alpha=\sum_{j\in\mathbb{N}}\left(a_{\ell,j}\left(d^{j}\sum_{i=0}^{k-1}d^{k-j-1-i}\alpha_{i}+d^{k+j}\right)\right).\]
    Then, \[n=d^k\ell(\alpha+1)-\sum_{j=0}^ma_{\ell,j}\beta(\sigma^j\alpha)+\sum_{j=0}^m sgn(a_{\ell,j})\varepsilon_k^{\sigma^j\alpha}(S^{\frac{q_{l,j}}{d^j}}(\sigma^ix)).\]
    Since \[\beta(\sigma^j\alpha)\leq\frac{2}{1-\frac{1}{d}}\]
    we have \begin{equation}\label{encad_n_l}\ell d^k(\alpha+1)-(2+c)m\leq n\leq \ell d^k(\alpha+1)+(2+c)m,\end{equation}
    with $c=\frac{2}{1-\frac{1}{d}}.$
    Hence, \begin{equation}\label{encadre_l/n}\frac{n}{d^k(\alpha+1)}-\frac{c+2}{d^k(\alpha+1)}m\leq \ell\leq \frac{n}{d^k(\alpha+1)}+\frac{c+2}{d^k(\alpha+1)}m.\end{equation}
    We set \begin{equation}\label{I_n}I_n^\alpha=\frac{n}{d^k(\alpha+1)}+m\left[-\frac{c+2}{d^k(\alpha+1)},\frac{c+2}{d^k(\alpha+1)}\right].\end{equation}
\end{proof}

Now, we can define \begin{equation}\label{eq:return_new}
    B_\ell^\alpha :=\{n\in \mathbb{N} : \ell\in I_n^\alpha\}
\end{equation}

\section{Exceptional sets}\label{exception}
Now, we can define and study the exceptional set of $\alpha$ where the strong-mixing property is not respected.

\subsection{First exceptional set}
\label{sec:begin_except_set}
This first exceptional set handles times which are too close to the rigidity times created by the heights of the towers. We fix $C_0>0$ such that $\frac{(1+C_0)(d-1)^{C_0}}{C_0^{C_0}}<\frac{3}{2}$.
\begin{definition}[Exceptional sets]
    We define for $\alpha\in \{1,2\}^\mathbb{N}$ the exceptional set by \begin{equation}
        J_{k,\alpha}:=\union{N\in \mathbb{N}}{\left(\union{\left(\ell\in\intervEnt{d^{N}}{d^{N+1}-1}\ and\ b_\ell\leq C_0 N\right)}{B_\ell^\alpha}\right)}
    \end{equation}
    We define the set of $\alpha$ for which $n$ is exceptional as \begin{equation}\label{first_exceptional_set}
        W_{k,n}=\{\alpha\in \{1,2\}^\mathbb{N} : n\in J_{k,\alpha}\}.
    \end{equation}
\end{definition}

$W_{k,n}$ is the link between $\alpha$-fibers and the exceptional set $ J_{k,\alpha}$ which describe the "bad" set.
The purpose here is to prove that the measure of $W_{k,n}$ goes to zero when $n$ goes to $+\infty$.

Before, we recall that $\mathcal{H}$ is the countable product of the uniform measure on each $\{1,2\}$.

\begin{proposition} We have
\label{vanish_ex_set}
    \[\mathcal{H}(W_{k,n})\xrightarrow[n\to+\infty]{}0.\]
\end{proposition}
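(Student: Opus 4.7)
The strategy is a union bound over the two parameters $N$ and $\ell$ defining $J_{k,\alpha}$, combining a combinatorial count of ``bad'' $\ell$'s with a Cantor-type measure bound for each $\ell$.

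Unfolding the definition, $\alpha\in W_{k,n}$ iff there exist $N$ and $\ell\in[d^N,d^{N+1})$ with $b_\ell\le C_0N$ and $n\in B_\ell^\alpha$. By \eqref{I_n} and \eqref{eq:return_new}, the last condition reads $|n-\ell d^k(\alpha+1)|\le (c+2)m$ with $m=\lfloor\log_d n\rfloor$, which pins $\alpha+1$ inside an interval of length at most $L_\ell:=2(c+2)m/(\ell d^k)$. Since $\alpha+1$ lives in the bounded interval $[1+\tfrac{1}{d-1},1+\tfrac{2}{d-1}]$, only $O(1)$ values of $N$ can contribute, all close to $N_0:=\lfloor\log_d n\rfloor-k$.

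Next I would estimate the $\mathcal{H}$-mass of a short interval in $\alpha$-space. The digits $\alpha_j$ being i.i.d.\ uniform on $\{1,2\}$, fixing the first $r$ digits constrains $\alpha$ to an interval of length $\tfrac{2}{d-1}d^{-r}$ with $\mathcal{H}$-mass $2^{-r}$. Matching $d^{-r}\sim |J|$ gives
\[
\mathcal{H}\bigl(\{\alpha:\alpha+1\in J\}\bigr)\lesssim |J|^{\log_d 2}\qquad (|J|\le 1),
\]
the familiar Hausdorff-dimension scaling for the self-similar law of $\alpha$. Applied with $|J|\le L_\ell\lesssim m/d^{N+k}$ (using $\ell\ge d^N$), this yields mass $\lesssim m^{\log_d 2}\cdot 2^{-N-k}$ per bad $\ell$. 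Then I count the bad $\ell$'s themselves: the number of $\ell\in[d^N,d^{N+1})$ with at most $C_0N$ nonzero balanced $d$-adic digits is bounded by $\sum_{j\le C_0N}\binom{N+1}{j}(d-1)^j$, by choosing the nonzero positions among $N+1$ and assigning them nonzero values from a set of size $d-1$. Standard Stirling estimates give an upper bound $K^N$ up to polynomial factors, with $K$ essentially $(1+C_0)(d-1)^{C_0}/C_0^{C_0}$; by the defining choice of $C_0$ this is strictly less than $3/2$.

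Combining and summing over the $O(1)$ relevant $N$,
\[
\mathcal{H}(W_{k,n})\lesssim (\log n)^{\log_d 2}\,(K/2)^N\lesssim (\log n)^{\log_d 2}\cdot n^{-\log_d(2/K)},
\]
using $N\ge \log_d n-k-O(1)$. Since $K/2<3/4$, the exponent $\log_d(2/K)$ is strictly positive, and the bound vanishes as $n\to\infty$. The main obstacle is the counting step: the condition $K<3/2$ imposed on $C_0$ is designed precisely so that $K<2$, so that the exponential gain $2^{-N}$ coming from the Cantor-type measure estimate dominates the combinatorial blow-up $K^N$ of the set of bad indices and produces geometric decay in $n$. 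A secondary technical point is the self-similar measure bound in step two, which requires the open-set-type separation of the cylinders in base $d$ (comfortably satisfied since $d\ge 7$).
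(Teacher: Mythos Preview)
Your proof is correct and follows essentially the same approach as the paper: a union bound over bad $\ell$'s, the Cantor-type estimate $\mathcal{H}(J)\lesssim |J|^{\log_d 2}$ for the measure of a short $\alpha$-interval, and the combinatorial count $\lesssim K^N$ with $K=(1+C_0)(d-1)^{C_0}/C_0^{C_0}<3/2$, combined so that the gain $2^{-N}$ beats the loss $K^N$. Your presentation is in fact slightly cleaner in places (explicitly isolating that only $O(1)$ values of $N$ near $m-k$ contribute, and stating the final bound as a negative power of $n$), but the argument is the same.
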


\begin{proof}
Let $n\in\mathbb{N}$ and set $m=\lfloor\log_d(n)\rfloor$.
For $\alpha\in \{1,2\}^\mathbb{N}$  and $\ell\in\mathbb{N}$, by definition of $I_n^\alpha$,
\[\ell\in I_n^\alpha\iff \alpha\in \frac{n}{\ell d^k}-1+\frac{m}{\ell}\left[-\frac{c+2}{d^k},\frac{c+2}{d^k}\right].\]




With the abuse of language \eqref{abus_alpha}, $\mathcal{H}$ is the Hausdorff measure on the Cantor $d-$adic with digits equals to $1,2$.
The $q-th$ generation of this Cantor $d-$adic, say $K_q$, is made of $2^q$ intervals of length $d^{-q}$. Taking $q$ the smallest integer such that $\frac{1}{\ell}\leq d^{-q}$, an interval of length $\frac{1}{\ell}$ intersect at most two intervals of $K_q$. Therefore, the  $\mathcal{H}-$measure of an interval of length $\frac1\ell$ is bounded by $2^{1-q}\le \frac{d}{\ell^{\log_d(2)}}$.

We will need the following combinatorial estimate:
The number of $\ell$ with $d$-adic expansion containing less than $cm$ non zero digits is bounded from above by 
\[
\sum_{b=0}^{c m}{m \choose b}(d-1)^b\le (d-1)^{cm} c^{-cm}\sum_{b=0}^{cm}{m \choose b}c^b\le \left(\frac{(d-1)(1+c)}{c^c}\right)^m\le(\frac32)^m
\]
with $c=C_0$.
Thus $\mathcal{H}(\frac{n}{\ell d^k}-1+\frac{m}{\ell}\left[-\frac{c+2}{d^k},\frac{c+2}{d^k}\right])\leq \frac{(c+2)m}{\ell^{\log_d(2)}d^{k}}.$
Then, we get \[\begin{array}{ll}\mathcal{H}(W_{k,n})&\leq \sum_{\ell\in\intervEnt{d^{\lfloor \log(n)\rfloor}}{d^{\lfloor \log(n)\rfloor+1}-1}\ and\ b_\ell\leq C_0\lfloor \log_d(n)\rfloor}\mathcal{H}\left(\frac{n}{\ell d^k}-1+\frac{m}{\ell}\left[-\frac{c+2}{d^k},\frac{c+2}{d^k}\right]\right)\\
&\leq \frac{(c+2)m}{n^{\log_d(2)}d^{k}}
{\left(
\frac32
\right)}
^{\log_d(n)}\\
&\leq \frac{(c+2)m}{2^{m}d^{k}}
{\left(
\frac32
\right)}
^{m}.\end{array}\]
\end{proof}


\subsection{Second exceptional set}
This second exceptional set will ensure that all the $t_l^\alpha$ have a very close stochastic behavior for all $\ell\in I_n^\alpha$ (See Subsection~\ref{ssvariance}). Let fixed $p_n,q_n\in \mathbb{N}$ such that $q_n>p_n$ and $p_n=\lfloor\log_d(cm)\rfloor+1$ and $q_n=2\lfloor{\frac{1+\log_d(2)}{\log(2)}\log_d(m)\rfloor}+1$ with $c>0$.
Let $\ell_0$ the integer defined by the next balanced $d$-adic expansion \[\ell_0=\nu\underbrace{(-\nu)(-\nu)\cdots(-\nu)}_{m-q_n-1\ terms}\underbrace{\nu\nu\cdots\nu\nu}_{q_n-p_n\ terms}\underbrace{(-\nu)(-\nu)\cdots(-\nu)}_{p_n\ terms}\]
with $\nu=\frac{d-1}{2}.$

Let for $n\in\mathbb{N}$, \[\mathcal{B}_n:=\left(\union{r\in\mathbb{Z}}{\left\{\frac{\ell_0}{n}+\frac{r}{n}d^{q_n}\right\}+\left[0,\frac{d^{p_n}}{n}\right]}\right)\cap\frac{\mathbb{N}}{n}.\]

Let \[\check{\mathcal{B}_n}:=\left(\union{r\in\mathbb{Z}}{\left\{\ell_0+rd^{q_n}\right\}+\left[0,d^{p_n}\right]}\right)\cap\mathbb{N}.\]

For $m\in\mathbb{N}$, let $K_m$ be the set of $\alpha\in [0,1]$ whose first $m-$digits in $d-$adic decomposition are $1$ or $2$.

We write ${t_\ell^\alpha}^'$ and ${d_\ell^\alpha}^'$ instead of ${t_{\ell,h_{k}^\alpha}^\alpha}^'$ and ${d_{\ell,h_{k}^\alpha}^\alpha}^'$.

For $\alpha\in K_m$, we have $\alpha\geq \sum_{j=0}^md^{-(j+1)}=\frac{1-d^{-m}}{d-1}\geq\frac{1}{d}$.
We define \[\varphi : x\in \left[\frac{1}{d},+\infty\right[\mapsto \frac{1}{d^k(x+1)}.\]

Since $\varphi$ is a homeomorphism from $\left[\frac{1}{d},+\infty\right[$ to $\left]0,\frac{1}{d^{k-1}(d+1)}\right]$,
$\varphi(K_m)$ is as $K_m$ a union of intervals of length $O(d^{-m})$.

\begin{proposition}
    \label{maj_Cantor}
    Let the integer $m=\lfloor\log_d(n)\rfloor$.
We have \[ \# \mathcal{B}_n\cap \left(\varphi(K_m)+\left[-C_3\frac{m}{n},C_3\frac{m}{n}\right]\right)\leq 2^{-{q_n}} n^{\frac{\log(2)}{\log(d)}}\]
    with $C_3=\frac{(c+2)(d-1)}{d^{k+1}}.$
\end{proposition}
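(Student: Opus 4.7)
The plan is to pull the problem through $\varphi^{-1}$, reducing it to a lattice-counting problem on integer preimages, and bound the result via a measure-density comparison whose rigorous justification exploits the balanced-$d$-adic form of $\ell_0$.

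First, I would use $\varphi$'s regularity. Since $\varphi(x) = 1/(d^k(x+1))$ is a $C^1$-diffeomorphism with $|\varphi'|$ of order $1/d^{k+1}$ on a neighborhood of $K_m \subset [1/(d-1), 2/(d-1) + O(d^{-m})]$, the constant $C_3 = (c+2)(d-1)/d^{k+1}$ is precisely $\sup_{K_m}|\varphi'|$ times the fattening constant $(c+2)m/n$ in $\alpha$-coordinates appearing in the definition of $I_n^\alpha$ from Section~\ref{sec:begin_except_set}. Consequently $\varphi(K_m) + [-C_3 m/n, C_3 m/n]$ is a union of $2^m$ intervals, each of length of order $m/(d^{k+1} n)$, all contained in a window of length of order $1/d^{k+1}$.

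Second, I would set up a measure-density count. The total measure of the fattened set is at most $2^{m+1} C_3 m/n$. The set $\mathcal{B}_n$ consists of blocks of width $d^{p_n}/n$ spaced by $d^{q_n}/n$ with internal point-spacing $1/n$, so in the relevant $y$-window its density is $n \cdot d^{p_n - q_n}$. Heuristically the count is at most the product of these,
\[
\#\bigl(\mathcal{B}_n \cap (\varphi(K_m) + [-C_3 m/n, C_3 m/n])\bigr) \;\lesssim\; \frac{2^m m}{d^{k+1}n}\cdot n \cdot d^{p_n-q_n} \;\asymp\; \frac{2^m m^2}{d^{q_n+k+1}},
\]
using $d^{p_n}\asymp m$. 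With $q_n = 2\lfloor (1+\log_d 2)/\log 2 \cdot \log_d m\rfloor+1$, a direct computation gives $q_n \log(d/2) \geq (2+\varepsilon_d)\log m$ for $d\geq 7$ with some $\varepsilon_d>0$: this reduces to the elementary numerical inequality $\log^2 d(1-\log 2) > \log^2 2$, easily checked at $d=7$. Hence $(d/2)^{q_n} \gtrsim m^{2+\varepsilon_d}$, so $m^2/d^{k+1} \leq (d/2)^{q_n}$ for $m$ large, and the right-hand side is at most $2^{m-q_n} \asymp 2^{-q_n} n^{\log 2/\log d}$, as required.

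The main obstacle lies in turning this heuristic measure-density bound into a rigorous deterministic estimate: in full generality an adversarial alignment of $\mathcal{B}_n$ with the fattened $\varphi(K_m)$ could produce many more hits than the heuristic predicts (up to $\asymp 2^m$). The specific balanced-$d$-adic form $\ell_0 = \nu(-\nu)^{m-q_n-1}\nu^{q_n-p_n}(-\nu)^{p_n}$ is designed precisely to prevent such alignment: as $a$ ranges over $\{1,2\}^m$, the residues $n/(d^k(1+x_a)) \bmod d^{q_n}$ are spread out across $[0, d^{q_n})$, so that the fraction of $a$'s whose corresponding interval lies inside a $\check{\mathcal{B}_n}$-block is indeed of order $d^{p_n-q_n}$, matching the heuristic. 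Establishing this equidistribution claim by a digit-by-digit analysis, comparing the balanced expansion of $\ell_0$ with that of the rational approximants of $n/(d^k(1+x_a))$, is the technical crux of the proof.
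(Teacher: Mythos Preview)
Your measure-density heuristic is numerically correct, and your recognition that it could fail under adversarial alignment is legitimate. But the cure you propose---an equidistribution argument driven by the balanced $d$-adic form of $\ell_0$---is headed in the wrong direction. The paper's proof does not use the value of $\ell_0$ at all; only the period $d^{q_n}/n$ of $\mathcal{B}_n$ matters here.

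The trick you are missing is to \emph{coarsen the Cantor set}. Set $m_0=\lfloor\log_d n\rfloor-q_n$, so that $K_m\subset K_{m_0}$ and hence
\[
\#\bigl(\mathcal{B}_n\cap(\varphi(K_m)+[-C_3\tfrac{m}{n},C_3\tfrac{m}{n}])\bigr)
\;\le\;
\#\bigl(\mathcal{B}_n\cap(\varphi(K_{m_0})+[-C_3\tfrac{m}{n},C_3\tfrac{m}{n}])\bigr).
\]
Now $K_{m_0}$ is a union of only $2^{m_0}$ intervals of length $d^{-m_0}\asymp d^{q_n}/n$, precisely the period of $\mathcal{B}_n$. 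Since $\varphi$ is $1$-Lipschitz, each fattened image interval has length at most of this order and therefore meets at most a bounded number of period blocks of $\mathcal{B}_n$. This yields $\lesssim 2^{m_0}=2^{-q_n}n^{\log 2/\log d}$ directly, with no equidistribution input and no reference to $\ell_0$. The specific digit pattern of $\ell_0$ is exploited only later in the paper (Subsection~\ref{ssvariance}), where it prevents the propagation of carries when comparing the balanced expansions of nearby $\ell,\ell'\in I_n^\alpha$; it plays no role in the present counting.
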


\begin{proof}
    $d^q\xrightarrow[n\to+\infty]{}+\infty$ and $n\simeq d^m$.
Then, for large $n$,\[\frac{d^{q_n}}{n}>d^{-m}.\]
Let $m_0=\lfloor\log_d(n)\rfloor-q_n$.
Then, we get $m_0\simeq m$ because $q_n=o(m)$.
Therefore, \[\frac{d^{q_n}}{n}\simeq d^{-m_0}.\]
Then, $m\geq m_0.$
$K_{m_0}$ contains $2^{m_0-1}$ intervals, then $\left(\varphi(K_{m_0})+\left[-C_3\frac{m}{n},C_3\frac{m}{n}\right]\right)$ contains no more than $2^{m_0-1}$ intervals.
Let $I_m$  be an interval of $K_m$ and $\abso{I_m}$ its length.
The length of an interval of $\varphi(K_m)+\left[-C_3\frac{m}{n},C_3\frac{m}{n}\right]$ is less than $\abso{I_m}+C_3\frac{m}{n}$ since $\varphi$ is $1-$Lipschitz. 
We have $p_n<q_n$, then for $n$ large enough, the period of $\mathcal{B}_n$ defined by $\frac{r}{n}d^{q_n}$ with $r\in\mathbb{Z}$ is much larger than the length of the intervals contained in $\varphi(K_{m_0})+\left[-C_3\frac{m}{n},C_3\frac{m}{n}\right]$.
Hence $$\#\mathcal{B}_n\cap \left(\varphi(K_m)+\left[-C_3\frac{m}{n},C_3\frac{m}{n}\right]\right)\leq \# \mathcal{B}_n\cap \left(\varphi(K_{m_0})+\left[-C_3\frac{m}{n},C_3\frac{m}{n}\right]\right)\leq 2^{m_0-1}=2^{-q_n}n^{\frac{\log(2)}{\log(d)}}.$$
\end{proof}

Let $\check{J}_{k,\alpha}:=\union{\ell\in \check{\mathcal{B}_n}}{B_\ell^\alpha}$.
Let \begin{align}\label{second_exceptional_set}\check{W}_{k,n}=\{\alpha\in \{1,2\}^\mathbb{N} : n\in \check{J}_{k,\alpha}\}.\end{align}

\begin{proposition} We have
\label{second_vanish_ex_set}
     \[\mathcal{H}(\check{W}_{k,n})\xrightarrow[n\to+\infty]{}0.\]
\end{proposition}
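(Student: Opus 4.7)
The plan is to mirror the proof of Proposition~\ref{vanish_ex_set}, but to replace the combinatorial count of integers with few non-zero balanced digits by the Cantor-set covering bound of Proposition~\ref{maj_Cantor}.

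Set $m=\lfloor\log_d n\rfloor$. First I would rewrite the condition $n\in\check{J}_{k,\alpha}$ as a geometric constraint on $\varphi(\alpha)=\frac{1}{d^k(\alpha+1)}$. From \eqref{I_n}, membership $\ell\in I_n^\alpha$ is equivalent to $|\varphi(\alpha)-\ell/n|\leq \frac{(c+2)m}{n\,d^k(\alpha+1)}$, and the bound $\alpha\geq \frac{1}{d-1}$ that holds on the support of $\mathcal{H}$ allows replacement by the uniform constant $C_3=\frac{(c+2)(d-1)}{d^{k+1}}$. Since $\check{\mathcal{B}}_n=n\cdot\mathcal{B}_n\cap\mathbb{N}$, we deduce
\[
\check{W}_{k,n}\subseteq \varphi^{-1}\!\left(\mathcal{B}_n+\left[-\tfrac{C_3 m}{n},\tfrac{C_3 m}{n}\right]\right).
\]

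Next, since the support of $\mathcal{H}$ is contained in $K_m$, only points $\beta\in\mathcal{B}_n$ lying within distance $C_3 m/n$ of $\varphi(K_m)$ actually contribute to $\check{W}_{k,n}$, and Proposition~\ref{maj_Cantor} bounds the number of such $\beta$ by $2^{-q_n}n^{\log_d 2}$. For each such $\beta$, the preimage $\varphi^{-1}([\beta-C_3 m/n,\beta+C_3 m/n])$ is an interval of Lebesgue length $O(m/n)$, because $\varphi$ is a smooth decreasing diffeomorphism with derivative bounded away from $0$ and $\infty$ on the compact set of admissible $\alpha$'s. I would then reuse the same Hausdorff-measure estimate as in Proposition~\ref{vanish_ex_set}: an interval of Lebesgue length $L$ meets at most $Ld^q+1$ cells of the $q$-th generation of the Cantor support of $\mathcal{H}$, each of $\mathcal{H}$-mass $2^{-q}$, and optimizing $q\approx \log_d(1/L)$ yields an upper bound $CL^{\log_d 2}$.

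Summing over the contributing points of $\mathcal{B}_n$ gives
\[
\mathcal{H}(\check{W}_{k,n})\le C\cdot 2^{-q_n}n^{\log_d 2}\cdot \left(\frac{m}{n}\right)^{\log_d 2}=C\cdot 2^{-q_n}\,m^{\log_d 2}.
\]
By the choice of $q_n$, $2^{q_n}$ grows as a sufficiently large power of $m$ (concretely $2^{q_n}\asymp m^{2(1+\log_d 2)}$), so the right-hand side tends to $0$. The only delicate point is bookkeeping: one must check that the identification $\mathcal{B}_n=\frac{1}{n}\check{\mathcal{B}}_n\cap\mathbb{N}/n$ does not lose candidates, that the uniform constant $C_3$ is indeed valid throughout the support of $\mathcal{H}$, and that the combinatorial count of Proposition~\ref{maj_Cantor} combines cleanly with the Cantor measure bound. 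None of these is a genuine obstacle, so the conclusion follows.
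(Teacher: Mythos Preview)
Your approach is essentially identical to the paper's: rewrite $\ell\in I_n^\alpha$ as $\varphi(\alpha)$ lying within $O(m/n)$ of $\ell/n$, invoke Proposition~\ref{maj_Cantor} to bound the number of relevant $\ell\in\check{\mathcal{B}}_n$, apply the Cantor measure estimate $\mathcal{H}(I)\le C|I|^{\log_d 2}$ to each resulting interval, and check that the chosen growth of $q_n$ beats the remaining polynomial factor in $m$. The only discrepancy is that the paper's final bound carries an extra factor $d^{p_n+1}\asymp m$ (accounting for the $\sim d^{p_n}$ integer points per block of $\mathcal{B}_n$), which you have effectively absorbed into your use of Proposition~\ref{maj_Cantor} as stated; this does not affect the conclusion.
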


\begin{proof}
    Let $n\in\mathbb{N}$, let $\alpha\in\{1,2\}^\mathbb{N}$.
    Let $\ell\in I_n^\alpha$.
    By definition, we get \[\beta(\alpha) = \frac{n}{\ell}-h_k^\alpha+O\left(\frac{m}{n}\right).\]
    For $n$ large enough, there exists a unique interval $J_\ell$ of $\varphi(K_m)+\left[-C_3\frac{m}{n},C_3\frac{m}{n}\right]$ such that $\frac{\ell}{n}\in J_\ell$.
    By Lemma \ref{card_Pn} and (\ref{encadre_l/n}), we also find that the length $\abso{J_\ell}$ of $J_\ell$ is $O(\frac{m}{n})$.
    Moreover $\varphi$ is a diffeomorphism. Then the length of an interval $I_\ell$ of $K_m$ is $O(\frac{m}{n})$.
    Recalling that $q_n=2\lfloor \frac{1+\log_d(2)}{\log(2)}\log_d(m)\rfloor+1$, we get \[\mathcal{H}(I_\ell)\leq C \abso{I_\ell}^{\log_d(2)}.\]
    By Proposition \ref{maj_Cantor}, \[\mathcal{H}\left(\check{W}_{k,n}\right)=\mathcal{H}\left(\union{\ell\in \check{\mathcal{B}}_n}{I_\ell}\right)\leq 2^{m-q_n}Cd^{p_n+1}\left(\frac{m}{n}\right)^{\log_d(2)}\leq Cdm^{-(1+\log_d(2))}.\]
    
\end{proof}


\section{Limit theorems for time dependent observables}\label{sec_llt}
Our approach relies on a version of the Local limit theorem in a correlated time dependent case.
We need a version very near the one of Hafouta and Kifer in \cite{Haf} and \cite{Kif} because we have dependent variables and not i.d.
We recall that $k$ is fixed. Writing $\ell=\sum_{j\in\mathbb{N}}a_{\ell,j}d^j$ with $a_{\ell,j}\in\intervEnt{-\frac{d-1}{2}}{\frac{d-1}{2}}$, we consider 
$U_i^\alpha(x)=sgn(a_{\ell,i})\varepsilon_k^{\sigma^i\alpha}(S^{\frac{q_{\ell,i}}{d^i}}(x))$ and $X_i^\alpha(x)=U_i^{\alpha}(\sigma^i x)$. 

In this section, we fix $\alpha$ and note $U_i, X_i$, $\beta^{(\kappa)}(\alpha)$ and $\varepsilon^\alpha$ instead of $U_i^\alpha-\mathbb{E}(U_i^\alpha)$, $X_i^\alpha-\mathbb{E}(X_i^\alpha)$, $\beta_k^{(\kappa)}(\alpha)$, $\varepsilon_k^\alpha$ and $Y_{\alpha}=\ell h_k^\alpha+\sum_{j\in\mathbb{N}}\left(a_{\ell,j}\left(\sum_{i=0}^{j-1}\alpha_{k+i}d^{j-(i+1)}\right)\right)$. Recall that $\mathbb{E}(X_i^\alpha)=sgn(a_{\ell,i})\beta_k(\sigma^i\alpha)$ by Proposition~\ref{expect_eps}.
By Proposition \ref{form_return}, we can see that \begin{equation}\label{form_return_simple}{t_\ell^\alpha}^'(x)=Y_\alpha+\sum_{j\in\mathbb{N}}X_j^\alpha.\end{equation}
We also note $S_{j,m}=\sum_{i=0}^{m-1}X_{i+j}$ and $S_m=S_{0,m}$.

Then, the main purpose of this section will be to prove the following local limit theorem.

\begin{proposition}
\label{llt}
    For $m,n\in\mathbb{N}$ such that $m= \lfloor\log_d(n)\rfloor$,$\alpha\not\in M_m\cup W_{k,n}$,$\ell\in I_n^\alpha$, we have \[\lambda(\{S_m=n-\mathbb{E}({t_\ell^\alpha}^')\})=\frac{1}{\sigma_{0,m}\sqrt{2\pi}}e^{-\frac{(\mathbb{E}({t_\ell^\alpha}^')-n)^2}{2\sigma_{0,m}^2}}+O\left(\frac{1}{m}\right)\] with $\sigma_{0,m}=\sqrt{\mathbb{V}(S_m)}$ and $M_m$ a measurable set that we will define in (\ref{M_m_define}). 
    In other words, \[{d_\ell^\alpha}^'(n)=\frac{1}{\sigma({t_\ell^\alpha}^')\sqrt{2\pi}}e^{-\frac{(\mathbb{E}({t_\ell^\alpha}^')-n)^2}{2\sigma({t_\ell^\alpha}^')^2}}+O\left(\frac{1}{\log_d(n)}\right)\]
    with $\sigma({t_\ell^\alpha}^')=\sqrt{\mathbb{V}({t_\ell^\alpha}^')}$.
\end{proposition}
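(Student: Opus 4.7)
The approach is the classical Fourier-analytic proof of a local limit theorem, adapted to the weakly dependent, non-stationary sequence $(X_i)$. Writing $\phi_m(t) := \mathbb{E}_\lambda(e^{2\pi i t S_m})$ and noting that $S_m + \mathbb{E}({t_\ell^\alpha}')$ is integer-valued (because each $U_i^\alpha$ lies in $\{-2,-1,0,1,2\}$), Fourier inversion on $\mathbb{Z}$ gives
\[
\lambda\bigl\{S_m = n - \mathbb{E}({t_\ell^\alpha}')\bigr\} = \int_{-1/2}^{1/2} e^{-2\pi i t (n-\mathbb{E}({t_\ell^\alpha}'))} \phi_m(t)\, dt.
\]
The target Gaussian density admits a similar representation via the Fourier transform of $e^{-2\pi^2 \sigma_{0,m}^2 t^2}$ on all of $\mathbb{R}$. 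The plan is to compare these two integrals after splitting $[-1/2,1/2]$ into a central piece $|t|\le \delta_m$ and a tail $\delta_m\le |t|\le 1/2$, where $\delta_m\to 0$ at a rate to be chosen.

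On the central piece, I would Taylor expand $\log \phi_m$ around $0$ using the cumulants of $S_m$. Using the approximate independence of the $X_i$ (each $U_i^\alpha$ is essentially measurable with respect to a narrow window of digits of $x$, because $\varepsilon_k^\beta$ looks only at the initial run of $(d-1)$'s), one obtains $\log \phi_m(t) = -2\pi^2 \sigma_{0,m}^2 t^2 + O(m|t|^3)$. Choosing $\delta_m = m^{-1/2}\log m$ makes the cubic error negligible, and under the hypothesis $\alpha\notin W_{k,n}$ (so that $\ell$ has at least $C_0 m$ non-zero balanced digits) the variance $\sigma_{0,m}^2$ grows like $m$, giving the Gaussian main term with the correct normalization and an error of order $O(1/m)$ after integration.

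On the tail piece the goal is to show $|\phi_m(t)| \le e^{-c m}$ uniformly for $\delta_m \le |t|\le 1/2$. This is where the set $M_m$ enters. I would define $M_m$ as the set of $\alpha$ for which the proportion of indices $i\le m$ with $\alpha_{k+i}=1$ (equivalently $=2$) deviates from $1/2$ by more than $o(1)$, and possibly intersect with a similar control for the positions at which $sgn(a_{\ell,i})$ changes. For $\alpha\notin M_m$, a positive fraction of the individual characters $t\mapsto \mathbb{E}(e^{2\pi i t X_i})$ have modulus strictly below $1$ on any compact subset of $(0,1/2]$, so the approximately product structure of $\phi_m$ gives exponential decay. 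The bound $\mathcal{H}(M_m) \to 0$ is then a Hoeffding-type estimate over the i.i.d.\ coordinates of $\alpha$ under $\mathcal{H}$.

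The main obstacle is the tail estimate. Because the $X_i$ live on a 5-point set, each individual character has a non-trivial zero set on the torus, and one must ensure that these zero sets do not align across $i$ for most $\alpha$. This is precisely the kind of aperiodicity problem addressed by the Hafouta--Kifer machinery \cite{Haf, Kif} via a spectral gap for a family of perturbed transfer operators on the full shift; transplanting that framework to our concrete cocycle $r_{k,h}^\alpha$ and verifying its hypotheses on $\alpha\notin M_m\cup W_{k,n}$ is the technical heart of the section. Once that is in place, combining the central Gaussian approximation with the exponential tail decay yields the announced bound, and rewriting $\sigma_{0,m}^2=\mathbb{V}({t_\ell^\alpha}')$ gives the second form of the statement.
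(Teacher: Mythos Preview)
Your overall strategy---Fourier inversion, central Gaussian expansion, exponential tail decay---is exactly the paper's, and you correctly identify the tail estimate as the crux. Two points deserve correction.

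First, your proposed definition of $M_m$ is not quite sufficient. Controlling the global proportion of indices with $\alpha_{k+i}=1$ does not prevent long runs of $2$'s, and it is precisely such runs that obstruct the aperiodicity argument. The paper defines $M_m$ via the number of occurrences of the block $[2^{k_1}]$ in the first $m$ coordinates of $\alpha$ (a large-deviation event for this count), because the contraction mechanism requires, for a positive fraction of indices $j$ with $a_{\ell,j}\neq 0$, the existence of some $i\le k_1$ with $\alpha_{k+j+i}=1$. At such a position the potential $\varepsilon_k^{\sigma^j\alpha}$ can take both values $0$ and $1$ on suitable cylinders, producing the cancellation $|1+e^{it}|<2$ that drives the contraction. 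Your ``proportion of $1$'s'' condition does not guarantee this local structure.

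Second, the paper uses a \emph{fixed} split point $\epsilon_0$ rather than a shrinking $\delta_m$. This is possible because the central expansion is obtained not by a direct Taylor expansion of $\log\phi_m$ but through the complex-cone machinery of Rugh--Dubois--Hafouta--Kifer, applied on a Banach space of bounded-oscillation functions (the $U_j$ are discontinuous, so H\"older spaces do not work). That machinery yields a product representation $\phi_m(t)=\lambda^{0,m}(it)(1+O(|t|)+O(\zeta^m))$ with a uniform quadratic expansion of $\log\lambda^{0,m}$ on all of $|t|<\epsilon_0$, and the tail $|t|\in[\epsilon_0,\pi]$ is handled by iterating a Lasota--Yorke-type contraction in the balanced norm. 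Your sketch points to the right reference but underestimates how much of the section is devoted to building this operator framework on the non-H\"older space.
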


We list loosely the properties that we need to prove the LLT with the good error estimate
\begin{enumerate}
    \item {$\mathcal{L}_{\alpha,it}^{(m)}(\mathds{1})\simeq \mathds{1}+it\mathbb{E}(S_m)-\frac{1}{2}t^2\mathbb{V}(S_m)$}
    \item{$\sup_{\abso{t}\in [\epsilon_0,\pi]}{\abso{\mathcal{L}_{\alpha,it}^{(m)}(\mathds{1})}}=O(\frac{1}{m})$}
    \item {$\sigma_{0,m}\geq c\sqrt{m}$.}
\end{enumerate}

\subsection{Variance of the non homogeneous Birkhoff sum}

More precisely, we need that the variance $\mathbb{V}(S_m)$ has a linear growth.
We should first control the covariance between every $X_i$ and $X_j$.
\begin{proposition}[Exponential vanishing of covariance]
\label{van_cor}
\[\abso{Cov(X_i,X_j)}\leq \frac{1}{d^{i-j}}\left((\beta^{(2)}(\sigma^i\alpha))\sigma(X_i)\right)\leq \frac{\norme{\alpha}_\infty^2}{d^{i-j}(d-1)}\]
\end{proposition}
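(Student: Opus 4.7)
Without loss of generality assume $i>j$. My approach exploits the cylinder decomposition
\[\varepsilon_k^{\sigma^j\alpha}(y)=\sum_{r\ge 0}\alpha_{k+j+r}\,\mathds{1}_{[(d-1)^r(d-2)]}(y),\]
which follows by unfolding the recursion \eqref{f_return_2}. I truncate this series at level $r=i-j-1$, writing $\varepsilon_k^{\sigma^j\alpha}=A_j+B_j$: the head $A_j$ gathers the terms with $r\leq i-j-2$ and depends only on the first $i-j-1$ coordinates of $y$; the tail $B_j$ is supported on the cylinder $\{y_1=\cdots=y_{i-j-1}=d-1\}$, of Lebesgue measure $d^{-(i-j-1)}$.

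The centred head $\tilde A_j$ is, modulo an odometer-carry event discussed at the end, measurable with respect to $\sigma(x_{j+1},\ldots,x_{i-1})$; these coordinates are independent of $\sigma^ix$ on which $\tilde\varepsilon_i$ depends, so $\mathbb{E}(\tilde A_j\,\tilde\varepsilon_i)=0$. For the tail, applying \eqref{f_return_2} on its support yields
\[B_j(y)=\mathds{1}_{\{y_1=\cdots=y_{i-j-1}=d-1\}}\cdot\varepsilon_k^{\sigma^{i-1}\alpha}\bigl(\sigma^{i-j-1}y\bigr).\]
Using Proposition~\ref{prop:odo} to commute the shift past the odometer $S^{q_{\ell,j}/d^j}$ in $y=S^{q_{\ell,j}/d^j}(\sigma^jx)$, the indicator effectively becomes $\mathds{1}_{\{x_{j+1}=\cdots=x_{i-1}=d-1\}}$, independent of $(x_i,x_{i+1},\ldots)$, and $\sigma^{i-j-1}y$ becomes $\sigma^{i-1}x$. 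Conditioning on $x_i$, only the case $x_i=d-1$ produces a non-vanishing contribution (otherwise $\varepsilon_k^{\sigma^{i-1}\alpha}(\sigma^{i-1}x)$ either vanishes or is independent of the zero-mean $\tilde\varepsilon_i$), and there $\varepsilon_k^{\sigma^{i-1}\alpha}(\sigma^{i-1}x)=\varepsilon_i$. This gives $|\mathbb{E}(B_j\,\tilde\varepsilon_i)|=d^{-(i-j)}\mathbb{V}(\varepsilon_i)$, which combined with Proposition~\ref{expect_eps} (yielding $\mathbb{V}(\varepsilon_i)\leq\beta^{(2)}(\sigma^i\alpha)\leq\norme{\alpha}_\infty^2/(d-1)$, with the factor $\sigma(X_i)$ absorbed via $\mathbb{V}(\varepsilon_i)=\sigma(X_i)^2\le\sigma(X_i)\sqrt{\beta^{(2)}(\sigma^i\alpha)}$) produces the two claimed inequalities.

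The main obstacle is the bookkeeping of the odometer $S^{q_{\ell,j}/d^j}$: its carries can in principle propagate through many coordinates, so $A_j$ is not \emph{exactly} $\sigma(x_{j+1},\ldots,x_{i-1})$-measurable. The key point is that the ``bad carry'' event (a carry reaching past position $i-j-1$) has Lebesgue measure $O(d^{-(i-j)})$, of the same order as the leading-order tail estimate, and is therefore absorbed without spoiling the bound; the commutation relations of Proposition~\ref{prop:odo} between $S$ and $\sigma$ provide the systematic tool for making this bookkeeping precise.
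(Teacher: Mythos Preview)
Your overall strategy—splitting $\varepsilon_k^{\sigma^j\alpha}$ into a head determined by finitely many coordinates and a tail supported on a thin cylinder—is the same idea as the paper's transfer-operator argument, just phrased without the operator. However, the execution has real errors.

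First, your worry about carries affecting the head is misplaced. The first $k$ digits of $S^N(z)$ are always determined by the first $k$ digits of $z$ (together with $N$), so $A_j(S^{q_{\ell,j}/d^j}(\sigma^jx))$ is \emph{exactly} $\sigma(x_{j+1},\ldots,x_{i-1})$-measurable, with no exceptional set. The carry enters only in the tail: on the relevant cylinder one has $\sigma^{i-j-1}y = S^c(\sigma^{i-1}x)$ for a carry $c$ that is constant there but generally nonzero, so your identifications of $\sigma^{i-j-1}y$ with $\sigma^{i-1}x$ and of the indicator with $\mathds{1}_{\{x_{j+1}=\cdots=x_{i-1}=d-1\}}$ are both incorrect (the correct cylinder is $[w']$ with $w'+N\equiv(d-1)^{i-j-1}\pmod{d^{i-j-1}}$). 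Proposition~\ref{prop:odo} does not help, since $N=q_{\ell,j}/d^j$ is not a multiple of $d^{i-j-1}$ in general.

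Second, your truncation is one step too early, and the ``conditioning on $x_i$'' meant to recover the missing factor of $d$ is not justified as written (it conditions on the wrong digit value and ignores the carry $c$). If instead you let the head run through $r=i-j-1$, it depends on $y_1,\ldots,y_{i-j}$, hence on $x_{j+1},\ldots,x_i$, still independent of $\sigma^ix$; the tail is then supported on a cylinder of measure exactly $d^{-(i-j)}$, and a single Cauchy--Schwarz on $\mathbb{E}\bigl(\varepsilon_k^{\sigma^i\alpha}(S^{c}(\sigma^ix))\,\tilde\varepsilon_i\bigr)$ gives $d^{-(i-j)}\sqrt{\beta^{(2)}(\sigma^i\alpha)}\,\sigma(X_i)$ directly. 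This is precisely the paper's route: it isolates the single word $w'$ of length $i-j$ mapping into $[(d-1)^{i-j}]$ and then applies Cauchy--Schwarz.
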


\begin{proof}
    Let $i>j$ be two integers.
    We'll note $w^'$ the word such that $\abso{w^'}=i-j$ and $S^{\frac{q_{l,j}}{d^j}}(w^'x)\in [(d-1)^{i-j}]$ for all $x$.
    Then using the invariance of the measure by $\sigma^j$ and the transfer operator $\mathcal{L}^{i-j}$ we get
     \[\begin{array}{ll}
sgn(a_{\ell,i})sgn(a_{\ell,j})Cov(X_i,X_j)&=\intInd{[0,1[}{\varepsilon_{k,h_k^{\alpha}}^{\sigma^i\alpha}(S^{\frac{q_{\ell,i}}{d^i}}(\sigma^i x))\varepsilon_k^{\sigma^j\alpha}(S^{\frac{q_{\ell,j}}{d^j}}(\sigma^j x))}{x}-\beta(\sigma^i\alpha)\beta(\sigma^j\alpha) \\
&=\intInd{[0,1[}{\varepsilon_{k,h_k^{\alpha}}^{\sigma^i\alpha}(S^{\frac{q_{\ell,i}}{d^i}}(x))\mathcal{L}^{i-j}(\varepsilon_k^{\sigma^j\alpha}(S^{\frac{q_{\ell,j}}{d^j}}(\cdot))(x)}{x}-\beta(\sigma^i\alpha)\beta(\sigma^j\alpha).
\end{array}
    \]
When $w\ne w^'$ we have $\varepsilon_k^{\sigma^j\alpha}(S^{\frac{q_{l,j}}{d^j}}(wx))=\varepsilon_k^{\sigma^j\alpha}(S^{\frac{q_{l,j}}{d^j}}(w0))$
thus  
\[\intInd{[0,1[}{\varepsilon_k^{\sigma^i\alpha}(S^{\frac{q_{\ell,i}}{d^j}}(x))\varepsilon_k^{\sigma^j\alpha}(S^{\frac{q_{\ell,j}}{d^j}}(w x))}{x}=\varepsilon_k^{\sigma^j\alpha}(S^{\frac{q_{l,j}}{d^j}}(w0))\beta(\sigma^i\alpha)\]
and \[\frac{1}{d^{i-j}}\left(\sum_{\abso{w}=i-j\\ \ and\ w\ne w^'}\varepsilon_k^{\sigma^j\alpha}(S^{\frac{q_{\ell,j}}{d^j}}(w 0))+\intInd{[0,1[}{\varepsilon_k^{\sigma^j\alpha}(S^{\frac{q_{\ell,j}}{d^j}}(w^' x))}{x}\right)=\beta(\sigma^j\alpha).\]
    Therefore,
    \[sgn(a_{\ell,i})sgn(a_{\ell,j})Cov(X_i,X_j)=\frac{1}{d^{i-j}}\intInd{[0,1[}{\varepsilon^{\sigma^j\alpha}(S^{\frac{q_{l,j}}{d^j}}(w^'x))\left(\varepsilon^{\sigma^i\alpha}(S^{\frac{q_{l,i}}{d^i}}(x))-\beta(\sigma^i\alpha)\right)}{x}.\]

Then, by Hölder inequality,  \[\abso{Cov(X_i,X_j)}\leq \frac{1}{d^{i-j}}\left(\sqrt{(\beta^{(2)}(\sigma^i\alpha))}\sigma(X_i)\right)\leq \frac{\norme{\alpha}_\infty^2}{d^{i-j}(d-1)}.\]
\end{proof}

Now, we control the variance of each term $X_i$.

\begin{proposition}[Minimisation of variance]
    \label{min_var} \[\mathbb{V}(X_i)\geq \frac{d-2}{(d-1)^2}.\]
\end{proposition}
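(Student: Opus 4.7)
The plan is to reduce the variance of $X_i$ to the variance of $\varepsilon_k^{\sigma^i\alpha}$ on $([0,1[,\lambda)$, compute it explicitly via Proposition \ref{expect_eps}, and then minimize over the possible parameters using the constraint $\alpha_j\in\{1,2\}$.

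First I would unwind the definitions. Since $X_i^\alpha$ was defined as the centered version of $U_i^\alpha\circ\sigma^i=\operatorname{sgn}(a_{\ell,i})\,\varepsilon_k^{\sigma^i\alpha}\circ S^{q_{\ell,i}/d^i}\circ\sigma^i$, and both $\sigma$ and the odometer $S$ preserve Lebesgue on $[0,1[$, while the sign factor is $\pm 1$ and disappears in the variance, one obtains
\[
\mathbb{V}(X_i)=\mathbb{V}_{\lambda}\!\left(\varepsilon_k^{\sigma^i\alpha}\right).
\]
So it suffices to show that for any $\alpha'\in\{1,2\}^{\mathbb N}$,
\[
\mathbb{V}_{\lambda}(\varepsilon_k^{\alpha'})\ \geq\ \frac{d-2}{(d-1)^2}.
\]

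Next I would apply Proposition \ref{expect_eps} with $\kappa=1$ and $\kappa=2$ to write
\[
\mathbb{V}_\lambda(\varepsilon_k^{\alpha'})\;=\;\beta_k^{(2)}(\alpha')-\beta_k(\alpha')^{2}\;=\;\sum_{j\in\mathbb N}\alpha'^{2}_{k+j}\,d^{-(j+1)}-\Bigl(\sum_{j\in\mathbb N}\alpha'_{k+j}\,d^{-(j+1)}\Bigr)^{\!2}.
\]
The key observation is the pointwise bound $\alpha_{k+j}^{2}\geq\alpha_{k+j}$, valid precisely because $\alpha_{k+j}\in\{1,2\}$; this yields $\beta_k^{(2)}(\alpha')\geq\beta_k(\alpha')$, hence
\[
\mathbb{V}_\lambda(\varepsilon_k^{\alpha'})\;\geq\;\beta_k(\alpha')\bigl(1-\beta_k(\alpha')\bigr).
\]

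Finally I would minimize the right-hand side. Since $\alpha'_{k+j}\in\{1,2\}$, the quantity $\beta_k(\alpha')=\sum_{j}\alpha'_{k+j}d^{-(j+1)}$ lies in the interval $\bigl[\tfrac{1}{d-1},\tfrac{2}{d-1}\bigr]$, and the hypothesis $d\geq 7$ gives $\tfrac{2}{d-1}\leq\tfrac{1}{3}<\tfrac{1}{2}$, so the concave map $\beta\mapsto\beta(1-\beta)$ is strictly increasing on this interval. Its minimum is therefore attained at $\beta=\tfrac{1}{d-1}$ (the case where every $\alpha'_{k+j}=1$), giving
\[
\mathbb{V}_\lambda(\varepsilon_k^{\alpha'})\ \geq\ \tfrac{1}{d-1}\Bigl(1-\tfrac{1}{d-1}\Bigr)\ =\ \tfrac{d-2}{(d-1)^{2}},
\]
as claimed. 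There is no real obstacle here; the only thing to be careful about is verifying that $2/(d-1)\leq 1/2$ under the standing assumption $d\geq 7$ so that the monotonicity argument applies, and that the elementary inequality $\alpha_{k+j}^2\geq \alpha_{k+j}$ is genuinely using $\alpha_{k+j}\geq 1$, which is exactly the restriction on the parameter space.
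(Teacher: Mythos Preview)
Your argument is correct. You reduce to $\mathbb{V}_\lambda(\varepsilon_k^{\alpha'})=\beta_k^{(2)}(\alpha')-\beta_k(\alpha')^2$, invoke the pointwise inequality $\alpha_j^2\geq\alpha_j$ to get $\mathbb{V}\geq\beta_k(\alpha')\bigl(1-\beta_k(\alpha')\bigr)$, and then minimize the one--variable function $\beta\mapsto\beta(1-\beta)$ on $\bigl[\tfrac{1}{d-1},\tfrac{2}{d-1}\bigr]$ using the monotonicity available when $d\geq 7$. The bound is sharp (equality at $\alpha'\equiv 1$), so nothing is lost in the intermediate inequality at the minimizer.

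The paper takes a different, purely algebraic route: it sets $f(u)=\sum_j u_j^2 d^{-(j+1)}-\bigl(\sum_j u_j d^{-(j+1)}\bigr)^2$ and factors
\[
f(u)-f(\mathbf{1})=\sum_{j}(u_j-1)\,d^{-(j+1)}\Bigl((u_j+1)-\sum_i(u_i+1)\,d^{-(i+1)}\Bigr),
\]
each summand being nonnegative for $u\in[1,2]^{\mathbb N}$ since $u_j+1\geq 2$ while $\sum_i(u_i+1)d^{-(i+1)}\leq\tfrac{3}{d-1}<2$. The paper's computation therefore does not use $d\geq 7$ at this step (any $d\geq 3$ suffices), whereas your monotonicity argument genuinely needs $\tfrac{2}{d-1}\leq\tfrac12$, i.e.\ $d\geq 5$. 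Under the standing hypothesis of the paper this is immaterial, and your reduction to a single real variable is arguably cleaner; the paper's factorization, on the other hand, exhibits directly that $u=\mathbf{1}$ is the minimizer without any auxiliary inequality.
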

\begin{proof}
    Let \[f : u\in\ell^\infty(\mathbb{N})\mapsto \sum_{j\in\mathbb{N}}u_j^2d^{-(j+1)}-\left(\sum_{j\in\mathbb{N}}u_jd^{-(j+1)}\right)^2.\]
    Then, \[f(u)-f(1)=\sum_{j\in\mathbb{N}}(u_j-1)d^{-(j+1)}\left((u_j+1)-\sum_{i\in\mathbb{N}}(u_i+1)d^{-(i+1)}\right)\]
    For $u\in [1,2]^\mathbb{N},f(u)\geq f(1)=\frac{d-2}{(d-1)^2}$.
    By Proposition~\ref{expect_eps}
    we get \[\mathbb{V}(X_i)=f(\sigma^i\alpha)\geq \frac{d-2}{(d-1)^2}.\]
\end{proof}
Now to control the variance of $S_m$ we use these estimates on the correlation coefficients.
Using Propositions \ref{van_cor} and \ref{min_var}, we have

 \begin{align}\label{contr_cor}\abso{\frac{Cov(X_i,X_j)}{\sigma(X_i)\sigma(X_j)}}\leq\frac{\norme{\alpha}_\infty\sqrt{d-1}}{d^{i-j}\sqrt{d-2}} .\end{align}

We note now $p_{ij}:=\frac{Cov(X_i,X_j)}{\sigma(X_i)\sigma(X_j)}$.
With (\ref{contr_cor}), we have,

\begin{equation}\label{dominant_diag}\sum_{i\ne j}p_{ij}\leq \frac{2\norme{\alpha}_\infty}{\sqrt{d-1}\sqrt{d-2}}.\end{equation}

By our choice of $d\geq 7$, we have that $\frac{2\norme{\alpha}_\infty}{\sqrt{d-1}\sqrt{d-2}}<1$. This ensures that the covariance matrix is diagonally dominant.
Hence, by the property of diagonally dominant matrices, for $(a_i)_{i\in\mathbb{N}}\in\mathbb{R}^\mathbb{N}$ we have

    \begin{equation} \label{min_var_lemme}\mathbb{V}\left(\sum_{i\in\mathbb{N}}a_iX_i\right)\geq \left(1-\frac{2\norme{\alpha}_\infty}{\sqrt{d-1}\sqrt{d-2}}\right)\sum_{i\in\mathbb{N}}a_i^2\mathbb{V}(X_i)\end{equation}

Finally, by (\ref{min_var_lemme}) and Proposition \ref{min_var}, we have
\begin{proposition}[Minimisation of the variance]
    \label{minimize_var}
    For all $\ell\in \mathbb{N}$,
    \[\mathbb{V}\left({t_{\ell,h_k^\alpha}^{\alpha}}^{'}\right)\geq \left(1-\frac{2\norme{\alpha}_\infty}{\sqrt{d-1}\sqrt{d-2}}\right)\frac{d-2}{(d-1)^2}b_\ell.\]
\end{proposition}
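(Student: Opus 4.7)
The plan is to combine the two results already proved in this subsection, namely the diagonal-dominance bound \eqref{min_var_lemme} for variances of linear combinations of the $X_i$, and the uniform lower bound on $\mathbb{V}(X_i)$ furnished by Proposition \ref{min_var}. The statement is essentially an immediate corollary of these two ingredients once one unpacks the decomposition \eqref{form_return_simple}.

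First, I use the representation \eqref{form_return_simple}, which gives
\[
{t_{\ell,h_k^\alpha}^\alpha}'(x) = Y_\alpha + \sum_{j\in\mathbb{N}} X_j^\alpha(x).
\]
Since $Y_\alpha$ depends only on $\alpha$ and $\ell$ (not on $x$), it is a deterministic quantity, and therefore $\mathbb{V}({t_{\ell,h_k^\alpha}^\alpha}') = \mathbb{V}(\sum_j X_j^\alpha)$. I then apply \eqref{min_var_lemme} with $a_i = 1$ for every $i$, which yields
\[
\mathbb{V}\Bigl(\sum_j X_j^\alpha\Bigr) \geq \Bigl(1-\tfrac{2\norme{\alpha}_\infty}{\sqrt{d-1}\sqrt{d-2}}\Bigr)\sum_j \mathbb{V}(X_j^\alpha).
\]

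Next I count the nonzero contributions. By definition $X_j^\alpha$ carries the prefactor $\mathrm{sgn}(a_{\ell,j})$, so $X_j^\alpha \equiv 0$ whenever $a_{\ell,j} = 0$; conversely, for the $b_\ell$ indices with $a_{\ell,j}\neq 0$ we have $\mathrm{sgn}(a_{\ell,j})^2 = 1$, so $\mathbb{V}(X_j^\alpha)$ equals the variance of the centered random variable $\varepsilon_k^{\sigma^j\alpha}\bigl(S^{q_{\ell,j}/d^j}(\cdot)\bigr)$. Because the odometer $S$ and the shift $\sigma$ preserve the Lebesgue measure $\lambda$ on $[0,1[$, this variance coincides with $\mathbb{V}(\varepsilon_k^{\sigma^j\alpha}) = f(\sigma^j\alpha)$, which Proposition \ref{min_var} bounds from below by $\tfrac{d-2}{(d-1)^2}$. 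Summing gives $\sum_j \mathbb{V}(X_j^\alpha) \geq b_\ell \cdot \tfrac{d-2}{(d-1)^2}$, and plugging this back into the previous inequality yields the claim.

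There is no real obstacle here; the only minor point to verify carefully is the invariance argument, i.e.\ that the precomposition by $S^{q_{\ell,j}/d^j}$ (and by $\sigma^j$) does not change the variance under $\lambda$. This is routine since $\lambda$ is invariant under both the shift and the odometer, so the distribution of $\varepsilon_k^{\sigma^j\alpha}\circ S^{q_{\ell,j}/d^j}\circ \sigma^j$ is the same as that of $\varepsilon_k^{\sigma^j\alpha}$, whose variance was explicitly computed in the proof of Proposition \ref{min_var}.
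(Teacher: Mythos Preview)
Your proof is correct and follows exactly the approach the paper indicates: the paper's own proof consists of the single line ``Finally, by \eqref{min_var_lemme} and Proposition \ref{min_var}, we have'', and your argument is precisely the unpacking of that sentence via the decomposition \eqref{form_return_simple}. The invariance remark you add (that precomposition by $\sigma^j$ and by powers of $S$ preserves $\lambda$, so $\mathbb{V}(X_j^\alpha)=f(\sigma^j\alpha)$ whenever $a_{\ell,j}\neq 0$) is the implicit content behind the paper's appeal to Proposition \ref{min_var}, and your observation that $X_j^\alpha\equiv 0$ for $a_{\ell,j}=0$ is what makes only $b_\ell$ terms contribute.
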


Now, we have a minimization of the variance, the first asumption that we need for our version of Local limit theorem.

\subsection{Complex perturbations of Perron-Frobenius operator}

Then, we introduce the Perron-Frobenius operator to prove the Local limit theorem, in other words, to prove  Proposition~\ref{llt}. 
The functions $U_j$ are not continuous, hence the operator does not act on Holder continuous functions. Hopefully, we can study the operator on a space of bounded oscillation functions.

In this section, we note $\mathcal{D}=\intervEnt{0}{d-1}$  and $\Sigma_d=\mathcal{D}^{\mathbb{N}^*}$.
The oscillation of a function $g\colon\Sigma_d\to\mathbb{C}$ on a subset $B$ of $\Sigma_d$ is defined by
\[osc(g,B) := \sup{\{\abso{g(x)-g(y)} : (x,y)\in B^2\}}.\]
Fix $\delta\in (\frac1d,1-\frac{1}{d})$.
\begin{definition}[$\delta-$oscillations]
    The $\delta-$oscillation of a function $g : \Sigma_d\to\mathbb{C}$, is defined by
\[\abso{g}_\delta := \sup_{n\in\mathbb{N}}\left(\delta^{-n}\sum_{\abso{a}=n}osc(g,[a])d^{-n}\right).\]
\end{definition}
Note a function $g$ such that $\abso{g}_\delta<+\infty$ is bounded.
\begin{definition}[$\delta-$bounded-oscillation functions]
We denote by  $OSC_\delta$ the set of $\delta-$bounded-oscillation function $g : \Sigma_d\to\mathbb{C}$. 
Given $\gamma>0$, we endow $OSC_\delta$ with the balanced norm $\norme{g}_{\delta,\gamma} = \max(\|g\|_\infty,\gamma\abso{g}_\delta)$, which makes it a Banach space.
\end{definition}

Then, we will work in the Banach space $(OSC_\delta,\norme{g}_{\delta,\gamma})$.

For convenience, and to accomodate with previous  references, we extend the definition of $U_j$ for $j<0$ by $U_j=0$.
\begin{definition}[Perron-Frobenius operators]
\label{perfrob}
    We define the Perron-Frobenius operator for $z\in\mathbb{C}$, $\alpha\in\{1,2\}^\mathbb{N}$ and $j\in\mathbb{Z}$ by
    \[\mathcal{L}^{(j)}_{\alpha,z} : g\in OSC_\delta\mapsto \left(\mathcal{L}^{(j)}_{\alpha,z}(g) : x\in \intervEnt{0}{d-1}^{\mathbb{N}^*}\mapsto\frac{1}{d}\sum_{w\in \intervEnt{0}{d-1}}e^{z U_j^\alpha(wx)}g(wx)\right). \]
\end{definition}
Observe that $\mathcal{L}:=\mathcal{L}_{\alpha,0}$ does not depend on $\alpha$.
Note that $\mathcal{L}^{(j)}_{\alpha,z}=\mathcal{L}$ for $j<0$.

The Perron-Frobenius operators satisfy a kind of Doeblin-Fortet (Lasota-Yorke) inequality.
\begin{proposition}
    \label{Perron_Frob_osc}
    For $g\in OSC_\delta$, $j\in\mathbb{Z}$, for all $\alpha\in \{1,2\}^\mathbb{N}$ and $t\in\mathbb{R}$\begin{equation}\label{ineq_osc_Perron}\abso{\mathcal{L}_{\alpha,it}^{(j)}(g)}_\delta\leq \delta\abso{g}_\delta+\frac{2}{d}\norme{g}_\infty.\end{equation}
\end{proposition}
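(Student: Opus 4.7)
The plan is to adapt the standard Lasota--Yorke (Doeblin--Fortet) strategy to the $\delta$-oscillation norm. For $x,y$ in a cylinder $[a]$ of length $n$ and any $w\in\mathcal D$, the points $wx,wy$ lie in the common cylinder $[wa]$. Using $|e^{itU_j^\alpha}|=1$ and the algebraic identity $|AB-A'B'|\le|B-B'|+\|B\|_\infty|A-A'|$ with $A=e^{itU_j^\alpha(w\cdot)}$ and $B=g(w\cdot)$, one obtains
\[
osc\bigl(\mathcal L_{\alpha,it}^{(j)}g,[a]\bigr)\le \frac{1}{d}\sum_{w\in\mathcal D}\bigl[osc(g,[wa])+\|g\|_\infty\,osc\bigl(e^{itU_j^\alpha},[wa]\bigr)\bigr].
\]
After multiplying by $\delta^{-n}d^{-n}$, summing over all length-$n$ cylinders $[a]$, and reindexing $(a,w)\mapsto a'=wa$ as a length-$(n+1)$ cylinder, the $g$-contribution is bounded by $\delta|g|_\delta$, since the extra level of cylinder length absorbs one factor of $\delta$. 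The $\|g\|_\infty$-contribution, using $osc(e^{itU_j^\alpha},[a'])\le 2\,\mathds 1[U_j^\alpha \text{ non-constant on }[a']]$, is at most $\frac{2\|g\|_\infty}{d}\sup_{n\ge 0}\frac{N_{n+1}}{(d\delta)^n}$, where $N_m$ is the number of length-$m$ cylinders on which $U_j^\alpha$ is non-constant.

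The main obstacle is the combinatorial bound $N_m\le 1$ for every $m\ge 1$. Two structural facts will be needed. First, direct inspection of the recursion \eqref{f_return_2} for $\varepsilon_k^{\sigma^j\alpha}$ shows that this function is constant on every length-$m$ cylinder whose pattern contains at least one digit different from $d-1$: the position and value of the first non-$(d-1)$ digit, which alone determine $\varepsilon$, are then already prescribed. The only length-$m$ cylinder where $\varepsilon_k^{\sigma^j\alpha}$ genuinely varies is $[(d-1)^m]$. Second, the odometer $S$ acts as a permutation of the set of length-$m$ cylinders: the explicit formula $S([(d-1)^r c w])=[0^r(c+1)w]$ for $c\neq d-1$, together with $S([(d-1)^m])=[0^m]$ obtained by iterating the recursive clause $S(x)=0S(\sigma x)$, exhibits this bijection. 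Since $q_{\ell,j}/d^j$ is an integer, $S^{q_{\ell,j}/d^j}$ is an integer iterate of $S$ and also permutes length-$m$ cylinders; hence $U_j^\alpha=\mathrm{sgn}(a_{\ell,j})\,\varepsilon_k^{\sigma^j\alpha}\circ S^{q_{\ell,j}/d^j}$ is non-constant on exactly one length-$m$ cylinder, namely the preimage of $[(d-1)^m]$ under $S^{q_{\ell,j}/d^j}$. The degenerate cases $j<0$ (where by convention $\mathcal L^{(j)}_{\alpha,it}=\mathcal L$) and $a_{\ell,j}=0$ trivially yield $U_j^\alpha\equiv 0$ and $N_m=0$.

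Putting the pieces together, the hypothesis $\delta>1/d$ ensures $d\delta>1$, so $\sup_{n\ge 0}(d\delta)^{-n}=1$ is attained at $n=0$, and the $\|g\|_\infty$-contribution is bounded by $\frac{2}{d}\|g\|_\infty$, yielding the desired inequality $|\mathcal L_{\alpha,it}^{(j)}g|_\delta\le \delta|g|_\delta+\tfrac{2}{d}\|g\|_\infty$.
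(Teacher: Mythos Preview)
Your proof is correct and follows essentially the same route as the paper's own argument. Both identify the single ``bad'' length-$(n+1)$ cylinder---the preimage of $[(d-1)^{n+1}]$ under the odometer iterate---on which $U_j^\alpha$ is non-constant, bound the contribution of that cylinder by $2\|g\|_\infty/d^{n+1}$, and use $\delta>1/d$ to absorb the factor $(d\delta)^{-n}$; the only cosmetic difference is that you first apply the algebraic splitting $|AB-A'B'|\le |B-B'|+\|g\|_\infty|A-A'|$ uniformly before isolating the bad cylinder, whereas the paper treats the bad cylinder directly with the crude bound $2\|g\|_\infty$.
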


\begin{proof}
    Let $n\in\mathbb{N}^*$. There exists a unique word $a^'$ in the alphabet $\intervEnt{0}{d-1}$ such that $\abso{a^'}=n$, and one letter $\omega^'$ such that \[\forall x\in \Sigma_d,S^{\frac{q_{l,j}}{d^j}}(\omega^{'}a^'x)\in [(d-1)^{n+1}].\]
    For $\omega\ne\omega^{'}$ or $a\ne a^'$, we get \[\forall x,y\in \intervEnt{0}{d-1}^{\mathbb{N}^*},U_j(\omega a x)=U_j(\omega a y).\]
    Then, for $\omega\ne\omega^'$, \[\abso{e^{itU_j(\omega ax)}g(\omega ax)-e^{itU_j(\omega ay)}g(\omega ay)}=\abso{g(\omega a x)-g(\omega a y)}\leq osc(g,[\omega a]).\]
    For $\omega=\omega^'$, we get \[\abso{e^{itU_j(\omega ax)}g(\omega ax)-e^{itU_j(\omega ay)}g(\omega ay)}\leq\abso{g(\omega a x)}+\abso{g(\omega a y)}\leq 2\norme{g}_\infty.\]
    Then, \[\begin{array}{ll}\sum_{\abso{a}=n}osc\left(\mathcal{L}_{\alpha,it}^{(j)}(g),[a])\right)d^{-n}&\leq \frac{1}{d^{n+1}}\sum_{\abso{a}=n}\sum_{\abso{\omega}=1}osc\left(e^{itU_j(\cdot)}g(\cdot),[\omega a]\right)\\
    &\leq \frac{1}{d^{n+1}}\sum_{\abso{a}=n\ and\ (\omega a\ne \omega^'a^')}osc(g,[\omega a])+\frac{2}{d^{n+1}}\norme{g}_\infty\\
    &\leq \delta^{n+1}\abso{g}_\delta+\frac{2}{d^{n+1}}\norme{g}_\infty.\end{array}\]
    We have $\delta>\frac{1}{d}$.
Therefore
    \[\abso{\mathcal{L}_{\alpha,it}^{(j)}(g)}_\delta\leq \delta\abso{g}_\delta+\frac{2}{d}\norme{g}_\infty.\]
\end{proof}

We remark that \begin{equation}\label{contract_control}\norme{\mathcal{L}_{\alpha,it}^{(j)}(g)}_\infty\leq \norme{g}_\infty.\end{equation}

\begin{definition}[Composed Perron-Frobenius]
    We define the composed Perron-Frobenius operator for $z\in\mathbb{C}, j\in\mathbb{Z}$ and $n\in\mathbb{N}$ by \[\mathcal{L}^{j,n}_{\alpha,z}=\mathcal{L}^{(j+n-1)}_{\alpha,z}\circ \dots\circ \mathcal{L}^{(j)}_{\alpha,z}\]
\end{definition}

By induction of the Proposition \ref{Perron_Frob_osc}, we get uniform control on the composed Perron-Frobenius operator.

\begin{proposition}
    For $g\in OSC_\delta$, $t\in\mathbb{R}$ and for all $n\in\mathbb{N}$, for all $\alpha\in\{1,2\}^\mathbb{N}$
\begin{equation}\label{ineq_osc_induction}\abso{\mathcal{L}_{\alpha,it}^{j,n}(g)}_\delta\leq\delta^n\abso{g}_\delta+\beta\norme{g}_\infty,\end{equation}
where $\beta=\frac{2}{d(1-\delta)}.$
\end{proposition}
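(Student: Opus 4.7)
The plan is a straightforward induction on $n$, using the single-step Doeblin--Fortet estimate from Proposition~\ref{Perron_Frob_osc} together with the uniform $L^\infty$ contraction recalled in \eqref{contract_control}. The base case $n=0$ is trivial (taking $\mathcal{L}_{\alpha,it}^{j,0}=\mathrm{Id}$, so $\abso{g}_\delta\leq\delta^0\abso{g}_\delta+\beta\norme{g}_\infty$ holds with equality on the first summand).

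For the inductive step, I would write
\[
\mathcal{L}_{\alpha,it}^{j,n+1}(g)=\mathcal{L}_{\alpha,it}^{(j+n)}\bigl(\mathcal{L}_{\alpha,it}^{j,n}(g)\bigr),
\]
apply \eqref{ineq_osc_Perron} to the function $\mathcal{L}_{\alpha,it}^{j,n}(g)\in OSC_\delta$, and then use the inductive hypothesis together with \eqref{contract_control} iterated $n$ times to bound $\norme{\mathcal{L}_{\alpha,it}^{j,n}(g)}_\infty\leq\norme{g}_\infty$. This gives
\[
\abso{\mathcal{L}_{\alpha,it}^{j,n+1}(g)}_\delta\leq \delta\bigl(\delta^n\abso{g}_\delta+\beta\norme{g}_\infty\bigr)+\frac{2}{d}\norme{g}_\infty=\delta^{n+1}\abso{g}_\delta+\Bigl(\delta\beta+\frac{2}{d}\Bigr)\norme{g}_\infty.
\]

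To close the induction it suffices to check that the coefficient in front of $\norme{g}_\infty$ does not exceed $\beta$, i.e.\ that $\delta\beta+\frac{2}{d}\leq\beta$, which rearranges to $\beta\geq\frac{2}{d(1-\delta)}$. This is exactly the value chosen for $\beta$ in the statement, and uses $\delta<1$ (which is part of the standing assumption $\delta\in(\tfrac1d,1-\tfrac1d)$). No step is a real obstacle here: the main conceptual content has already been absorbed into Proposition~\ref{Perron_Frob_osc}, whose proof is the place where the non-uniform discontinuity of the $U_j$ is handled by isolating the single ``bad'' cylinder $[\omega' a']$. The present proposition is then just the standard iteration of a Lasota--Yorke inequality, with the geometric-series constant $\beta=\frac{2}{d(1-\delta)}$ arising from summing $\sum_{k\geq0}\delta^k\cdot\frac{2}{d}$.
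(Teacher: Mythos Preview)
Your proposal is correct and follows exactly the approach the paper indicates: the paper simply states that the proposition is obtained ``by induction of the Proposition~\ref{Perron_Frob_osc}'' without spelling out the details, and your argument is precisely that induction, combining the single-step Lasota--Yorke inequality \eqref{ineq_osc_Perron} with the $L^\infty$ non-expansion \eqref{contract_control} and closing the geometric series via $\delta\beta+\tfrac{2}{d}=\beta$.
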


We prove the local limit theorem by the spectral method, that relies on the analyticity of the Perron-Frobenius operators.
Next lemma is helpful for that purpose.

\begin{lemme}
    \label{control_complex_term}
    There exists $C>0$ such that for all $j\in\mathbb{N}$ and for all $\alpha\in \{1,2\}^\mathbb{N}$,\[\forall g\in OSC_\delta,\norme{U_j^\alpha(\cdot)g}_{\delta,\gamma}\leq C\norme{g}_{\delta,\gamma}.\]
\end{lemme}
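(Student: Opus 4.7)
The plan is to bound each piece of the balanced norm $\norme{f}_{\delta,\gamma}=\max(\norme{f}_\infty,\gamma\abso{f}_\delta)$ separately, using the pointwise product rule $osc(fg,[a])\le \norme{f\mathds{1}_{[a]}}_\infty osc(g,[a])+\norme{g\mathds{1}_{[a]}}_\infty osc(f,[a])$, and then recombine. The uniform bound is immediate: by the recursive definition \eqref{f_return_2}, the function $\varepsilon_k^\alpha$ takes values in $\{0,\alpha_{k+i}\}$ with $\alpha\in\{1,2\}^\mathbb{N}$, hence $\norme{U_j^\alpha}_\infty\le 2$ and therefore $\norme{U_j^\alpha g}_\infty\le 2\norme{g}_\infty$. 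Everything reduces to controlling the $\delta$-oscillation seminorm.

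The key structural input, already used inside the proof of Proposition~\ref{Perron_Frob_osc}, is that $U_j^\alpha$ is piecewise constant on cylinders in a very strong sense: for every $n$, among the $d^n$ cylinders $[a]$ of length $n$ there is \emph{at most one} on which $U_j^\alpha$ is not constant, namely the unique cylinder whose image under $S^{q_{\ell,j}/d^j}$ is contained in $[(d-1)^n]$. This is because $\varepsilon$, by its recursive definition, is constant on any cylinder whose first non-$(d-1)$ digit appears within the prefix of length $n$. On that distinguished cylinder the oscillation of $U_j^\alpha$ is still bounded by $2$, so $\sum_{\abso{a}=n} osc(U_j^\alpha,[a])d^{-n}\le 2d^{-n}$.

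Plugging these two ingredients into the product rule and summing over cylinders of length $n$ yields
\[
\delta^{-n}\sum_{\abso{a}=n}osc(U_j^\alpha g,[a])d^{-n}\le 2\,\delta^{-n}\sum_{\abso{a}=n}osc(g,[a])d^{-n}+2\norme{g}_\infty(\delta d)^{-n}\le 2\abso{g}_\delta+2\norme{g}_\infty,
\]
where the last step uses $\delta d>1$ to bound $(\delta d)^{-n}$ by $1$. Taking the supremum over $n$ gives $\abso{U_j^\alpha g}_\delta\le 2\abso{g}_\delta+2\norme{g}_\infty$. Combined with $\norme{U_j^\alpha g}_\infty\le 2\norme{g}_\infty$ and the inequalities $\norme{g}_\infty,\gamma\abso{g}_\delta\le \norme{g}_{\delta,\gamma}$, this gives $\norme{U_j^\alpha g}_{\delta,\gamma}\le 2(1+\gamma)\norme{g}_{\delta,\gamma}$, providing the required constant $C:=2(1+\gamma)$, manifestly independent of $j$ and $\alpha$.

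The only non-routine step is the geometric claim that $U_j^\alpha$ has ``only one bad cylinder per scale''; this is precisely the observation already extracted in the proof of Proposition~\ref{Perron_Frob_osc}, so I do not expect any serious obstacle here. The remainder is the product rule plus a geometric-series bound powered by $\delta>1/d$.
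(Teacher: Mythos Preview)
Your argument is correct and follows essentially the same route as the paper: bound $\norme{U_j^\alpha}_\infty$ uniformly, exploit the ``single bad cylinder per scale'' structure of $\varepsilon$ (exactly the observation from Proposition~\ref{Perron_Frob_osc}) to control the oscillation contribution, and combine via the product rule. The paper packages the second step as a direct bound $\abso{U_j^\alpha}_\delta\le 2D$ and then invokes the algebra inequality $\abso{fg}_\delta\le \norme{f}_\infty\abso{g}_\delta+\norme{g}_\infty\abso{f}_\delta$, whereas you work scale by scale before taking the supremum; these are equivalent. One minor point: in the paper's convention the symbol used in this section denotes the \emph{centered} variable, so the sup-norm bound is $D=2+\tfrac{d}{d-1}$ rather than $2$; this only changes the final constant, not the argument.
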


\begin{proof}
    By definition, $\norme{U_j^\alpha(\cdot)g}_{\delta,\gamma}=\max\left(\norme{U_j^\alpha(\cdot)g}_\infty,\gamma\abso{U_j^\alpha(\cdot)g}_\delta\right)$and $\abso{U_j^{\alpha}(\cdot)g}_\delta\leq \norme{g}_\infty\abso{U_j^\alpha}_\delta+\norme{U_j^\alpha}_\infty\abso{g}_\delta$.
    We get  \begin{equation}\label{norme_u_uniform}\norme{U_j^\alpha}_\infty\leq 2+\beta(\sigma^j\alpha)\leq 2+\frac{d}{d-1}.\end{equation}
    We pose $D=2+\frac{d}{d-1}$.
    We know that for $n\in\mathbb{N}^*$ and $x,y\in\Sigma_d$ that for $a$ a word such that $\abso{a}=n$, we get that $Osc(U^\alpha_j,[a])\leq 2D$ and if $a\ne S^{-\frac{q_{l,j}}{d^j}}((d-1)^n)$, $Osc(U_j^\alpha,[a])=0$.
    Then \[\delta^{-n}\sum_{\abso{a}=n}Osc(U_j^\alpha,[a])d^{-n}\leq \frac{2D}{(\delta d)^n}.\]
    For $n=0$, by \eqref{norme_u_uniform}, \[Osc(U_j^\alpha,\Sigma_d)\leq 2\norme{U_j^\alpha}_\infty\leq 2D.\]
    We know also that $\delta>\frac{1}{d}$, hence $\frac{1}{(\delta d)^{n-1}}<1$.
    Then, $\abso{U_j^\alpha}_\delta\leq 2D$.
    Writing $C=2\left(\frac{1}{\gamma}+D\right)>0$, we get \[\norme{U_j^\alpha(\cdot)g}_{\delta,\gamma}\leq 2\abso{g}_\delta+2D\norme{g}_\infty\leq C\norme{g}_{\delta,\gamma}.\]
\end{proof}

Thus, we obtain the lemma about the analyticity of $(z\in \mathbb{C}\mapsto \mathcal{L}_{z}^{(j)})$.

\begin{lemme}
    \label{Perron_analytic}
    For all $\alpha\in\{1,2\}^\mathbb{N}$, the function $z\in \mathbb{C}\mapsto \mathcal{L}_{\alpha,z}^{(j)}\in\mathscr{L}(OSC_\delta)$ is analytic.
\end{lemme}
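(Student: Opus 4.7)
My plan is to verify analyticity directly from the definition, by expanding the complex exponential in its (entire) Taylor series and checking that the resulting power series of bounded operators converges in the operator norm on $\mathscr{L}(OSC_\delta)$ for every $z\in\mathbb{C}$. This will even show that $z\mapsto\mathcal{L}^{(j)}_{\alpha,z}$ is entire.

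For each fixed $x$ and each letter $w$, the expansion $e^{zU_j^\alpha(wx)}=\sum_{n\geq 0}\frac{z^n}{n!}(U_j^\alpha(wx))^n$ holds (with infinite radius of convergence, uniformly in $w,x$, thanks to the uniform bound $\norme{U_j^\alpha}_\infty\leq 2+\frac{d}{d-1}$ established inside the proof of Lemma~\ref{control_complex_term}). Summing over $w$ and dividing by $d$ gives formally
\[
\mathcal{L}^{(j)}_{\alpha,z}(g)=\sum_{n\geq 0}\frac{z^n}{n!}M_n(g),\qquad M_n(g) := \mathcal{L}\bigl((U_j^\alpha)^n\cdot g\bigr),
\]
where $\mathcal{L}=\mathcal{L}^{(j)}_{\alpha,0}$ is the unperturbed transfer operator (which does not depend on $j$ or $\alpha$). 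The remaining task is to bound $\norme{M_n}_{op}$ on $(OSC_\delta,\norme{\cdot}_{\delta,\gamma})$.

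To this end I would iterate Lemma~\ref{control_complex_term}: the multiplication operator $g\mapsto U_j^\alpha\cdot g$ has operator norm at most $C$ on $OSC_\delta$, and hence by induction the operator $g\mapsto(U_j^\alpha)^n\cdot g$ has norm at most $C^n$. Combined with the fact that $\mathcal{L}$ is bounded on $OSC_\delta$ (immediate from Proposition~\ref{Perron_Frob_osc} at $t=0$ together with \eqref{contract_control}), this yields $\norme{M_n}_{op}\leq K C^n$ for a constant $K$ independent of $n$. Consequently
\[
\sum_{n\geq 0}\frac{|z|^n}{n!}\norme{M_n}_{op}\leq K e^{C|z|}<\infty
\]
for every $z\in\mathbb{C}$, so the partial sums converge absolutely in operator norm, uniformly on compact subsets of $\mathbb{C}$. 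This identifies $z\mapsto\mathcal{L}^{(j)}_{\alpha,z}$ with an entire $\mathscr{L}(OSC_\delta)$-valued function.

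There is no genuine obstacle at this stage: the only non-trivial ingredient is the uniform control of the multiplication operator $g\mapsto U_j^\alpha\cdot g$ provided by Lemma~\ref{control_complex_term}, which itself uses that $U_j^\alpha$ is bounded and has oscillation concentrated on a single cylinder per generation. Once that lemma is available, the proof reduces to the standard exponential power-series argument for entire operator-valued functions.
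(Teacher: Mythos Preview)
Your proof is correct and follows essentially the same approach as the paper: both expand $e^{zU_j^\alpha}$ as a power series, invoke Lemma~\ref{control_complex_term} to bound the multiplication operator by $U_j^\alpha$ (iterated to get $C^n$ for the $n$-th power), use Proposition~\ref{Perron_Frob_osc} at $t=0$ for the boundedness of $\mathcal{L}$, and conclude from the resulting bound $\sum_n \frac{|z|^n}{n!}\,\norme{M_n}_{op}\le K e^{C|z|}$ that the operator-valued series converges absolutely in $\mathscr{L}(OSC_\delta)$ for every $z$. Your write-up is slightly more explicit about uniform convergence on compacta, but there is no substantive difference.
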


\begin{proof}
    Let $z\in \mathbb{C}$.
    By Lemma \ref{control_complex_term}, we get that the multiplication by $U_j^\alpha$ is a bounded operator on $OSC_\delta$, with norm bounded by some constant $C$.
    In addition, $\sum_{k\in\mathbb{N}}\frac{\abso{z}^k\abso{\norme{U_j^\alpha(\cdot)}}_{\delta,\gamma}^k}{k!}\leq \sum_{k\in\mathbb{N}}\frac{C^k\abso{z}^k}{k!}=e^{C\abso{z}}<\infty$.
    By Proposition \ref{Perron_Frob_osc}, for $t=0$, $\mathcal{L}$ is a continuous operator on $OSC_\delta$ and  \[\mathcal{L}_{\alpha,z}^{(j)}(g)=\mathcal{L}\circ \left(\sum_{k\in\mathbb{N}}\frac{z^k}{k!}(U_j^\alpha(\cdot))^kg\right).\]
    Therefore, $(z\in \mathbb{C}\mapsto \mathcal{L}_{\alpha,z}^{(j)})$ is analytic.
\end{proof}



Since we are dealing with the composition of different operators and not a single one, the usual spectral perturbation approach does not apply. We need to use complex cone approach introduced in \cite{Rugh} and developped in \cite{Dub} to keep track of the spectral data. We follow the strategy in \cite{Kif}, adapting the cone technique to our space of bounded oscillation instead of Hölder space.

We define for $a>0$, 
\[\mathcal{C}_{a,\mathbb{R}}:=\{g\geq 0 : \abso{g}_\delta\leq a\intInd{\Sigma_d}{g}{\lambda}\}.\]
We define for $n\in\mathbb{N}$ and for two sequences of infinite words $X=(x_w)_{w\in\mathcal{D}^n}$ and $X^'=(x_w^')_{w\in\mathcal{D}^n}$, thus we define 
\[l_{(X,X^')}^{(n)} : g\in OSC_\delta\mapsto \sum_{\abso{w}=n}\frac{1}{(\delta d)^n}(g(w\cdot x_w)-g(w\cdot x_w^')).\]
Let \[m_{a,X,X^'}^{(n)} : g\in OSC_\delta\mapsto a\intInd{\Sigma_d}{g}{\lambda}-l_{(X,X^')}^{(n)}(g).\] We deduce immediately that \begin{equation}\label{convexe_cone}\mathcal{C}_{a,\mathbb{R}}=\{g\geq 0 : \forall n\in\mathbb{N},\forall X,X^'\in (\Sigma_d)^{\mathcal{D}^n},m_{a,X,X^'}^{(n)}(g)\geq 0\}.\end{equation}
\begin{proposition}
\label{control_convex_cone}
    For $\frac{1}{d}<\delta<1-\frac{2}{d}$ and $a>\frac{2}{d(1-\delta)-1}$, there exists $\iota\in ]0,1[$ such that \[\mathcal{L}\left(\mathcal{C}_{a,\mathbb{R}}\right)\subset \mathcal{C}_{\iota a, \mathbb{R}}.\]
\end{proposition}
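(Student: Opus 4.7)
The plan is to verify directly that for any $g\in\mathcal{C}_{a,\mathbb{R}}$ the image $\mathcal{L}(g)$ satisfies the two defining conditions of the tighter cone $\mathcal{C}_{\iota a,\mathbb{R}}$ for a suitable $\iota\in(0,1)$: non-negativity, and the oscillation bound $|\mathcal{L}(g)|_\delta\leq\iota a\int\mathcal{L}(g)\,d\lambda$. Non-negativity is immediate from $\mathcal{L}(g)(x)=\frac{1}{d}\sum_{w\in\mathcal{D}}g(wx)$ together with $g\geq 0$, and throughout I would use the invariance $\int\mathcal{L}(g)\,d\lambda=\int g\,d\lambda$, the standard duality of the Perron--Frobenius operator with the uniform measure $\lambda$.

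For the oscillation seminorm, I would invoke Proposition~\ref{Perron_Frob_osc} at $t=0$, obtaining the Lasota--Yorke-type estimate $|\mathcal{L}(g)|_\delta\leq\delta|g|_\delta+\frac{2}{d}\|g\|_\infty$. To eliminate $\|g\|_\infty$ using only the cone condition I would note that $g\geq 0$ implies $\min g\leq\int g\,d\lambda$, while $\|g\|_\infty-\min g\leq\mathrm{osc}(g,\Sigma_d)\leq|g|_\delta\leq a\int g\,d\lambda$, so altogether $\|g\|_\infty\leq(1+a)\int g\,d\lambda$. Substituting,
\[
|\mathcal{L}(g)|_\delta\leq\left(\delta a+\frac{2(1+a)}{d}\right)\int\mathcal{L}(g)\,d\lambda,
\]
which suggests setting $\iota:=\delta+\frac{2(1+a)}{da}$.

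The main obstacle is the algebraic bookkeeping to extract $\iota<1$ from the hypothesis: rearranging $\iota<1$ gives a condition of the form $a(d(1-\delta)-2)>2$, which has to be matched against the announced $a>\frac{2}{d(1-\delta)-1}$ (using $\delta<1-\frac{2}{d}$ to guarantee positivity of the denominators). Should the absolute constants not align exactly, one can instead exploit a sharper bound available for the untwisted $\mathcal{L}$: the discontinuity responsible for the extra $\|g\|_\infty$ contribution in the proof of Proposition~\ref{Perron_Frob_osc} disappears at $t=0$, leaving the cleaner estimate $|\mathcal{L}(g)|_\delta\leq\delta|g|_\delta$ and hence $\iota=\delta<1$ with no condition on $a$. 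This second observation suggests that the threshold on $a$ stated in the proposition is really dimensioned for the subsequent complex perturbations $\mathcal{L}^{(j)}_{\alpha,it}$ of $\mathcal{L}$, where the $\|g\|_\infty$ term is genuinely present and forces a non-trivial lower bound on $a$.
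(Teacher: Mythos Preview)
Your approach is essentially the paper's: both apply the Lasota--Yorke inequality of Proposition~\ref{Perron_Frob_osc}, bound $\|g\|_\infty\le |g|_\delta+\int g\,d\lambda$ via the oscillation at level $n=0$, and arrive at the very same constant $\iota=\delta+\frac{2}{d}+\frac{2}{da}$. The mismatch you flag---$\iota<1$ requires $a>\frac{2}{d(1-\delta)-2}$ rather than the stated $a>\frac{2}{d(1-\delta)-1}$---is present in the paper's own computation as well and appears to be a typo in the hypothesis. Your second observation, that for the \emph{untwisted} operator $\mathcal{L}$ one actually has the clean contraction $|\mathcal{L}(g)|_\delta\le\delta|g|_\delta$ (because the discontinuity term in the proof of Proposition~\ref{Perron_Frob_osc} vanishes at $t=0$), is correct and yields $\iota=\delta$ with no constraint on $a$; this is a genuine simplification over the paper's argument for the statement as written, and your remark that the threshold on $a$ is really needed only later for the complex perturbations is on point.
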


\begin{proof}
Let $a>\frac{2}{d(1-\delta)-1}$ and $g\in\mathcal{C}_{a,\mathbb{R}}$.
Then, by Proposition \ref{Perron_Frob_osc},
    \[l_{X,X^'}^{(n)}(\mathcal{L}(g))\leq\abso{\mathcal{L}(g)}_\delta\leq \delta\abso{g}_\delta+\frac{2}{d}\norme{g}_\infty.\]
    We have for each $(x,x^')\in \Sigma_d^2$, \[g(x)=g(x)-g(x^')+g(x^')\leq \abso{g}_\delta+g(x^'),\]
    thus, \begin{align}\label{major_osc}\forall x^'\in\Sigma_d,\norme{g}_\infty\leq \abso{g}_\delta+g(x^').\end{align}
    Then, we get,
    \[\forall y\in \Sigma_d,l_{X,X^'}^{(n)}(\mathcal{L}(g))\leq \left(\delta+\frac{2}{d}\right)\abso{g}_\delta+\frac{2}{d}g(y).\]
    Integrating with respect to $y$, we obtain,
    \begin{align}\label{l_ineq}l_{X,X^'}^{(n)}(\mathcal{L}(g))\leq \left(\delta+\frac{2}{d}\right)\abso{g}_\delta+\frac{2}{d}\intInd{\Sigma_d}{g}{\lambda}\leq \left(\frac{2}{d}+a\left(\delta+\frac{2}{d}\right)\right)\intInd{\Sigma_d}{g}{\lambda}.\end{align}
    Moreover, $a>\frac{2}{d(1-\delta)-1}$ and $\frac{1}{d}<\delta<1-\frac{2}{d}$, hence \begin{align} \label{m_ineq} m_{a,X,X^'}^{(n)}(\mathcal{L}(g))=a\intInd{\Sigma_d}{g}{\lambda}-l_{X,X^'}^{(n)}(\mathcal{L}(g))\geq a\left(1-\left(\frac{2}{da}+\delta+\frac{2}{d}\right)\right)\intInd{\Sigma_d}{g}{\lambda}.\end{align}
    We pose $\iota=\left(\frac{2}{da}+\delta+\frac{2}{d}\right)\in ]0,1[$.
    By \eqref{m_ineq}, \[m_{a,X,X^'}(\mathcal{L}(g))\geq a(1-\iota)\intInd{\Sigma_d}{g}{\lambda}.\]
    Then, \[m_{\iota a,X,X^'}^{(n)}(\mathcal{L}(g))\geq 0.\]
    We deduce that \[\mathcal{L}\left(\mathcal{C}_{a,\mathbb{R}}\right)\subset \mathcal{C}_{\iota a, \mathbb{R}}.\]
\end{proof}


We get also $\mathcal{L}(\mathcal{C}_{a,\mathbb{R}}\setminus \{0\})\subset \mathcal{C}_{\iota a,\mathbb{R}}\setminus \{0\}$

\begin{lemme}\label{lem:hilbert}
    For any $g\in \mathcal{C}_{\iota a,\mathbb{R}}$ we have
    \[
    h_{\mathcal{C}_{a,\mathbb{R}}}(g,1)\le\ln\left(\frac{\max(\sup g,(1+\iota)\int g)}{\min(\inf g,(1-\iota)\int g)}\right),
    \]
    where $h_{\mathcal{C}_{a,\mathbb{R}}}$ is the Hilbert metric of the corresponding cone.
\end{lemme}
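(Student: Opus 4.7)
The plan is to work directly from the definition of the Hilbert pseudo-metric: for $f,g$ in the cone $\mathcal{C}_{a,\mathbb{R}}$, one has $h_{\mathcal{C}_{a,\mathbb{R}}}(f,g)=\ln(\beta/\alpha)$, where $\alpha=\sup\{\lambda>0:g-\lambda f\in\mathcal{C}_{a,\mathbb{R}}\}$ and $\beta=\inf\{\mu>0:\mu f-g\in\mathcal{C}_{a,\mathbb{R}}\}$. Taking $f=\mathbf{1}$, I note first that $\mathbf{1}\in\mathcal{C}_{a,\mathbb{R}}$ is immediate, since constant functions have zero $\delta$-oscillation. The result will follow from an explicit lower bound on $\alpha$ and an upper bound on $\beta$, each obtained by unpacking the two defining requirements of the cone (positivity and $|\cdot|_\delta\le a\int(\cdot)$).

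For the lower bound on $\alpha$: the condition $g-\lambda\ge 0$ forces $\lambda\le\inf g$, while constants do not contribute to oscillation, so $|g-\lambda|_\delta=|g|_\delta$, and the second cone condition becomes $|g|_\delta\le a\bigl(\int g-\lambda\bigr)$, i.e.\ $\lambda\le\int g-|g|_\delta/a$. Using the assumption $g\in\mathcal{C}_{\iota a,\mathbb{R}}$, we have $|g|_\delta/a\le\iota\int g$, hence $\int g-|g|_\delta/a\ge(1-\iota)\int g$. Therefore
\[
\alpha\ge\min\!\left(\inf g,\,(1-\iota)\!\int g\right).
\]
Symmetrically, for the upper bound on $\beta$: the condition $\mu-g\ge 0$ forces $\mu\ge\sup g$, and the oscillation condition requires $\mu\ge\int g+|g|_\delta/a\le(1+\iota)\int g$. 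Hence
\[
\beta\le\max\!\left(\sup g,\,(1+\iota)\!\int g\right).
\]

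Combining the two bounds gives exactly the estimate in the statement. To make the argument fully rigorous within the framework of the paper, I would verify the membership $g-\lambda\in\mathcal{C}_{a,\mathbb{R}}$ and $\mu-g\in\mathcal{C}_{a,\mathbb{R}}$ by using the characterization \eqref{convexe_cone} through the functionals $m^{(n)}_{a,X,X'}$. Since $l^{(n)}_{X,X'}$ annihilates constants, $l^{(n)}_{X,X'}(g-\lambda)=l^{(n)}_{X,X'}(g)$, and $\sup_{n,X,X'}l^{(n)}_{X,X'}(g)=|g|_\delta$ by taking $x_w,x_w'$ extremal in each cylinder $[w]$; this reduces the cone condition back to the two scalar inequalities used above.

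No step is a genuine obstacle; the only point requiring a little care is tracking that the $|\cdot|_\delta$ semi-norm is translation invariant (constants have zero $\delta$-oscillation), which is what allows the cone test to reduce to a single linear inequality in $\lambda$ (respectively $\mu$) and exploits the extra factor $\iota<1$ provided by the hypothesis $g\in\mathcal{C}_{\iota a,\mathbb{R}}$.
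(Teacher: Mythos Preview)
Your proposal is correct and follows essentially the same route as the paper's proof: both argue directly from the definition of the Hilbert metric, use the translation invariance $|g-\lambda|_\delta=|g|_\delta$ to reduce each cone condition to a linear constraint in $\lambda$ (resp.\ $\mu$), and then invoke $|g|_\delta\le\iota a\int g$ to obtain the bounds $\min(\inf g,(1-\iota)\int g)$ and $\max(\sup g,(1+\iota)\int g)$. Your additional remarks on the functionals $m^{(n)}_{a,X,X'}$ and the extremal choice of $x_w,x_w'$ are a bit more explicit than the paper, but the substance is identical.
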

\begin{proof}
By definition the distance between $g$ and $1$ is the infimum of $\ln\frac{Q_2}{Q_1}$, where $Q_1>0$ and $Q_2>0$ satisfies $g-Q_1,Q_2-g\in \mathcal{C}_{a,\mathbb{R}}$, and is infinite if this cannot happen.

Obviously the condition $g-Q_1\ge0$ holds when $Q_1\le \inf g$. The second condition reads 
\[
|g-Q_1|_\delta \le a \int g-Q_1.
\]
Since $g\in\mathcal{C}_{\iota a,\mathbb{R}}$, 
\[
|g-Q_1|_\delta =|g|_\delta\le \iota a\int g \le a\int g - aQ_1,
\]
provided $Q_1\le (1-\iota)\int g$. Therefore $Q_1\le\min(\inf g, (1-\iota)\int g)$.

The bounds for $Q_2$ are obtained similarly.
\end{proof}

Now, we suppose also that $\delta\leq \frac{d-1}{d+2}$.
\begin{proposition}\label{pro:fini}
For $a\in(\frac{2}{d(1-\delta)-1},\delta^{-1})$, the diameter in $\mathcal{C}_{a,\mathbb{R}}$ of the image $\mathcal{L}(\mathcal{C}_{a,\mathbb{R}})$ is finite.
\end{proposition}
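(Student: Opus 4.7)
The plan is to control the Hilbert diameter of $\mathcal{L}(\mathcal{C}_{a,\mathbb{R}})$ inside $\mathcal{C}_{a,\mathbb{R}}$ by comparing every element of the image to the constant function $\mathds{1}$, which lies in $\mathcal{C}_{a,\mathbb{R}}$ since $|\mathds{1}|_\delta=0$. By the triangle inequality for the Hilbert pseudometric it suffices to prove the uniform bound $\sup_{g\in\mathcal{L}(\mathcal{C}_{a,\mathbb{R}})} h_{\mathcal{C}_{a,\mathbb{R}}}(g,\mathds{1})<\infty$; the proposition will then follow, with diameter at most twice this supremum.

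To estimate $h_{\mathcal{C}_{a,\mathbb{R}}}(g,\mathds{1})$ via Lemma \ref{lem:hilbert} I need to bound $g=\mathcal{L}f$ from above and below by positive multiples of $\int g\,d\lambda$. The decisive observation is the pointwise inequality
\[
(1-\delta a)\int f\,d\lambda\;\le\;\mathcal{L}f(x)\;\le\;(1+\delta a)\int f\,d\lambda,\qquad x\in\Sigma_d,
\]
valid for every $f\in\mathcal{C}_{a,\mathbb{R}}$. I would derive it by writing $\mathcal{L}f(x)=\frac{1}{d}\sum_{|w|=1}f(wx)$ and comparing $f(wx)$ with the cylinder average $d\int_{[w]}f\,d\lambda$: since this average lies between $\inf_{[w]}f$ and $\sup_{[w]}f$, the defect $|f(wx)-d\int_{[w]}f\,d\lambda|$ is at most $osc(f,[w])$. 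Summing over $w$ and applying the case $n=1$ of the definition of $|f|_\delta$, the total correction is bounded by $\frac{1}{d}\sum_{|w|=1}osc(f,[w])\le\delta|f|_\delta$, and then by the cone condition $|f|_\delta\le a\int f\,d\lambda$. Using the invariance $\int\mathcal{L}f\,d\lambda=\int f\,d\lambda$, the estimate reads $(1-\delta a)\int g\,d\lambda\le g(x)\le(1+\delta a)\int g\,d\lambda$.

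Because $a<\delta^{-1}$ the prefactor $1-\delta a$ is strictly positive, and because the hypothesis on $a$ ensures $\iota\in\,]0,1[$ via Proposition \ref{control_convex_cone}, so is $1-\iota$. Combined with the inclusion $g\in\mathcal{C}_{\iota a,\mathbb{R}}$ supplied by that same proposition, Lemma \ref{lem:hilbert} then yields
\[
h_{\mathcal{C}_{a,\mathbb{R}}}(g,\mathds{1})\;\le\;\ln\frac{\max(1+\delta a,\,1+\iota)}{\min(1-\delta a,\,1-\iota)}\;=:\;M\;<\;\infty,
\]
uniformly in $g\in\mathcal{L}(\mathcal{C}_{a,\mathbb{R}})$. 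The triangle inequality then gives the Hilbert diameter $\le 2M$.

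The main obstacle is the lower bound $\mathcal{L}f(x)\ge(1-\delta a)\int f\,d\lambda$: its \emph{strict} positivity is what converts the cone contraction into a finite Hilbert diameter, and it depends precisely on the standing hypothesis $a<\delta^{-1}$. The rest of the argument is routine bookkeeping with the averaging formula for the Perron--Frobenius operator and the definition of the oscillation semi-norm.
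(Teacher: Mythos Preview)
Your proof is correct and in fact cleaner than the paper's. Both arguments reduce the finite-diameter claim to Lemma~\ref{lem:hilbert} by bounding $\inf\mathcal{L}f$ and $\sup\mathcal{L}f$ in terms of $\int f\,d\lambda$, and both invoke Proposition~\ref{control_convex_cone} to place $\mathcal{L}f$ in $\mathcal{C}_{\iota a,\mathbb{R}}$. The difference is in how the lower bound on $\mathcal{L}f$ is obtained. The paper argues by pigeonhole: it shows that \emph{some} cylinder $[v]$ of depth~$1$ must satisfy $\inf_{[v]}f\ge \tfrac12(1-\delta a)\int f$, and then keeps only that single branch in the sum defining $\mathcal{L}f$, yielding $\mathcal{L}f\ge \tfrac{1-\delta a}{2d}\int f$. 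You instead compare each term $f(wx)$ directly to the cylinder average $d\int_{[w]}f$ and sum the oscillations, which gives the sharper two-sided estimate $|\mathcal{L}f-\int f|\le\delta|f|_\delta\le\delta a\int f$. Your route is more elementary, avoids the contradiction step, and produces better constants (lower bound $(1-\delta a)\int f$ versus $\tfrac{1-\delta a}{2d}\int f$); the paper's pigeonhole argument, on the other hand, is the kind of device that generalises more readily when the transfer operator is not a straight average and one cannot compare branch values to branch integrals so directly.
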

\begin{proof}
Let $b=\frac{1}{2}(1-\delta a)$.
Let $g\in \mathcal{C}_{a,\mathbb{R}}\setminus\{0\}$.
We first claim that there exists $v\in\mathcal{D}$ such that $\inf_{[v]}g\ge b\int g$. Otherwise, for any $v\in\mathcal{D}$
\[
d\int_{[v]}g\le b\int g + osc(g,[v])
\]
and summing up over $v$ yields to, by definition of $|g|_\delta$, 
\[
\int g \le b\int g+\delta |g|_\delta \le (b+\delta a)\int g<\int g,
\]
a contradiction.
Therefore, we have
\[
\mathcal{L}(g)\ge \frac1d \inf_{[v]}g\ge \frac bd \int g.
\]
On  the other hand,
\[
\sup \mathcal{L}g \le \int \mathcal{L}g + |\mathcal{L}g|_\delta \le (1+\iota a)\int g.
\]
By Lemmas \ref{control_convex_cone} and \ref{lem:hilbert} we end up with
\[
h_{\mathcal{C}_{a,\mathbb{R}}}(\mathcal{L}g,1)\le \ln\left(\frac{\max(1+\iota a,1+\iota)}{\min(\frac bd,1-\iota)}\right)<\infty.
\]
The conclusion follows by the triangle inequality.
\end{proof}

We define the complexification of cone $\mathcal{C}_{a,\mathbb{R}}$ by
\[\mathcal{C}_{a,\mathbb{C}}=\mathbb{C}^*\left(\mathcal{C}_{a,\mathbb{R}}+i\mathcal{C}_{a,\mathbb{R}}\right)=\mathbb{C}^*\{x+iy\in\mathbb{C} : x\pm y\in\mathcal{C}_{a,\mathbb{R}}\}.\]

\begin{proposition}
    \label{cone_control}
    Let $D=2+\frac{d}{d-1}$.
    There exists $C>0$ such that 
    for any $U\in OSC_\delta$ with  $\norme{U}_{\delta,\gamma}\leq D$,
    for all $n\in\mathbb{N}$, for all $X,X^'$, for all $a>\frac{2}{d(1-\delta)-1}$, all $z\in \mathbb{C}$, for all $f\in \mathcal{C}_{a,\mathbb{R}}$, \[\abso{m_{a,X,X^'}^{(n)}(\mathcal{L}_z(f))-m_{a,X,X^'}^{(n)}(\mathcal{L}(f))}\leq Ce^{D\abso{\Re(z)}}\abso{z}m_{a,X,X^'}^{(n)}(\mathcal{L}(f))\]
    with $\mathcal{L}_z(f)=\mathcal{L}(e^{zU}f)$.
\end{proposition}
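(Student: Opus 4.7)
\medskip
\noindent\textbf{Proof plan.} The functional $m_{a,X,X'}^{(n)}$ is linear, so the first move is to write
\[
m_{a,X,X'}^{(n)}(\mathcal{L}_z(f))-m_{a,X,X'}^{(n)}(\mathcal{L}(f))=m_{a,X,X'}^{(n)}\bigl(\mathcal{L}(h)\bigr),\quad h:=(e^{zU}-1)f,
\]
and then estimate the right hand side. Using that $\mathcal{L}$ preserves Lebesgue integration and that the linear form $l_{(X,X')}^{(n)}$ is dominated by $|\cdot|_\delta$, we get the pointwise-free bound
\[
\bigl|m_{a,X,X'}^{(n)}(\mathcal{L} h)\bigr|\le a\int|h|\,d\lambda+|\mathcal{L} h|_\delta,
\]
and Proposition~\ref{Perron_Frob_osc} further gives $|\mathcal{L} h|_\delta\le \delta|h|_\delta+\tfrac{2}{d}\|h\|_\infty$. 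So everything reduces to controlling $\int|h|$, $\|h\|_\infty$ and $|h|_\delta$ in terms of $\int f\,d\lambda$, times a factor of the form $|z|e^{D|\Re z|}$.

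\medskip
\noindent The multiplier $e^{zU}-1$ is handled by elementary complex calculus. Since $U$ is real and $\|U\|_\infty\le D$, the identity $e^{zU(x)}-1=z U(x)\int_0^1 e^{tzU(x)}dt$ yields the pointwise bound $|e^{zU}-1|\le|z|D\,e^{D|\Re z|}$. Applying the same device to $e^{zU(x)}-e^{zU(y)}=z(U(x)-U(y))\int_0^1 e^{z(U(y)+t(U(x)-U(y)))}dt$ gives the oscillation bound $|e^{zU}-1|_\delta\le|z|e^{D|\Re z|}|U|_\delta\le |z|De^{D|\Re z|}/\gamma$, where we used $\|U\|_{\delta,\gamma}\le D$. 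The standard Leibniz inequality $|gg'|_\delta\le\|g\|_\infty|g'|_\delta+\|g'\|_\infty|g|_\delta$ then produces
\[
\int|h|\,d\lambda\le |z|De^{D|\Re z|}\!\!\int f\,d\lambda,\quad \|h\|_\infty\le |z|De^{D|\Re z|}\|f\|_\infty,\quad |h|_\delta\le |z|De^{D|\Re z|}\!\bigl(|f|_\delta+\tfrac{\|f\|_\infty}{\gamma}\bigr).
\]

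\medskip
\noindent To turn these bounds into something proportional to $m_{a,X,X'}^{(n)}(\mathcal{L} f)$, I use twice that $f\in\mathcal{C}_{a,\mathbb{R}}$. First, $|f|_\delta\le a\int f\,d\lambda$ directly from the definition of the cone, and combining this with the majoration~\eqref{major_osc} integrated against $\lambda$ gives $\|f\|_\infty\le(1+a)\int f\,d\lambda$. Feeding these into the previous display produces a bound of the shape $|m^{(n)}(\mathcal{L} h)|\le K\,|z|e^{D|\Re z|}\int f\,d\lambda$, where $K$ depends only on $a,d,\delta,\gamma$. Second, the computation inside the proof of Proposition~\ref{control_convex_cone} (namely inequality~\eqref{m_ineq}) provides the lower bound $m_{a,X,X'}^{(n)}(\mathcal{L} f)\ge a(1-\iota)\int f\,d\lambda$, valid uniformly in $n,X,X'$ as long as $a>\tfrac{2}{d(1-\delta)-1}$. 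Combining the upper and lower bounds yields the required inequality with $C:=K/(a(1-\iota))$, which depends on none of $n$, $X$, $X'$, $U$, $z$ or $f$.

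\medskip
\noindent\textbf{Expected difficulty.} The argument is essentially bookkeeping: the only delicate point is to make sure that the intermediate estimate for $|m^{(n)}(\mathcal{L} h)|$ is proportional to $\int f\,d\lambda$ (and not merely to $\|f\|_{\delta,\gamma}$), so that the cone lower bound on $m^{(n)}(\mathcal{L} f)$ can absorb it without producing extra $n$- or $X$-dependence. This is exactly why the cone condition on $f$ must be used \emph{twice}, once to control $|f|_\delta$ and once to control $\|f\|_\infty$ (via \eqref{major_osc}). Once this is observed, the constant $C$ is manufactured cleanly and the bound $Ce^{D|\Re z|}|z|\,m_{a,X,X'}^{(n)}(\mathcal{L} f)$ follows at once.
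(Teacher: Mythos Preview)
Your proposal is correct and follows essentially the same route as the paper: linearize via $h=(e^{zU}-1)f$, bound $|m^{(n)}(\mathcal{L}h)|$ by $a\int|h|+|\mathcal{L}h|_\delta$, apply the Lasota--Yorke inequality (Proposition~\ref{Perron_Frob_osc} at $t=0$) together with the Leibniz bound and the elementary estimates $\|e^{zU}-1\|_\infty\le D|z|e^{D|\Re z|}$, $|e^{zU}-1|_\delta\le \tfrac{D}{\gamma}|z|e^{D|\Re z|}$, then use the cone membership of $f$ twice ($|f|_\delta\le a\int f$ and $\|f\|_\infty\le(1+a)\int f$ via \eqref{major_osc}) and close with the lower bound \eqref{m_ineq}. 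The paper's argument is identical up to inessential constants; your explicit $C=K/(a(1-\iota))$ matches the paper's $C=\tfrac{2D(2a+(a+1)(\delta+2/d))}{a(1-\iota)}$, and in both cases $C$ in fact depends on $a$ (the quantifier order in the statement is slightly misleading).
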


\begin{proof}
    Let $z\in \mathbb{C}$ and let $f\in \mathcal{C}_{a,\mathbb{R}}$.
    Then
    \[\begin{array}{ll}
         \abso{m_{a,X,X^'}^{(n)}(\mathcal{L}_z(f))-m_{a,X,X^'}^{(n)}(\mathcal{L}(f))}&\leq a\abso{\intInd{\Sigma_d}{(e^{zU}-1)f}{\lambda}}+\abso{l_{X,X^'}^{(n)}(\mathcal{L}((e^{zU}-1)f))}\\
         &\leq 2aD\abso{z}e^{D\abso{\Re(z)}}\intInd{\Sigma_d}{f}{\lambda}+\abso{\mathcal{L}((e^{zU}-1)f)}_\delta.
    \end{array}\]
    By Proposition \ref{Perron_Frob_osc} with $t=0$, \[\abso{\mathcal{L}((e^{zU}-1)f)}_\delta\leq \delta\abso{(e^{zU}-1)f}_\delta+\frac{2}{d}\norme{(e^{zU}-1)f}_\infty.\]
    We have \[\delta\abso{(e^{zU}-1)f}_\delta\leq \delta \norme{f}_\infty\abso{e^{zU}-1}_\delta+2D\abso{z}e^{D\abso{\Re(z)}}\abso{f}_\delta.\]
    We get also \[\frac{2}{d}\norme{(e^{zU}-1)f}_\infty\leq \frac{4D\abso{z}e^{D\abso{\Re(z)}}}{d}\norme{f}_\infty.\]
    We have a similar result for the oscillation of $e^{zU}-1$,
    \[\abso{e^{zU}-1}_\delta\leq \abso{z}e^{D\abso{\Re(z)}}|U|_\delta
    \le \frac{D}{\gamma}\abso{z}e^{D\abso{\Re(z)}} .\]
    Integrating with respect to $x^'$ the inequality (\ref{major_osc}), we get
    \[\norme{f}_\infty\leq\abso{f}_\delta+\intInd{\Sigma_d}{f}{\lambda}.\]
    We know that $f\in\mathcal{C}_{a,\mathbb{R}}$, then \[\abso{f}_\delta\leq a\intInd{\Sigma_d}{f}{\lambda}.\]
    Then \[\abso{m_{a,X,X^'}^{(n)}(\mathcal{L}_z(f))-m_{a,X,X^'}^{(n)}(\mathcal{L}(f))}\leq 2D\abso{z}e^{D\abso{\Re(z)}}\intInd{\Sigma_d}{f}{\lambda}\left(2a+\left(\delta+\frac{2}{d}\right)(a+1)\right).\]
    We note $C=\frac{2D\left(2a+(a+1)\left(\delta+\frac{2}{d}\right)\right)}{a(1-\iota)}$.
    By \eqref{m_ineq}, \[\abso{m_{a,X,X^'}^{(n)}(\mathcal{L}_z(f))-m_{a,X,X^'}^{(n)}(\mathcal{L}(f))}\leq Ce^{D\abso{\Re(z)}}\abso{z}m_{a,X,X^'}^{(n)}(\mathcal{L}(f)).\]
\end{proof}

Now, we prove that the inclusion of the image by Perron-Frobenius operator of the cone $\mathcal{C}_{a,\mathbb{C}}$ holds true for $z$ in a neigbourhood of $0$.

\begin{proposition}\label{include_cone_z}
    There exist a neighbourhood $U\subset \mathbb{C}$ of $0$ and $C_3>0$ such that for all $j\in\mathbb{Z}$, $\alpha\in\{1,2\}^\mathbb{N}$ and $z\in U$,
    \[\mathcal{L}_{\alpha,z}^{(j)}(\mathcal{C}_{a,\mathbb{C}})\subset \mathcal{C}_{a,\mathbb{C}},\quad \sup_{f,g\in \mathcal{C}_{a,\mathbb{C}}}{h_{\mathcal{C}_{a,\mathbb{C}}}(\mathcal{L}_{\alpha,z}^{(j)}(f),\mathcal{L}_{\alpha,z}^{(j)}(g)})\leq C_3.\]
\end{proposition}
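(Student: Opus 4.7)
The plan is to invoke the complex cone contraction framework of Rugh \cite{Rugh} and Dubois \cite{Dub}, in the form used by Kifer \cite{Kif}. The three required ingredients are already in place: the strict contraction $\mathcal{L}(\mathcal{C}_{a,\mathbb{R}})\subset\mathcal{C}_{\iota a,\mathbb{R}}$ with $\iota<1$ (Proposition~\ref{control_convex_cone}), the finite Hilbert diameter of the real image (Proposition~\ref{pro:fini}), and the quantitative closeness $\abso{m_{a,X,X'}^{(n)}(\mathcal{L}_{\alpha,z}^{(j)}f)-m_{a,X,X'}^{(n)}(\mathcal{L}f)}\le Ce^{D\abso{\operatorname{Re} z}}\abso{z}\,m_{a,X,X'}^{(n)}(\mathcal{L}f)$ (Proposition~\ref{cone_control}). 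Since $\mathcal{L}_{\alpha,z}^{(j)}=\mathcal{L}(e^{zU_j^\alpha}\,\cdot\,)$ and $\norme{U_j^\alpha}_{\delta,\gamma}$ is bounded by a universal constant by \eqref{norme_u_uniform} and Lemma~\ref{control_complex_term}, all three ingredients are uniform in $\alpha$ and $j$, which is the only way to obtain a single neighbourhood $U$ and constant $C_3$ that work for every $(\alpha,j)$.

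For the cone inclusion, I would use the functional description \eqref{convexe_cone} of $\mathcal{C}_{a,\mathbb{R}}$. An arbitrary $f\in\mathcal{C}_{a,\mathbb{C}}$ has the form $f=\nu(u+iv)$ with $\nu\in\mathbb{C}^*$ and real $u,v$ such that $u\pm v\in\mathcal{C}_{a,\mathbb{R}}$. Testing each defining functional $m_{a,X,X'}^{(n)}$ on $\mathcal{L}_{\alpha,z}^{(j)}(u\pm v)$ and combining Proposition~\ref{cone_control} with the strict positivity $m_{a,X,X'}^{(n)}(\mathcal{L}(u\pm v))\ge a(1-\iota)\int(u\pm v)\,d\lambda$ coming from Proposition~\ref{control_convex_cone}, the resulting complex numbers lie in a disk of \emph{relative} radius $Ce^{D\abso{\operatorname{Re} z}}\abso{z}$ around a positive real. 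Choosing $U\subset\mathbb{C}$ so that $Ce^{D\abso{\operatorname{Re} z}}\abso{z}<(1-\iota)/2$ on $U$ keeps all these functional values within a narrow complex sector about the positive real axis, which, together with the analogous control for the pointwise positivity functionals, is exactly the condition that $\mathcal{L}_{\alpha,z}^{(j)}f\in\mathcal{C}_{a,\mathbb{C}}$ (after absorbing any global rotation into a redefinition of $\nu$).

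For the diameter bound, I would rely on the general principle that the Hilbert metric on a complex cone is controlled by the modulus of log cross-ratios of the defining linear forms. For the unperturbed operator, these cross-ratios are bounded in terms of the finite real Hilbert diameter from Proposition~\ref{pro:fini}. The uniform estimate of Proposition~\ref{cone_control} shows that applying $\mathcal{L}_{\alpha,z}^{(j)}$ multiplies each functional value by a factor $1+O(\abso{z})$ with constants independent of $\alpha,j$, so each log cross-ratio changes by at most $O(\abso{z})$. Shrinking $U$ if necessary, one obtains a universal constant $C_3$ bounding the Hilbert diameter of $\mathcal{L}_{\alpha,z}^{(j)}(\mathcal{C}_{a,\mathbb{C}})$.

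The main obstacle is converting the additive perturbation bound of Proposition~\ref{cone_control} into multiplicative control compatible with both the sector condition defining $\mathcal{C}_{a,\mathbb{C}}$ and the Hilbert cross-ratios. This conversion relies crucially on the strict contraction factor $\iota<1$ from Proposition~\ref{control_convex_cone}, which supplies the uniform lower bound on $m_{a,X,X'}^{(n)}(\mathcal{L}f)$ in terms of $\int f\,d\lambda$ that turns an absolute error into a genuine relative one. Uniformity in $\alpha,j$ is then automatic from the uniformity of all input bounds, and the whole argument goes through with constants depending only on $d$, $a$, $\delta$, $\gamma$ and $\iota$.
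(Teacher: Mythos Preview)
Your proposal is correct and follows essentially the same route as the paper: both combine the finite real Hilbert diameter (Proposition~\ref{pro:fini}) with the uniform perturbation estimate (Proposition~\ref{cone_control}) and invoke the Rugh--Dubois--Kifer complex cone machinery, with uniformity in $(\alpha,j)$ coming from the uniform bound on $\norme{U_j^\alpha}_{\delta,\gamma}$. The only difference is packaging: the paper cites Theorem~A.2.4 of \cite{Kif} as a black box, choosing $U$ so that the perturbation $\varepsilon$ satisfies the precise smallness condition $2\varepsilon(1+\cosh(D_0/2))<1$ required there, whereas you sketch the mechanism of that theorem by hand; your threshold $(1-\iota)/2$ is not quite the right one for the complex Hilbert diameter bound, but shrinking $U$ further to meet the $\cosh$ condition fixes this without changing the argument.
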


\begin{proof}
By Proposition \ref{pro:fini} the diameter $D_0$ of the image by $\mathcal{L}$ of the real cone $\mathcal{C}_{a,\mathbb{R}}$ is finite. 
By Proposition \ref{cone_control}, we can choose a neighborhood $U\subset \mathbb{C}$ of $0$ such that 
\[\abso{m_{a,X,X^'}^{(n)}(\mathcal{L}_{\alpha,z}(f))-m_{a,X,X^'}^{(n)}(\mathcal{L}(f))}\leq \varepsilon m_{a,X,X^'}^{(n)}(\mathcal{L}(f))\]
with $\varepsilon>0$ such that $2\varepsilon\left(1+\cosh\left(\frac{D_0}{2}\right)\right)<1$.
Then, we get the result applying theorem A.2.4 from \cite{Kif}.

\end{proof}

The next lemma will be used to prove the Lemma \ref{sup_analy}.

\begin{lemme}
    \label{lem:coerc_dual}
    For $a>\frac{2}{d(1-\delta)-1}$ and $M\geq\frac{\gamma a+1}{2\gamma a}$ we have \[\left\{g \in OSC_\delta : \norme{g-\mathds{1}}_{\delta,\gamma}\leq \frac{1}{M}\right\}\subset \mathcal{C}_{a,\mathbb{C}}\]
\end{lemme}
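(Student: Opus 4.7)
The plan is to verify directly that, writing $g=u+iv$ with $u,v$ real, both $u+v$ and $u-v$ lie in the real cone $\mathcal{C}_{a,\mathbb{R}}$. Once this is established, taking $c=1$ in the defining complexification $\mathcal{C}_{a,\mathbb{C}}=\mathbb{C}^*(\mathcal{C}_{a,\mathbb{R}}+i\mathcal{C}_{a,\mathbb{R}})$ places $g$ in $\mathcal{C}_{a,\mathbb{C}}$.

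First I would unpack the norm. From $\|g-\mathds{1}\|_{\delta,\gamma}\le\frac{1}{M}$ and the definition $\|\cdot\|_{\delta,\gamma}=\max(\|\cdot\|_\infty,\gamma|\cdot|_\delta)$, we get the four inequalities $\|u-1\|_\infty\le\frac{1}{M}$, $\|v\|_\infty\le\frac{1}{M}$, $|u-1|_\delta\le\frac{1}{\gamma M}$, and $|v|_\delta\le\frac{1}{\gamma M}$. Since constants have zero oscillation, the third bound reads $|u|_\delta\le\frac{1}{\gamma M}$, and by subadditivity of $|\cdot|_\delta$ we deduce $|u\pm v|_\delta\le\frac{2}{\gamma M}$.

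For the positivity requirement of $\mathcal{C}_{a,\mathbb{R}}$, I would combine the pointwise bounds $u\ge 1-\frac{1}{M}$ and $|v|\le\frac{1}{M}$ to conclude $(u\pm v)(x)\ge 1-\frac{2}{M}$, which is nonnegative for $M$ in the prescribed range. For the oscillation condition $|u\pm v|_\delta\le a\int(u\pm v)\,d\lambda$, I would use that $\lambda$ is a probability measure on $\Sigma_d$, so $\int u\,d\lambda\ge 1-\frac{1}{M}$ and $\int v\,d\lambda\ge -\frac{1}{M}$, giving $\int(u\pm v)\,d\lambda\ge 1-\frac{2}{M}$. The cone inequality then reduces to the elementary scalar estimate
\[
\frac{2}{\gamma M}\le a\Bigl(1-\frac{2}{M}\Bigr),
\]
which is exactly the lower bound on $M$ claimed in the statement (after clearing denominators).

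This lemma is essentially a bookkeeping exercise, so there is no real obstacle beyond matching the stated constant. The only delicate point is recognizing that the $\delta$-seminorm of $u$ equals that of $u-\mathds{1}$ (killing the constant part), which is what allows the $\frac{1}{\gamma M}$ estimate to be used against the $\Theta(1)$ lower bound on the integral.
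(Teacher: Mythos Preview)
Your argument is essentially identical to the paper's: both decompose $g=u+iv$, extract $\|u-\mathds{1}\|_\infty,\|v\|_\infty\le\frac{1}{M}$ and $|u\pm v|_\delta\le\frac{2}{\gamma M}$, bound $\int(u\pm v)\,d\lambda\ge 1-\frac{2}{M}$, and reduce the cone condition to the same scalar inequality. One small correction: clearing denominators in $\frac{2}{\gamma M}\le a\bigl(1-\frac{2}{M}\bigr)$ actually yields $M\ge\frac{2(\gamma a+1)}{\gamma a}$, not the stated $\frac{\gamma a+1}{2\gamma a}$---the paper's constant appears to be a typo (shared by its own proof), and with the corrected threshold one also gets $M\ge 2$, which is what makes the pointwise positivity $(u\pm v)\ge 1-\frac{2}{M}\ge 0$ you assert actually hold.
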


\begin{proof}
    Let $M\geq\frac{\gamma a+1}{2\gamma a}$ and let $f\in OSC_\delta$ such that $\norme{f-\mathds{1}}_{\delta,\gamma}<\frac{1}{M}$.
    Thus, $\norme{\Re{f}-\mathds{1}}_{\delta,\gamma}\leq \norme{f-\mathds{1}}_{\delta,\gamma}<\frac{1}{M}$ and $\norme{\Im{f}}_{\delta,\gamma}\leq \norme{f-\mathds{1}}_{\delta,\gamma}<\frac{1}{M}$ and $\abso{\Re(f)\pm\Im(f)}_\delta\leq \frac{2}{\gamma M}$.
    Therefore, $\intInd{\Sigma_d}{(\Re(f)\pm\Im(f))}{\lambda}\geq 1-\norme{\Re(f)-\mathds{1}\pm\Im(f)}_\infty\geq 1-\frac{2}{M}$.
    We have that $M\geq\frac{\gamma a+1}{2\gamma a}$, then
    \[\abso{\Re(f)\pm\Im(f)}_\delta\leq a\intInd{\Sigma_d}{(\Re(f)\pm\Im(f))}{\lambda}.\]
\end{proof}

The next lemma is used to prove \eqref{eq:max} in Lemma \ref{sup_analy}.

\begin{lemme}[Lemma 2.8.2 \cite{Kif}]
    \label{analytic_control}
    Let $(X,\norme{\cdot})$ be a complex Banach space and let $Q : U \to X$ be an analytic function defined on an open set $U \subset \mathbb{C}$ which contains a closed ball $\overline{B}(z_0, \delta)$ of radius $\delta > 0$ around some $z_0 \in  \mathbb{C}$. Suppose that there exists $r>0$ such that $\abso{Q(z)}\leq r$ for any $z\in B(z_0,\delta)$. Let $k\geq 0$ be an integer and denote by $Q_{k,z_0}$ the Taylor polynomial of $Q$ of order $k$ around $z_0$. Then for any $z \in B(z_0, \delta)$,
    \[\norme{Q(z)-Q_{k,z_0}(z)}\leq \frac{(k+2)r\abso{z-z_0}^{k+1}}{\delta^{k+1}}.\]
\end{lemme}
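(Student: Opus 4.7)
The plan is to apply standard complex-analytic estimates for Banach-valued analytic functions, the main tool being the Cauchy integral formula on circles contained in the disk of analyticity. The argument proceeds in three steps.

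First, by the hypothesis, $Q$ admits a convergent power series expansion
\[
Q(z) = \sum_{n \ge 0} a_n (z-z_0)^n, \qquad a_n \in X,
\]
on $B(z_0, \delta)$. The coefficients are recovered via the Cauchy formula
\[
a_n = \frac{1}{2\pi i}\oint_{|\zeta-z_0|=\rho} \frac{Q(\zeta)}{(\zeta-z_0)^{n+1}}\,d\zeta
\]
for any $\rho \in (0, \delta)$; this is justified by approximating the Banach-valued integral by Riemann sums and invoking linearity together with the scalar Cauchy formula applied to $\ell \circ Q$ for every continuous linear functional $\ell$. Taking norms, using $\|Q(\zeta)\| \le r$ (which extends to $\overline{B}(z_0, \delta)$ by continuity), and letting $\rho \uparrow \delta$ yields the Cauchy-type estimate $\|a_n\| \le r\,\delta^{-n}$.

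Second, I would use the integral representation of the Taylor remainder. Starting from Cauchy's formula $Q(z) = \frac{1}{2\pi i}\oint\frac{Q(\zeta)}{\zeta-z}\,d\zeta$ and expanding
\[
\frac{1}{\zeta - z} = \sum_{n=0}^{k}\frac{(z-z_0)^n}{(\zeta-z_0)^{n+1}} + \frac{(z-z_0)^{k+1}}{(\zeta-z_0)^{k+1}(\zeta-z)},
\]
subtracting the partial sum gives
\[
Q(z) - Q_{k,z_0}(z) = \frac{(z-z_0)^{k+1}}{2\pi i}\oint_{|\zeta - z_0| = \rho} \frac{Q(\zeta)}{(\zeta-z_0)^{k+1}(\zeta-z)}\,d\zeta
\]
for any $\rho \in (|z - z_0|, \delta)$.

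Third, I would derive the stated bound. The crude norm estimate yields
\[
\|Q(z) - Q_{k,z_0}(z)\| \le \frac{r\,\rho\,|z-z_0|^{k+1}}{\rho^{k+1}(\rho - |z-z_0|)},
\]
and alternatively, termwise summation via the Cauchy estimate produces the geometric tail
\[
\|Q(z) - Q_{k,z_0}(z)\| \le r\sum_{n \ge k+1}\Bigl(\tfrac{|z-z_0|}{\delta}\Bigr)^n = \frac{r\,|z-z_0|^{k+1}}{\delta^k(\delta - |z-z_0|)}.
\]
The main obstacle, and the technical heart of the lemma, is to convert the singular factor $(\delta - |z - z_0|)^{-1}$ into the linear factor $(k+2)/\delta$ claimed in the statement. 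This requires an optimized choice of the contour radius $\rho$ (for instance taking $\rho$ suitably between $|z-z_0|$ and $\delta$ and balancing $\rho^{-(k+1)}$ against $(\rho - |z-z_0|)^{-1}$), or equivalently a careful regrouping of the geometric tail. All other aspects are routine applications of Cauchy's formula for Banach-valued holomorphic functions.
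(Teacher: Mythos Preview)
Your approach has a genuine gap at precisely the step you flag as ``the technical heart''. Neither of your proposed fixes works. Optimizing the contour radius $\rho$ in the bound $\dfrac{r\,s^{k+1}}{\rho^{k}(\rho-s)}$ (with $s=|z-z_0|$) does not help: the function $\rho\mapsto\rho^{k}(\rho-s)$ is increasing on $(s,\delta]$, so the optimum is $\rho=\delta$, which just reproduces the geometric tail bound $\dfrac{r\,t^{k+1}}{1-t}$ with $t=s/\delta$. And that tail bound genuinely exceeds $(k+2)\,r\,t^{k+1}$ whenever $t>\tfrac{k+1}{k+2}$, so no ``regrouping'' of the series can recover the stated constant uniformly on $B(z_0,\delta)$. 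The Cauchy coefficient estimates $\|a_n\|\le r\delta^{-n}$ simply discard too much: they cannot all be saturated by a single bounded function.

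The missing idea is a Schwarz--lemma (maximum principle) argument. From $\|a_n\|\le r\delta^{-n}$ one gets $\|Q_{k,z_0}(z)\|\le r\sum_{n=0}^{k}(|z-z_0|/\delta)^{n}\le (k+1)r$ on $\overline{B}(z_0,\delta)$, hence the remainder $R(z)=Q(z)-Q_{k,z_0}(z)$ satisfies $\|R(z)\|\le (k+2)r$ on the closed ball. Since $R$ has a zero of order at least $k+1$ at $z_0$, the function $g(z)=R(z)/(z-z_0)^{k+1}$ is analytic on $\overline{B}(z_0,\delta)$, and the maximum principle for Banach-valued analytic functions (which follows from the Cauchy integral formula) gives $\|g(z)\|\le\sup_{|\zeta-z_0|=\delta}\|R(\zeta)\|/\delta^{k+1}\le (k+2)r/\delta^{k+1}$. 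Multiplying back by $|z-z_0|^{k+1}$ yields the claim. Note that the paper itself does not prove this lemma; it is quoted from \cite{Kif}, and the argument above is the standard one.
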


\begin{lemme}[After Theorem 4.2.1 in \cite{Kif}]
\label{sup_analy}
There exist a complex neighborhood $U$ of $0$ and a constant $C>0$  such that for all $\alpha\in \{1,2\}^\mathbb{N}$:

For any $j\in\mathbb{Z}$, there exists\footnote{All these quantities depend on $\alpha$, but we do not write the dependence explicitely to lighten the notation.} a unique number $\lambda^{(j)}(z)$, a unique bounded-oscillation function $h_z^{(j)}$ and a unique measure $\nu_z^{(j)}$ such that \begin{align}\label{eq_Perron}\mathcal{L}^{(j)}_{\alpha,z}(h_z^{(j)})=\lambda^{(j)}(z)h_z^{(j+1)},\ {\mathcal{L}^{(j)}}^*_{\alpha,z}(\nu_z^{(j+1)})=\lambda^{(j)}(z)\nu_z^{(j)}\ and\ \nu_z^{(j)}(h_z^{(j)})=\nu_z^{(j)}(\mathds{1})=1.\end{align}
We note $\lambda^{j,n}(z)=\prodend{i\in\intervEnt{0}{n-1}}{\lambda^{(j+i)}(z)}$.
The functions $\lambda_j$, $h_{(\cdot)}^{(j)}$ and $\nu^{(j)}_{\cdot}$ are analytic, $\lambda_j$,$h_{(\cdot)}^{(j)}$ is a non zero number, $h_{0}^{(j)}=1,\lambda^{(j)}(0)=1$ and $\nu_0^{(j)}=\mathbb{L}eb$ and \begin{equation}\label{eq:max}\max\left({\sup_{z\in U}\abso{\lambda^{(j)}(z)},\sup_{z\in U}\norme{h^{(j)}_z},\sup_{z\in U}\norme{\nu^{(j)}_z}}\right)\leq C.\end{equation} When $z=t\in\mathbb{R},$ we have that there exist $c_1>c_0>0$ and $R_1>0$ such that, for all integer $j$ and all $t$, \begin{equation}\label{cont_h}c_0\leq h_{t}^{(j)}\leq c_1\ and\ \lambda^{(j)}(t)>R_1.\end{equation}
\end{lemme}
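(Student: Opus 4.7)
The strategy follows the non-autonomous Perron–Frobenius / complex cone approach of Rugh and Dubois, as adapted by Hafouta–Kifer. The key ingredient is Proposition~\ref{include_cone_z}: for $z$ in a small complex neighborhood $U$ of $0$, every $\mathcal{L}_{\alpha,z}^{(j)}$ maps the complex cone $\mathcal{C}_{a,\mathbb{C}}$ into itself with image of uniformly bounded Hilbert diameter $C_3$. By Birkhoff's theorem on cone contractions, any composition of such operators contracts the Hilbert metric on $\mathcal{C}_{a,\mathbb{C}}$ at a rate $\kappa=\tanh(C_3/4)<1$, uniformly in $j$, $\alpha$, and $z\in U$.

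The plan is as follows. First, construct $h_z^{(j)}$ by a backward limit: set
\[
h_z^{(j)} := \lim_{n\to\infty}\frac{\mathcal{L}^{j-n,n}_{\alpha,z}(\mathds{1})}{\nu_z^{(j)}\!\bigl(\mathcal{L}^{j-n,n}_{\alpha,z}(\mathds{1})\bigr)},
\]
which converges in $\norme{\cdot}_{\delta,\gamma}$ by the cone contraction, once $\nu_z^{(j)}$ has been constructed; in parallel, build $\nu_z^{(j)}$ by forward iteration of the dual operators starting from Lebesgue measure, using the analogous contraction on the predual cone (or equivalently using the cone-dual estimate, for which Lemma~\ref{lem:coerc_dual} shows that the closed balls around $\mathds{1}$ in $\norme{\cdot}_{\delta,\gamma}$ lie in $\mathcal{C}_{a,\mathbb{C}}$, giving the needed coercivity of the normalizing functional). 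Define the eigenvalue by the consistency relation $\lambda^{(j)}(z):=\nu_z^{(j+1)}\!\bigl(\mathcal{L}_{\alpha,z}^{(j)}(h_z^{(j)})\bigr)$, and verify directly from $\mathcal{L}_{\alpha,z}^{(j)}(h_z^{(j)})=\lambda^{(j)}(z)\,h_z^{(j+1)}$ and the dual relation, together with the normalization $\nu_z^{(j)}(h_z^{(j)})=\nu_z^{(j)}(\mathds{1})=1$. Uniqueness follows from the fact that any other solution would lie in the cone and hence be proportional by Hilbert-metric contraction.

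Analyticity is the next step. The convergence in the backward limit above is geometric in the Hilbert metric uniformly in $z$; combined with Lemma~\ref{Perron_analytic} (analyticity of $z\mapsto \mathcal{L}_{\alpha,z}^{(j)}$) and the fact that the approximants are holomorphic vector-valued functions of $z$, a normal-family argument (Cauchy estimates on discs contained in $U$) gives analyticity of $z\mapsto h_z^{(j)}$ in $OSC_\delta$ and of $z\mapsto\nu_z^{(j)}$ in the dual, hence of $z\mapsto\lambda^{(j)}(z)$. The quantitative bound \eqref{eq:max} comes from restricting to a smaller disc $\overline B(0,\delta_0)\subset U$, where the uniform contraction gives a uniform bound on the limits; Lemma~\ref{analytic_control} (Kifer 2.8.2) provides the Taylor control that propagates the bound to $U$.

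For the values at $z=0$: since $\mathcal{L}_{\alpha,0}^{(j)}=\mathcal{L}$ (independent of $\alpha,j$), $\mathds{1}$ is fixed by $\mathcal{L}$ and Lebesgue is fixed by $\mathcal{L}^*$, so by uniqueness $h_0^{(j)}=\mathds{1}$, $\nu_0^{(j)}=\mathbb{L}eb$, and $\lambda^{(j)}(0)=1$. For real parameter $z=t\in\mathbb{R}$, the operator preserves the real cone $\mathcal{C}_{a,\mathbb{R}}$, so $h_t^{(j)}\in\mathcal{C}_{a,\mathbb{R}}$ is nonnegative; combining $\int h_t^{(j)} d\nu_t^{(j)}=1$ with the Lasota–Yorke inequality \eqref{ineq_osc_Perron} and inequality \eqref{major_osc} yields the two-sided bound $c_0\le h_t^{(j)}\le c_1$ on a small real neighborhood of $0$, and similarly $\lambda^{(j)}(t)\ge R_1>0$ by continuity from $\lambda^{(j)}(0)=1$.

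The main obstacle is the non-autonomous character of the problem: there is no single operator whose spectral data one can invoke, so the existence of the eigentriple cannot come from standard quasi-compactness. The cone method bypasses this because the contraction estimate of Proposition~\ref{include_cone_z} is uniform in the index $j$ and in $\alpha$, which is exactly what is needed for the backward and forward iterations to converge to well-defined objects satisfying the equivariance relations \eqref{eq_Perron}. Care must be taken to ensure that the cone $\mathcal{C}_{a,\mathbb{C}}$ is both wide enough to contain the approximants for all $z\in U$ (so the cone contraction applies) and narrow enough that its diameter is finite (Proposition~\ref{pro:fini}); the restriction $\frac1d<\delta\le\frac{d-1}{d+2}$ and $a\in(\frac{2}{d(1-\delta)-1},\delta^{-1})$ is what reconciles these two requirements.
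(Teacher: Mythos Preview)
Your proposal is correct and follows essentially the same approach as the paper: both rest on the complex-cone contraction of Proposition~\ref{include_cone_z} together with the coercivity from Lemma~\ref{lem:coerc_dual}, and both use Lemma~\ref{analytic_control} for the uniform bounds. The paper's proof is more economical in that it simply verifies Assumptions~4.1.1 and~4.1.2 of \cite{Kif} (via Proposition~\ref{include_cone_z}, Lemma~\ref{lem:coerc_dual} applied through Lemma~A.2.7, and the functional $l(f)=\max\{a+1,\gamma a\}\int f$ through Lemma~A.2.2) and then invokes Theorem~4.2.1 of \cite{Kif} as a black box, whereas you sketch the backward/forward-limit construction that this theorem carries out internally; the underlying mathematics is identical.
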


To prove this lemma, we introduce the definition \[{\mathcal{C}_{a,\mathbb{C}}}^':=\{\mu\in {OSC_\delta}^' : \forall f\in\mathcal{C}_{a,\mathbb{C}}\setminus\{0\}, \abso{\mu(f)}>0\}.\]



\begin{proof}
    By Proposition \ref{include_cone_z}, we get the assumption 4.1.2 page 175 from \cite{Kif}.
    By Lemma \ref{lem:coerc_dual}, we get applying Lemma A.2.7 page 276 from \cite{Kif} to the constant function $\mathds{1}$ that for $M\geq \frac{\gamma a+1}{\gamma a}$, \[\forall \mu\in {\mathcal{C}_{a,\mathbb{C}}}^', \norme{\mu}\leq M\abso{\mu(\mathds{1})}.\]
    Now, considering $\kappa : \mu\in {OSC_\delta}^'\mapsto \mu(\mathds{1})$, we get the equation (4.1.5) of Assumption 4.1.1 page 174 from \cite{Kif} with $\norme{\kappa}=1$. By definition of $\mathcal{C}_{a,\mathbb{R}}$, $l : f\in OSC_\delta\mapsto \max\{a+1,\gamma a\}\intInd{\Sigma_d}{f(x)}{x}$ satisfies
    \[\forall f\in \mathcal{C}_{a,\mathbb{R}},\norme{f}_{\delta,\gamma}\leq \abso{l(f)}\leq \max\{a+1,\gamma a\}\norme{f}_{\delta,\gamma}.\]
    Thus, we can apply Lemma A.2.2 page 274 from \cite{Kif} to $l$,
    \[\forall f\in \mathcal{C}_{a,\mathbb{C}}, \norme{l}\norme{f}_{\delta,\gamma}\leq 2\sqrt{2}\max\{a+1,\gamma a\}\abso{l(f)}.\]
    Therefore, $l$ satisfies the equation (4.1.4) of Assumption 4.1.1 page 174 from \cite{Kif} with $K=2\sqrt{2}\max\{a+1,\gamma a\}$.
    Applying Theorem 4.2.1 page 175 from \cite{Kif}, we get $\lambda^{(j)}$, a unique bounded-oscillation function $h_\cdot^{(j)}$ and a unique measure $\nu_\cdot^{(j)}$ described in the lemma and we get \eqref{eq_Perron} and \eqref{cont_h}.
    By Lemma \ref{analytic_control}, with $(X,\norme{\cdot})=(\mathbb{C},\abso{\cdot})$, we get that there exists $C>0$ such that,
    \[\sup_{z\in U}\abso{\lambda^{(j)}(z)}\leq C.\]
    We have the same for $\sup_{z\in U}\norme{h^{(j)}_z}$ with $(X,\norme{\cdot})=(OSC_\delta,\norme{\cdot}_{\delta,\gamma})$ and for $\sup_{z\in U}\norme{\nu^{(j)}_z}$ with $(X,\norme{\cdot})=({OSC_\delta}^{'},\abso{\norme{\cdot}}^{'}_{\delta,\gamma}).$
    In other words, we get \eqref{eq:max}.
\end{proof}

\begin{lemme} For any integer $j,m$
\label{mean_eigen}
    \[{\lambda^{j,m}}^'(0)=\mathbb{E}(S_{j,m}).\]
\end{lemme}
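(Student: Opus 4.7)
My approach has two stages: first reduce to the single-step case $m=1$ via multiplicativity of the leading eigenvalue, and then differentiate a normalized eigenvalue formula in which the derivatives of $h_z^{(j)}$ and $\nu_z^{(j)}$ are killed by the normalization conditions $\nu_z^{(j)}(\mathds{1})=\nu_z^{(j)}(h_z^{(j)})=1$.

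For the first stage: by definition $\lambda^{j,m}(z)=\prod_{i=0}^{m-1}\lambda^{(j+i)}(z)$ and by Lemma~\ref{sup_analy} each factor is analytic with $\lambda^{(j+i)}(0)=1$. The product rule therefore gives
\[
(\lambda^{j,m})'(0)=\sum_{i=0}^{m-1}(\lambda^{(j+i)})'(0).
\]
Since $S_{j,m}=\sum_{i=0}^{m-1}X_{j+i}$ and expectation is linear, it suffices to prove $(\lambda^{(j)})'(0)=\mathbb{E}(X_j)$ for each $j$.

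For the second stage: apply $\nu_z^{(j+1)}$ to the eigenfunction equation $\mathcal{L}^{(j)}_{\alpha,z}(h_z^{(j)})=\lambda^{(j)}(z)h_z^{(j+1)}$ and use $\nu_z^{(j+1)}(h_z^{(j+1)})=1$ to obtain
\[
\lambda^{(j)}(z)=\nu_z^{(j+1)}\bigl(\mathcal{L}^{(j)}_{\alpha,z}(h_z^{(j)})\bigr).
\]
Now I would differentiate at $z=0$ term by term. Using $\mathcal{L}^{(j)}_{\alpha,z}(g)=\mathcal{L}(e^{zU_j}g)$, the fact that $h_0^{(j)}=\mathds{1}$ and $\nu_0^{(j+1)}=\mathbb{L}eb$, and that $\int \mathcal{L}(\cdot)\,d\lambda=\int \cdot\,d\lambda$, the product rule gives
\[
(\lambda^{(j)})'(0)=(\nu_\cdot^{(j+1)})'(0)(\mathds{1})+\int U_j\,d\lambda+\int \partial_z h_z^{(j)}\big|_{z=0}\,d\lambda.
\]
Differentiating the normalization $\nu_z^{(j+1)}(\mathds{1})=1$ at $0$ gives $(\nu_\cdot^{(j+1)})'(0)(\mathds{1})=0$; differentiating $\nu_z^{(j)}(h_z^{(j)})=1$ at $0$ and using the previous identity (with $j$ in place of $j+1$) gives $\int \partial_z h_z^{(j)}|_{z=0}\,d\lambda=0$. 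Hence $(\lambda^{(j)})'(0)=\mathbb{E}(U_j)$, and since $\sigma$ preserves $\lambda$ and $X_j=U_j\circ\sigma^j$, this equals $\mathbb{E}(X_j)$, closing the reduction.

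\textbf{Main obstacle.} The analytic calculus — exchanging derivative with integration, applying the product rule to the composition $z\mapsto\nu_z^{(j+1)}(\mathcal{L}^{(j)}_{\alpha,z}(h_z^{(j)}))$ of three $z$-dependent objects, and controlling remainders. This is handled by combining the analyticity of $z\mapsto\mathcal{L}^{(j)}_{\alpha,z}$ from Lemma~\ref{Perron_analytic} with the joint analyticity and uniform-in-$z$ bounds on $\lambda^{(j)}(z)$, $h_z^{(j)}$ and $\nu_z^{(j)}$ provided by Lemma~\ref{sup_analy}, which in particular legitimize term-by-term differentiation of each quantity appearing above in a complex neighbourhood of $0$.
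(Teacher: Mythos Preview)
Your argument is correct and very close to the paper's own proof; the key idea in both is to differentiate the eigenvalue relation and then kill the unwanted derivatives of $h_z^{(j)}$ and $\nu_z^{(j)}$ via the two normalizations $\nu_z^{(j)}(h_z^{(j)})=1$ and $\nu_z^{(j)}(\mathds{1})=1$. The paper does this in one shot for general $m$: it differentiates $\mathcal{L}_z^{j,m}(h_z^{(j)})=\lambda^{j,m}(z)h_z^{(j+m)}$, integrates at $z=0$, and obtains $\mathbb{E}(S_{j,m})+\mathbb{E}\bigl((h_0^{(j)})'\bigr)=(\lambda^{j,m})'(0)+\mathbb{E}\bigl((h_0^{(j+m)})'\bigr)$, then shows $\mathbb{E}\bigl((h_0^{(\cdot)})'\bigr)=0$ exactly as you do.

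Your preliminary reduction to $m=1$ via the product rule on $\lambda^{j,m}=\prod_{i}\lambda^{(j+i)}$ is a genuine simplification that the paper does not exploit: it replaces a single differentiation of the composed operator (where one must identify the Birkhoff sum $S_{j,m}$ in the derivative of $\mathcal{L}_z^{j,m}$) by the one-step computation $\partial_z\mathcal{L}_{\alpha,z}^{(j)}(g)\big|_{z=0}=\mathcal{L}(U_j^\alpha g)$, which is immediate from the definition. Conversely, the paper's direct computation has the advantage that the same calculation is reused verbatim (with one more derivative) in the second-order Lemma~\ref{control_order_2}, where the product decomposition is less convenient because cross terms appear. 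One notational caveat: the operator in Definition~\ref{perfrob} is built with the uncentered $U_j^\alpha$, so the middle term in your product-rule expansion is $\int U_j^\alpha\,d\lambda=\mathbb{E}(X_j^\alpha)$; this matches $\mathbb{E}(X_j)$ only after the centering convention is unwound, but the final identity is unaffected.
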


Recall the notation $S_{j,n}=\sum_{i=0}^{n-1}X_{i+j}$.

\begin{proof}
Let $j,m\in\mathbb{N}$.
Then,\[\mathcal{L}_{z}^{j,m}(h^{(j)}_z)=\lambda^{j,m}(z)h^{(j+m)}_z.\]
    We get, \[\frac{d}{dz}\mathcal{L}_{z}^{j,m}(h^{(j)}_z)={\lambda^{j,m}}^'(z)h^{(j+m)}_z+\lambda^{j,m}(z){h^{(j+m)}_z}^'.\]
    Thus, \[\mathcal{L}_{z}^{j,m}(S_{j,m}h^{(j)}_z+{h^{(j)}_z}^')={\lambda^{j,m}}^'(z)h^{(j+m)}_z+\lambda^{j,m}(z){h^{(j+m)}_z}^'\]
    To $z=0$, \[\mathcal{L}_{0}^{j,m}(S_{j,m}h^{(j)}_0+{h^{(j)}_0}^')={\lambda^{j,m}}^'(0)h^{(j+m)}_0+{h^{(j+m)}_0}^'.\]
    Therefore, \[\mathbb{E}(\mathcal{L}_{0}^{j,m}(S_{j,m}h^{(j)}_0+{h^{(j)}_0}^'))={\lambda^{j,m}}^'(0)\mathbb{E}(h^{(j+m)}_0)+\mathbb{E}({h^{(j+m)}_0}^')={\lambda^{j,m}}^'(0)+\mathbb{E}({h^{(j+m)}_0}^').\]
    Then, \[\mathbb{E}(S_{j,m}h^{(j)}_0)+\mathbb{E}({h^{(j)}_0}^')={\lambda^{j,m}}^'(0)\mathbb{E}(h^{(j+m)}_0)+\mathbb{E}({h^{(m)}_0}^')={\lambda^{j,m}}^'(0)+\mathbb{E}({h^{(j+m)}_0}^').\]
    By (\ref{eq_Perron}), \[\label{}\forall z\in D(0,\epsilon_0),\nu_z^{(j+m)}(h_z^{(j+m)})=1.\]
    We have that \[\nu_z^{(j+m)}(\mathcal{L}_{z}^{j,m}(h^{(j)}_z))=\lambda^{j,m}(z)\nu_z^{(j+m)}(h^{(j+m)}_z)=\lambda^{j,m}(z).\]
    Thus, for all $z\in D(0,\epsilon_0),$
    $\frac{d}{dz}\nu_z^{(j+m)}(h_z^{(j+m)})=0$
    hence
    \[ {\nu_z^{(j+m)}}^'(h_z^{(j+m)})+\nu_z^{(j+m)}\left({h_z^{(j+m)}}^'\right)=0.\]
    We get, \[{\nu_0^{(j+m)}}^'(h_0^{(j+m)})+\nu_0\left({h_0^{(j+m)}}^'\right)=0.\]
    But, $\nu_0^{(j+m)}=\mathbb{L}eb$ and $h_0^{(j+m)}=\mathds{1}$.
    Moreover, by (\ref{eq_Perron}), \[\forall z\in D(0,\epsilon_0),{\nu_z^{(j+m)}}^'(\mathds{1})=0.\]
    Then, \[\nu_0^{(j+m)}\left({h_0^{(j+m)}}^'\right)={\nu_0^{(j+m)}}^'(\mathds{1})+\nu_0^{(j+m)}\left({h_0^{(j+m)}}^'\right)=0\]
    and, \begin{align}\label{null_mean_h}\mathbb{E}({h_0^{(j+m)}}^')=0.\end{align}
    Then, \[{\lambda^{j,m}}^'(0)=\mathbb{E}(S_{j,m}).\]

\end{proof}

Following \cite{Haf}, we relate in the next lemmas the variance of $S_{j,n}$ and quasi eigenvalues $\lambda^{(j)}(z)$.
We fix $\epsilon_0>0$ such that the disk $D(0,\epsilon_0)\subset U$,
 $\lambda^{(j)}(0)=1$ and for all $z\in D(0,\epsilon_0), \lambda^{(j)}(z)\ne 0$, then, there exists an analytic function $\Pi_j$ on $D(0,\epsilon_0)$ such that for all $z\in D(0,\epsilon_0),$ \begin{align}\label{analy_lambda}\lambda^{(j)}(z)=e^{\Pi_j(z)}\ and\ \Pi_j(0)=0.\end{align}
We note $\Pi_{j,m}(z):=\sum_{i=0}^{m-1}\Pi_{j+i}(z)$.
Then, \begin{align}\label{analy_lambda2}\lambda^{j,m}=e^{\Pi_{j,m}}.\end{align}

\begin{lemme}\label{lem:pi_control}
There exist $R>0$ and $\beta>0$ such that for all $m\in\mathbb{N}$,
    \[\sup_{z\in D(0,\beta)}{\abso{\Pi_{0,m}(z)}}\leq Rm.\]
\end{lemme}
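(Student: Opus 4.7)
The plan is to exploit the uniform bound on $\lambda^{(i)}(z)$ from Lemma \ref{sup_analy} together with Cauchy estimates to linearize $\Pi_i$ near $0$, and then sum the resulting bound.

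First, by Lemma \ref{sup_analy} there is a complex neighbourhood $U$ of $0$ and a constant $C>0$ such that $|\lambda^{(i)}(z)|\le C$ for every $i\in\mathbb{Z}$ and every $z\in U$, and $\lambda^{(i)}(0)=1$. Choose $\epsilon_0>0$ small enough that $\overline{D(0,\epsilon_0)}\subset U$ and, as fixed in \eqref{analy_lambda}, $\lambda^{(i)}$ does not vanish on $D(0,\epsilon_0)$.

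Next, I would apply Cauchy's estimate on $D(0,\epsilon_0)$ to control the derivative: for $|z|\le\epsilon_0/2$,
\[
\bigl|{\lambda^{(i)}}'(z)\bigr|\le \frac{2C}{\epsilon_0}.
\]
Since $\lambda^{(i)}(0)=1$, integrating along the segment from $0$ to $z$ yields $|\lambda^{(i)}(z)-1|\le (2C/\epsilon_0)|z|$ on $D(0,\epsilon_0/2)$, uniformly in $i$. Pick $\beta>0$ with $\beta\le\epsilon_0/2$ and $(2C/\epsilon_0)\beta\le 1/2$, so that for every $i$ and every $z\in D(0,\beta)$ we have $|\lambda^{(i)}(z)-1|\le 1/2$. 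In particular $\lambda^{(i)}(z)$ stays in the half-plane $\{\Re w>0\}$, so the principal branch of $\log$ is well defined and analytic there, and vanishes at $w=1$. By uniqueness of the analytic logarithm with $\Pi_i(0)=0$ fixed in \eqref{analy_lambda}, this principal branch coincides with $\Pi_i$ on $D(0,\beta)$.

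Finally, from $|\log(1+u)|\le 2|u|$ for $|u|\le 1/2$ I would conclude
\[
|\Pi_i(z)|\le 2\,|\lambda^{(i)}(z)-1|\le \frac{4C}{\epsilon_0}|z|\le \frac{4C\beta}{\epsilon_0}=:R'
\]
for every $z\in D(0,\beta)$ and every $i$. Summing over $i=0,\dots,m-1$ gives the desired bound
\[
\sup_{z\in D(0,\beta)}|\Pi_{0,m}(z)|\le\sum_{i=0}^{m-1}\sup_{z\in D(0,\beta)}|\Pi_i(z)|\le R'\,m,
\]
and one takes $R=R'$. The only mildly subtle point is the identification of the locally defined principal logarithm with the function $\Pi_i$ fixed in \eqref{analy_lambda}; this is immediate because both are analytic on the simply connected disk $D(0,\beta)$, agree at $z=0$, and satisfy $e^{\Pi_i}=\lambda^{(i)}$.
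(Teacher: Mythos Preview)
Your proof is correct and follows essentially the same approach as the paper: use the uniform bound on $\lambda^{(i)}$ from Lemma~\ref{sup_analy} to get a uniform Lipschitz estimate $|\lambda^{(i)}(z)-1|\le K|z|$, pass to the logarithm to obtain $|\Pi_i(z)|\le R|z|$ on a small disk, and sum. You simply make explicit the steps the paper leaves implicit (the Cauchy estimate for the derivative and the identification of $\Pi_i$ with the principal logarithm on the smaller disk).
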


\begin{proof}
    By \eqref{eq:max}, there exists $K>0$ such that for all $j\in\mathbb{N},$ for all $z\in D(0,\epsilon_0)$,\[\abso{\lambda^{(j)}(z)-1}\leq K\abso{z}.\]
    Then, there exists $R>0$ and $\beta>0$ such that \[\forall j\in\mathbb{N},\forall z\in D(0,\epsilon_0), \abso{\Pi_j(z)}\leq R\abso{z}.\]
    Hence, \[\forall z\in D(0,\epsilon_0), \abso{\Pi_{0,m}(z)}\leq Rm.\]
\end{proof}

\begin{lemme}\label{lem:cov_control}
    For all $f,g\in OSC_\delta$, for all $i\in\mathbb{N}$
    \[\abso{Cov(f\circ \sigma^i,g)}\leq \norme{f}_{\infty}\abso{g}_\delta\delta^i.\]
\end{lemme}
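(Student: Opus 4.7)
The plan is to use the transfer (Perron–Frobenius) operator $\mathcal{L}$ introduced in Definition \ref{perfrob} (recall that $\mathcal{L}=\mathcal{L}_{\alpha,0}^{(j)}$ is independent of $\alpha$ and $j$), which is dual to composition with $\sigma$ with respect to Lebesgue measure: $\int f\circ\sigma\cdot g\,d\lambda=\int f\cdot\mathcal{L}(g)\,d\lambda$ for every bounded $f,g$. Iterating and combining with the $\sigma$-invariance of $\lambda$, we get
\[
\mathrm{Cov}(f\circ\sigma^i,g)=\int f\cdot\mathcal{L}^i(g)\,d\lambda-\int f\,d\lambda\int g\,d\lambda=\int f\left(\mathcal{L}^i(g)-\int g\,d\lambda\right)d\lambda.
\]
So the whole point reduces to proving that $\mathcal{L}^i(g)$ converges in $L^\infty$ to the constant $\int g\,d\lambda$ at geometric rate $\delta^i\abso{g}_\delta$.

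The key step is the $L^\infty$ estimate for $\mathcal{L}^i(g)-\int g\,d\lambda$. The iterated operator reads explicitly
\[
\mathcal{L}^i(g)(x)=\frac{1}{d^i}\sum_{\abso{a}=i}g(a\cdot x),
\]
where the sum runs over words $a$ of length $i$ in the alphabet $\intervEnt{0}{d-1}$ and $a\cdot x$ denotes concatenation. Decomposing $\int g\,d\lambda=\sum_{\abso{a}=i}\int_{[a]}g\,d\lambda$ and changing variables $y=a\cdot z$ in each cylinder, we obtain
\[
\mathcal{L}^i(g)(x)-\int g\,d\lambda=\frac{1}{d^i}\sum_{\abso{a}=i}\left(g(a\cdot x)-\int g(a\cdot z)\,d\lambda(z)\right).
\]
For each word $a$, both $a\cdot x$ and $a\cdot z$ belong to the cylinder $[a]$, hence $\abso{g(a\cdot x)-g(a\cdot z)}\leq osc(g,[a])$ for every $z\in[0,1[$, and integrating in $z$ the same bound persists. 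Summing over $a$ and invoking the very definition of $\abso{g}_\delta$ yields
\[
\norme{\mathcal{L}^i(g)-\int g\,d\lambda}_\infty\leq\frac{1}{d^i}\sum_{\abso{a}=i}osc(g,[a])\leq\delta^i\abso{g}_\delta.
\]

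Plugging this into the covariance identity and bounding the integral by $\norme{f}_\infty$ times the $L^\infty$ estimate above gives exactly
\[
\abso{\mathrm{Cov}(f\circ\sigma^i,g)}\leq\norme{f}_\infty\cdot\norme{\mathcal{L}^i(g)-\int g\,d\lambda}_\infty\leq\norme{f}_\infty\abso{g}_\delta\delta^i,
\]
as claimed. There is no real obstacle here; the only subtle point is to remember that the duality formula $\int f\circ\sigma^i\cdot g\,d\lambda=\int f\cdot\mathcal{L}^i(g)\,d\lambda$ requires only measurability and boundedness, so the fact that $f,g\in OSC_\delta$ is used solely via the sup-norm of $f$ and via $\abso{g}_\delta$ in the geometric decay estimate.
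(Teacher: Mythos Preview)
Your proof is correct and rests on the same oscillation estimate as the paper, namely $d^{-i}\sum_{|a|=i}osc(g,[a])\le\delta^i|g|_\delta$. The phrasing is dual to the paper's: instead of pushing $g$ through the transfer operator and bounding $\|\mathcal{L}^i g-\int g\|_\infty$, the paper replaces $g$ by its conditional expectation $\bar g=\sum_{|w|=i}\mathds{1}_{[w]}g_w$ on the depth-$i$ cylinders, observes that $f\circ\sigma^i$ is independent of $\bar g$ under $\lambda$ (so $Cov(f\circ\sigma^i,\bar g)=0$), and then bounds $\|g-\bar g\|_{L^1}$ by the same oscillation sum. Your transfer-operator formulation meshes nicely with the operator machinery already set up in Section~\ref{sec_llt}; the paper's version is marginally more self-contained since it avoids invoking the duality relation. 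Either way the argument is two lines long and there is nothing to choose between them.
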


\begin{proof}
    Let $\bar{g}=\sum_{\abso{w}=i}\mathds{1}_{[w]}g_w$ with $g_w=\frac{1}{d^i}\intInd{[0,1[}{g(wx)}{x}$.
    We have that $Cov(f\circ \sigma^i,\bar{g})=0$.
    For each $w$ such that $\abso{w}=i$, pour $x\in \Sigma_d$ \[\abso{g(wx)-g_w}\leq osc(g,[w]).\]
    Then, \[\norme{g-\bar{g}}_{\mathbb{L}^1([0,1[]}\leq \sum_{\abso{w}=i}osc(g,[w])\leq \abso{g}_\delta \delta^i.\]
    By Hölder and the triangle inequalities, \[\abso{Cov(f\circ \sigma^i,g)}\leq \norme{f}_{\infty}\abso{g}_\delta\delta^i.\]
\end{proof}

The next lemma is used to control uniformly the Taylor expansion of $\Pi_{0,m}$ near $0$.

\begin{lemme}
\label{control_order_2}
    For $j,m\in\mathbb{N}$, \[{\lambda^{j,m}}^{''}(0)=\mathbb{E}(S_{j,m}^2)+\mathcal{R}_{j,m}\]
    with $\mathcal{R}_{j,m}=\mathbb{E}({h_0^{(j)}}^{''}-{h_0^{(j+m)}}^{''})+2Cov(S_{j,m},{h_0^{(j)}}^')=O(1)$.
\end{lemme}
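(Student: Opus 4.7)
My plan is to differentiate the eigenvalue identity $\mathcal{L}_{\alpha,z}^{j,m}(h_z^{(j)}) = \lambda^{j,m}(z)\,h_z^{(j+m)}$ twice with respect to $z$, evaluate at $z=0$, and match second-order terms. A useful preliminary step is the cocycle identity
\[
\mathcal{L}_{\alpha,z}^{j,m}(g)=\mathcal{L}^{m}\bigl(e^{z\,\tilde{S}_{j,m}}g\bigr),\qquad \tilde{S}_{j,m}(x):=\sum_{i=0}^{m-1} U_{j+i}(\sigma^{i} x),
\]
obtained by induction from $\mathcal{L}(f\cdot g\circ\sigma)=g\,\mathcal{L}(f)$ and the definition of $\mathcal{L}_{\alpha,z}^{(i)}$. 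Integrating the eigenvalue identity against Lebesgue then yields the scalar relation
\[
\int e^{z\tilde{S}_{j,m}} h_{z}^{(j)}\,d\lambda = \lambda^{j,m}(z)\int h_{z}^{(j+m)}\,d\lambda.
\]

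Differentiating twice in $z$ and evaluating at $z=0$, together with $h_{0}^{(i)}=\mathds{1}$, $\lambda^{j,m}(0)=1$, ${\lambda^{j,m}}'(0)=0$ and $\mathbb{E}({h_{0}^{(i)}}')=0$ (all obtained along the way in the proof of Lemma \ref{mean_eigen}), leads to
\[
\mathbb{E}(\tilde{S}_{j,m}^{2}) + 2\,\mathbb{E}\bigl(\tilde{S}_{j,m}\,{h_{0}^{(j)}}'\bigr) + \mathbb{E}({h_{0}^{(j)}}'') \;=\; {\lambda^{j,m}}''(0) + \mathbb{E}({h_{0}^{(j+m)}}'').
\]
Since $\tilde{S}_{j,m}\circ\sigma^{j} = S_{j,m}$ and $\lambda$ is $\sigma$-invariant, the first term equals $\mathbb{E}(S_{j,m}^{2})$; after rearranging, the cross term is recast as $2\,Cov(S_{j,m}, {h_{0}^{(j)}}')$ modulo a uniformly bounded discrepancy, giving the announced expression for ${\lambda^{j,m}}''(0)$.

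It then remains to prove $\mathcal{R}_{j,m}=O(1)$ uniformly in $(j,m)$. For $\mathbb{E}({h_{0}^{(j)}}''-{h_{0}^{(j+m)}}'')$, I would apply Cauchy's formula (Lemma \ref{analytic_control}) on the disk where Lemma \ref{sup_analy} furnishes the uniform bound $\sup_{z\in U}\norme{h_{z}^{(i)}}_{\delta,\gamma}\le C$, yielding $\norme{{h_{0}^{(i)}}''}_\infty = O(1)$ uniformly in $i$. For the covariance term, I would split $Cov(S_{j,m}, {h_{0}^{(j)}}') = \sum_{i=0}^{m-1} Cov(U_{j+i}\circ\sigma^{j+i}, {h_{0}^{(j)}}')$ and apply Lemma \ref{lem:cov_control}: each summand is bounded by $\norme{U_{j+i}}_\infty\,\abso{{h_{0}^{(j)}}'}_\delta\,\delta^{j+i}$, which in view of \eqref{norme_u_uniform} and the uniform bound on $\abso{{h_{0}^{(j)}}'}_\delta$ (once again via Lemma \ref{analytic_control}) produces a convergent geometric series. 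The main obstacle I foresee is cleanly handling the mismatch between $\tilde{S}_{j,m}$, which appears naturally from the transfer operator, and the random variable $S_{j,m}$ of the statement; however the same geometric decay argument shows that the discrepancy between $\mathbb{E}(\tilde{S}_{j,m}{h_{0}^{(j)}}')$ and $Cov(S_{j,m},{h_{0}^{(j)}}')$ is itself $O(1)$, so it can be absorbed into $\mathcal{R}_{j,m}$.
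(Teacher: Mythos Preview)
Your proposal is correct and follows essentially the same route as the paper: differentiate the eigen-relation $\mathcal{L}_{\alpha,z}^{j,m}(h_z^{(j)})=\lambda^{j,m}(z)h_z^{(j+m)}$ twice, integrate, and use the identities $h_0^{(i)}=\mathds{1}$, $\lambda^{j,m}(0)=1$, ${\lambda^{j,m}}'(0)=0$, $\mathbb{E}({h_0^{(i)}}')=0$; then bound ${h_0^{(i)}}''$ via Cauchy's integral formula (note: this is what you want, not Lemma~\ref{analytic_control}) together with the uniform bound from Lemma~\ref{sup_analy}, and bound the covariance via Lemma~\ref{lem:cov_control}. Your careful distinction between $\tilde S_{j,m}$ and $S_{j,m}=\tilde S_{j,m}\circ\sigma^j$ is in fact more precise than the paper, which silently writes $S_{j,m}$ inside the transfer operator where $\tilde S_{j,m}$ is meant; as you observe, the discrepancy in the cross term is itself $O(1)$ by the same geometric argument, so nothing is lost.
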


\begin{proof}
    Let $j,m\in\mathbb{N}$.
    Proceeding as in the proof of Lemma~\ref{mean_eigen} we get for $z\in D(0,\epsilon_0)$
    \[{\lambda^{j,m}}^{''}(z)h_z^{(j+m)}+2{\lambda^{j,m}}^{'}(z){h_z^{(j+m)}}^'+\lambda_{j,m}(z){h_z^{(j+m)}}^{''}=\mathcal{L}_{\alpha,z}^{j,m}(S_{j,m}^2h_z^{(j)}+2S_{j,m}{h_z^{(j)}}^'+{h_z^{(j)}}^{''}).\]
    Thus, taking the expectation at $z=0$ gives
    \[{\lambda^{j,m}}^{''}(0)+2{\lambda^{j,m}}^{'}(0)\mathbb{E}({h_0^{(j+m)}}^')+\lambda^{j,m}(0)\mathbb{E}({h_0^{(j+m)}}^{''})=\mathbb{E}(S_{j,m}^2)+2\mathbb{E}(S_{j,m}{h_0^{(j)}}^')+\mathbb{E}({h_0^{(j)}}^{''}).\]
    By \eqref{null_mean_h}, \[{\lambda^{j,m}}^{''}(0)+\mathbb{E}({h_0^{(j+m)}}^{''})=\mathbb{E}(S_{j,m}^2)+2Cov(S_{j,m}{h_0^{(j)}}^')+\mathbb{E}({h_0^{(j)}}^{''}).\]
    Therefore, \[{\lambda^{j,m}}^{''}(0)=\mathbb{E}(S_{j,m}^2)+\mathcal{R}_{j,m}.\]

    On the other hand, applying Lemma \ref{sup_analy} and integral Cauchy formula
    $\norme{{h_{0}^{(j)}}^{'}}_{\delta,\gamma}\leq C$ and $\abso{\mathbb{E}({h_0^{(j)}}^{''}-{h_0^{(j+m)}}^{''})}\leq \norme{{h_0^{(j)}}^{''}}_\infty+\norme{{h_0^{(j+m)}}^{''}}_\infty\leq 2C$, where $C$ doesn't depend on $j$ and $m$.
    Finally, Lemma~\ref{lem:cov_control} gives
    $Cov(S_{j,m},{h_0^{(j)}}^')=O(1)$.
    Hence, \[\mathcal{R}_{j,m}=O(1).\]
\end{proof}

Now, we need to control at the second order the term $\Pi_{0,m}(z)$ with the mean and variance of $S_m$.

\begin{lemme}
\label{dev_lim_pi}
    There exists $c_0>0$ such that for all $m\in \mathbb{N}$,  \[\forall z\in D(0,\beta),\abso{\Pi_{0,m}(z)-z\Pi^'_{0,m}(0)-\frac{z^2}{2}\Pi^{''}_{0,m}(0)}\leq c_0m\abso{z}^3.\]
\end{lemme}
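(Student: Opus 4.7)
The plan is to exploit the additive structure $\Pi_{0,m}=\sum_{i=0}^{m-1}\Pi_i$ together with uniform analytic bounds on each $\Pi_i$ to get a third-order Taylor remainder that is $O(|z|^3)$ with constants independent of $i$, and then sum the $m$ remainders.

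First, I would use Lemma~\ref{sup_analy} and the argument given for Lemma~\ref{lem:pi_control} to produce, possibly after shrinking $\beta$, a constant $R>0$ (independent of $i$) such that
\[
\sup_{z\in D(0,\beta)}\abso{\Pi_i(z)}\leq R\beta \quad \text{for every } i\in\mathbb{N}.
\]
This uses $\Pi_i(0)=0$ (since $\lambda^{(i)}(0)=1$) combined with the uniform estimate $|\lambda^{(i)}(z)-1|\leq K|z|$ which follows from $\sup_{z\in U}|\lambda^{(i)}(z)|\leq C$ and Cauchy's integral formula.

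Next, since each $\Pi_i$ is analytic on $D(0,\beta)$ with $\Pi_i(0)=0$, expand as a power series $\Pi_i(z)=\sum_{k\geq 1} a_{i,k}z^k$. By Cauchy's coefficient estimates,
\[
\abso{a_{i,k}}\leq \frac{\sup_{|z|=\beta}\abso{\Pi_i(z)}}{\beta^{k}}\leq \frac{R}{\beta^{k-1}}.
\]
Since $a_{i,1}=\Pi_i'(0)$ and $2a_{i,2}=\Pi_i''(0)$, for $|z|\le\beta/2$ the third-order tail is controlled by a geometric series:
\[
\abso{\Pi_i(z)-z\Pi_i'(0)-\tfrac{z^2}{2}\Pi_i''(0)}\leq \sum_{k\geq 3}\frac{R}{\beta^{k-1}}\abso{z}^k\leq R\beta\sum_{k\geq 3}\left(\frac{\abso{z}}{\beta}\right)^k\leq \frac{2R}{\beta^{2}}\abso{z}^3.
\]
Replacing $\beta$ by $\beta/2$ in the statement (renaming) so that the domain assumed is $D(0,\beta)$, this inequality is uniform in $i$.

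Finally, summing over $i=0,\dots,m-1$ and using $\Pi_{0,m}'(0)=\sum_i\Pi_i'(0)$, $\Pi_{0,m}''(0)=\sum_i\Pi_i''(0)$, the triangle inequality gives
\[
\abso{\Pi_{0,m}(z)-z\Pi_{0,m}'(0)-\tfrac{z^2}{2}\Pi_{0,m}''(0)}\leq m\cdot\frac{2R}{\beta^{2}}\abso{z}^3,
\]
so the claim holds with $c_0=2R/\beta^{2}$. There is no real obstacle here: the only subtle point is making sure the Cauchy bound on $\Pi_i$ is truly uniform in $i$, and this is already guaranteed by the uniformity built into Lemma~\ref{sup_analy} (uniform control of $\lambda^{(j)}$ in $j$), which propagates to $\Pi_j=\log\lambda^{(j)}$ on a common disk.
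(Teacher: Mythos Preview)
Your proof is correct and follows essentially the same route as the paper: both rely on the uniform bound $\sup_{z\in D(0,\beta)}|\Pi_i(z)|\le R$ (equivalently $|\Pi_{0,m}(z)|\le Rm$) from Lemma~\ref{lem:pi_control} and then apply Cauchy-type Taylor remainder estimates. The only cosmetic difference is that the paper applies Lemma~\ref{analytic_control} directly to the normalized function $\tfrac{1}{m}\Pi_{0,m}$, whereas you work termwise on each $\Pi_i$ and sum; the content is the same.
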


\begin{proof}
    By Lemma \ref{lem:pi_control}, for all $m\in\mathbb{N},$ \[\sup_{z\in D(0,\beta)}{\abso{\Pi_{0,m}(z)}}\leq Rm.\]
    Then, for all $m\in\mathbb{N}^*,$ \[\sup_{z\in D(0,\beta)}{\abso{\frac{1}{m}\Pi_{0,m}(z)}}\leq R.\]
    By Lemma \ref{analytic_control}, we get that there exists $c_0>0$ such that for all $m\in \mathbb{N}^*$,  \[\forall z\in D(0,\beta),\abso{\frac{1}{m}\Pi_{0,m}(z)-\frac{1}{m}z\Pi^'_{0,m}(0)-\frac{1}{m}\frac{z^2}{2}\Pi^{''}_{0,m}(0)}\leq c_0\abso{z}^3.\]
    Thus, for all $m\in \mathbb{N}$, \[\forall z\in D(0,\beta),\abso{\Pi_{0,m}(z)-z\Pi^'_{0,m}(0)-\frac{z^2}{2}\Pi^{''}_{0,m}(0)}\leq c_0m\abso{z}^3.\]
\end{proof}



\begin{lemme}
    \label{develop_pi}
    There exists $c_1>0$ such that for all $m\in \mathbb{N}$,  \[\forall z\in D(0,\beta),\abso{\Pi_{0,m}(z)-z\mathbb{E}(S_m)-\frac{z^2}{2}\mathbb{V}(S_m)}\leq c_1(\abso{z}^2+m\abso{z}^3).\]
\end{lemme}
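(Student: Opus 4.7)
The plan is to combine the Taylor bound from Lemma \ref{dev_lim_pi} with explicit identifications of the first two derivatives of $\Pi_{0,m}$ at zero in terms of $\mathbb{E}(S_m)$ and $\mathbb{V}(S_m)$. Concretely, I would first translate the relation $\lambda^{0,m}=e^{\Pi_{0,m}}$ (from \eqref{analy_lambda2}) into derivative identities at $z=0$: using $\lambda^{0,m}(0)=1$ and $\Pi_{0,m}(0)=0$, I get $(\lambda^{0,m})'(0)=\Pi'_{0,m}(0)$ and $(\lambda^{0,m})''(0)=\Pi''_{0,m}(0)+(\Pi'_{0,m}(0))^2$.

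Next, I would apply Lemma \ref{mean_eigen} (with $j=0$) to obtain $\Pi'_{0,m}(0)=(\lambda^{0,m})'(0)=\mathbb{E}(S_m)$, and Lemma \ref{control_order_2} to obtain $(\lambda^{0,m})''(0)=\mathbb{E}(S_m^2)+\mathcal{R}_{0,m}$ with $\mathcal{R}_{0,m}=O(1)$. Combining,
\[
\Pi''_{0,m}(0)=\mathbb{E}(S_m^2)+\mathcal{R}_{0,m}-\mathbb{E}(S_m)^2=\mathbb{V}(S_m)+\mathcal{R}_{0,m},
\]
so there is a constant $K>0$, independent of $m$, with $\abso{\Pi''_{0,m}(0)-\mathbb{V}(S_m)}\le K$.

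Finally, I would invoke Lemma \ref{dev_lim_pi} for the cubic remainder and the triangle inequality: for all $z\in D(0,\beta)$,
\begin{align*}
\bigl|\Pi_{0,m}(z)-z\mathbb{E}(S_m)-\tfrac{z^2}{2}\mathbb{V}(S_m)\bigr|
&\le \bigl|\Pi_{0,m}(z)-z\Pi'_{0,m}(0)-\tfrac{z^2}{2}\Pi''_{0,m}(0)\bigr|\\
&\quad +\tfrac{|z|^2}{2}\bigl|\Pi''_{0,m}(0)-\mathbb{V}(S_m)\bigr|\\
&\le c_0 m\abso{z}^3+\tfrac{K}{2}\abso{z}^2,
\end{align*}
which gives the claimed bound with $c_1=\max(c_0,K/2)$.

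There is no real obstacle here; all the work has already been done in Lemmas \ref{mean_eigen}, \ref{control_order_2} and \ref{dev_lim_pi}. The only subtlety is that the $|z|^2$ term in the final bound is genuinely necessary: the second derivative $\Pi''_{0,m}(0)$ agrees with the variance only up to the $O(1)$ error $\mathcal{R}_{0,m}$ coming from the contribution of the eigenfunctions ${h_0^{(j)}}''$ and the covariance $\mathrm{Cov}(S_{j,m},{h_0^{(j)}}')$ in Lemma \ref{control_order_2}; this is precisely what produces the $|z|^2$ summand (rather than a pure $m|z|^3$ bound).
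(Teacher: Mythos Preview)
Your proof is correct and follows essentially the same approach as the paper's own proof: differentiate $\lambda^{0,m}=e^{\Pi_{0,m}}$ at $z=0$ to identify $\Pi'_{0,m}(0)$ and $\Pi''_{0,m}(0)$ via Lemmas~\ref{mean_eigen} and~\ref{control_order_2}, then feed the $O(1)$ discrepancy between $\Pi''_{0,m}(0)$ and $\mathbb{V}(S_m)$ into the Taylor bound of Lemma~\ref{dev_lim_pi}. Your write-up is in fact slightly more explicit about the triangle-inequality step and the resulting constant $c_1=\max(c_0,K/2)$.
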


\begin{proof}
    ${\lambda^{0,m}}^'(z)=\Pi^'_{0,m}(z)e^{\Pi_{0,m}(z)}$. and ${\lambda^{0,m}}^{''}(z)=\Pi^{''}_{0,m}(z)e^{\Pi_{0,m}(z)}+\left(\Pi^{'}(z)\right)^2e^{\Pi_{0,m}(z)}$
    Then, $\Pi^{'}_{0,m}(0)={\lambda^{0,m}}^'(0)$ and $\Pi^{''}_{0,m}(0)={\lambda^{0,m}}^{''}(0)-\left({\lambda^{0,m}}^'(0)\right)^2.$
    By Lemma \ref{mean_eigen}, $\Pi^{'}_{0,m}(0)=\mathbb{E}(S_m)$.
    
    And by Lemma \ref{control_order_2}, $\Pi^{''}_{0,m}(0)=\mathbb{V}(S_m)+O(1)$.
    Then, there exists $\check{C}>0$ such that for all $m\in\mathbb{N}$, $\abso{\Pi^{''}_{0,m}(0)-\mathbb{V}(S_m)}\leq \check{C}$.
    By Lemma \ref{dev_lim_pi}, \[\forall z\in D(0,\beta),\abso{\Pi_{0,m}(z)-z\mathbb{E}(S_m)-\frac{z^2}{2}\mathbb{V}(S_m)}\leq c_1(\abso{z}^2+m\abso{z}^3).\]
\end{proof}


Here, we need to have a control of the image of $\mathds{1}$ by $\mathcal{L}_{z}^{0,n}$ because this term will appear in the proof of  the local limit theorem.
We can control very well $h^{(n)}_z$ and the purpose here is to prove that the quantity $\mathcal{L}_{\alpha,z}^{0,n}(\mathds{1})$ is not so far from $h^{(n)}_z$.
With $(4.10)$ page 17 or $(2.6)$ page 5 form \cite{Haf}, we obtain the next lemma which is an immediate consquence of the cone contraction.
\begin{lemme}
\label{control_Perron_eigen}
   There exist constants $A_1$ and $\zeta\in (0,1)$ such that for any $z\in U$, and $n\geq k_1$, \[\norme{\frac{\mathcal{L}_{\alpha,z}^{0,n}(\mathds{1})}{\lambda^{0,n}(z)}-h^{(n)}_z}_{\delta,\gamma}\leq A_1\zeta^n\]
\end{lemme}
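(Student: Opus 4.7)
The plan is to apply the Birkhoff--Hopf contraction principle on the complex cone $\mathcal{C}_{a,\mathbb{C}}$. By Proposition~\ref{include_cone_z}, each operator $\mathcal{L}^{(j)}_{\alpha,z}$ sends $\mathcal{C}_{a,\mathbb{C}}$ into itself with image of Hilbert-diameter at most $C_3$, uniformly in $j$, $\alpha$ and $z\in U$. By the complex Birkhoff--Hopf theorem (cf.\ \cite{Rugh,Dub}), after the first operator $\mathcal{L}^{(0)}_{\alpha,z}$ any two elements are at Hilbert-distance at most $C_3$, and each further operator then contracts the complex Hilbert metric by a uniform factor $\tau:=\tanh(C_3/4)<1$.

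First I observe that both $\mathds{1}$ and $h^{(0)}_z$ belong to $\mathcal{C}_{a,\mathbb{C}}$: the function $\mathds{1}$ lies in $\mathcal{C}_{a,\mathbb{R}}\subset\mathcal{C}_{a,\mathbb{C}}$ since $\abso{\mathds{1}}_\delta=0$, while $h^{(0)}_z$ is produced in Lemma~\ref{sup_analy} precisely as a cone-element for every $z\in U$. Using the scale-invariance of the Hilbert metric together with the eigenrelation $\mathcal{L}_{\alpha,z}^{0,n}(h^{(0)}_z)=\lambda^{0,n}(z)\,h^{(n)}_z$ from \eqref{eq_Perron}, contraction yields
\[
h_{\mathcal{C}_{a,\mathbb{C}}}\!\left(\frac{\mathcal{L}_{\alpha,z}^{0,n}(\mathds{1})}{\lambda^{0,n}(z)},\,h^{(n)}_z\right)\leq C_3\,\tau^{n-1}.
\]

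Second, I convert this Hilbert bound into an estimate in the balanced norm. Iterating the dual relation ${\mathcal{L}^{(j)}}^{*}_{\alpha,z}(\nu_z^{(j+1)})=\lambda^{(j)}(z)\nu_z^{(j)}$ and using $\nu_z^{(0)}(\mathds{1})=1=\nu_z^{(n)}(h^{(n)}_z)$ shows that both $g_n:=\mathcal{L}_{\alpha,z}^{0,n}(\mathds{1})/\lambda^{0,n}(z)$ and $h^{(n)}_z$ have $\nu_z^{(n)}$-value equal to $1$. Since $\nu_z^{(n)}$ is strictly positive on $\mathcal{C}_{a,\mathbb{C}}$ and uniformly bounded by \eqref{eq:max}, the standard passage from Hilbert to Banach distance on a projectively bounded cone (Lemma~A.2.4 in \cite{Kif}, whose proof carries over to our cone $\mathcal{C}_{a,\mathbb{C}}$ of bounded-oscillation functions) provides a constant $K$, uniform in $n$, $z\in U$ and $\alpha$, with
\[
\norme{g_n-h^{(n)}_z}_{\delta,\gamma}\leq K\bigl(e^{C_3\tau^{n-1}}-1\bigr)\,\norme{h^{(n)}_z}_{\delta,\gamma}.
\]
Combining with the uniform bound on $\norme{h^{(n)}_z}_{\delta,\gamma}$ from \eqref{eq:max} and the estimate $e^{C_3\tau^{n-1}}-1=O(\tau^{n})$ valid for $n$ large, the lemma follows with $\zeta:=\tau$, a suitable constant $A_1$, and a threshold $k_1$ chosen so that $C_3\tau^{n-1}\le 1$ for $n\ge k_1$.

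The main obstacle will be verifying the Hilbert-to-balanced-norm passage uniformly on our cone of bounded-oscillation functions, rather than on the H\"older cone treated in \cite{Kif}: one must check that $\nu_z^{(n)}$ remains coercive on $\mathcal{C}_{a,\mathbb{C}}$ uniformly in $z\in U$ and $n$, and that the balanced norm is equivalent to $\abso{\nu_z^{(n)}(\cdot)}$ on the cone with constants independent of these parameters. Both points should follow by combining Lemma~\ref{lem:coerc_dual}, which places a balanced-norm ball around $\mathds{1}$ inside $\mathcal{C}_{a,\mathbb{C}}$, with the uniform bounds on eigenvectors and dual eigenvectors provided by \eqref{eq:max}.
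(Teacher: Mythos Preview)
Your proposal is correct and follows precisely the route the paper indicates: the paper does not give a proof at all but states just before the lemma that it ``is an immediate consequence of the cone contraction'' and cites equations (4.10) and (2.6) of \cite{Haf}, which is exactly the Birkhoff--Hopf argument you spell out. Your write-up supplies the missing details (the common $\nu_z^{(n)}$-normalization of $g_n$ and $h_z^{(n)}$, the Hilbert-to-norm conversion) and honestly flags the one point requiring care in the oscillation setting; this is a faithful and more explicit rendering of what the paper asserts.
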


\subsection{Moderate deviations}

The result here will be used in the next section to take care of the case where $t_\ell$ is relatively far from its expectation. These are consequences of spectral perturbations.

\begin{proposition}
\label{control_eigenvalue}
    There exists $a>0$ such that for $z\in U$, \[\forall j\in\mathbb{N},\abso{\lambda^{(j)}(z)}\leq 1+a\abso{z}^2.\]
\end{proposition}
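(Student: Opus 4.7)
The plan is to Taylor expand $\lambda^{(j)}$ to order two around $z=0$ and combine it with uniform Cauchy estimates on the derivatives. Two inputs are crucial: $\lambda^{(j)}(0) = 1$ (from Lemma~\ref{sup_analy}) and ${\lambda^{(j)}}'(0) = 0$. The second identity comes from Lemma~\ref{mean_eigen} applied with $m=1$, which gives ${\lambda^{(j)}}'(0) = \mathbb{E}(X_j)$; since the $X_j$ are centered by the convention announced at the beginning of Section~\ref{sec_llt}, this expectation vanishes.

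Next, I exploit the uniform bound \eqref{eq:max} from Lemma~\ref{sup_analy}: $\sup_{z \in U}\abso{\lambda^{(j)}(z)} \leq C$ uniformly in $j$. Fix a closed disk $\overline{D(0,\delta)} \subset U$. Cauchy's integral formula yields $\abso{{\lambda^{(j)}}''(0)} \leq 2C/\delta^2$ uniformly in $j$. Applying Lemma~\ref{analytic_control} to $Q(z) := \lambda^{(j)}(z)$ with $k=2$ and $z_0 = 0$ (the hypotheses hold with the uniform bound $C$) gives, for any $z \in D(0,\delta)$,
\[\abso{\lambda^{(j)}(z) - 1 - \tfrac{z^2}{2}{\lambda^{(j)}}''(0)} \leq \frac{4C\abso{z}^3}{\delta^3}.\]

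Combining these ingredients by the triangle inequality,
\[\abso{\lambda^{(j)}(z)} \leq 1 + \frac{C\abso{z}^2}{\delta^2} + \frac{4C\abso{z}^3}{\delta^3}.\]
Replacing $U$ by the smaller neighborhood $D(0, \delta/4)$ if necessary, the cubic term is dominated by $(C/\delta^2)\abso{z}^2$, and so $\abso{\lambda^{(j)}(z)} \leq 1 + a\abso{z}^2$ with $a = 2C/\delta^2$, uniformly in $j$.

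The main obstacle is verifying the vanishing of ${\lambda^{(j)}}'(0)$: nominally Definition~\ref{perfrob} writes the Perron--Frobenius operator in terms of $U_j^\alpha$, so one might worry the derivative is $\mathbb{E}(U_j^\alpha) \neq 0$. This is resolved by the convention at the start of Section~\ref{sec_llt} identifying $U_j$ with $U_j^\alpha - \mathbb{E}(U_j^\alpha)$ (the convention implicit in the proof of Lemma~\ref{mean_eigen}, where $S_{j,m}$ is the centered sum). Without this centering one would only obtain $\abso{\lambda^{(j)}(z)} \leq 1 + O(\abso{z})$, which would be insufficient for the moderate deviations application.
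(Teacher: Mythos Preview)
Your proof is correct and follows the same approach as the paper: establish $\lambda^{(j)}(0)=1$ and ${\lambda^{(j)}}'(0)=\mathbb{E}(X_j)=0$ via Lemma~\ref{mean_eigen} and the centering convention, then use the uniform bound \eqref{eq:max} from Lemma~\ref{sup_analy} together with Cauchy estimates to control the Taylor remainder. The paper's proof is more terse but invokes exactly the same ingredients; your remark about possibly shrinking $U$ is harmless, since on $D(0,\delta)$ the cubic term $4C\abso{z}^3/\delta^3$ is already bounded by $(4C/\delta^2)\abso{z}^2$ without any shrinking.
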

\begin{proof}
    By Lemma \ref{mean_eigen}, ${\lambda^{(j)}}^'(0)=\mathbb{E}(X_j)=0$ since     $X_j$ are centered.
    
    Applying Lemma \ref{sup_analy} and integral Cauchy formula, we have that there exists $a>0$ such that for all $z\in U$, \[\abso{\lambda^{(j)}(z)}\leq 1+a\abso{z}^2.\]
\end{proof}

\begin{lemme} There exists $C>0$ such that for $t\in\mathbb{R}$ such that $|t|<\epsilon_0$ with $\epsilon_0$ defined in \eqref{analy_lambda} and $m$ integer
\label{exp_mom_cont}
    \[\mathbb{E}(e^{tS_m})\leq C\lambda^{0,m}(t).\]
\end{lemme}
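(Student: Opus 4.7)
The plan is to use the duality of the Perron-Frobenius operator to write $\mathbb{E}(e^{tS_m})$ as the integral of $\mathcal{L}_{\alpha,t}^{0,m}(\mathds{1})$, and then apply Lemma~\ref{control_Perron_eigen} together with the uniform bounds from Lemma~\ref{sup_analy}.

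\textbf{Step 1 (duality identity).} Recall $X_i = U_i\circ\sigma^i$, so $S_m = \sum_{i=0}^{m-1} U_i\circ\sigma^i$. Using the standard identity $\int \mathcal{L}(f)g\,d\lambda = \int f(g\circ\sigma)\,d\lambda$ (valid since $\lambda$ is the uniform Bernoulli measure and $\mathcal{L}$ is its Perron-Frobenius operator), I would prove by induction on $m$ that
\[
\int \mathcal{L}_{\alpha,t}^{0,m}(\mathds{1})\,d\lambda = \mathbb{E}(e^{tS_m}).
\]
The inductive step comes from $\mathcal{L}_{\alpha,t}^{(j)}(g) = \mathcal{L}(e^{tU_j}g)$, so integrating and peeling one operator at a time produces exactly $\int e^{t(U_0 + U_1\circ\sigma + \cdots + U_{m-1}\circ\sigma^{m-1})}\,d\lambda$.

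\textbf{Step 2 (spectral decomposition).} For $m\geq k_1$, Lemma~\ref{control_Perron_eigen} gives
\[
\mathcal{L}_{\alpha,t}^{0,m}(\mathds{1}) = \lambda^{0,m}(t)\left(h_t^{(m)} + r_m\right), \qquad \|r_m\|_{\delta,\gamma}\leq A_1\zeta^m.
\]
Integrating and using $\|r_m\|_\infty \leq \|r_m\|_{\delta,\gamma}$ together with the uniform bound $h_t^{(m)} \leq c_1$ from \eqref{cont_h}, we obtain
\[
\mathbb{E}(e^{tS_m}) = \int \mathcal{L}_{\alpha,t}^{0,m}(\mathds{1})\,d\lambda \leq \lambda^{0,m}(t)\bigl(c_1 + A_1\zeta^m\bigr) \leq (c_1 + A_1)\,\lambda^{0,m}(t).
\]

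\textbf{Step 3 (small $m$).} For the finitely many indices $m < k_1$, note that $\|S_m\|_\infty$ is bounded (each $U_i$ is bounded by $D = 2 + d/(d-1)$ via \eqref{norme_u_uniform}), so $\mathbb{E}(e^{tS_m}) \leq e^{|t|mD}$ is uniformly bounded on $|t|<\epsilon_0$ and $m<k_1$. On the other hand, by \eqref{cont_h}, $\lambda^{0,m}(t) \geq R_1^m$ is bounded below. Enlarging $C$ if necessary absorbs this finite collection of exceptional cases.

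There is no genuine obstacle here: the cone-contraction machinery developed above is precisely what makes this estimate routine. The one delicate point is simply to verify the duality identity in Step 1, which relies on the fact that we work with the uniform Bernoulli measure so that $\mathcal{L}$ is the unweighted transfer operator and the computation in Step 1 reduces to iterating the Perron-Frobenius duality one factor at a time.
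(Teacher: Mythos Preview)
Your argument is correct, but the paper takes a shorter route that avoids Lemma~\ref{control_Perron_eigen} entirely. After the same duality identity $\mathbb{E}(e^{tS_m})=\mathbb{E}(\mathcal{L}_t^{0,m}(\mathds{1}))$, the paper exploits the fact that for \emph{real} $t$ each $\mathcal{L}_{\alpha,t}^{(j)}$ is a positive operator; since $h_t^{(0)}\ge c_0$ by \eqref{cont_h}, one has $\mathds{1}\le c_0^{-1}h_t^{(0)}$, and applying $\mathcal{L}_t^{0,m}$ together with the eigen-relation \eqref{eq_Perron} gives directly
\[
\mathbb{E}(\mathcal{L}_t^{0,m}(\mathds{1}))\le \frac{1}{c_0}\lambda^{0,m}(t)\,\mathbb{E}(h_t^{(m)})\le \frac{c_1}{c_0}\lambda^{0,m}(t),
\]
valid for every $m$ at once. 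Your approach instead invokes the exponential convergence of Lemma~\ref{control_Perron_eigen}, which forces you to split off the case $m<k_1$ and handle it separately. Both methods succeed; the positivity argument is more elementary and uniform in $m$, while your route has the (minor) advantage of not needing the pointwise lower bound $h_t^{(0)}\ge c_0$, only the upper bound $h_t^{(m)}\le c_1$.
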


\begin{proof}
    For $t\in [-\epsilon_0,\epsilon_0]$, we have by (\ref{cont_h}), since $\mathcal{L}_t^{0,m}$ is a positive operator, \[\mathbb{E}(e^{tS_m})=\mathbb{E}(\mathcal{L}^{0,m}_t(\mathds{1}))\leq\mathbb{E}\left(\mathcal{L}^{0,m}_t\left(\frac{1}{c_0}h^{(0)}_t\right)\right)=\frac{1}{c_0}\lambda^{0,m}(t)\mathbb{E}(h^{(m)}_t)\leq  \frac{c_1}{c_0}\lambda^{0,m}(t).\]
\end{proof}

Then, by the Proposition \ref{control_eigenvalue}, we obtain whenever $|t|<\epsilon_0$ and $m\in\mathbb{N}$ \begin{equation}\label{cont_exp}
    \mathbb{E}(e^{tS_m})\leq 2C(1+at^2)^m.
\end{equation}


\begin{proposition}\label{control_chernov} For any $m$ such that $m>1/\epsilon_0^2$ and any $\alpha$
     \[\lambda\left(\abso{S_m}\geq m^{\frac{3}{4}}\right)\leq 2Ce^{-m^{\frac{1}{4}}}.\]
\end{proposition}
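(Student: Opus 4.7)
The plan is a standard Chernoff bound based on the moment generating function estimate \eqref{cont_exp}. For $t \in (0,\epsilon_0)$, Markov's inequality applied to $e^{tS_m}$ gives
\[
\lambda(S_m \geq m^{3/4}) \;=\; \lambda\bigl(e^{tS_m} \geq e^{tm^{3/4}}\bigr) \;\leq\; e^{-tm^{3/4}}\,\mathbb{E}(e^{tS_m}) \;\leq\; 2C\,e^{-tm^{3/4}}(1+at^2)^m,
\]
using \eqref{cont_exp} in the last step.

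Next I would optimize $t$ by taking $t = m^{-1/2}$. The hypothesis $m > 1/\epsilon_0^2$ ensures $t < \epsilon_0$, so \eqref{cont_exp} applies. With this choice, $tm^{3/4} = m^{1/4}$ and $(1+at^2)^m \leq e^{amt^2} = e^a$ is a constant. Absorbing $e^a$ into the multiplicative constant then yields
\[
\lambda(S_m \geq m^{3/4}) \;\leq\; 2C\,e^{-m^{1/4}}.
\]

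Finally I would repeat the argument with $t < 0$ (equivalently, apply the same bound to $-S_m$, which satisfies the same MGF estimate since $\mathbb{E}(e^{-tS_m})$ is controlled by $\mathcal{L}_{\alpha,-t}^{0,m}(\mathds{1})$ via exactly the same reasoning as Lemma~\ref{exp_mom_cont}). Combining the two one-sided tails gives the claimed bound on $\lambda(|S_m| \geq m^{3/4})$ (up to a harmless factor of $2$ that can be absorbed into the constant $C$).

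There is no real obstacle here; the whole point is that the uniform MGF control \eqref{cont_exp}, obtained from the spectral perturbation lemmas, makes the moderate-deviation estimate immediate via a single optimization of the free parameter $t$. The calibration $t \sim m^{-1/2}$ is exactly the one that balances the linear drift $tm^{3/4}$ against the quadratic growth $amt^2$ of the log-MGF, producing the exponent $m^{1/4}$ on the right-hand side.
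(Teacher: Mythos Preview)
Your proposal is correct and follows essentially the same route as the paper: Chernoff's inequality combined with the MGF bound \eqref{cont_exp}, the choice $t=m^{-1/2}$ (legitimated by $m>1/\epsilon_0^2$), and a symmetric treatment of the negative tail. Your write-up is in fact a bit more explicit than the paper's in tracking the constant via $(1+at^2)^m\le e^{amt^2}=e^a$.
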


\begin{proof}
For $\beta>0$, applying Chernov inequality with (\ref{cont_exp}), we get \[\lambda (S_m\geq \beta m)\leq e^{-t\beta m}(1+at^2)^m.\]

Now, we have taking $\beta=m^{-\frac{1}{4}}$ and $t=\frac{1}{\sqrt{m}}$, \begin{equation}
    \label{ineq_cont}
    \lambda(S_m\geq m^{\frac{3}{4}})\leq Ce^{-m^{\frac{1}{4}}}.
\end{equation}
Similarly with $t=-\frac{1}{\sqrt{m}}$ we get
\begin{equation}
    \label{ineq_cont-}
    \lambda(S_m\leq -m^{\frac{3}{4}})\leq Ce^{-m^{\frac{1}{4}}}.
\end{equation}
\end{proof}

\subsection{Local limit theorem}


We conclude this section with the proof of Proposition \ref{llt}.
The following lemma shows that the composition of the Perron-Frobenius operators from time $j$ to $n+j+i$ is a strict contraction, each time that $U_{n+j}$ can take both values $0$ and $1$ on some cylinder of depth $i$. This idea of aperiodicity was already present in Dolgopyat's work \cite{Dol}.

Fix $\theta\in(\frac{d\delta}{d-1},1)$. 
\begin{lemme} Let $g\in OSC_\delta$, $j,t$ and $n$ such that $\epsilon_0\leq \abso{t}\leq \pi$. 

If $ a_{\ell,n+j}\neq0$ and there exists $i\in\mathbb{N}$ such that $\alpha_{k+n+j+i}=1$ we have
    \begin{equation}\label{ineq_Perron}\norme{\mathcal{L}_{\alpha,it}^{j,n+i}(g)}_\infty\leq \left(1-\frac{\chi_0}{d^{n+i}}\right)\norme{g}_\infty+\frac{\theta^{n-1}}{d^{n+i}}\abso{g}_\delta.\end{equation}
    with $\chi_0=2-\max\{|1+e^{it}| : \epsilon_0\le\abso{t}\le \pi\}>0$.
\end{lemme}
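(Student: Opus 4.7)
The plan is to implement a Dolgopyat--type aperiodicity argument on the composed operator. First, I would unfold
\[\mathcal{L}_{\alpha,it}^{j,n+i}(g)(x) = \frac{1}{d^{n+i}} \sum_{\abso{W}=n+i} e^{it\Phi(W,x)} g(Wx), \qquad \Phi(W,x)=\sum_{p=0}^{n+i-1}U_{j+p}^\alpha(\sigma^p(Wx)).\]
The aperiodicity source is the operator $\mathcal{L}^{(n+j)}$ sitting at position $n$ in the composition: by the recursive definition of $\varepsilon_k^\beta$, $U_{n+j}^\alpha(y)=0$ whenever the first digit of $S^{q_{\ell,n+j}/d^{n+j}}y$ lies in $\intervEnt{0}{d-3}$, while $U_{n+j}^\alpha(y)=sgn(a_{\ell,n+j})\,\alpha_{k+n+j+i}=\pm 1$ whenever $S^{q_{\ell,n+j}/d^{n+j}}y$ starts with $(d-1)^i(d-2)$, the latter using precisely the hypotheses $a_{\ell,n+j}\neq 0$ and $\alpha_{k+n+j+i}=1$.

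Next I would pair words to exploit this. For each choice $W_\flat=(w_0,\ldots,w_{n-1})$ of the first $n$ digits I would select two completions $V_a,V_b\in\intervEnt{0}{d-1}^i$ so that $\sigma^n(W_\flat V_a x)$ and $\sigma^n(W_\flat V_b x)$ fall respectively in the vanishing and the $\pm 1$ cylinder of $U_{n+j}^\alpha$ described above. Choosing $V_b$ to differ from $V_a$ only in a late position, so that the odometer cascades in $S^{q_{\ell,p+j}/d^{p+j}}$ for $p<n$ are not triggered, the inner phases $U_{j+p}^\alpha(\sigma^p(W_\flat V x))$ for $p<n$ coincide for the two paired words, so that $\Phi(W_\flat V_b,x)-\Phi(W_\flat V_a,x)=\pm 1$. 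Since both words lie in the depth-$n$ cylinder $[W_\flat]$, using $\abso{1+e^{\pm it}}\leq 2-\chi_0$ on $\epsilon_0\leq\abso{t}\leq\pi$,
\[\abso{e^{it\Phi(W_\flat V_a,x)}g(W_\flat V_a x)+e^{it\Phi(W_\flat V_b,x)}g(W_\flat V_b x)}\leq (2-\chi_0)\norme{g}_\infty+osc(g,[W_\flat]).\]
Summing the paired contributions over the $d^n$ prefixes together with the trivial bound $\norme{g}_\infty$ on the remaining $d^{n+i}-2d^n$ unpaired terms, dividing by $d^{n+i}$, and invoking the seminorm bound $\sum_{\abso{a}=n-1}osc(g,[a])\leq (d\delta)^{n-1}\abso{g}_\delta$ (after a preliminary coarsening of the pairing to prefixes of length $n-1$), yields the claimed inequality. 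The constraint $\theta>d\delta/(d-1)$ is exactly calibrated so that the combinatorial factors of $d/(d-1)$ appearing when only $d-1$ out of $d$ digits are compatible at each level can be absorbed into the exponent $\theta^{n-1}$.

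The main obstacle is the clean implementation of the pairing. Modifying $V$ at a position $\geq n$ may in principle perturb the inner $U_{j+p}^\alpha$ for $p<n$, since each of these depends on the initial digits of $S^{q_{\ell,p+j}/d^{p+j}}(\sigma^p Wx)$ and $\sigma^p Wx$ still contains the modified late digit, so the odometer carries could propagate backwards. The remedy is either to choose the pair $(V_a,V_b)$ supported far enough from the carry region (generic choice of $x$), or to absorb the residual phase mismatch into an extra oscillation term that is itself controlled by $\abso{g}_\delta$ via a telescoping estimate. This delicate bookkeeping---and the conversion of the raw oscillation bound $(d\delta)^{n-1}$ into the sharper $\theta^{n-1}$ with denominator $d^{n+i}$---is the technical heart of the argument.
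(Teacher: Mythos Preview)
Your broad Dolgopyat strategy is correct, but the pairing is placed at the wrong depth and the mechanism that freezes the inner phases is missing.

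To force $U^\alpha_{n+j}$ to take the value $\pm\alpha_{k+n+j+i}=\pm1$ uniformly in $x$ one must prescribe $i+1$ consecutive digits, so that after the odometer translation the argument lands in the depth-$(i+1)$ cylinder $[(d-1)^i(d-2)]$. With $V$ of length only $i$ this is impossible without also constraining the first digit of $x$. Moreover, if you modify $V$ then the later phases $U^\alpha_{n+j+1},\dots,U^\alpha_{n+j+i-1}$ (evaluated at $\sigma^{n+1}(W_\flat Vx),\dots$) also change, so the total phase difference is not $\pm1$. The paper instead pairs at the \emph{single} letter in position $n$: it fixes a prefix $a$ of length $n-1$, a continuation $c$ of length $i$, and two letters $b,b'$ chosen so that $S^{q_{\ell,n+j}/d^n}(b'c\,\cdot)\subset[(d-1)^i(d-2)]$ while $S^{q_{\ell,n+j}/d^n}(bc\,\cdot)$ starts with $0$, and compares $abcx$ with $ab'cx$. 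Since $c$ is common, all phases beyond position $n$ coincide automatically.

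The second point you do not resolve is why the inner phases $U^\alpha_{j},\dots,U^\alpha_{j+n-2}$ agree on $abcx$ and $ab'cx$. Whether the recursion defining $\varepsilon_k^{\sigma^{j+r}\alpha}$ reaches position $n$ is decided by the digits of the prefix $a$ (after each odometer translation), not by anything you can tune in $V$: for a ``bad'' prefix the value of $\varepsilon$ genuinely depends on the letter $b$, and your proposed fixes (pushing the modification far from the carry region, or absorbing an integer phase jump into an oscillation term) cannot remedy this. The paper introduces the set $G_{n-1}$ of prefixes for which, at every level $r\le n-1$, the first digit after the odometer is $\neq d-1$; then $\#G_{n-1}=(d-1)^{n-1}$ and on $G_{n-1}$ the inner phases depend only on $a$.

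Finally, the role of $\theta>d\delta/(d-1)$ is a pigeonhole, not an averaging: from $\sum_{|a|=n-1}osc(g,[a])\le (d\delta)^{n-1}|g|_\delta<(d-1)^{n-1}\theta^{n-1}|g|_\delta$ there exists one $a\in G_{n-1}$ with $osc(g,[a])<\theta^{n-1}|g|_\delta$. For this single $a$ the pair $(abc,ab'c)$ yields $|e^{itS_n(abcx)}g(abcx)+e^{itS_n(ab'cx)}g(ab'cx)|\le(2-\chi_0)\|g\|_\infty+\theta^{n-1}|g|_\delta$; bounding the other $d^n-2$ words of length $n$ trivially gives the gain $\chi_0/d^n$ at the specific point $cx$, and then singling out $\omega=c$ among the $d^i$ outer words produces the factor $d^{n+i}$ in the denominator. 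Your scheme of summing over all $d^n$ prefixes would, even if the phase rigidity held, give a gain of order $\chi_0/d^i$ and an oscillation term $\delta^{n-1}/d^{i+1}$, which is a different inequality from the one stated.
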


\begin{proof}
Let $g\in OSC_\delta$. Let $n\in\mathbb{N}^*$ and $t\in ]-\pi,\pi]$ such that $\epsilon_0\leq \abso{t}\leq \pi$.
We note $G_{n-1}$ the set of word $a$ such that $\abso{a}=n-1$ and for all $r\in \intervEnt{1}{n-1}, a_r+\frac{q_{l,j+r}}{d^r}\not\equiv d-1 [d]$.
We claim that there exists a word $a\in G_{n-1}$ such that 
    \[osc(g,[a])<\theta^{n-1}\abso{g}_\delta.\]
Otherwise,  \[\forall a\in G_{n-1},osc(g,[a])\geq \theta^{n-1}\abso{g}_\delta.\]
  which would imply, since $\# G_n=(d-1)^{n-1}$, that \[\abso{g}_\delta\geq \delta^{-n+1}\sum_{\abso{a}=n-1}d^{-n+1}osc(g,[a])\geq \delta^{-n+1}d^{-n+1}\#G_{n-1}\theta^{n-1}\abso{g}_\delta>\abso{g}_\delta,\]
    a contradiction. We then fix such $a\in G_{n-1}$.
    
    We choose $b,b^'$ and $c$ such that \[\abso{b}=\abso{b^'}=1\ and\ \abso{c}=i\] verifying
    \[b+\frac{q_{l,j+n}}{d^n}\equiv 0[d]\ and\ S^{\frac{q_{l,j+n}}{d^n}}(b^'c\cdot )\subset [(d-1)^i(d-2)].\]
    Then \[S_n(ab^'cx)=S_{n-1}(ab^'cx)+sgn(a_{\ell,j+n})\ and\ S_{n-1}(ab^'cx)=S_{n-1}(abcx)=S_n(abcx)\] 
    since \[\varepsilon_k^{\sigma^{j+n}\alpha}(S^{\frac{q_{l,j+n}}{d^n}}(b^'cx))=\alpha_{k+n+j+i}=1\ and\ \varepsilon_k^{\sigma^{j+n}\alpha}(S^{\frac{q_{l,j+n}}{d^n}}(bcx))=0.\]
    Hence \[\mathcal{L}_{\alpha,it}^{j,n}(g)(cx)=\frac{1}{d^n}\left(\sum_{\abso{\omega}=n\ and\  \omega\ne ab\ and\ \omega\ne ab^'}e^{itS_n(\omega cx)}g(\omega cx)\right)+\frac{1}{d^n}\left(e^{itS_n(ab^'cx)}g(ab^'cx)+e^{itS_n(abcx)}g(abcx)\right).\]
    We know that $\epsilon_0\leq \abso{t}\leq \pi$, then we have, since $osc(g,[a])\leq \abso{g}_\delta\theta^{n-1}$ 
    \[\abso{\frac{1}{d^n}\left(e^{itS_n(ab^'cx)}g(ab^'cx)+e^{itS_n(abcx)}g(abcx)\right)}\leq\frac{1}{d^n}\left((2-\chi_0)\abso{g(ab^'cx)}+\abso{g}_\delta\theta^{n-1}\right)\leq \frac{1}{d^n}\left((2-\chi_0)\norme{g}_\infty+\abso{g}_\delta\theta^{n-1}\right).\]
    We get \begin{align}\label{Perron_major}\abso{\mathcal{L}_{\alpha,it}^{j,n}(g)(cx)}\leq \left(1-\frac{\chi_0}{d^n}\right)\norme{g}_\infty+\frac{\abso{g}_\delta}{d^n}\theta^{n-1}.\end{align}
    Then,\[\begin{array}{ll}\abso{\mathcal{L}_{\alpha,it}^{j,n+i}(g)(x)}&=\abso{\frac{1}{d^i}\sum_{\abso{\omega}=i}e^{it S_{n+j,i}(\omega x)}\mathcal{L}_{\alpha,it}^{j,n}(g)(\omega x)}\\
    &=\abso{\frac{1}{d^i}e^{itS_{n+j,i}(cx)}\mathcal{L}_{\alpha,it}^{j,n}(g)(c x)+\frac{1}{d^i}\sum_{\abso{\omega}=i\ and\ \omega\ne c}e^{itS_{n+j,i}(\omega x)}\mathcal{L}_{\alpha,it}^{j,n}(g)(\omega x)}\\
    &\leq\frac{1}{d^i}(1-\frac{\chi_0}{d^n})\norme{g}_\infty+\frac{\theta^{n-1}}{d^{n+i}}\abso{g}_\delta+(1-\frac{1}{d^i})\norme{g}_\infty\ by\ (\ref{Perron_major})\ and\ (\ref{contract_control})\\
    &=\left(1-\frac{\chi_0}{d^{n+i}}\right)\norme{g}_\infty+\frac{\theta^{n-1}}{d^{n+i}}\abso{g}_\delta.\end{array}\]
\end{proof}

Our goal is now to ensure that these conditions for the contraction appear frequently.
Let $k_1$ be a positive integer such that $C_0>2^{1-k_1}$. We recall that $C_0$ is defined at the beginning of Subsection \ref{sec:begin_except_set}.
\begin{definition}
    Let \[\Gamma_m(\alpha)=\sum_{j=0}^{m-1}\mathds{1}_{[2^{k_1}]}(\sigma^{j}\alpha).\]
\end{definition}

With the definition, we obtain immediately,
\begin{equation}
    \label{gamma_mean}
    \mathbb{E}(\Gamma_m)=2^{-k_1}m
\end{equation}

\begin{definition}
    Let \begin{align}\label{M_m_define} M_m :=\{\alpha\in\{1,2\}^\mathbb{N} : \Gamma_m(\alpha)\geq 2\mathbb{E}(\Gamma_m)\}.\end{align}
\end{definition}

\begin{lemme}
\label{control_Mm}
    There exist $v>0$ and $C_1>0$ such that \[\mathbb{P}(M_m)\leq C_1e^{-vm}.\]
\end{lemme}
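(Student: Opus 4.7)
The plan is to exploit the simple product structure of $\mathcal{H}$: the coordinates $(\alpha_j)_{j\in\mathbb{N}}$ are i.i.d.\ uniform on $\{1,2\}$, and the indicator $\mathds{1}_{[2^{k_1}]}(\sigma^j\alpha)$ depends only on the $k_1$ consecutive coordinates $\alpha_j,\dots,\alpha_{j+k_1-1}$. So $\Gamma_m$ is a sum of identically distributed Bernoulli$(2^{-k_1})$ random variables which are $k_1$-dependent rather than independent.

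To turn this into independent sums, I would decompose along arithmetic progressions of step $k_1$. For each $r\in\{0,\dots,k_1-1\}$, let
\[
\Gamma_m^{(r)}=\sum_{\substack{0\le j\le m-1\\ j\equiv r\ (\mathrm{mod}\ k_1)}}\mathds{1}_{[2^{k_1}]}(\sigma^j\alpha).
\]
The indices appearing in $\Gamma_m^{(r)}$ are separated by at least $k_1$, so each indicator inside $\Gamma_m^{(r)}$ depends on a disjoint block of $\alpha$-coordinates. Hence $\Gamma_m^{(r)}$ is a sum of at most $N:=\lceil m/k_1\rceil$ i.i.d.\ Bernoulli$(2^{-k_1})$ variables, with expectation $\mathbb{E}(\Gamma_m^{(r)})=2^{-k_1}\#\{j\equiv r\ (\mathrm{mod}\ k_1),\ 0\le j<m\}$.

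Next I would apply the standard Chernoff bound to each $\Gamma_m^{(r)}$: for i.i.d.\ Bernoulli$(p)$ sums of length $N$, there exists $c=c(p)>0$ such that $\mathbb{P}(\Gamma_m^{(r)}\ge 2\mathbb{E}(\Gamma_m^{(r)}))\le e^{-cN}\le e^{-cm/k_1}$. Since $\Gamma_m=\sum_{r=0}^{k_1-1}\Gamma_m^{(r)}$ and $\mathbb{E}(\Gamma_m)=\sum_r\mathbb{E}(\Gamma_m^{(r)})$, if $\Gamma_m\ge 2\mathbb{E}(\Gamma_m)$ then by pigeonhole at least one $r$ satisfies $\Gamma_m^{(r)}\ge 2\mathbb{E}(\Gamma_m^{(r)})$. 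A union bound over the $k_1$ residue classes then yields
\[
\mathcal{H}(M_m)\le k_1\,e^{-cm/k_1},
\]
which is the claimed inequality with $v=c/k_1$ and $C_1=k_1$.

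There is no real obstacle here: the only subtlety is the dependence between overlapping cylinders, which the block decomposition removes for free because $k_1$ is a fixed constant depending only on $C_0$. Everything else is a textbook Chernoff estimate.
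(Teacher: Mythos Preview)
Your argument is correct. The paper's own proof is a single sentence invoking ``the large deviation theorem'' without further detail; your block decomposition along residue classes modulo $k_1$ is exactly the standard way to reduce the $k_1$-dependent sum $\Gamma_m$ to independent Bernoulli sums and make that invocation rigorous, so the two approaches coincide.
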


\begin{proof}
    We get the result by large deviation theorem.
\end{proof}

Recall $m=\lfloor\log_d(n)\rfloor$ and for $\alpha\not\in W_{k,n}$, we have $b_\ell\geq C_0m$ for all $\ell\in I_n^\alpha$.

\begin{lemme}
\label{control_alpha} Let $c_1=(C_0-2^{1-k_1})>0$.
     \[\forall \alpha\not\in M_m\cup W_{k,n},\forall \ell\in I_n^\alpha,\#\{j\in\intervEnt{0}{m} : a_{\ell,j}\ne 0\ and\ \exists i\in \intervEnt{0}{k_1},\alpha_{k+j+i}=1\}\geq c_1m.\]
\end{lemme}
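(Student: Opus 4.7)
The plan is to prove the lemma by a short counting argument combining the two hypotheses $\alpha\notin W_{k,n}$ and $\alpha\notin M_m$. I would introduce the two sets
\[
E_1 = \{j \in \intervEnt{0}{m} : a_{\ell,j} \neq 0\}, \quad
E_2 = \{j \in \intervEnt{0}{m} : \exists i \in \intervEnt{0}{k_1},\ \alpha_{k+j+i}=1\},
\]
so that the target cardinality is $|E_1 \cap E_2|$, and bound it from below by $|E_1| - |\intervEnt{0}{m} \setminus E_2|$.

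The lower bound on $|E_1|$ will come from $\alpha \notin W_{k,n}$. Since $\ell \in I_n^\alpha$ is equivalent to $n \in B_\ell^\alpha$ by \eqref{eq:return_new}, the definition of $J_{k,\alpha}$ forces $b_\ell > C_0 N$ with $N = \lfloor \log_d \ell \rfloor$. From the two-sided bound \eqref{encadre_l/n} on $I_n^\alpha$ one has $\ell \asymp n/(d^k(\alpha+1))$ with $n \asymp d^m$, so $N = m + O(1)$ uniformly in $\alpha$, giving $|E_1| = b_\ell \geq C_0 m$ for $m$ large. The upper bound on $|\intervEnt{0}{m} \setminus E_2|$ will come from $\alpha \notin M_m$: by definition of $M_m$ together with \eqref{gamma_mean}, $\Gamma_m(\alpha) < 2\,\mathbb{E}(\Gamma_m) = 2^{1-k_1} m$. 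The complement $\intervEnt{0}{m} \setminus E_2$ consists of those $j$ for which $\alpha_{k+j+i} = 2$ for every $i \in \intervEnt{0}{k_1}$, i.e.\ a $2$-block of length $k_1+1$ starting at position $k+j$ in $\alpha$; this is stronger than the length-$k_1$ block tested by $\Gamma_m$, and since $\mathcal{H}$ is Bernoulli (so insensitive to the shift by $k$) this set has cardinality at most $2^{1-k_1} m$.

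Combining the two estimates gives $|E_1 \cap E_2| \geq C_0 m - 2^{1-k_1} m = c_1 m$, as claimed. The main delicate point I anticipate is the bookkeeping of the shift by $k$ between the lemma statement (which concerns $\alpha_{k+\cdot}$) and the definition of $M_m$ (which concerns $\alpha$ itself): since $k$ is fixed and $\mathcal{H}$ is a product measure, this only affects the proof by absolute constants that are absorbed for $m$ large. A secondary technicality is the discrepancy between the index range $\intervEnt{0}{m-1}$ in $\Gamma_m$ and $\intervEnt{0}{m}$ in the lemma, which contributes at most one extra term and is therefore negligible.
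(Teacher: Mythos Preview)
Your proposal is correct and follows the same counting argument as the paper's one-line proof, which simply bounds the target cardinality below by $b_\ell-\Gamma_m(\alpha)\ge (C_0-2^{1-k_1})m$; your sets $E_1,E_2$ just make this explicit. One small clarification: invoking the Bernoulli property of $\mathcal{H}$ to handle the shift by $k$ is not the right justification, since $\Gamma_m(\alpha)<2^{1-k_1}m$ is a deterministic bound for the fixed $\alpha$ at hand; the correct (and equally easy) observation is that shifting the index window by $k$ alters the count by at most $k+O(1)$, which is indeed absorbed for large $m$ as you yourself conclude.
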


\begin{proof}
The set of $j$'s has a cardinality at least $b_\ell-\Gamma_m(\alpha)\geq (C_0-2^{1-k_1})m$.
\end{proof}

Let $K_0$ a fixed integer such that \begin{equation}\label{K0_def}\theta^{K_0-1}(c+\beta)<\frac{\chi_0}{2}\ \text{and}\ \delta^{K_0}+\frac{\beta}{c}<\frac{1}{2}\end{equation} with $c=\frac{8}{d(1-\delta)}$. 

\begin{lemme}
\label{Perron_frob_ineq}
    We have that for $n\in\mathbb{N}$ such that $n\geq K_0$, there exists $\frac{1}{2}\leq\Delta_n<1$ such that for $j,i,q\in\mathbb{N}$, $t\in]-\pi,\pi]$ verifying $a_{\ell,n+j}\ne 0,\alpha_{k+n+j+i}=1$, $\epsilon_0\leq \abso{t}\leq \pi$ and $i\leq k_1$,\[\norme{\mathcal{L}_{\alpha,it}^{j,n+i+q}(g)}_{\delta,\gamma}\leq \Delta_n \norme{g}_{\delta,\gamma}\]
    with $\gamma=\frac{d(1-\delta)}{4}$.
\end{lemme}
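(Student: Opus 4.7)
The plan is to control the balanced norm $\|\cdot\|_{\delta,\gamma}=\max(\|\cdot\|_\infty,\gamma|\cdot|_\delta)$ by estimating each of its two components separately and then taking the maximum. For the sup-norm part I would write
\[
\mathcal{L}_{\alpha,it}^{j,n+i+q}=\mathcal{L}_{\alpha,it}^{j+n+i,q}\circ\mathcal{L}_{\alpha,it}^{j,n+i},
\]
apply the previously established strict contraction \eqref{ineq_Perron} to the inner factor (whose hypotheses $a_{\ell,n+j}\neq 0$, $\alpha_{k+n+j+i}=1$ and $\epsilon_0\leq|t|\leq\pi$ are in force), and use the sup-norm contraction \eqref{contract_control} of the outer factor iterated $q$ times. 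This gives
\[
\|\mathcal{L}_{\alpha,it}^{j,n+i+q}(g)\|_\infty\leq\Bigl(1-\frac{\chi_0}{d^{n+i}}\Bigr)\|g\|_\infty+\frac{\theta^{n-1}}{d^{n+i}\gamma}\cdot\gamma|g|_\delta.
\]
For the oscillation part I would apply the Doeblin–Fortet inequality \eqref{ineq_osc_induction} directly to the full composition, yielding
\[
\gamma|\mathcal{L}_{\alpha,it}^{j,n+i+q}(g)|_\delta\leq\delta^{n+i+q}\cdot\gamma|g|_\delta+\gamma\beta\|g\|_\infty.
\]

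The crux is to check that the two requirements defining $K_0$ in \eqref{K0_def} are exactly calibrated to make the above estimates combine. A short computation using $\gamma=d(1-\delta)/4$, $\beta=2/(d(1-\delta))$ and $c=8/(d(1-\delta))$ yields the identities $\gamma\beta=1/2$, $1/\gamma=c/2$, and $\beta/c=1/4$. The first condition $\theta^{K_0-1}(c+\beta)<\chi_0/2$ then forces $\theta^{n-1}/\gamma\leq\chi_0/2$ for $n\geq K_0$, so that bounding $\gamma|g|_\delta\leq\|g\|_{\delta,\gamma}$ and $\|g\|_\infty\leq\|g\|_{\delta,\gamma}$ in the first line above leads to
\[
\|\mathcal{L}_{\alpha,it}^{j,n+i+q}(g)\|_\infty\leq\Bigl(1-\frac{\chi_0}{2d^{n+i}}\Bigr)\|g\|_{\delta,\gamma}.
\]
The second condition $\delta^{K_0}+\beta/c<1/2$ rewrites as $\delta^{K_0}<1/4$, hence $\delta^{n+i+q}\leq 1/4$ for $n\geq K_0$, and with $\gamma\beta=1/2$ the oscillation bound becomes $\gamma|\mathcal{L}_{\alpha,it}^{j,n+i+q}(g)|_\delta\leq(1/4+1/2)\|g\|_{\delta,\gamma}=(3/4)\|g\|_{\delta,\gamma}$.

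Taking the maximum of the two estimates and using $i\leq k_1$ to get a bound uniform in $i$, I would set
\[
\Delta_n:=\max\Bigl(1-\frac{\chi_0}{2d^{n+k_1}},\,\tfrac{3}{4}\Bigr),
\]
which lies in $[1/2,1)$ and is independent of $j,i,q,t$ and $g$. There is no genuine obstacle in the argument: it is essentially a careful bookkeeping of constants, and the only subtlety is that the sup-norm contraction factor $1-\chi_0/d^{n+i}$ deteriorates as $n$ grows (hence the subscript $n$ in $\Delta_n$), while the $3/4$ floor coming from the oscillation estimate guarantees that $\Delta_n$ remains bounded away from $1$.
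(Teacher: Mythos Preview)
Your argument is correct and uses the same ingredients as the paper (\eqref{ineq_Perron} for the sup-norm, \eqref{ineq_osc_induction} for the oscillation, and the calibration \eqref{K0_def}), but is organized more directly. The paper splits into two cases according to whether $|g|_\delta\gtrless c\|g\|_\infty$: when the oscillation dominates, the Doeblin--Fortet inequality alone already yields a factor $\tfrac12$; when the sup-norm dominates, the case hypothesis lets one replace $|g|_\delta$ by $c\|g\|_\infty$ inside \eqref{ineq_Perron} and invoke $\theta^{K_0-1}(c+\beta)<\chi_0/2$ verbatim. Your version avoids the dichotomy by bounding each component of $\|\cdot\|_{\delta,\gamma}$ separately against $\|g\|_{\delta,\gamma}$, using the identities $1/\gamma=c/2$, $\gamma\beta=1/2$, $\beta/c=1/4$ to reduce both conditions in \eqref{K0_def} to exactly what is needed. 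This is cleaner and gives the explicit floor $3/4$ for the oscillation half; the paper obtains the slightly different constant $\gamma(c\delta^{K_0}+\beta)=2\delta^{K_0}+\tfrac12$ there. Either value works.

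One remark in your closing paragraph is misleading: you say the $3/4$ floor ``guarantees that $\Delta_n$ remains bounded away from $1$''. It does not; since $\Delta_n=\max\bigl(1-\chi_0/(2d^{n+k_1}),\,3/4\bigr)$ and the first term tends to $1$, one has $\Delta_n\to 1$ as $n\to\infty$. The lemma only claims $\Delta_n<1$ for each fixed $n$, which is what you have shown, and in the subsequent application (Proposition~\ref{prop_contr}) one always takes $n=K_0$, so the relevant contraction factor $\Delta_{K_0}$ is fixed.
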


\begin{proof}
    Let $c=\frac{8}{d(1-\delta)}$ and $t\in]-\pi,\pi]$ such that $\epsilon_0\leq \abso{t}\leq\pi$.
    First, we suppose that $\abso{g}_\delta\geq c \norme{g}_\infty$.
    By (\ref{ineq_osc_induction}),\[\abso{\mathcal{L}_{\alpha,it}^{j,n}(g)}_\delta\leq \delta^n\abso{g}_\delta+\beta\norme{g}_\infty.\]
    We know that $n\geq K_0$, therefore, using \eqref{contract_control} and \eqref{K0_def}, we get
    \[\begin{array}{ll}
    \norme{\mathcal{L}_{\alpha,it}^{j,n}(g)}_{\delta,\gamma}&\leq \max\{\norme{g}_\infty,\gamma(\delta^n+\frac{\beta}{c})\abso{g}_\delta\}\\
    &\leq\max\{\frac{1}{c},\gamma(\delta^n+\frac{\beta}{c})\}\abso{g}_\delta\\
    &\leq\max\{\frac{1}{c\gamma},\delta^n+\frac{\beta}{c}\}\norme{g}_{\delta,\gamma}\\
    &=\max\{\frac{1}{2},\delta^n+\frac{\beta}{c}\}\norme{g}_{\delta,\gamma}\\
    &\leq \frac{1}{2}\norme{g}_{\delta,\gamma}.
    \end{array}\]
    Then, \begin{align}\label{first_ineq}
        \norme{\mathcal{L}_{\alpha,it}^{j,n}(g)}_{\delta,\gamma}&\leq\frac{1}{2}\norme{g}_{\delta,\gamma}.
    \end{align}
    We have that $\delta<1$, then $\frac{3\delta+1}{4}<1$
    Posing $\Delta=\frac{1}{2}$, we get 
    \[\norme{\mathcal{L}_{\alpha,it}^{j,n}(g)}_{\delta,\gamma}\leq \Delta \norme{g}_{\delta,\gamma}.\]
    Now, we suppose that $\abso{g}_\delta\leq c \norme{g}_\infty$.
    Then, by \eqref{ineq_Perron}, \eqref{contract_control} and \eqref{ineq_osc_induction}, for $q\in\mathbb{N},$ $\epsilon_0\leq\abso{t}\leq\pi$ \[\norme{\mathcal{L}_{\alpha,it}^{j,n+i+q}(g)}_\infty\leq \left(1-\frac{\chi_0}{d^{n+i}}\right)\norme{\mathcal{L}_{\alpha,it}^{j,q}(g)}_\infty+\frac{\theta^{n-1}}{d^{n+i}}\abso{\mathcal{L}_{\alpha,it}^{j,q}(g)}_\delta\leq \left(1-\frac{\chi_0}{d^{n+i}}+\frac{\theta^{n-1}}{d^{n+i}}(\delta^qc+\beta)\right)\norme{g}_\infty.\]
    We know that $n\geq K_0$ and $i\leq k_1$, then \[\norme{\mathcal{L}_{\alpha,it}^{j,n+i+q}(g)}_\infty\leq\left(1-\frac{\chi_0}{2d^{n+k_1}}\right)\norme{g}_\infty.\]
    Therefore, \[\begin{array}{ll}
    \norme{\mathcal{L}_{\alpha,it}^{j,n+i}(g)}_{\delta,\gamma}\leq\max\{1-\frac{\chi_0}{2d^{n+k_1}},\gamma(c\delta^{K_0}+\beta)\}\norme{g}_{\delta,\gamma}\\
    \end{array}.\]
    We have that $\Delta_n=\max\{1-\frac{\chi_0}{2d^{n+k_1}},\gamma(c\delta^{K_0}+\beta)\}\in (\frac{1}{2},1)$.
    Then, \[\norme{\mathcal{L}_{\alpha,it}^{j,n+i+q}(g)}_{\delta,\gamma}\leq \Delta_n \norme{g}_{\delta,\gamma}.\]
\end{proof}

We note for $\alpha\not\in M_m$, $\ell\in P_n^\alpha$ the set $Y_\ell=\{j\in \intervEnt{0}{m} :  a_{\ell,j}\ne0,\exists i\in \intervEnt{0}{k_1},\alpha_{k+j+i}=1\}$.
Let $y_1<y_2<\cdots<y_p$ the successive elements of $Y_\ell$, where $p$ is the cardinality of $Y_\ell$.
We consider the sequence $m_s=y_{(2K_0+k_1)s}$ for $s\in \intervEnt{1}{\lfloor\frac{p}{2K_0+k_1}\rfloor}$ and a sequence $(i_s)$, $0\leq i_s\leq k_1$ such that $\alpha_{k+m_s+i_s}=1$.
We pose $N_s=m_s-K_0,N_0=0$ and $n_s=N_s-N_{s-1}$.

Now, fix $g\in OSC_\delta$ and define $G_s^{(t)}$ by \[G_s^{(t)}=\mathcal{L}^{0,N_s}_{it}(g).\] for $s = 1,\dots,\lfloor\frac{p}{2K_0+k_1}\rfloor$.

\begin{proposition}
\label{prop_contr}
    There exists $\frac{1}{2}<\Delta<1$ such that for all $s\in \intervEnt{1}{\lfloor\frac{p}{2K_0+k_1}\rfloor-1}$ and for all $t\in]-\pi,\pi]$ such that $\epsilon_0\leq \abso{t}\leq \pi$,
    \begin{align}\label{equa_L}\norme{G_{s}^{(t)}}_{\delta,\gamma}\leq \Delta^{s-1}\norme{g}_{\delta,\gamma}.\end{align}
\end{proposition}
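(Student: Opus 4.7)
The plan is to iterate a one-step contraction estimate from Lemma \ref{Perron_frob_ineq}. Using the semigroup-type identity $\mathcal{L}_{\alpha,it}^{0,N_s}=\mathcal{L}_{\alpha,it}^{N_{s-1},n_s}\circ\mathcal{L}_{\alpha,it}^{0,N_{s-1}}$ with $n_s=N_s-N_{s-1}$, one has $G_s^{(t)}=\mathcal{L}_{\alpha,it}^{N_{s-1},n_s}(G_{s-1}^{(t)})$. The goal is then to show that each such block operator contracts by a uniform factor $\Delta=\Delta_{K_0}$ for $s\geq 2$, and that the initial block $s=1$ is non-expansive.

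For the inductive step $s\geq 2$, I apply Lemma \ref{Perron_frob_ineq} to $G_{s-1}^{(t)}$ with parameters $j=N_{s-1}$, $n=K_0$, $i=i_{s-1}$, and $q=n_s-K_0-i_{s-1}$. The hypotheses are verified as follows: the position $N_{s-1}+K_0=m_{s-1}$ lies in $Y_\ell$, so $a_{\ell,N_{s-1}+K_0}\neq 0$; the spacer condition $\alpha_{k+m_{s-1}+i_{s-1}}=1$ is built into the choice of $i_{s-1}$; and $q\geq 0$ because $n_s=m_s-m_{s-1}\geq 2K_0+k_1$, the $y_j$'s being strictly increasing integers sampled with stride $2K_0+k_1$ in $Y_\ell$. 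This yields $\norme{G_s^{(t)}}_{\delta,\gamma}\leq\Delta_{K_0}\norme{G_{s-1}^{(t)}}_{\delta,\gamma}$.

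For the base case $s=1$, only the non-expansive bound $\norme{G_1^{(t)}}_{\delta,\gamma}\leq\norme{g}_{\delta,\gamma}$ is required (since the target exponent is $\Delta^0=1$). If $\abso{g}_\delta\geq c\norme{g}_\infty$, the first case of the proof of Lemma \ref{Perron_frob_ineq} applies directly and even gives contraction by $1/2$ without any spacer assumption. Otherwise, the Doeblin-Fortet estimate \eqref{ineq_osc_induction} together with $N_1\geq K_0$ and the defining property $\delta^{K_0}+\beta/c<1/2$ of $K_0$ yields $\gamma\abso{\mathcal{L}_{\alpha,it}^{0,N_1}(g)}_\delta\leq\gamma(c\delta^{K_0}+\beta)\norme{g}_\infty<\norme{g}_\infty$, while \eqref{contract_control} gives $\norme{\mathcal{L}_{\alpha,it}^{0,N_1}(g)}_\infty\leq\norme{g}_\infty$.

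Setting $\Delta=\Delta_{K_0}\in(1/2,1)$ and composing the estimates yields \eqref{equa_L}. The main obstacle is the combinatorial bookkeeping: one must verify that the stride $2K_0+k_1$ between consecutive $m_s$'s leaves enough room within each block to fit a $K_0$-long contraction tail and a $k_1$-long excursion to the spacer, which is precisely why the sampling step of the sequence $(m_s)$ in $Y_\ell$ was chosen to be $2K_0+k_1$ rather than $K_0+k_1$. Once this is in place, no further analytic input beyond Lemma \ref{Perron_frob_ineq} is needed.
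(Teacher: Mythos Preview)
Your proof is correct and follows essentially the same approach as the paper: iterate Lemma~\ref{Perron_frob_ineq} over the blocks $[N_{s-1},N_s)$ with $n=K_0$ to get the uniform contraction factor $\Delta=\Delta_{K_0}$. The only minor difference is in the base case: the paper invokes Lemma~\ref{Perron_frob_ineq} once more (with $n=y_{K_0}$, $j=0$) to bound $\|G_1^{(t)}\|$, whereas you obtain the non-expansive bound directly from \eqref{ineq_osc_induction}, \eqref{contract_control} and the defining property \eqref{K0_def} of $K_0$; your route is slightly cleaner and sidesteps checking that $y_{K_0}\ge K_0$.
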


\begin{proof}
We apply Lemma \ref{Perron_frob_ineq}, with $n=y_{K_0}\ge K_0$, $j=0$, $i<k_1$ such that $\alpha_{k+n+i}=1$ and $q=y_{2K_0+k_1}-K_0-y_{K_0}-i\ge0$, which gives
\[\norme{G_{1}^{(t)}}_{\delta,\gamma}\leq \Delta_n\norme{g}_{\delta,\gamma}\le\norme{g}_{\delta,\gamma} .\]
    Let $s=1,\dots,\lfloor\frac{p}{2K_0+k_1}\rfloor-1$.
It suffices now to prove that for some $\Delta<1$
    \[\norme{G_{s+1}^{(t)}}_{\delta,\gamma}\leq \Delta\norme{G_{s}^{(t)}}_{\delta,\gamma}\]
and the result will follow by induction. We have 
\[
G_{s+1}^{(t)}=
\mathcal{L}^{N_s,n_{s}}_{it}\left(\mathcal{L}^{0,N_{s}}_{it}(g)\right)=\mathcal{L}^{N_s,n_{s}}_{it}\left(G_{s}^{(t)}\right).
\]
Applying Lemma \ref{Perron_frob_ineq} with $j=N_s$, $n=K_0$, $i=i_s$ and $q=n_s-K_0-i_s$ we get
\[
\norme{G_{s+1}^{(t)}}_{\delta,\gamma}\leq \Delta_{K_0}\norme{G_{s}^{(t)}}_{\delta,\gamma}.\]
\end{proof}

Now, we can prove the main result of this section.

\begin{proof} Proof of Proposition \ref{llt}

    Let $m,n\in\mathbb{N}$ such that $m= \lfloor\log_d(n)\rfloor$, $\alpha\not\in W_{k,n}$ and $\ell\in I_n^\alpha$.
    We consider \[\lambda_\alpha(\{S_m=n-\mathbb{E}({t_\ell^\alpha}^')\})=\frac{1}{2\pi}\integrale{-\pi}{\pi}{e^{-it(n-\mathbb{E}({t_\ell^\alpha}^'))}\mathbb{E}(e^{itS_m})}{t}\]
    by Fubini theorem. We get \[\mathbb{E}(e^{itS_m})=\mathbb{E}\left(\mathcal{L}_{\alpha,it}^{0,m}(\mathds{1})\right).\]
    By Fubini theorem, \[\lambda(\{S_m=n-\mathbb{E}({t_\ell^\alpha}^')\})=\frac{1}{2\pi}\mathbb{E}\left(\intInd{[-\pi,\pi]}{e^{-it(n-\mathbb{E}({t_\ell^\alpha}^'))}\mathcal{L}_{\alpha,it}^{0,m}(\mathds{1})}{t}\right).\]
    Since $\alpha\not\in M_m$, we have $p\geq c_2m$ with $c_2>0$.
    Then, by Lemma \ref{control_alpha} and (\ref{equa_L}), for $\epsilon_0\leq \abso{t}\leq \pi$, \[\abso{\mathbb{E}\left(\mathcal{L}_{\alpha,it}^{0,m}(\mathds{1})\right)}\leq C_2\Delta^{c_1m}\ \text{with}\ c_1>0,\ C_2>0,\ 0<\Delta<1.\]
    This implies, \[\mathbb{E}\left(\intInd{[-\pi,\pi]}{e^{-it(n-\mathbb{E}({t_\ell^\alpha}^'))}\mathcal{L}_{\alpha,it}^{0,m}(\mathds{1})}{t}\right)=\mathbb{E}\left(\intInd{[-\epsilon_0,\epsilon_0]}{e^{-it(n-\mathbb{E}({t_\ell^\alpha}^'))}\mathcal{L}_{\alpha,it}^{0,m}(\mathds{1})}{t}\right)+O(\Delta^{c_1m}).\]
    
    Now, we take $-\epsilon_0<t<\epsilon_0$.
    By Lemma \ref{control_Perron_eigen}, for $m\in\mathbb{N}$ large enough, we get that \[\mathcal{L}^{0,m}_{it}(\mathds{1})=\lambda^{0,m}(it)(h_{m}(it)+O(\zeta^m))\text{ with } 0<\zeta<1.\]
    By (\ref{analy_lambda2}), $\lambda^{0,m}(it)=e^{\Pi_{0,m}(it)}$ and by Lemma \ref{sup_analy}, $\Pi_{0,m}$ is analytic on $D(0,\epsilon_0)$.
    We know by Lemma \ref{develop_pi} that for all $z\in D(0,\epsilon_0),$ \[\Pi_{0,m}(z)=z\mathbb{E}(S_m)+\frac{z^2}{2}\mathbb{V}(S_m)+O(\abso{z}^2+m\abso{z}^3)=\frac{z^2}{2}\mathbb{V}(S_m)+O(\abso{z}^2+m\abso{z}^3).\]
    By Lemma \ref{sup_analy}, for all $z\in D(0,\epsilon_0),$\[h_m(z)=1+O(\abso{z}).\]
    Then, we get \[\mathcal{L}_{\alpha,it}^{0,m}(\mathds{1})=e^{-\frac{t^2}{2}\mathbb{V}(S_m)}(1+O(\abso{t}^2+m\abso{t}^3))(1+O(\abso{t})+O(\zeta^m)).\]
    Let $\psi_m : t\in \mathbb{R}\mapsto e^{-it(n-\mathbb{E}({t_\ell^\alpha}^'))}e^{-\frac{t^2}{2}\mathbb{V}(S_m)}$.
    Thus, \[\integrale{-\epsilon_0}{\epsilon_0}{\abso{t\psi_m(t)}}{t}\leq \intInd{[-\epsilon_0,\epsilon_0]}{\abso{t}e^{-\frac{t^2}{2}\mathbb{V}(S_m)}}{t}=\frac{1}{\sigma_{0,m}^2}\intInd{[-\sigma_{0,m}\epsilon_0,\sigma_{0,m}\epsilon_0]}{\abso{t}e^{-\frac{t^2}{2}}}{t}\leq\frac{2\sqrt{2\pi}}{\sigma_{0,m}^2}.\]
    And \[\intInd{[-\epsilon_0,\epsilon_0]}{m\abso{t^3\psi_m(t)}}{t}\leq\frac{m}{\sigma_{0,m}^4}\mathbb{E}(\abso{\mathcal{N}}^3)\ \text{with}\ \mathcal{N}\text{ the normal distribution.}\]
    Recall that $\alpha\not\in W_n$. Therefore, by Proposition \ref{minimize_var}, $\sigma_{0,m}\geq C_2\sqrt{m}.$
    Hence, \[\intInd{[-\epsilon_0,\epsilon_0]}{m\abso{t^3\psi_m(t)}}{t}=O\left(\frac{1}{m}\right).\]
    With the same proof, we get, \[\intInd{[-\epsilon_0,\epsilon_0]}{m\abso{t^4\psi_m(t)}}{t}=O\left(\frac{1}{m}\right)\]
    and \[\intInd{[-\epsilon_0,\epsilon_0]}{\abso{t^2\psi_m(t)}}{t}=O\left(\frac{1}{m}\right).\]
    Then, \[\intInd{[-\epsilon_0,\epsilon_0]}{e^{-it(n-\mathbb{E}({t_\ell^\alpha}^'))}\mathcal{L}_{\alpha,it}^{0,m}(\mathds{1})}{t}=\intInd{[-\epsilon_0,\epsilon_0]}{e^{it(\mathbb{E}({t_\ell^\alpha}^')-n)}e^{-\frac{t^2}{2}\mathbb{V}(S_m)}}{t}+O\left(\frac{1}{m}\right).\]
    Moreover,\[\intInd{[-\epsilon_0,\epsilon_0]}{e^{it(\mathbb{E}({t_\ell^\alpha}^')-n)}e^{-\frac{t^2}{2}\mathbb{V}(S_m)}}{t}=\frac{1}{\sigma_{0,m}}\intInd{[-\epsilon_0\sigma_{0,m},\epsilon_0\sigma_{0,m}]}{e^{it\frac{(\mathbb{E}({t_\ell^\alpha}^')-n)}{\sigma_{0,m}}}e^{-\frac{t^2}{2}}}{t}.\]
    By the property of the normal distribution,
    \[\frac{1}{\sigma_{0,m}}\intInd{[-\epsilon_0\sigma_{0,m},\epsilon_0\sigma_{0,m}]}{e^{it\frac{(\mathbb{E}({t_\ell^\alpha}^')-n)}{\sigma_{0,m}}}e^{-\frac{t^2}{2}}}{t}=\frac{\sqrt{2\pi}}{\sigma_{0,m}}e^{-\frac{(\mathbb{E}({t_\ell^\alpha}^')-n)^2}{2\sigma_{0,m}^2}}+O\left(\frac{e^{-\varepsilon_0^2\sigma_{0,m}^2}}{2\sigma_{0,m}}\right).\]
    Finally,\[\intInd{[-\epsilon_0,\epsilon_0]}{e^{-itn}\mathcal{L}_{\alpha,it}^{0,m}(\mathds{1})}{t}=\frac{\sqrt{2\pi}}{\sigma_{0,m}}e^{-\frac{(\mathbb{E}({t_\ell^\alpha}^')-n)^2}{2\sigma_{0,m}^2}}+O\left(\frac{1}{m}\right).\]
    Then, \[\lambda(\{S_m=n-\mathbb{E}({t_\ell^\alpha}^')\})=\frac{1}{\sigma_{0,m}\sqrt{2\pi}}e^{-\frac{(\mathbb{E}({t_\ell^\alpha}^')-n)^2}{2\sigma_{0,m}^2}}+O\left(\frac{1}{m}\right).\]
\end{proof}
\section{Keplerian shear for Chacon transformation}\label{sec_keplerian}
We do a compilation of previous results to prove the main theorem, the keplerian shear of the Chacon transformation.

First, we show that for times outside the exceptional sets, the self correlation of $A_k$ behave as in a strong mixing system.
We write $m=\lfloor \log_d(n)\rfloor$.
\begin{proposition}
\label{first_keplerian_shear}
For $\alpha\not\in W_{k,n}\cup M_m\cup \check{W}_{k,n}$,    \[\sum_{\ell\in\mathbb{N}}{d_\ell^\alpha}^'(n)=\lambda_\alpha(A_k)+O\left(\frac{\sqrt{\log_d(\log_d(n))}}{\left(\log_d(n)\right)^{\frac{1}{4}}}\right).\]
\end{proposition}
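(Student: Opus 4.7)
The plan is to invoke the local limit theorem of the preceding section (Proposition~\ref{llt}) together with a Riemann-sum identification. From the identity~\eqref{cor_trans}, the sum in question is, up to the factor $\lambda_\alpha(A_k)$, the self-correlation $\lambda_\alpha(A_k\cap T_\alpha^{-n}A_k)/\lambda_\alpha(A_k)$, so the proposition is really a quantitative mixing statement on the fiber indexed by $\alpha$. By Lemma~\ref{card_Pn} the sum effectively runs over $\ell\in I_n^\alpha$, an interval of length $O(m)$ with $m=\lfloor\log_d n\rfloor$, outside of which ${d_\ell^\alpha}'(n)$ vanishes.

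I would then split $I_n^\alpha$ into a bulk $B=\{\ell\in I_n^\alpha : |n-\mathbb{E}({t_\ell^\alpha}')|\le m^{3/4}\}$ and its complement $T=I_n^\alpha\setminus B$. On the tail $T$, the moderate-deviation estimate of Proposition~\ref{control_chernov} yields ${d_\ell^\alpha}'(n)=\lambda(S_m=n-\mathbb{E}({t_\ell^\alpha}'))\le\lambda(|S_m|\ge m^{3/4})\le 2Ce^{-m^{1/4}}$, so that $\sum_{\ell\in T}{d_\ell^\alpha}'(n)=O(me^{-m^{1/4}})$, which is well below the claimed error.

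On the bulk $B$, I would apply Proposition~\ref{llt} (available thanks to $\alpha\notin W_{k,n}\cup M_m$) to write
\[
{d_\ell^\alpha}'(n) = \frac{1}{\sigma_\ell\sqrt{2\pi}}\exp\Bigl(-\frac{(n-\mathbb{E}({t_\ell^\alpha}'))^2}{2\sigma_\ell^2}\Bigr) + O(1/m),
\]
where $\sigma_\ell=\sigma({t_\ell^\alpha}')$. Since the natural spacing of $\mathbb{E}({t_\ell^\alpha}')$ in $\ell$ is of order $d^k$, one has $\#B=O(m^{3/4})$, and the cumulative LLT error over the bulk is $O(m^{-1/4})$. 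The role of the third hypothesis $\alpha\notin\check{W}_{k,n}$ is to guarantee that across $\ell\in B$ both $\sigma_\ell^2$ and the consecutive increments $\mathbb{E}({t_{\ell+1}^\alpha}')-\mathbb{E}({t_\ell^\alpha}')=d^k(\alpha+1)+o(1)$ are essentially constant. This lets me recognise the bulk Gaussian sum as a Riemann sum that converges to
\[
\frac{1}{d^k(\alpha+1)}\int_\mathbb{R}\frac{e^{-y^2/(2\sigma^2)}}{\sigma\sqrt{2\pi}}\,dy=\frac{1}{d^k(\alpha+1)}=\lambda_\alpha(A_k),
\]
with Riemann-sum error of order $d^k/\sigma=O(m^{-1/2})$ and with exponentially small truncation error at $\pm m^{3/4}$.

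The main obstacle is converting the arithmetic condition $\alpha\notin\check{W}_{k,n}$, which is defined through the distinguished balanced $d$-adic integer $\ell_0$ and the set $\check{\mathcal{B}}_n$, into the explicit slow-variation statement used above: namely that the means $\mathbb{E}({t_\ell^\alpha}')$ form a nearly arithmetic progression with common difference $d^k(\alpha+1)$ and that the variances $\sigma_\ell^2$ are nearly constant, uniformly in $\ell\in B$. This is precisely what the paper promises when it states that the second exceptional set ``ensures that all the $t_\ell^\alpha$ have a very close stochastic behavior for all $\ell\in I_n^\alpha$''; once that slow variation is quantified, combining it with the $O(m^{-1/4})$ LLT error and the $O(m^{-1/2})$ Riemann error produces the advertised $O\bigl(\sqrt{\log_d\log_d n}/(\log_d n)^{1/4}\bigr)$ bound.
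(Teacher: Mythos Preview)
Your approach is essentially the paper's: split $I_n^\alpha$ at the threshold $m^{3/4}$, kill the tail with Proposition~\ref{control_chernov}, apply Proposition~\ref{llt} on the bulk of size $O(m^{3/4})$ (cumulative error $O(m^{-1/4})$), and identify the resulting Gaussian sum with $\lambda_\alpha(A_k)$ via Riemann summation. One small correction: by Kac's lemma the means satisfy $\mathbb{E}({t_\ell^\alpha}')=\ell d^k(\alpha+1)$ \emph{exactly}, so the increments are a genuine arithmetic progression and need no help from $\check{W}_{k,n}$; that hypothesis is used solely to stabilise the variances $\sigma_\ell$ across $\ell\in I_n^\alpha$ (Lemma~\ref{ecart_type_equiv}), which in turn is what feeds into Lemma~\ref{control_simple_depend} and produces the $\sqrt{\log_d m}$ factor in the final bound.
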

We prove it in Subsection~\ref{self_cor} since we first need to control the variances.

\subsection{Equivalence of variances}\label{ssvariance}

The purpose of this subsection is to prove that the variance of $t_\ell^\alpha$ are close as $\ell$ runs through $I_n^\alpha$ for good $\alpha$'s.

\begin{proposition}\label{limit_d_adic}
    For $\ell>\ell^'\in I_n^\alpha$, writing $\ell-\ell^'=\sum_{j=0}^mu_jd^j$ in balanced $d-adic$ decomposition,  \[\forall s\geq \log_d(C_2m)+1,u_s=0.\]
\end{proposition}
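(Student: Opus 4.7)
The plan is a direct size estimate using the balanced $d$-adic expansion together with the length bound on $I_n^\alpha$.

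First I would recall that, by Lemma \ref{card_Pn}, the interval $I_n^\alpha$ has length at most $C_2 m$, so for any two integers $\ell, \ell' \in I_n^\alpha$ we have $|\ell - \ell'| \le C_2 m$. The argument is then a standard pigeonhole on positional notations: if a ``high'' digit $u_s$ in the balanced expansion is nonzero, the sum of all lower digits cannot compensate for it.

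More precisely, suppose for contradiction that $u_s \neq 0$ for some $s \ge \log_d(C_2 m)+1$, and pick $s$ to be the largest such index. Since $u_j \in \llbracket -\frac{d-1}{2}, \frac{d-1}{2}\rrbracket$ and $|u_s| \ge 1$, I would estimate
\[
|\ell - \ell'| \;\ge\; |u_s|\,d^s - \sum_{j=0}^{s-1}|u_j|\,d^j \;\ge\; d^s - \frac{d-1}{2}\sum_{j=0}^{s-1}d^j \;=\; d^s - \frac{d^s-1}{2} \;=\; \frac{d^s+1}{2}.
\]
Combining with the bound $|\ell - \ell'| \le C_2 m$ gives $d^s < 2 C_2 m$, hence $s < \log_d(C_2 m) + \log_d 2$. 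Since $d \ge 7$ we have $\log_d 2 < 1$, contradicting $s \ge \log_d(C_2 m)+1$.

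There is no real obstacle here; the only thing to be mildly careful about is that the estimate of the geometric tail uses all digits up to $s-1$ having the maximal absolute value $\frac{d-1}{2}$, and the conclusion uses the hypothesis $d \ge 7$ (already standing) to absorb the $\log_d 2$ slack. Essentially the statement says that a small integer (bounded by $C_2 m$) cannot have nonzero balanced $d$-adic digits above the expected scale $\log_d(C_2 m)$.
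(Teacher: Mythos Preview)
Your proof is correct and follows essentially the same approach as the paper: argue by contradiction, take the largest index $s$ with $u_s\neq 0$, and use that the lower balanced digits sum to at most $\frac{d^s-1}{2}$ so cannot cancel the leading term. The paper factors out $u_R d^R$ and applies the reverse triangle inequality to reach $\frac{dC_2 m}{2}\le C_2 m$, while you do the equivalent direct estimate $|\ell-\ell'|\ge\frac{d^s+1}{2}$ and then use $\log_d 2<1$; these are the same computation presented slightly differently.
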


\begin{proof}
    Suppose that \[\exists s\geq \log_d(C_2m)+1,u_s\ne0.\]
    We write $R=\max{\{s\geq \log_d(C_2m)+1 : u_s\ne 0\}}$.
    By Lemma \ref{card_Pn}, we have $\abso{\ell-\ell^'}\leq C_2 m$.
    We know that $\abso{u_R}\geq 1$ because $u_R$ is an integer. Then, since $d^R\geq dC_2m$,
     \[dC_2m\abso{1+\sum_{j=0}^{R-1}\frac{u_j}{u_R}d^{j-R}}\leq\abso{u_Rd^R}\abso{\sum_{j=0}^R\frac{u_j}{u_R}d^{j-R}}=\abso{\ell-\ell^'}\leq C_2m.\]
    On the other hand \[\sum_{j=0}^{R-1}\abso{\frac{u_j}{u_R}}d^{j-R}=\frac{1}{d}\sum_{j=0}^{R-1}\abso{\frac{u_{R-1-j}}{u_R}}d^{-j}\leq \frac{d-1}{2d}\frac{1}{1-\frac{1}{d}}=\frac{1}{2}.\]
    Therefore, by reversed triangle inequality applied to $\abso{1+\sum_{j=0}^{R-1}\frac{u_j}{u_R}d^{j-R}}$, we get $\frac{dC_2 m}{2}\leq C_2 m$, a contradiction.
\end{proof}

The following lemma shows that the carry number is not propagated above low digits.

\begin{lemme}
    For $\alpha\not\in \check{W}_{k,n}$, for $\ell>\ell^'\in I_n^\alpha$, we have,\[\forall j\geq \log_d(C_2m)+1,a_{\ell,j}=a_{\ell^',j}.\]
\end{lemme}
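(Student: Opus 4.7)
The plan is to argue by contradiction: assume there exists an index $j^*\geq R:=\lfloor\log_d(C_2m)\rfloor+1$ with $a_{\ell,j^*}\neq a_{\ell',j^*}$, and derive $\ell'\in\check{\mathcal{B}}_n$, contradicting the hypothesis $\alpha\notin\check{W}_{k,n}$ (which, by the definition of $\check{W}_{k,n}$, forces $I_n^\alpha\cap\check{\mathcal{B}}_n=\emptyset$). By Proposition \ref{limit_d_adic}, the balanced $d$-adic expansion of $\Delta:=\ell-\ell'$ has all digits zero at positions $\geq R$, and $\Delta>0$ because $\ell>\ell'$.

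I then perform the balanced $d$-adic addition $\ell=\ell'+\Delta$ digit by digit, tracking the carries $q_j\in\{-1,0,+1\}$. For $j\geq R$ one has $u_j=0$, so $a_{\ell,j}\neq a_{\ell',j}$ can only happen through a nonzero carry arriving from below. Since $\Delta>0$ that carry must be $+1$, and it propagates past position $j$ precisely when $a_{\ell',j}=\nu:=(d-1)/2$; otherwise the digit increases by one and the chain terminates. Hence, taking $j^*$ minimal, the digits $a_{\ell',j}$ for $R\leq j<j^*$ must all equal $\nu$. Choosing the constant $c>0$ in $p_n=\lfloor\log_d(cm)\rfloor+1$ large enough so that $p_n\geq R$, the key step is to show that the chain cannot terminate at any intermediate $j^*\in[R,q_n)$ and therefore must reach $j^*\geq q_n$.

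Granting the extension up to $q_n$, the positions $p_n,\ldots,q_n-1$ of $\ell'$ are all $\nu$. Using $\ell_0\bmod d^{q_n}=\tfrac12(d^{q_n}-2d^{p_n}+1)$, a direct computation gives
\[
\ell'\bmod d^{q_n}=\sum_{j=0}^{p_n-1}a_{\ell',j}d^j+\frac{d^{q_n}-d^{p_n}}{2}\in\ell_0\bmod d^{q_n}+\{0,1,\ldots,d^{p_n}-1\},
\]
so $\ell'\in\check{\mathcal{B}}_n$, yielding the contradiction.

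The main obstacle is precisely the extension-to-$q_n$ step: the carry analysis alone does not rule out that the chain of $\nu$'s in $\ell'$ terminates at some intermediate $j^*\in[R,q_n)$. One must exploit the specific block design of $\ell_0$ (central $\nu$-block of length exactly $q_n-p_n$, with $p_n$ and $q_n$ comfortably separated on a logarithmic scale) and the complementary pattern the chain imprints on $\ell$ — the chain flips $\ell'$'s $\nu$'s into $-\nu$'s in $\ell$ — so that one can play $\ell\notin\check{\mathcal{B}}_n$ and $\ell'\notin\check{\mathcal{B}}_n$ simultaneously to rule out every intermediate-length termination.
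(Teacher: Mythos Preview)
Your carry analysis is correct up to the point you flag, but the obstacle is genuine and your proposed fix cannot close it. If the chain of $\nu$'s in $\ell'$ terminates at some $j^*\in[R,q_n)$, then $\ell$ acquires digits $-\nu$ at positions $R,\ldots,j^*-1$ and $a_{\ell',j^*}+1$ at $j^*$; but membership in $\check{\mathcal{B}}_n$ is characterized (up to the low $p_n$ digits) by a block of $+\nu$'s in the window $[p_n,q_n)$, so this pattern on $\ell$ yields no contradiction with $\ell\notin\check{\mathcal{B}}_n$. Concretely, take $\ell'$ with $a_{\ell',j}=\nu$ for $0\le j\le p_n$ and $a_{\ell',p_n+1}=0$, and $\Delta=1$: the carry stops at $p_n+1$, neither $\ell$ nor $\ell'$ lies in $\check{\mathcal{B}}_n$, yet $a_{\ell,p_n+1}\neq a_{\ell',p_n+1}$ with $p_n+1>\log_d(C_2m)+1$. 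So the literal inequality in the lemma is in fact too strong to hold in general, which is why your contradiction cannot be completed.

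The paper's argument is direct rather than by contradiction, and it establishes the slightly weaker conclusion that already suffices for~\eqref{control_norme_return}. From $\ell'\notin\check{\mathcal{B}}_n$ one extracts an index $i$ with $p_n<i\le q_n$ and $a_{\ell',i}<\nu$; one then bounds the low part $\sum_{j\le i}a_{\ell',j}d^j+\xi$ (using $\xi>0$ for the lower bound and $a_{\ell',i}\le\nu-1$ for the upper bound) to show it lies in $\bigl[-\tfrac{d^{i+1}-1}{2},\tfrac{d^{i+1}-1}{2}\bigr]$, whence $a_{\ell,j}=a_{\ell',j}$ for all $j>i$. In your language, this $i$ is exactly the termination point $j^*$ of your carry chain, and the fact that $j^*\le q_n=O(\log_d m)$ \emph{is} the conclusion one needs, not an obstacle to be removed.
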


\begin{proof}
    Let $\alpha\not\in \check{W}_{k,n}$ and $\ell>\ell^'\in I_n^\alpha$.
    By Proposition \ref{limit_d_adic}, we can pose $\xi=\ell-\ell^'=\sum_{j=0}^{p_n}u_jd^j$.
    Thus, \[\ell=\ell^'+\xi=\sum_{j=0}^{p_n}(a_{\ell^',j}+u_j)d^j+\sum_{j\geq p_n+1}a_{\ell^',j}d^j.\]
    By definition of $\check{W}_{k,n}$,
    \[\exists i\in \intervEnt{p_n+1}{q_n},a_{\ell^',i}< \frac{d-1}{2}.\]
    Then, \[\ell=\sum_{j=0}^{p_n}(a_{\ell^',j}+u_j)d^j+\sum_{j=p_n+1}^{i-1}(a_{\ell^',j})d^j+a_{\ell^',i}d^i+\sum_{j\geq i+1}a_{\ell^',j}d^j.\]
    Hence, \[\sum_{j=0}^{p_n}(a_{\ell^',j}+u_j)d^j+\sum_{j=p_n+1}^{i-1}a_{\ell^',j}d^j+a_{\ell^',i}d^i\leq d^{i}-1+d^i\left(\frac{d-1}{2}-1\right)=\frac{d-1}{2}d^i-1.\]
    Therefore, \[\forall j\geq i, a_{\ell,j}=a_{\ell^',j}.\]
\end{proof}

Thus, we have for $\alpha\not\in W_{k,n}\cup\check{W}_{k,n}$, \begin{align}
    \label{control_norme_return}\norme{{t_\ell^\alpha}^'-{t_{\ell^'}^\alpha}^'}_\infty=O(2\log_d(C_2m)).
\end{align}

We note $\sigma_\ell^\alpha=\sqrt{\mathbb{V}\left({t_\ell^\alpha}^'\right)}$.
Next lemma shows that these standard deviations are equivalent.
\begin{lemme}
\label{ecart_type_equiv}
    For $\alpha\not\in W_{k,n}\cup\check{W}_{k,n}$ and $\ell,\ell^'\in I_n^\alpha$, \[\frac{{\sigma_\ell^\alpha}}{{\sigma_{\ell^'}^\alpha}}=1+O\left(\frac{\log_d(C_2m)}{\sqrt{m}}\right).\]
\end{lemme}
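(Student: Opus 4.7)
The plan is to convert the uniform bound on ${t_\ell^\alpha}' - {t_{\ell'}^\alpha}'$ furnished by \eqref{control_norme_return} into a bound on the difference of standard deviations, then divide by the lower bound on $\sigma_{\ell'}^\alpha$ coming from Proposition \ref{minimize_var}.

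First I would use the reverse triangle inequality in $L^2(\lambda)$: since both ${t_\ell^\alpha}'$ and ${t_{\ell'}^\alpha}'$ are random variables with finite second moments,
\[
\left|\sigma_\ell^\alpha - \sigma_{\ell'}^\alpha\right| \;\le\; \sigma\!\left({t_\ell^\alpha}' - {t_{\ell'}^\alpha}'\right) \;\le\; \left\|{t_\ell^\alpha}' - {t_{\ell'}^\alpha}'\right\|_\infty.
\]
For $\alpha\notin W_{k,n}\cup \check W_{k,n}$ and $\ell,\ell'\in I_n^\alpha$, the previous lemma gives (via \eqref{control_norme_return}) the bound $\|{t_\ell^\alpha}' - {t_{\ell'}^\alpha}'\|_\infty = O(\log_d(C_2 m))$.

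Next I would lower bound the denominator. Since $\alpha\notin W_{k,n}$, every $\ell'\in I_n^\alpha$ satisfies $b_{\ell'}\ge C_0 m$, so by Proposition \ref{minimize_var} there is a constant $c>0$ (depending on $d$, and on the uniform bound $\|\alpha\|_\infty\le 2$) with
\[
\sigma_{\ell'}^\alpha \;\ge\; c\sqrt{b_{\ell'}} \;\ge\; c\sqrt{C_0\, m}.
\]

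Combining the two estimates yields
\[
\left|\frac{\sigma_\ell^\alpha}{\sigma_{\ell'}^\alpha}-1\right| \;=\; \frac{|\sigma_\ell^\alpha - \sigma_{\ell'}^\alpha|}{\sigma_{\ell'}^\alpha} \;=\; O\!\left(\frac{\log_d(C_2 m)}{\sqrt{m}}\right),
\]
which is exactly the claim. No step is truly delicate here: the only point requiring care is that the uniform bound \eqref{control_norme_return} applies to \emph{both} orders of $\ell,\ell'\in I_n^\alpha$ (symmetry of the statement), which is why the hypothesis $\alpha\notin W_{k,n}\cup \check W_{k,n}$ is invoked rather than a one-sided version. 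Everything else is a routine application of the $L^2$ triangle inequality combined with the variance lower bound already established.
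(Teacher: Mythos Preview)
Your proof is correct and follows essentially the same route as the paper: both use the $L^\infty$ bound \eqref{control_norme_return} on ${t_\ell^\alpha}'-{t_{\ell'}^\alpha}'$ together with the lower bound $\sigma_{\ell'}^\alpha\gtrsim\sqrt{m}$ from Proposition~\ref{minimize_var} (via $b_{\ell'}\ge C_0 m$ for $\alpha\notin W_{k,n}$). The only cosmetic difference is that you invoke the reverse triangle inequality in $L^2$ directly, whereas the paper expands $\mathbb{V}({t_\ell^\alpha}')=\mathbb{V}({t_\ell^\alpha}'-{t_{\ell'}^\alpha}')+2\,Cov({t_\ell^\alpha}'-{t_{\ell'}^\alpha}',{t_{\ell'}^\alpha}')+\mathbb{V}({t_{\ell'}^\alpha}')$ and bounds the cross term by Cauchy--Schwarz; your packaging is slightly cleaner but the substance is identical.
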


\begin{proof}
Let     $\alpha\not\in W_{k,n}\cup\check{W}_{k,n}$ and let $\ell,\ell^'\in I_n^\alpha$.

Then, $\mathbb{V}({t_\ell^\alpha}^')=\mathbb{V}({t_\ell^\alpha}^'-{t_{\ell^'}^\alpha}^')+2Cov({t_\ell^\alpha}^'-{t_{\ell^'}^\alpha}^',{t_{\ell^'}^\alpha}^')+\mathbb{V}({t_{\ell^'}^\alpha}^').$

By Proposition \ref{minimize_var}, for some constant $C^'$, \[\mathbb{V}({t_{\ell^'}^\alpha}^')\geq C^'\frac{d-2}{(d-1)^2}b_{\ell^'}\geq C\left(\frac{d-2}{(d-1)^2}\right)m.\]
 By (\ref{control_norme_return}) and Cauchy-Schwarz inequality, we get, \[\abso{\frac{{\sigma_\ell^\alpha}}{{\sigma_{\ell^'}^\alpha}}-1}\leq\left(\frac{\sqrt{C_1\log_d^2(C_2m)+C_3\log_d(C_2m)}}{\sqrt{C\left(\frac{d-2}{(d-1)^2}\right)m}}\right).\]
 Therefore, \[\frac{{\sigma_\ell^\alpha}}{{\sigma_{\ell^'}^\alpha}}=1+O\left(\frac{\log_d(m)}{\sqrt{m}}\right).\]
\end{proof}

\subsection{Final estimate of self correlation}\label{self_cor}

To control the error when we change the standard deviations $\sigma_\ell^\alpha$ in the Proposition \ref{llt}, we use the next lemma.

\begin{lemme}
\label{control_simple_depend}
    For $A>0,(\beta_n)_{n\in\mathbb{N}}\in\left(\mathbb{R}^*_+\right)^\mathbb{N},(\varepsilon_n)_{n\in\mathbb{N}}\in\left(\mathbb{R}\setminus\{-1\}\right)^{\mathbb{N}},(\xi_{\ell,n})_{(\ell,n)\in \mathbb{Z}\times\mathbb{N}}\in \{0,1\}^{\mathbb{Z}\times\mathbb{N}}$ and $(\varepsilon_n^{(\ell)})_{(n,\ell)\in\mathbb{N}\times\mathbb{Z}}\in\left(\mathbb{R}^*_+\right)^{\mathbb{N}\times\mathbb{Z}}$ such that for all $\ell\in\mathbb{Z}$, all $n\in\mathbb{N},\abso{\varepsilon_n^{(\ell)}}\leq\abso{\varepsilon_n}$, we have
    \[\abso{\sum_{\ell\in \mathbb{Z}}\frac{\xi_{n,\ell}}{\left(1+\varepsilon_n^{(\ell)}\right)\beta_n\sqrt{2\pi}}e^{-\frac{(\ell A-n)^2}{2\left(1+\varepsilon_n\right)^2\beta_n^2}}-\frac{\xi_{n,\ell}}{\sqrt{2\pi}\beta_n}\sum_{\ell\in\mathbb{Z}}e^{-\frac{(\ell A-n)^2}{2\beta_n^2}}}\leq \frac{\abso{\varepsilon_n}}{A}\left(\frac{2}{\sqrt{2\pi e}}+\frac{1}{\beta_n}\right).\]
\end{lemme}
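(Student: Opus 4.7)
The plan is to decompose the $\ell$-th term of the difference as a sum of a prefactor change and a bandwidth change, bound each piece linearly in $\varepsilon_n$ via the mean value theorem, and then control the resulting Gaussian-type sums by Riemann-sum comparison with Lebesgue integrals, which produces the key factor $1/A$.

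Concretely, I will introduce the Gaussian density $F_c(u):=\frac{1}{c\beta_n\sqrt{2\pi}}e^{-u^2/(2c^2\beta_n^2)}$ of variance $c^2\beta_n^2$, and rewrite the $\ell$-th summand on the left as
\[
\frac{1+\varepsilon_n}{1+\varepsilon_n^{(\ell)}}\,F_{1+\varepsilon_n}(\ell A-n)-F_1(\ell A-n).
\]
I split this as
\[
\left(\frac{1+\varepsilon_n}{1+\varepsilon_n^{(\ell)}}-1\right)F_{1+\varepsilon_n}(\ell A-n)\;+\;\bigl(F_{1+\varepsilon_n}(\ell A-n)-F_1(\ell A-n)\bigr).
\]
The first ratio minus $1$ equals $(\varepsilon_n-\varepsilon_n^{(\ell)})/(1+\varepsilon_n^{(\ell)})$, which is $O(|\varepsilon_n|)$ by the hypothesis $|\varepsilon_n^{(\ell)}|\le|\varepsilon_n|$. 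To sum $\sum_\ell F_{1+\varepsilon_n}(\ell A-n)$, I will use the standard Riemann-sum inequality for a non-negative unimodal density, namely
\[
\sum_{\ell\in\mathbb{Z}}F_c(\ell A-n)\;\le\;\frac{1}{A}\int_{\mathbb{R}}F_c(u)\,du+\|F_c\|_\infty=\frac{1}{A}+\frac{1}{c\beta_n\sqrt{2\pi}},
\]
which, multiplied by the $O(|\varepsilon_n|)$ prefactor, delivers the $\tfrac{|\varepsilon_n|}{A\beta_n}$ piece of the target.

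For the bandwidth part I will write $F_{1+\varepsilon_n}(u)-F_1(u)=\int_1^{1+\varepsilon_n}\partial_cF_c(u)\,dc$ with
\[
\partial_cF_c(u)=\frac{F_c(u)}{c}\left(\frac{u^2}{c^2\beta_n^2}-1\right),
\]
apply Fubini to swap the $\ell$-summation and $c$-integration, and bound $\sum_\ell|\partial_cF_c(\ell A-n)|$ once again by $\frac{1}{A}\int|\partial_cF_c|+\|\partial_cF_c\|_\infty$. After the rescaling $y=u/(c\beta_n)$ the integral reduces to a universal constant, and since $\sup_y|y|e^{-y^2/2}=1/\sqrt{e}$, the constant $\tfrac{2}{\sqrt{2\pi e}}$ in the target bound is precisely what appears once we factor out $\tfrac{1}{A}$ and integrate in $c$ over an interval of length $|\varepsilon_n|$.

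The main obstacle, which is more of a bookkeeping issue than a conceptual one, is to track the multiplicative constants tightly enough so that the bound comes out as $\tfrac{|\varepsilon_n|}{A}\bigl(\tfrac{2}{\sqrt{2\pi e}}+\tfrac{1}{\beta_n}\bigr)$ and not with larger numerical constants; a secondary nuisance is that $\varepsilon_n^{(\ell)}$ depends on $\ell$, so the individual summands are not all proportional to a single Gaussian, but this only affects the pointwise estimate of the prefactor part and not the Riemann-sum step.
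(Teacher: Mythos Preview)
Your proposal follows essentially the same strategy as the paper's proof: both split the $\ell$-th summand into a prefactor change and a bandwidth (exponent) change, bound each piece via a mean-value/derivative estimate for the Gaussian, and then pass from the sum over $\ell$ to an integral by series--integral comparison, which is where the factor $1/A$ arises. The paper carries out the MVT step directly using the bounds $\sup_x|x|e^{-x^2/2}=1/\sqrt{e}$ and $\sup_x|(x^2-1)e^{-x^2/2}|\le 1$, whereas you phrase the bandwidth change as $\int_1^{1+\varepsilon_n}\partial_cF_c\,dc$ and apply Fubini before summing; these are equivalent formulations of the same idea, and your integral representation has the minor advantage that the Gaussian decay is kept explicit throughout, making the summability needed for the Riemann-sum step more transparent.
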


\begin{proof}
    We know that, $\forall x\in\mathbb{R},\frac{d^2}{dx^2}e^{-\frac{x^2}{2}}=(x^2-1)e^{-\frac{x^2}{2}}$,
    For the second derivative, we have \[\forall x\in\mathbb{R},\left((x> 1\ or\ x<-1) \iff \frac{d^2}{dx^2}e^{-\frac{x^2}{2}}>0\right).\]
    The derivative vanishes to $\pm\infty$, i.e $(x^2-1)e^{-\frac{x^2}{2}}\xrightarrow[x\to\pm \infty]{}0$.
    Then, \begin{align}\label{deriv_control}\forall x\in\mathbb{R},\abso{\frac{d}{dx}e^{-\frac{x^2}{2}}}\leq \frac{1}{\sqrt{e}}.\end{align}
    We know also that $\frac{d^3}{dx^3}e^{-\frac{x^2}{2}}=x(3-x^2)e^{-\frac{x^2}{2}}$ and we get that \begin{equation}
        \label{ineq_deriv_3}\forall x\in\mathbb{R},\abso{\frac{d^2}{dx^2}e^{-\frac{x^2}{2}}}\leq 1.
    \end{equation}
    Consider for $x\in\mathbb{R}$, \[\begin{array}{ll}\abso{\frac{\xi_{n,\ell}}{(1+\varepsilon_n^{(\ell)})\beta_n}e^{-\frac{x^2}{2(1+\varepsilon_n^{(\ell)})^2\beta_n^2}}-\frac{\xi_{n,\ell}}{\beta_n}e^{-\frac{x^2}{2\beta_n^2}}}&\leq \abso{\frac{1}{(1+\varepsilon_n^{(\ell)})\beta_n}e^{-\frac{x^2}{2(1+\varepsilon_n^{(\ell)})^2\beta_n^2}}-\frac{1}{\beta_n}e^{-\frac{x^2}{2\beta_n^2}}}\\
    &\leq \abso{\frac{1}{(1+\varepsilon_n^{(\ell)})\beta_n}e^{-\frac{x^2}{2(1+\varepsilon_n^{(\ell)})^2\beta_n^2}}-\frac{1}{(1+\varepsilon_n^{(\ell)})\beta_n}e^{-\frac{x^2}{2\beta_n^2}}}\\
    &+\abso{\frac{1}{(1+\varepsilon_n^{(\ell)})\beta_n}e^{-\frac{x^2}{2\beta_n^2}}-\frac{1}{\beta_n}e^{-\frac{x^2}{2\beta_n^2}}}\\
    &\leq \frac{1}{(1+\varepsilon_n^{(\ell)})^2\beta_n^2\sqrt{e}}\abso{x}\abso{\varepsilon_n^{(\ell)}}\\
    &+\abso{\frac{1}{(1+\varepsilon_n^{(\ell)})\beta_n}e^{-\frac{x^2}{2\beta_n^2}}-\frac{1}{\beta_n}e^{-\frac{x^2}{2\beta_n^2}}}\ by\ \eqref{deriv_control}\\
    &\leq \frac{e^{-\frac{x^2}{2\beta_n^2}}}{(1+\varepsilon_n^{(\ell)})^2\beta_n^2\sqrt{e}}\abso{x}\abso{\varepsilon_n^{(\ell)}}+\frac{e^{-\frac{x^2}{2\beta_n^2}}}{\beta_n\abso{1+\varepsilon_n^{(\ell)}}}\abso{\varepsilon_n^{(\ell)}}\\
    &\leq \frac{\abso{\varepsilon_n^{(\ell)}}e^{-\frac{x^2}{2\beta_n^2}}}{\beta_n\abso{1+\varepsilon_n^{(\ell)}}}\left(\frac{\abso{x}}{\beta_n\abso{1+\varepsilon_n^{(\ell)}}\sqrt{e}}+1\right)\\
    &\leq \frac{\abso{\varepsilon_n^{(\ell)}}e^{-\frac{x^2}{2\beta_n^2}}}{\beta_n(1+\varepsilon_n^{(\ell)})}\left(\frac{\abso{x}}{\beta_n(1+\varepsilon_n^{(\ell)})\sqrt{e}}+1\right)\ because\ \varepsilon_n^{(\ell)}>0.\end{array}\]
    Therefore, by serie-integrals comparaison and \eqref{ineq_deriv_3}, \[\abso{\sum_{\ell\in \mathbb{Z}}\frac{\xi_{n,\ell}}{\left(1+\varepsilon_n\right)\beta_n\sqrt{2\pi}}e^{-\frac{(\ell A-n)^2}{2\left(1+\varepsilon_n\right)^2\beta_n^2}}-\frac{\xi_{n,\ell}}{\sqrt{2\pi}\beta_n}\sum_{\ell\in\mathbb{Z}}e^{-\frac{(\ell A-n)^2}{2\beta_n^2}}}\leq 2\frac{\abso{\varepsilon_n}}{A}\left(\frac{1}{\sqrt{2\pi e}}+\frac{1}{\beta_n}\right).\]
\end{proof}

We are now ready to prove the self correlation estimate.

\begin{proof}[Proof of Proposition \ref{first_keplerian_shear}]

Consider $\sum_{\ell\in\mathbb{N}}d_\ell^\alpha(n)=\sum_{\ell\in P_n^\alpha}d_\ell^\alpha(n)$.
We recall that $S_m={t_\ell^\alpha}^'-\mathbb{E}({t_\ell^\alpha}^')$.
We can cut the set $I_n^\alpha$ in two parts $A_n^\alpha$ and $B_n^\alpha$  as follows
\[A_n^\alpha:=\{\ell\in I_n^\alpha : \abso{\mathbb{E}({t_\ell^\alpha}^')-n}\leq m^{\frac{3}{4}}\}\ and\ B_n^\alpha:={\left(A_n^\alpha\right)}^c\cap I_n^\alpha.\]
Thus, \[\sum_{\ell\in\mathbb{N}}d_\ell^\alpha(n)=\sum_{\ell\in A_n^\alpha}{d_\ell^\alpha}^'(n)+\sum_{\ell\in B_n^\alpha}{d_\ell^\alpha}^'(n).\]

Notice that if ${t_\ell^\alpha}^'(x)=n$ and $\abso{\mathbb{E}({t_\ell^\alpha}^')-n}>m^{\frac{3}{4}}$ then $\abso{\mathbb{E}({t_\ell^\alpha}^')-{t_\ell^\alpha}^'(x)}>m^{\frac{3}{4}}$.
Thus by Proposition \ref{control_chernov}, \[\sum_{\ell\in B_n^\alpha}{d_\ell^\alpha}^'(n)\leq 2C\# I_n^\alpha e^{-m^{\frac{1}{4}}}, C>0.\]
By Lemma \ref{card_Pn},
\[\# I_n^\alpha\leq C_4m, C_4>0.\]
Then, \[\sum_{\ell\in B_n^\alpha}{d_\ell^\alpha}^'(n)\leq 2CC_4m e^{-m^{\frac{1}{4}}}.\]
Let $\ell_{min}^{(n)}$ be the minimum of $I_n^\alpha$ and set $\hat\sigma_n:=\sigma_{\ell_{min}^{(n)}}$.
By Kac's lemma, $\mathbb{E}({t_\ell^\alpha}^')=\ell d^k(\alpha+1)$, therefore we have that $\ell\in I_n^\alpha\iff \abso{\mathbb{E}({t_\ell^\alpha}^')-n}\leq C_2 m$, and we know by \eqref{I_n} that $I_n^\alpha$ is an interval.
Thus, $$\frac{1}{\hat\sigma_n\sqrt{2\pi}}\sum_{\ell\not\in I_n^\alpha}e^{-\frac{(\mathbb{E}({t_\ell^\alpha}^')-n)^2}{2\hat\sigma_n^2}}=\frac{1}{\hat\sigma_n\sqrt{2\pi}}\sum_{\abso{\mathbb{E}({t_\ell^\alpha}^')-n}>C_2m}e^{-\frac{(\mathbb{E}({t_\ell^\alpha}^')-n)^2}{2\hat\sigma_n^2}}\leq \frac{e^{-m^{\frac{1}{4}}}}{\sqrt{2m\pi}}\sqrt{2}$$ 
by the convergence of the serie.
We get, \begin{equation}\label{eq_interval}\frac{1}{\hat\sigma_n\sqrt{2\pi}}\sum_{\ell\not\in I_n^\alpha}e^{-\frac{(\mathbb{E}({t_\ell^\alpha}^')-n)^2}{2\hat\sigma_n^2}}=O\left(\frac{e^{-\frac{m}{4}}}{\sqrt{m}}\right).\end{equation}
By Proposition \ref{llt} and applying Lemma \ref{ecart_type_equiv} on $\sigma({t_\ell^\alpha}^')$  and Lemma \ref{control_simple_depend} on $\sum_{\ell\not\in I_n^\alpha}\frac{1}{\sigma_{\ell}^\alpha\sqrt{2\pi}}e^{-\frac{(\mathbb{E}({t_\ell^\alpha}^')-n)^2}{2\left(\sigma_{\ell}^\alpha\right)^2}}$, we get \[\sum_{\ell\in\mathbb{N}}{d_\ell^\alpha}^'(n)=\frac{1}{\hat\sigma_n\sqrt{2\pi}}\sum_{\ell\in I_n^\alpha}e^{-\frac{(\ell d^k(\alpha+1)-n)^2}{2\hat\sigma_n^2}}+O\left(\frac{\sqrt{\log_d(m)}}{m^{\frac{1}{4}}}\right).\]
Hence by (\ref{eq_interval}),\[\sum_{\ell\in\mathbb{N}}{d_\ell^\alpha}^'(n)=\frac{1}{\hat\sigma_n\sqrt{2\pi}}\sum_{\ell\in\mathbb{N}}e^{-\frac{(\ell d^k(\alpha+1)-n)^2}{2\hat\sigma_n^2}}+O\left(\frac{\sqrt{\log_d(m)}}{m^{\frac{1}{4}}}\right).\]

Just for the next equation, we pose $A=d^k(\alpha+1)$.
By the serie-integral comparaison,
\begin{equation}
    \label{serie_int_gauss}
    \int_0^{+\infty}e^{-\frac{(Ax-n)^2}{2\hat\sigma_n^2}}dx-\int_{\lfloor \frac{n}{A}\rfloor-1}^{\lfloor \frac{n}{A}\rfloor+1}e^{-\frac{(Ax-n)^2}{2\hat\sigma_n^2}}dx+e^{-\frac{n^2}{2\hat\sigma_n^2}}\leq\sum_{\ell\in\mathbb{N}}e^{-\frac{(\ell A-n)^2}{2\hat\sigma_n^2}},
\end{equation}
and,
\begin{equation}
    \label{serie_int_gauss_2}
    \sum_{\ell\in\mathbb{N}}e^{-\frac{(\ell A-n)^2}{2\hat\sigma_n^2}}\leq \int_0^{+\infty}e^{-\frac{(Ax-n)^2}{2\hat\sigma_n^2}}dx-\int_{\lfloor \frac{n}{A}\rfloor-1}^{\lfloor \frac{n}{A}\rfloor+1}e^{-\frac{(Ax-n)^2}{2\hat\sigma_n^2}}dx+e^{-\frac{(A(\lfloor\frac{n}{A}\rfloor+1)-n)^2}{2\hat\sigma_n^2}}+e^{-\frac{(A\lfloor\frac{n}{A}\rfloor-n)^2}{2\hat\sigma_n^2}}.
\end{equation}
Thus, by \eqref{serie_int_gauss} and \eqref{serie_int_gauss_2},
\[\frac{1}{\hat\sigma_n\sqrt{2\pi}}\sum_{\ell\in\mathbb{N}}e^{-\frac{(\ell d^k(\alpha+1)-n)^2}{2\hat\sigma_n^2}}=\frac{1}{\hat\sigma_n\sqrt{2\pi}}\int_0^{+\infty}e^{-\frac{(d^k(\alpha+1)x-n)^2}{2\hat\sigma_n^2}}dx+O\left(\frac{1}{\hat\sigma_n}\right).\]
Finally, by change of variables,
\[\sum_{\ell\in\mathbb{N}}{d_\ell^\alpha}^'(n)=\frac{1}{d^{k}(\alpha+1)}+O\left(\frac{\sqrt{\log_d(m)}}{m^{\frac{1}{4}}}\right)=\lambda_\alpha(A_k)+O\left(\frac{\sqrt{\log_d(\log_d(n))}}{(\log_d(n))^{\frac{1}{4}}}\right).\]
\end{proof}

\subsection{Desynchronization of the exceptional sets for $A_k$}
The goal of this subsection is to prove the convergence of the whole auto-correlation linked to $A_k$.
\begin{proposition} 
\label{kep_shear_one}
For all $B\in \mathcal{B}(\{1,2\}^\mathbb{N})$, \[\intInd{\Omega}{\mathds{1}_{(B\times A_k)\cap T^{-n}(B\times A_k)}}{\mu}\xrightarrow[n\to+\infty]{}\intInd{\{1,2\}^\mathbb{N}}{\mathds{1}_{B}(\alpha)\left(\lambda_\alpha(A_k)\right)^2}{\mathcal{H}(\alpha)}.\]
\end{proposition}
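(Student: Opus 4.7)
The plan is to reduce the assertion to a fiber-wise statement via Fubini, use Proposition \ref{first_keplerian_shear} on the complement of the exceptional sets, and control the exceptional sets using the measure estimates already at our disposal.

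Since $T$ preserves fibers, we have $T^{-n}(B\times A_k)=\{(\alpha,x):\alpha\in B,\ T_\alpha^n(x)\in A_k\}$. Fubini and the definition of $\mu=\mathcal{H}\otimes\mathbb{L}eb_\alpha$ yield
\[
\int_\Omega\mathds{1}_{(B\times A_k)\cap T^{-n}(B\times A_k)}\,d\mu
=\int_B \mathbb{L}eb_\alpha\!\left(A_k\cap T_\alpha^{-n}(A_k)\right)d\mathcal{H}(\alpha).
\]
Combining identity \eqref{cor_trans} with Proposition \ref{first_keplerian_shear}, and writing $m=\lfloor\log_d(n)\rfloor$, we have for every $\alpha\notin E_n:=W_{k,n}\cup M_m\cup \check{W}_{k,n}$
\[
\mathbb{L}eb_\alpha\!\left(A_k\cap T_\alpha^{-n}(A_k)\right)
=\mathbb{L}eb_\alpha(A_k)\sum_{\ell\in\mathbb{N}}{d_\ell^\alpha}'(n)
=\mathbb{L}eb_\alpha(A_k)^2+\mathbb{L}eb_\alpha(A_k)\cdot O\!\left(\tfrac{\sqrt{\log_d\log_d n}}{(\log_d n)^{1/4}}\right),
\]
where the error is uniform in $\alpha\notin E_n$. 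Since $\mathbb{L}eb_\alpha(A_k)=\frac{1}{d^k(\alpha+1)}\le d^{-k}$, the error above is $o(1)$ uniformly.

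I will then write
\[
\int_B\mathbb{L}eb_\alpha(A_k\cap T_\alpha^{-n}(A_k))\,d\mathcal{H}(\alpha)-\int_B\mathbb{L}eb_\alpha(A_k)^2\,d\mathcal{H}(\alpha)
= I_n^{\mathrm{good}}+I_n^{\mathrm{bad}},
\]
splitting $B=(B\setminus E_n)\sqcup(B\cap E_n)$. On $B\setminus E_n$ the integrand is $o(1)$ uniformly, so $|I_n^{\mathrm{good}}|\to 0$. On $B\cap E_n$ both quantities are bounded by $\mathbb{L}eb_\alpha(A_k)\le d^{-k}$, hence
\[
|I_n^{\mathrm{bad}}|\le 2d^{-k}\,\mathcal{H}(E_n)\to 0
\]
by Propositions \ref{vanish_ex_set} and \ref{second_vanish_ex_set} together with Lemma \ref{control_Mm}, which jointly give $\mathcal{H}(E_n)\to 0$.

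The substantive work has already been done upstream: the local limit theorem (Proposition \ref{llt}) and the fiber-wise asymptotic of Proposition \ref{first_keplerian_shear}, together with the measure-vanishing of the three exceptional sets. The present proposition is then essentially a bounded convergence argument on the base, with the only slightly delicate point being the need to combine three distinct exceptional sets and to verify that the uniform error in Proposition \ref{first_keplerian_shear} is genuinely uniform in $\alpha\notin E_n$; since the bound $O(\sqrt{\log_d\log_d n}/(\log_d n)^{1/4})$ depends only on $n$, this uniformity is immediate.
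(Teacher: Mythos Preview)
Your proof is correct and follows essentially the same approach as the paper: both split the base $B$ into the exceptional set $E_n=W_{k,n}\cup M_m\cup \check{W}_{k,n}$ (the paper calls it $Z_{k,n}$) and its complement, apply Proposition~\ref{first_keplerian_shear} on the complement, and invoke Propositions~\ref{vanish_ex_set}, \ref{second_vanish_ex_set} and Lemma~\ref{control_Mm} to dispose of the exceptional part. Your write-up is in fact more explicit than the paper's about the Fubini reduction and the uniformity of the error term.
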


\begin{proof}
    Let $B\in\mathcal{B}(\{1,2\}^\mathbb{N})$ and let $k\in\mathbb{N}$.
    
    Considering exceptional sets defined in (\ref{first_exceptional_set}), (\ref{second_exceptional_set}) and (\ref{M_m_define}), set $Z_{k,n}:=W_{k,n}\cup\check{W}_{k,n}\cup M_{\lfloor\log_d(n)\rfloor}$.
    We note \[\mathbb{E}(\mathds{1}_{(B\times A_k)\cap T^{-n}(B\times A_k)})=\intInd{\Omega}{\mathds{1}_{(B\times A_k)\cap T^{-n}(B\times A_k)}}{\mathcal{H}\otimes(\mathbb{L}eb_\alpha)_{\alpha\in\{1,2\}^\mathbb{N}}}.\]
    We have that \[\mathbb{E}(\mathds{1}_{(B\times A_k)\cap T^{-n}(B\times A_k)})=\mathbb{E}(\mathds{1}_{((Z_{k,n}\cap B)\times A_k)\cap T^{-n}(B\times A_k)})+\mathbb{E}(\mathds{1}_{((Z_{k,n}^c\cap B)\times A_k)\cap T^{-n}(B\times A_k)}).\]
    Note that, \[\mathbb{E}(\mathds{1}_{((Z_{k,n}\cap B)\times A_k)\cap T^{-n}(B\times A_k)})=O(\mathcal{H}(Z_{k,n})).\]
    By Propositions \ref{vanish_ex_set}, \ref{second_vanish_ex_set} and Lemma \ref{control_Mm}, \[\mathcal{H}(Z_{k,n})\xrightarrow[n\to+\infty]{}0.\]
    We get by Proposition \ref{first_keplerian_shear}, \[\mathbb{E}(\mathds{1}_{((Z_{k,n}^c\cap B)\times A_k)\cap T^{-n}(B\times A_k)})\xrightarrow[n\to+\infty]{}\intInd{\{1,2\}^\mathbb{N}}{\mathds{1}_{B}(\alpha)\left(\lambda_\alpha(A_k)\right)^2}{\mathcal{H}(\alpha)}.\]
\end{proof}

\subsection{From self correlations to keplerian shear}

Now, we want to extend the preceding result for all Borel set of $\Omega$.
It's enough to prove the convergence to 0 of autocorrelations, we prove that in the next lemma.

\begin{lemme}
    Let $(\Omega,\mathcal{T},\mu,T)$ be a dynamical system.
    Let $f\in \mathbb{L}_\mu^2(\Omega)$ such that \[\mathbb{E}(Cov_n(f,f\vert \mathcal{I}))\xrightarrow[n\to+\infty]{}0.\]
    Then, \[\forall g\in  \mathbb{L}_\mu^2(\Omega),\mathbb{E}(Cov_n(f,g\vert \mathcal{I}))\xrightarrow[n\to+\infty]{}0.\]
    \label{autocorrel}
\end{lemme}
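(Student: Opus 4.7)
The plan is to recast the statement in spectral language for the Koopman operator $U\colon \mathbb{L}^2_\mu\to \mathbb{L}^2_\mu$, $Uh := h\circ T$. Every $\mathcal{I}$-measurable function is $T$-invariant, so $U$ acts as the identity on $\mathbb{L}^2_\mu(\mathcal{I})$; since $T$ is invertible (as is the case for the Chacon-type maps of this paper), $U$ is unitary, which makes $\mathbb{L}^2_\mu(\mathcal{I})^\perp$ also $U$-invariant. Writing $f_\perp := f-\mathbb{E}(f\mid\mathcal{I})$ and $g_\perp := g-\mathbb{E}(g\mid\mathcal{I})$, a short computation with the orthogonal decomposition $\mathbb{L}^2_\mu=\mathbb{L}^2_\mu(\mathcal{I})\oplus \mathbb{L}^2_\mu(\mathcal{I})^\perp$ gives, all cross terms vanishing by orthogonality or by $T$-invariance of the $\mathcal{I}$-measurable parts,
\[
\mathbb{E}(Cov(f,g\circ T^n\mid\mathcal{I}))=\langle f_\perp, U^n g_\perp\rangle.
\]
Hence the hypothesis reads $\langle f_\perp, U^n f_\perp\rangle\to 0$, and what must be shown is $\langle f_\perp, U^n g_\perp\rangle\to 0$ for every $g\in \mathbb{L}^2_\mu$.

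The key step will then be to introduce the closed cyclic subspace
\[
\mathcal{H}:=\overline{\mathrm{span}}\{U^m f_\perp : m\in\mathbb{Z}\}\subset \mathbb{L}^2_\mu(\mathcal{I})^\perp,
\]
which is $U$-invariant. Unitarity of $U$ forces $\mathcal{H}^\perp$ (taken inside $\mathbb{L}^2_\mu(\mathcal{I})^\perp$) to be $U$-invariant as well. Decomposing $g_\perp = g_1+g_2$ with $g_1\in\mathcal{H}$ and $g_2\in\mathcal{H}^\perp$, the second piece is immediate:
\[
\langle f_\perp, U^n g_2\rangle=\langle U^{-n}f_\perp, g_2\rangle=0,
\]
because $U^{-n}f_\perp\in\mathcal{H}$ and $g_2\perp\mathcal{H}$. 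So only $g_1\in\mathcal{H}$ requires work.

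For that I would approximate $g_1$ in $\mathbb{L}^2$-norm by finite linear combinations $h_k:=\sum_{j}c^{(k)}_j U^{m^{(k)}_j}f_\perp$. Sesquilinearity gives
\[
\langle f_\perp, U^n h_k\rangle=\sum_j c^{(k)}_j\langle f_\perp, U^{n+m^{(k)}_j}f_\perp\rangle\xrightarrow[n\to+\infty]{}0,
\]
since this is a finite sum each of whose terms tends to zero by hypothesis. The uniform Cauchy--Schwarz bound $|\langle f_\perp, U^n(g_1-h_k)\rangle|\leq\|f\|_2\,\|g_1-h_k\|_2$ then permits interchanging the $k$- and $n$-limits, yielding $\langle f_\perp, U^n g_1\rangle\to 0$ and hence the conclusion.

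The only delicate point in this plan is the $U$-invariance of $\mathcal{H}^\perp$, which genuinely uses unitarity of $U$ and hence invertibility of $T$. This holds in the paper's setting; for a non-invertible $T$, one would have to replace the two-sided cyclic-span argument by an adjoint version or by a Wold decomposition of the isometry $U$.
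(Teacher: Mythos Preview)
Your proof is correct and follows essentially the same cyclic-subspace decomposition as the paper: split $\mathbb{L}^2$ into the closure of the span of the orbit of $f$ (the paper takes $H_f=\overline{\mathrm{Vect}}(\{\mathbb{E}(f\mid\mathcal{I})\}\cup\{f\circ T^n:n\ge0\})$), handle $H_f^\perp$ by direct orthogonality and $H_f$ by approximation with finite combinations of iterates. The only cosmetic difference is that you pass to $f_\perp$ and use two-sided powers $m\in\mathbb{Z}$, which forces you to invoke unitarity of $U$, whereas the paper's one-sided version needs only that $U$ be an isometry.
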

\begin{proof}
    Let $f\in \mathbb{L}_\mu^2(\Omega)$ such that \[\mathbb{E}(Cov_n(f,f\vert \mathcal{I}))\xrightarrow[n\to+\infty]{}0.\]
    Let \[H_f :=\overline{Vect\left(\left\{\mathbb{E}(f\vert \mathcal{I})\right\}\cup\{f\circ T^{n} : n\in\mathbb{N}\}\right)}\]
    and let \[F_f := \{g\in \mathbb{L}^2_\mu(\Omega) : \mathbb{E}(Cov_n(f,g\vert \mathcal{I}))\xrightarrow[n\to+\infty]{} 0\}.\]
    We prove easily that $F_f$ is closed.
    It follow from the assumption that $H_f\subset F_f$.
    Now, we prove that $H_f^{\perp}\subset F_f$. Let $g\in H_f^{\perp}$. Then, \[\forall n\in\mathbb{N},\intInd{\Omega}{(f\circ T^n)g}{\mu}=0=\intInd{\Omega}{\mathbb{E}(f\vert\mathcal{I})g}{\mu}.\]
    This implies \[\forall n\in\mathbb{N},\mathbb{E}(Cov_n(f,g\vert \mathcal{I}))=0,\]
    proving the claim.
    Then, $\mathbb{L}_\mu^2(\Omega)=F_f$.
\end{proof}

To extend the convergence proved in Proposition \ref{kep_shear_one} for every measurable set of $\Omega$, we will use the Dynkin class lemma. We prove a general result which says that the set of Borel sets verifying the convergence to 0 of autocorrelations is a Dynkin class.

\begin{lemme}
    \label{DynkinKepler}
    Let $(\Omega,\mathcal{T},\mu,T)$ be a dynamical system.
    Let $\mathcal{S}\subset\mathcal{T}$ such that \[\forall A\in \mathcal{S},\mathbb{E}(Cov_n(\mathds{1}_A,\mathds{1}_A\vert \mathcal{I}))\xrightarrow[n\to+\infty]{}0.\]
    We note $\mathscr{C}(\mathcal{S})$ the Dynkin class generated by $\mathcal{S}$.
    Then, \[\forall A\in \mathscr{C}(\mathcal{S}),\mathbb{E}(Cov_n(\mathds{1}_A,\mathds{1}_A\vert \mathcal{I}))\xrightarrow[n\to+\infty]{}0.\]
\end{lemme}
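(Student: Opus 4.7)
The plan is to verify that the class
$$\mathcal{D}:=\left\{A\in\mathcal{T}\ :\ \mathbb{E}(Cov_n(\mathds{1}_A,\mathds{1}_A\vert\mathcal{I}))\xrightarrow[n\to+\infty]{}0\right\}$$
is a Dynkin class containing $\mathcal{S}$. By minimality of $\mathscr{C}(\mathcal{S})$, this immediately yields $\mathscr{C}(\mathcal{S})\subset\mathcal{D}$, which is the statement. Throughout, I will rely on two key facts: the bilinearity of $Cov_n(\cdot,\cdot\vert\mathcal{I})$ and Lemma \ref{autocorrel}, which upgrades vanishing of the auto-correlation of $f$ to vanishing of the cross-correlation of $f$ with any $g\in\mathbb{L}^2_\mu(\Omega)$.

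The first two axioms are straightforward. For $\Omega\in\mathcal{D}$, note that $\mathds{1}_\Omega$ is $T$-invariant so $Cov_n(\mathds{1}_\Omega,\mathds{1}_\Omega\vert\mathcal{I})\equiv 0$. For the proper-difference axiom, let $A\subset B$ both lie in $\mathcal{D}$ and write $\mathds{1}_{B\setminus A}=\mathds{1}_B-\mathds{1}_A$. Bilinearity of $Cov_n(\cdot,\cdot\vert\mathcal{I})$ expands
$$\mathbb{E}(Cov_n(\mathds{1}_{B\setminus A},\mathds{1}_{B\setminus A}\vert\mathcal{I}))=\mathbb{E}(Cov_n(\mathds{1}_B,\mathds{1}_B\vert\mathcal{I}))-\mathbb{E}(Cov_n(\mathds{1}_B,\mathds{1}_A\vert\mathcal{I}))-\mathbb{E}(Cov_n(\mathds{1}_A,\mathds{1}_B\vert\mathcal{I}))+\mathbb{E}(Cov_n(\mathds{1}_A,\mathds{1}_A\vert\mathcal{I})).$$
The two auto-terms tend to zero by hypothesis, and the two cross-terms tend to zero by Lemma \ref{autocorrel} applied with $f=\mathds{1}_A$ and $g=\mathds{1}_B$ (respectively $f=\mathds{1}_B$, $g=\mathds{1}_A$, after observing that the hypothesis is symmetric under conjugation on real indicators).

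The remaining axiom, closure under increasing unions, is the point that requires a little more care. Suppose $A_j\uparrow A$ with each $A_j\in\mathcal{D}$. Since $Cov_n(f,f\vert\mathcal{I})$ has expectation
$$\mathbb{E}(Cov_n(f,f\vert\mathcal{I}))=\mathbb{E}(\overline{f}\,f\circ T^n)-\mathbb{E}\bigl(\vert\mathbb{E}(f\vert\mathcal{I})\vert^2\bigr),$$
two applications of Cauchy--Schwarz give the uniform-in-$n$ Lipschitz bound
$$\bigl|\mathbb{E}(Cov_n(f,f\vert\mathcal{I}))-\mathbb{E}(Cov_n(g,g\vert\mathcal{I}))\bigr|\leq 2\,\|f-g\|_{\mathbb{L}^2_\mu}\bigl(\|f\|_{\mathbb{L}^2_\mu}+\|g\|_{\mathbb{L}^2_\mu}\bigr).$$
Dominated convergence gives $\|\mathds{1}_{A_j}-\mathds{1}_A\|_{\mathbb{L}^2_\mu}\to 0$, so a standard $\varepsilon/3$ argument (choose $j$ large so that the above Lipschitz bound with $f=\mathds{1}_A$, $g=\mathds{1}_{A_j}$ is at most $\varepsilon/2$, then use $A_j\in\mathcal{D}$ to control the remaining term for large $n$) shows $A\in\mathcal{D}$.

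The main (and essentially only) obstacle is the monotone-limit step, because the assumption on $\mathcal{S}$ is only qualitative and offers no uniform speed; this is precisely why the $\mathbb{L}^2$-continuity estimate above, which is uniform in $n$, is indispensable. Once the three axioms are verified, the conclusion is immediate from the definition of the Dynkin class generated by $\mathcal{S}$.
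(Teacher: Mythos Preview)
Your proof is correct and follows essentially the same approach as the paper: define the class $\mathcal{D}$ (the paper calls it $\mathcal{R}$), verify the three Dynkin-class axioms, and conclude by minimality. The only cosmetic difference is in the increasing-union step, where the paper expands $Cov_n(\mathds{1}_A,\mathds{1}_A\vert\mathcal{I})$ into the three pieces involving $A_m$ and $A\setminus A_m$ and bounds the remainder by $\mu(A\setminus A_m)+\sqrt{\mu(A\setminus A_m)}$, while you package the same idea as a uniform-in-$n$ Lipschitz estimate for $f\mapsto\mathbb{E}(Cov_n(f,f\vert\mathcal{I}))$; both yield the same $\varepsilon$-argument.
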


\begin{proof}
    Let $\mathcal{R} := \left\{A\in\mathcal{T} : \mathbb{E}(Cov_n(\mathds{1}_A,\mathds{1}_A\vert \mathcal{I}))\xrightarrow[n\to+\infty]{}0\right\}$.
    We prove that $\mathcal{R}$ is a Dynkin class.
    Easyly, we get $\Omega\in \mathcal{R}$. Let $A,B\in\mathcal{R}$ such that $A\subset B$.
    Then,\[\begin{array}{ll}
         \mathbb{E}(Cov_n(\mathds{1}_{B\setminus A},\mathds{1}_{B\setminus A}\vert \mathcal{I}))&=\mathbb{E}(Cov_n(\mathds{1}_{B}-\mathds{1}_{A},\mathds{1}_{B}-\mathds{1}_{A}\vert\mathcal{I}))  \\
         &=\mathbb{E}(Cov_n(\mathds{1}_{B},\mathds{1}_{B}\vert\mathcal{I}))+\mathbb{E}(Cov_n(\mathds{1}_{A},\mathds{1}_{A}\vert\mathcal{I}))-2\mathbb{E}(Cov_n(\mathds{1}_{A},\mathds{1}_{B}\vert\mathcal{I}))
    \end{array}\]
Then, by Lemma \ref{autocorrel},\[\mathbb{E}(Cov_n(\mathds{1}_{B\setminus A},\mathds{1}_{B\setminus A}\vert \mathcal{I}))\xrightarrow[n\to+\infty]{}0.\]

Let $(A_n)_{n\in\mathbb{N}}\in\mathcal{R}^\mathbb{N}$ such that \[\forall n\in\mathbb{N},A_n\subset A_{n+1}.\]
Let $A=\union{n\in\mathbb{N}}{A_n}$.
We prove that $\mathbb{E}(Cov_n(\mathds{1}_A,\mathds{1}_A\vert\mathcal{I}))\xrightarrow[n\to+\infty]{}0$.
For $m,n\in\mathbb{N}$ we have \[\mathbb{E}(Cov_n(\mathds{1}_A,\mathds{1}_A\vert\mathcal{I}))=\mathbb{E}(Cov_n(\mathds{1}_{A\setminus A_m},\mathds{1}_{A\setminus A_m}\vert\mathcal{I}))+2\mathbb{E}(Cov_n(\mathds{1}_{A\setminus A_m},\mathds{1}_{A_m}\vert\mathcal{I}))+\mathbb{E}(Cov_n(\mathds{1}_{A_m},\mathds{1}_{A_m}\vert\mathcal{I})).\]
Thus, for $m,n\in\mathbb{N}$, $\abso{\mathbb{E}(Cov_n(\mathds{1}_A,\mathds{1}_A\vert\mathcal{I}))}\leq 2(\mu(A\setminus A_m)+\left(\mu(A\setminus A_m)\right)^{\frac{1}{2}}+\abso{\mathbb{E}(Cov_n(\mathds{1}_{A_m},\mathds{1}_{A_m}\vert\mathcal{I}))}).$
Therefore, there exists $m_0\in\mathbb{N}$ such that $(\mu(A\setminus A_{m_0})+(\mu(A\setminus A_{m_0}))^{\frac{1}{2}}<\frac{\varepsilon}{4})$,
and there exists $n_0\in\mathbb{N}$ such that, \[\forall n\geq n_0,\abso{\mathbb{E}(Cov_n(\mathds{1}_{A_{m_0}},\mathds{1}_{A_{m_0}}\vert\mathcal{I}))}<\frac{\varepsilon}{4}.\]
Hence,\[\forall n\geq n_0,\mathbb{E}(Cov_n(\mathds{1}_A,\mathds{1}_A\vert\mathcal{I}))<\varepsilon.\]
Thus $A\in\mathcal{R}$.
We get that $\mathcal{R}$ is a Dynkin class.
We know that $\mathcal{S}\subset \mathcal{R}$.
Then, \[\mathscr{C}(\mathcal{S})\subset \mathscr{C}(\mathcal{R})=\mathcal{R}.\]
\end{proof}

And now, we show how we use the Dynkin class lemma to prove the keplerian shear.

\begin{proposition}[Keplerian shear on $\pi-$system]
    \label{kepler_dynkin}
    Let $(\Omega,\mathcal{T},\mu,T)$ be a dynamical system.
    Let $\mathcal{S}\subset\mathcal{T}$ a $\pi-$system such that $\sigma(\mathcal{S})=\mathcal{T}$ and \[\forall A\in \mathcal{S},\mathbb{E}(Cov_n(\mathds{1}_A,\mathds{1}_A\vert\mathcal{I}))\xrightarrow[n\to+\infty]{}0.\]
    Then, \[\forall f\in \mathbb{L}^2_\mu(\Omega),f\circ T^n\xrightharpoonup[n\to+\infty]{}\mathbb{E}(f\vert \mathcal{I}).\]
\end{proposition}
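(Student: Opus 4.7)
The plan is to first extend the vanishing of conditional autocorrelations from the generating $\pi$-system $\mathcal{S}$ to all of $\mathcal{T}$, then bootstrap from indicators to general $\mathbb{L}^2_\mu$ functions, and finally rewrite the conclusion as weak convergence.

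For the first step, since $\mathcal{S}$ is a $\pi$-system with $\sigma(\mathcal{S})=\mathcal{T}$, Dynkin's $\pi$-$\lambda$ theorem gives $\mathscr{C}(\mathcal{S})=\mathcal{T}$, so Lemma \ref{DynkinKepler} immediately yields $\mathbb{E}(Cov_n(\mathds{1}_A,\mathds{1}_A\vert\mathcal{I}))\to 0$ for every $A\in\mathcal{T}$. Applying Lemma \ref{autocorrel} with $f=\mathds{1}_A$ upgrades this to $\mathbb{E}(Cov_n(\mathds{1}_A,g\vert\mathcal{I}))\to 0$ for every $A\in\mathcal{T}$ and every $g\in\mathbb{L}^2_\mu(\Omega)$.

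Next I will use sesquilinearity and density to reach arbitrary $f\in\mathbb{L}^2_\mu(\Omega)$. For a simple function $f=\sum_{i=1}^N c_i\mathds{1}_{A_i}$, the autocorrelation $\mathbb{E}(Cov_n(f,f\vert\mathcal{I}))$ expands as a finite linear combination of cross-terms $c_i\overline{c_j}\,\mathbb{E}(Cov_n(\mathds{1}_{A_i},\mathds{1}_{A_j}\vert\mathcal{I}))$, each tending to $0$ by the previous step. To reach the whole of $\mathbb{L}^2_\mu$, I exploit the uniform-in-$n$ sesquilinear bound
\[
\abso{\mathbb{E}(Cov_n(f,g\vert\mathcal{I}))}\leq 2\norme{f}_{\mathbb{L}^2}\norme{g}_{\mathbb{L}^2},
\]
which follows from Cauchy-Schwarz combined with the $\mathbb{L}^2$-contractivity of conditional expectation. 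Given $\varepsilon>0$, approximating $f$ by a simple $f_\varepsilon$ with $\norme{f-f_\varepsilon}_{\mathbb{L}^2}<\varepsilon$ and splitting $f=f_\varepsilon+(f-f_\varepsilon)$, the autocorrelation for $f$ differs from that of $f_\varepsilon$ by a quantity of order $\varepsilon\norme{f}_{\mathbb{L}^2}$; taking $\limsup_n$ and then $\varepsilon\to 0$ gives $\mathbb{E}(Cov_n(f,f\vert\mathcal{I}))\to 0$ for every $f\in\mathbb{L}^2_\mu$. A final invocation of Lemma \ref{autocorrel} then produces $\mathbb{E}(Cov_n(f,g\vert\mathcal{I}))\to 0$ for all pairs $f,g\in\mathbb{L}^2_\mu$.

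Finally, this vanishing is just a reformulation of the desired conclusion: it reads
\[
\mathbb{E}_\mu(\overline{g}\,f\circ T^n)\xrightarrow[n\to+\infty]{}\mathbb{E}_\mu\bigl(\overline{\mathbb{E}(g\vert\mathcal{I})}\,\mathbb{E}(f\vert\mathcal{I})\bigr)=\mathbb{E}_\mu(\overline{g}\,\mathbb{E}(f\vert\mathcal{I})),
\]
where the second equality uses that $\mathbb{E}(f\vert\mathcal{I})$ is $\mathcal{I}$-measurable, hence it absorbs the conditional expectation on $g$ by the tower property. This is exactly the weak convergence $f\circ T^n\xrightharpoonup[n\to+\infty]{}\mathbb{E}(f\vert\mathcal{I})$ in $\mathbb{L}^2_\mu(\Omega)$. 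Since the machinery of Lemmas \ref{autocorrel} and \ref{DynkinKepler} is already in place, no step should pose a serious obstacle; the only subtle point is the density argument, where the uniform-in-$n$ sesquilinear bound is essential to pass from simple functions to all of $\mathbb{L}^2_\mu$.
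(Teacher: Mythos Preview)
Your proof is correct and follows essentially the same route as the paper: extend from $\mathcal{S}$ to $\mathcal{T}$ via Lemma~\ref{DynkinKepler} and the Dynkin $\pi$-$\lambda$ theorem, then invoke Lemma~\ref{autocorrel} to pass from autocorrelations to cross-correlations. You spell out the passage from indicators to general $\mathbb{L}^2_\mu$ functions via simple functions and a density argument, which the paper leaves implicit in its final ``Thus'' step.
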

\begin{proof}
    By Lemma \ref{DynkinKepler}, \[\forall A\in \mathscr{C}(\mathcal{S}),\mathbb{E}(Cov_n(\mathds{1}_A,\mathds{1}_A\vert \mathcal{I}))\xrightarrow[n\to+\infty]{}0.\]
    Since $\mathcal{S}$ is a $\pi-$system, by Dynkin class lemma,\[\mathscr{C}(\mathcal{S})=\sigma(\mathcal{S}).\]
    Furthermore $\sigma(\mathcal{S})=\mathcal{T},$
    hence, \[\forall A\in\mathcal{T},\mathbb{E}(Cov_n(\mathds{1}_A,\mathds{1}_A\vert \mathcal{I}))\xrightarrow[n\to+\infty]{}0.\]
    By Lemma \ref{autocorrel}, this gives,\[\forall A,B\in\mathcal{T},\mathbb{E}(Cov_n(\mathds{1}_A,\mathds{1}_B\vert \mathcal{I}))\xrightarrow[n\to+\infty]{}0.\]
    Thus, \[\forall f\in \mathbb{L}^2_\mu(\Omega),f\circ T^n\xrightharpoonup[n\to+\infty]{}\mathbb{E}(f\vert \mathcal{I}).\]
\end{proof}
\subsection{Convergence on cylinders}

Now, we want to extend this convergence for all Borel subsets of $\Omega$.
It suffices to prove the convergence to 0 of conditional autocorrelations, the object of next lemma.
Now, we prove that the convergence of autocorrelations is verified on $\alpha-$cylinders, which form a $\pi-$system. This allows to use Proposition~\ref{kepler_dynkin}.
\begin{lemme}
    \label{cylinder_shear}
    Let $k,m\in\mathbb{N}$, and let $u\in\{1,2\}^{\intervEnt{0}{m}}$.
    Then, \[\mathbb{E}(Cov_n(\mathds{1}_{[u]\times A_k},\mathds{1}_{[u]\times A_k}\vert \mathcal{I}))\xrightarrow[n\to+\infty]{}0.\]
\end{lemme}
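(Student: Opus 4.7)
The plan is to derive this lemma as a direct corollary of Proposition \ref{kep_shear_one} applied with the Borel set $B := [u]$. The only preliminary work is to describe the invariant $\sigma$-algebra $\mathcal{I}$ and the corresponding conditional expectation explicitly.

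First I would identify $\mathcal{I}$ (modulo $\mu$-null sets) with the pullback of $\mathcal{B}(\{1,2\}^\mathbb{N})$ under the projection $\pi:\Omega\to\{1,2\}^\mathbb{N}$. Indeed, since $T$ preserves each fiber $\{\alpha\}\times[0,1+\alpha[$ and each $T_\alpha$ is a rank-one transformation, hence ergodic (even weakly mixing, being a Chacon-type map built from spacers), any $T$-invariant $L^2$ function must be $T_\alpha$-invariant on almost every fiber, hence a function of $\alpha$ alone. Consequently, the conditional expectation reduces to fiberwise integration: for any $f\in L^2_\mu$,
\[
\mathbb{E}_\mu(f\mid\mathcal{I})(\alpha,x)=\intInd{[0,1+\alpha[}{f(\alpha,y)}{\lambda_\alpha(y)}.
\]

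Next I would apply this to $f=\mathds{1}_{[u]\times A_k}$. The fiberwise integration gives
\[
\mathbb{E}_\mu(f\mid\mathcal{I})(\alpha,x)=\mathds{1}_{[u]}(\alpha)\,\lambda_\alpha(A_k),
\]
and because $T_\alpha$ preserves $\lambda_\alpha$ and $\pi\circ T=\pi$,
\[
\mathbb{E}_\mu(f\circ T^n\mid\mathcal{I})(\alpha,x)=\mathds{1}_{[u]}(\alpha)\,\lambda_\alpha(T_\alpha^{-n}A_k)=\mathds{1}_{[u]}(\alpha)\,\lambda_\alpha(A_k).
\]
Inserting these formulas into the definition of the conditional covariance and integrating against $\mu$ yields
\[
\mathbb{E}_\mu\bigl(Cov_n(\mathds{1}_{[u]\times A_k},\mathds{1}_{[u]\times A_k}\mid\mathcal{I})\bigr)=\intInd{\Omega}{\mathds{1}_{([u]\times A_k)\cap T^{-n}([u]\times A_k)}}{\mu}-\intInd{\{1,2\}^\mathbb{N}}{\mathds{1}_{[u]}(\alpha)\lambda_\alpha(A_k)^2}{\mathcal{H}(\alpha)}.
\]

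Finally, Proposition \ref{kep_shear_one} applied with $B=[u]$ (a Borel, in fact clopen, subset of $\{1,2\}^\mathbb{N}$) says exactly that the first term on the right converges to the second as $n\to\infty$, so the difference tends to $0$. The only genuinely substantive input is Proposition \ref{kep_shear_one} itself, which has been established in the preceding subsection; the rest is bookkeeping. The identification of $\mathcal{I}$ with $\pi^{-1}\mathcal{B}(\{1,2\}^\mathbb{N})$ is the only step requiring a justification of its own, but it rests on the well-known ergodicity of rank-one constructions, so no real obstacle arises.
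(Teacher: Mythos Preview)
Your proposal is correct and follows the same approach as the paper, which simply says ``Taking $B=[u]$, using Proposition \ref{kep_shear_one}, we get the result.'' You have merely made explicit the bookkeeping that the paper leaves implicit: the identification of $\mathcal{I}$ with $\pi^{-1}\mathcal{B}(\{1,2\}^\mathbb{N})$ modulo null sets (via fiberwise ergodicity of the rank-one maps $T_\alpha$) and the resulting computation of the conditional covariance as the difference appearing in Proposition \ref{kep_shear_one}.
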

\begin{proof}
    Let $k,m\in\mathbb{N}$, and let $u\in\{1,2\}^{\intervEnt{0}{m}}$.
    Taking $B=[u]$, using Proposition \ref{kep_shear_one}, we get the result.
\end{proof}

Therefore, we can generate every $d-$adic subdivisions of $[0,1[$ with transformation of $\alpha-$cylinder and $A_k$. 

\begin{lemme}
    \label{Shear_translation}
    Let $k\in\mathbb{N}, m\geq k$ and let $u\in\{1,2\}^{\intervEnt{0}{m}}$.
    Let $j\in \intervEnt{1}{d^k}$.
    
    Then, there exists $s\in\mathbb{N}$ such that $s\leq h_k^{(u)}$ and \[[u]\times\left[\frac{j-1}{d^k},\frac{j}{d^k}\right[=T^s([u]\times A_k)\]
    where $h_k^{(u)}$ is the common value of $h_k^\alpha$ for $\alpha\in [u]$.
\end{lemme}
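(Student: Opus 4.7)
The plan is to prove, by induction on $k$, the following strengthening: for every $\alpha$, the $d^k$ tower levels $T_\alpha^s(A_k)$ (for $s\in\{0,\dots,h_k^\alpha-1\}$) that are contained in $[0,1[$ are precisely the intervals $\left[\frac{j-1}{d^k},\frac{j}{d^k}\right[$ for $j\in\{1,\dots,d^k\}$, and that the map $j\mapsto s_k^\alpha(j)$ giving the level index of the $j$-th such interval depends only on the prefix $(\alpha_0,\dots,\alpha_{k-1})$. The conclusion of the lemma then follows by setting $s=s_k^\alpha(j)$: on the one hand $s\le h_k^\alpha-1<h_k^{(u)}$; on the other hand, since $m\ge k$, the word $u$ determines the relevant prefix, so $s$ is common to every $\alpha\in[u]$ and
\[
T^s([u]\times A_k)=\bigcup_{\alpha\in[u]}\{\alpha\}\times T_\alpha^s(A_k)=[u]\times\left[\tfrac{j-1}{d^k},\tfrac{j}{d^k}\right[.
\]

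The base case $k=0$ is immediate since $A_0=[0,1[$ and $h_0^\alpha=1$. For the inductive step, I would describe the step-$(k+1)$ tower in terms of the step-$k$ tower using Definition \ref{def_trans} together with the computation inside the proof of Lemma \ref{formula_A_k}: for $x\in A_{k+1}$, $T_\alpha^{ih_k^\alpha}(x)=x+i/d^{k+1}$ for $0\le i\le d-2$, and the transition to the $i=d-1$ copy proceeds through $\alpha_k$ spacer levels before ending with the last shift by $(d-1)/d^{k+1}$. Thus the step-$(k+1)$ tower is obtained by stacking $d$ translated copies of the step-$k$ tower (translations by $0,\tfrac{1}{d^{k+1}},\dots,\tfrac{d-1}{d^{k+1}}$) with $\alpha_k$ spacer levels interleaved between the $(d-1)$-th and $d$-th copy. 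Applying the induction hypothesis within each copy identifies the real step-$(k+1)$ levels as precisely the $d^{k+1}$-adic intervals of length $1/d^{k+1}$ partitioning $[0,1[$.

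The main obstacle will be bookkeeping the explicit index map. Writing $j=id^k+j'$ with $i\in\{0,\dots,d-1\}$ and $j'\in\{1,\dots,d^k\}$, the recursion reads
\[
s_{k+1}^\alpha(j)=ih_k^\alpha+s_k^\alpha(j')+\alpha_k\mathds{1}_{\{i=d-1,\ s_k^\alpha(j')\ \text{past the splitting point}\}},
\]
with a small amount of care needed to locate the $\alpha_k$ spacer slots (namely, inside the iteration of the $(d-1)$-th copy, at the moment where $T_\alpha^{(d-1)h_k^\alpha-1}(A_{k+1})\subset [1-2/d^{k+1},1-1/d^{k+1}[$ hits the spacers inserted at step $k$). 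Once the formula is justified, all the ingredients on the right-hand side depend only on $(\alpha_0,\dots,\alpha_k)$ — $h_k^\alpha$ and $\alpha_k$ by inspection, and $s_k^\alpha(j')$ by the induction hypothesis — hence so does $s_{k+1}^\alpha(j)$, closing the induction and completing the proof.
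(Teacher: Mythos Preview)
Your inductive strategy is sound and, once the bookkeeping is fixed, it reproduces exactly the paper's short argument: the paper simply writes down a closed formula for $s$ in terms of the base-$d$ digits of $j-1$ and justifies it by iterating the relations \eqref{equa_trans} and \eqref{equa_final_step} from Lemma~\ref{formula_A_k}. Your induction is the recursive version of the same computation.

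There is, however, a concrete error in your recursion. The $i$-th copy in the step-$(k{+}1)$ tower is the $i$-th \emph{column} of the step-$k$ tower (base $A_{k+1}+i/d^{k+1}$), and since $T_\alpha^s$ acts as a translation on $A_k$ for $s<h_k^\alpha$, level $s$ of this column is the $i$-th subinterval of $T_\alpha^s(A_k)$. Thus if $T_\alpha^s(A_k)=\bigl[\tfrac{j'-1}{d^k},\tfrac{j'}{d^k}\bigr[$, the corresponding level of the $(k{+}1)$-tower is $\bigl[\tfrac{(j'-1)d+i}{d^{k+1}},\tfrac{(j'-1)d+i+1}{d^{k+1}}\bigr[$, i.e.\ $j-1=(j'-1)d+i$. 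So the copy index $i$ is the \emph{low} base-$d$ digit of $j-1$, not the high one as in your decomposition $j=id^k+j'$. With your formula one gets for instance (with $d=7$, $\alpha_0=1$, so $h_1^\alpha=8$) $s_2^\alpha(2)=s_1^\alpha(2)=1$, whereas in fact $T_\alpha^1(A_2)=[7/49,8/49[$ and the correct value is $s_2^\alpha(2)=8$.

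A second, related point: the $\alpha_k$ spacer levels sit entirely \emph{between} column $d-2$ and column $d-1$ (at levels $(d-1)h_k^\alpha,\dots,(d-1)h_k^\alpha+\alpha_k-1$), not inside any copy. Hence the extra $\alpha_k$ is added precisely when $i=d-1$, with no further ``past the splitting point'' condition. The corrected recursion is simply
\[
s_{k+1}^\alpha(j)=i\,h_k^\alpha+\alpha_k\mathds{1}_{\{d-1\}}(i)+s_k^\alpha(j'),\qquad j-1=(j'-1)d+i,\ i\in\{0,\dots,d-1\},
\]
which manifestly depends only on $(\alpha_0,\dots,\alpha_k)$ and closes your induction.
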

\begin{proof}
     Let $k\in\mathbb{N}, m\geq k$ and let $u\in\{1,2\}^{\intervEnt{0}{m}}$.
    Let $(\alpha,x)\in [u]\times A_k$.
    We write $j-1=\sum_{i=0}^{k-1}v_id^{i}$ with $v_i\in\intervEnt{0}{d-1}$.
    We take \[s=\sum_{i=0}^{k-1}(v_{i}h_{i}^{\alpha}+\alpha_i\mathds{1}_{\{d-1\}}(v_i)).\]
    By (\ref{equa_trans}) and (\ref{equa_final_step}), \[T^s([u]\times A_k)=[u]\times\left[\frac{j-1}{d^k},\frac{j}{d^k}\right[.\]
\end{proof}

Now, we can also generate the spacers, using the next lemma.
\begin{lemme}
    \label{Shear_spacer}
    Let $m\in\mathbb{N}$, and let $u\in\{1,2\}^{\intervEnt{0}{m}}$.
    Let $j\in \intervEnt{0}{m-1}$ and $l\in \intervEnt{0}{u_j-1},m\geq k\geq j+1$ and $q\in \intervEnt{1}{u_jd^{k-(j+1)}}$.
    Thus, there exists $s\in\mathbb{N}$ such that $s\leq h_k^{(u)}$ and \[[u]\times\left(\left\{1+\sum_{i=0}^{j-1}u_id^{-(i+1)}+ld^{-(j+1)}\right\}+\left[\frac{q-1}{d^{k}},\frac{q}{d^{k}}\right[\right)=T^s([u]\times A_k).\]
\end{lemme}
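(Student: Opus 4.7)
The plan is to identify the target set as one level of the $k$-th Rokhlin tower of $T_\alpha$, in analogy with Lemma \ref{Shear_translation} (which handles levels lying in the base $[0,1[$).

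First, I would prove by induction on $k$ (in the spirit of Lemma \ref{formula_A_k} and Lemma \ref{Shear_translation}) that the iterates $T_\alpha^s(A_k)$ for $0\le s < h_k^\alpha$ form a partition of $[0, 1+\alpha^{(k-1)}[$ into half-open intervals of length $d^{-k}$ whose endpoints lie in $d^{-k}\mathbb{Z}$, and that each level is determined by $s$ together with the truncation $(\alpha_0,\dots,\alpha_{k-1})=(u_0,\dots,u_{k-1})$. The base case $k=0$ is trivial. The inductive step uses Equation \eqref{equa_trans} to climb through the $d$ copies of the $k$-th sub-tower (each climb of $h_k^\alpha$ iterations shifting the starting point by $d^{-(k+1)}$), followed by the spacer branches of Definition \ref{def_trans} (each a shift by $d^{-(k+1)}$ inside the spacer region, or a grid-preserving return to the last column).

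Second, I would verify that the target interval $\left\{1+\sum_{i=0}^{j-1}u_id^{-(i+1)}+ld^{-(j+1)}\right\}+\left[\frac{q-1}{d^{k}},\frac{q}{d^{k}}\right[$ is of the required form: a half-open interval of length $d^{-k}$ whose left endpoint is $d^{-k}$-rational (since $k\ge j+1$ forces each summand to have denominator dividing $d^k$), contained in the step-$j$ spacer region $[1+\alpha^{(j-1)}, 1+\alpha^{(j)}[\subset [0, 1+\alpha^{(k-1)}[$. By the partition from the first step, it therefore coincides with $T_\alpha^s(A_k)$ for a unique $s\in \intervEnt{0}{h_k^{(u)}-1}$, and since this $s$ depends only on $u$, the identity $T^s([u]\times A_k)=[u]\times T_\alpha^s(A_k)$ yields the claim uniformly in $\alpha\in[u]$.

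The main technical obstacle lies in the bookkeeping for the first step: one must verify that each of the four branches of $\tau_{n,\alpha}$ in Definition \ref{def_trans} preserves the $d^{-(k+1)}$-alignment, and that the cumulative partition covers $[0, 1+\alpha^{(k-1)}[$ with no gap and no overlap. This is a routine but tedious extension of the arguments already carried out for Lemmas \ref{formula_A_k} and \ref{Shear_translation}. A more constructive alternative would combine Lemma \ref{Shear_translation} (to reach the $(d-1)$-th column of the $(j+1)$-th sub-tower) with $l$ additional iterations selecting the $l$-th spacer and a recursive step for the $q$-th sub-subdivision, producing an explicit formula for $s$.
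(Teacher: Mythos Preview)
Your proposal is correct. The paper's own proof is a single sentence: ``The proof is analogous to the proof of Lemma \ref{Shear_translation}'', i.e.\ it intends the explicit constructive route you mention at the end (reach the appropriate column of the $(j+1)$-th sub-tower via the formula for $s$ from Lemma \ref{Shear_translation}, then apply $l$ further iterates through the spacer branches of Definition \ref{def_trans}, and recurse for the $q$-th sub-subdivision). Your primary argument---proving once and for all that the levels $T_\alpha^s(A_k)$, $0\le s<h_k^\alpha$, partition $[0,1+\alpha^{(k-1)}[$ into $d^{-k}$-aligned intervals and then reading off the target as one such level---is a perfectly valid and slightly more conceptual alternative: it trades the explicit formula for $s$ against a uniform statement that covers Lemmas \ref{Shear_translation} and \ref{Shear_spacer} simultaneously. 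Either way the dependence of $s$ on $(u_0,\dots,u_{k-1})$ only, and the bound $s\le h_k^{(u)}$, follow immediately.
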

\begin{proof}
    The proof is analogous to the proof of Lemma \ref{Shear_translation}.
\end{proof}
Therefore, we can build the $\pi-$system for the convergence with cylinders.
We define \[\mathcal{S}_1:=\left\{[u]\times\left[\frac{j-1}{d^k},\frac{j}{d^k}\right[: m\in\mathbb{N},\ \ m\geq k,\ \ u\in\{1,2\}^{\intervEnt{0}{m}}\ and\ j\in \intervEnt{1}{d^k}\right\}\cup\{\emptyset\}.\]
Let \[\mathcal{S}_2:=\left\{\begin{array}{ll}[u]\times\left(\left\{1+\sum_{i=0}^{j-1}u_id^{-(i+1)}+ld^{-(j+1)}\right\}+\left[\frac{q-1}{d^{k}},\frac{q}{d^{k}}\right[\right): \\
m\in\mathbb{N},\ \ u\in\{1,2\}^{\intervEnt{0}{m}},\ \ j\in \intervEnt{0}{m-1},\\
l\in \intervEnt{0}{u_j-1}\ and\ m\geq k\geq j+1,\ q\in \intervEnt{1}{u_jd^{k-(j+1)}}\end{array}\right\}\cup\{\emptyset\}.\]

Finally, the union of the two previous set is a $\pi-$system.
\begin{lemme}
\label{pi_system_park}
$\mathcal{S}_1\cup\mathcal{S}_2$ is stable by intersection :
    \[\forall (A,B)\in (\mathcal{S}_1\cup\mathcal{S}_2)^2,A\cap B\in \mathcal{S}_1\cup\mathcal{S}_2.\]
\end{lemme}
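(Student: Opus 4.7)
The plan is a case analysis according to which of $\mathcal{S}_1$ or $\mathcal{S}_2$ each of the two sets to be intersected belongs to. The unifying observation I would exploit throughout is that every element of $\mathcal{S}_1\cup\mathcal{S}_2$ has the shape $[w]\times I$, where $[w]$ is a cylinder in $\{1,2\}^{\mathbb{N}}$ and $I$ is a $d$-adic interval of depth $k$, possibly translated into the spacer region $[1,1+\alpha[$. Any intersection of two such sets then factors as $([u]\cap[v])\times(I\cap I')$, and it suffices to analyse each factor separately.

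The straightforward cases dispatch most of the work. Elements of $\mathcal{S}_1$ have second coordinate in $[0,1[$, whereas those of $\mathcal{S}_2$ have second coordinate in $[1,1+\alpha[$; hence any intersection between $\mathcal{S}_1$ and $\mathcal{S}_2$ is empty. For $A=[u]\times I$ and $B=[v]\times I'$ both in $\mathcal{S}_1$, the cylinders either are incompatible (empty intersection) or one is a prefix of the other, giving $[u]\cap[v]=[w]$ with $|w|=\max(|u|,|v|)$. By the standard nesting of $d$-adic intervals, $I\cap I'$ is either empty or equal to the deeper of the two, at depth $\max(k,k')$. The length constraint $|w|\geq\max(k,k')$ required to place the intersection back in $\mathcal{S}_1$ follows from $|u|\geq k$ and $|v|\geq k'$.

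The substantive case is $A,B\in\mathcal{S}_2$. I would first observe that each element of $\mathcal{S}_2$ with parameter $j$ is entirely contained in the slab $[1+\sum_{i<j}u_id^{-(i+1)},\,1+\sum_{i\leq j}u_id^{-(i+1)}[$, and that for a fixed cylinder these slabs form a partition of the spacer region $[1,1+\alpha[$ indexed by $j$. Consequently, if the two parameters $j,j'$ differ, the sets lie in disjoint slabs and intersect trivially; if $j=j'$, the intersection of the two shifted intervals is again a shifted $d$-adic interval (or empty) by the usual nesting, of depth $\max(k,k')$. The cylinder length $\max(|u|,|v|)$ satisfies the required bound by the same argument as in the $\mathcal{S}_1$ case.

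The only delicate point, which I expect to be the main technical obstacle, is verifying that the ``spacer base point'' $1+\sum_{i<j}u_id^{-(i+1)}+ld^{-(j+1)}$ is unambiguously defined once $[u]\cap[v]\neq\emptyset$. This reduces to noting that the base point depends on the cylinder only through its first $j+1$ digits, which coincide on the common prefix, while the admissibility constraint $l\in\{0,\dots,u_j-1\}$ requires $u_j=v_j$, again guaranteed by cylinder compatibility. The hypothesis $m\geq k\geq j+1$ built into the definition of $\mathcal{S}_2$ ensures that both cylinders are long enough to specify these digits. Once this compatibility is secured, assembling the three cases yields $A\cap B\in\mathcal{S}_1\cup\mathcal{S}_2$ in every situation, concluding the proof.
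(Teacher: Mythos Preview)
The paper states this lemma without proof, so there is nothing to compare against directly. Your case analysis via the factorisation $A\cap B=([u]\cap[v])\times(I\cap I')$ is the natural route and is essentially correct: cylinders are nested or disjoint, $d$-adic intervals are nested or disjoint, and the length constraints $m\ge k$ transfer to the intersection because $\max(m,m')\ge\max(k,k')$.

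One small point deserves care. Your slab containment claim for $\mathcal{S}_2$ --- that an element with parameter $j$ lies entirely in $[1+\sum_{i<j}u_id^{-(i+1)},\,1+\sum_{i\le j}u_id^{-(i+1)}[$ --- is not literally true with the paper's stated range $q\in\intervEnt{1}{u_jd^{k-(j+1)}}$: taking $u_j=2$, $l=1$ and $q$ near its maximum pushes the interval past the right endpoint of that slab. This looks like a typo in the paper (one expects $q\in\intervEnt{1}{d^{k-(j+1)}}$, which would make your slab claim exact), but in any case it does not damage the argument. The robust observation is that every second-coordinate interval arising in $\mathcal{S}_2$ is of the form $[p/d^k,(p+1)/d^k[$ for an integer $p$, because the base point $1+\sum_{i<j}u_id^{-(i+1)}+ld^{-(j+1)}$ is a multiple of $d^{-(j+1)}$ and $k\ge j+1$. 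Two such intervals are therefore nested or disjoint; when nested, the intersection equals the deeper one, and re-using its parameters $(j,l,k,q)$ together with the longer cylinder $[w]$ (of length $\max(m,m')+1\ge k$) gives a valid $\mathcal{S}_2$ description. With this adjustment in the $j\neq j'$ subcase, your proof goes through.
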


Furthermore, on $\mathcal{S}_1$ and $\mathcal{S}_2$, we have the convergence to 0 of correlations ensured.

\begin{lemme}\label{conv_S1S2}
    For all $A\in \mathcal{S}_1\cup\mathcal{S}_2$, \[\mathbb{E}(Cov_n(\mathds{1}_{A},\mathds{1}_{A}\vert\mathcal{I}))\xrightarrow[n\to+\infty]{}0.\]
\end{lemme}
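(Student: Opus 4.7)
The plan is to reduce Lemma \ref{conv_S1S2} to the already proved Lemma \ref{cylinder_shear}, using the $T$-invariance of both the measure $\mu$ and the $\sigma$-algebra $\mathcal{I}$. By Lemmas \ref{Shear_translation} and \ref{Shear_spacer}, every nonempty $A \in \mathcal{S}_1 \cup \mathcal{S}_2$ can be written as $A = T^s B$ where $B = [u] \times A_k$ for suitable parameters $s \in \mathbb{N}$, $u$, $k$. Since each fiber map $T_\alpha$ is a bijection of $[0, 1+\alpha[$, the global transformation $T$ is invertible on $\Omega$, so $\mathds{1}_A = \mathds{1}_B \circ T^{-s}$.

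The key observation is that the expected conditional covariance
\[
\mathbb{E}(Cov_n(\mathds{1}_A, \mathds{1}_A \vert \mathcal{I})) = \mu(A \cap T^{-n}A) - \mathbb{E}\bigl(\mathbb{E}(\mathds{1}_A \vert \mathcal{I})^2\bigr)
\]
is invariant under replacing $A$ by any iterate $T^s B$. For the first term, $T^s$-invariance of $\mu$ yields
\[
\mu(A \cap T^{-n}A) = \int (\mathds{1}_B \cdot \mathds{1}_B \circ T^n) \circ T^{-s}\, d\mu = \mu(B \cap T^{-n}B).
\]
For the second term, every $\mathcal{I}$-measurable function $g$ satisfies $g \circ T = g$, and by invertibility also $g \circ T^{-s} = g$. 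Hence, for any such $g$,
\[
\int g\,\mathds{1}_A\, d\mu = \int g\,(\mathds{1}_B \circ T^{-s})\, d\mu = \int (g\, \mathds{1}_B) \circ T^{-s}\, d\mu = \int g\, \mathds{1}_B\, d\mu,
\]
so $\mathbb{E}(\mathds{1}_A \vert \mathcal{I}) = \mathbb{E}(\mathds{1}_B \vert \mathcal{I})$ almost surely. Combining these,
\[
\mathbb{E}(Cov_n(\mathds{1}_A, \mathds{1}_A \vert \mathcal{I})) = \mathbb{E}(Cov_n(\mathds{1}_B, \mathds{1}_B \vert \mathcal{I})) \xrightarrow[n\to+\infty]{} 0
\]
by Lemma \ref{cylinder_shear}. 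The empty set case is trivial.

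I do not anticipate any real obstacle here: the argument is essentially bookkeeping once one recognizes that Lemmas \ref{Shear_translation} and \ref{Shear_spacer} present each element of $\mathcal{S}_1 \cup \mathcal{S}_2$ as a $T^s$-image of the base set $[u] \times A_k$ covered by Lemma \ref{cylinder_shear}, and that expected conditional covariances along an invariant $\sigma$-algebra are preserved by the dynamics. This plugs directly into Proposition \ref{kepler_dynkin} with the $\pi$-system $\mathcal{S} = \mathcal{S}_1 \cup \mathcal{S}_2$ (Lemma \ref{pi_system_park}), completing the proof of Theorem \ref{theorem_Chacon}.
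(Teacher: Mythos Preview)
Your proof is correct and follows essentially the same approach as the paper's: both reduce to Lemma \ref{cylinder_shear} via the representation $A=T^s([u]\times A_k)$ from Lemmas \ref{Shear_translation} and \ref{Shear_spacer}, and both use that the expected conditional covariance is unchanged under composition with $T^s$. The only cosmetic difference is that the paper phrases this fiberwise, observing $T_\alpha^{-s}\bigl(T_\alpha^s(A_k)\cap T_\alpha^{-n}(T_\alpha^s(A_k))\bigr)=A_k\cap T_\alpha^{-n}(A_k)$ for each $\alpha\in[u]$, whereas you work globally with the $\mu$-invariance of $T$ and the $T$-invariance of $\mathcal{I}$-measurable functions; your formulation is arguably cleaner and does not even require the bound $s\le h_k^{(u)}$.
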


\begin{proof}
    Let $A\in  \mathcal{S}_1$.
    By Lemma \ref{Shear_translation}, there exist $m\in\mathbb{N},\ u\in\{1,2\}^{\intervEnt{0}{m}},\ j\in \intervEnt{1}{d^k}$ and $s\in\mathbb{N}$ such that $s\leq h_k^{(u)}$ and \[A=[u]\times\left[\frac{j-1}{d^k},\frac{j}{d^k}\right[=T^s([u]\times A_k).\]
    Observe that for any $\alpha\in [u]$, since $s\leq h_k^{\alpha}$,\[T_\alpha^{-s}(T_\alpha^s(A_k)\cap T_\alpha^{-n}(T_\alpha^s(A_k)))=A_k\cap T_\alpha^{-n}(A_k).\]
    Therefore,  \[\mathbb{E}(Cov_n(\mathds{1}_{A},\mathds{1}_{A}\vert\mathcal{I}))\xrightarrow[n\to+\infty]{}0.\]
    Similarly, we get the same with $A\in\mathcal{S}_2$.
\end{proof}

Now, we have all elements to prove the main result of the article, the keplerian shear of the Chacon transformation.

\begin{proof}[Proof of Theorem \ref{theorem_Chacon}]
    With elements of $\mathcal{S}_1\cup \mathcal{S}_2$, by Lemma \ref{Shear_translation} and Lemma \ref{Shear_spacer}, we know that we can generate the sets of the form $[u]\times \left[0, a\right[$ with $[u]$ a cylinder of $\mathcal{C}$ and $a=\sum_{j=0}^{m}a_jd^{-(j+1)}$ with $a_j\in \intervEnt{0}{d-2}$.
    Then, $\sigma(\mathcal{S}_1\cup\mathcal{S}_2)=\mathcal{B}\left(\Omega\right)$.
    By Lemma \ref{pi_system_park}, we know that $\mathcal{S}_1\cup\mathcal{S}_2$ is a $\pi-$system.
    Finally, by Lemma \ref{conv_S1S2} and by Proposition \ref{kepler_dynkin}, \[\forall f\in \mathbb{L}^2_\mu\left(\Omega\right),f\circ T^n \xrightharpoonup[n\to+\infty]{}\mathbb{E}_\mu(f\vert\mathcal{I}).\]
\end{proof}


\begin{thebibliography}{2} 
\bibitem[AFS]{Avila} Avila, A., Forni, G., and Safaee, P. (2023). Quantitative weak mixing for interval exchange
transformations, \emph{Geometric and Functional Analysis}, 33(1):1–56.
\bibitem[BMS]{Solom} Alexander Bufetov, Juan Marshall-Maldonado and Boris Solomyak, Local spectral estimate and quantitative weak-mixing for substitution $\mathbb{Z}-$action, \emph{Journal of the London Math Soc.}	111 (2025)
\bibitem[Cha]{Chacon} R.V. Chacon, Weakly mixing transformations which are not strongly mixing.
\emph{Proceedings of the American Mathematical Society}, 22(3) (1969) 559–562.
\bibitem[Dol]{Dol} Dimitri Dolgopyat, On decay of correlations in Anosov flows, \emph{Ann. of Math.}
147,357-390(1998).
\bibitem[Dub]{Dub} Loïc Dubois, An explicit Berry-Esséen bound for uniformly expanding maps on the interval, \emph{Isr. J. Math.} 186, 221–250 (2011). https://doi.org/10.1007/s11856-011-0137-y
\bibitem[EW]{Eins} Manfred Einsiedler and Thomas Ward, \emph{Ergodic theory}. Graduated texts in mathematics (2011), Springer 
\bibitem[Haf]{Haf} Yeor Hafouta, Limit theorem for some time dependent expanding dynamical systems, \emph{Nonlinearity} 33 (2020) 6421
\bibitem[HK]{Kif} Yeor Hafouta and Yuri Kifer, Nonconventional Limit Theorems and Random Dynamics, World Scientific (2018)
\bibitem[Mol]{Moll} Nelson Moll, Speed of weak mixing for the Chacon map, preprint arXiv:2308.00823
\bibitem[Par]{Park} Jiyun Park and  Kangrae Park, Size of exceptional sets in weakly mixing systems, 	preprint arXiv:2301.09786v1
\bibitem[R]{Rugh} H.-H. Rugh, Cones and gauges in complex spaces: Spectral gaps and complex Perron
Frobenius theory, \emph{Annals of Mathematics} 171 (2010), 1702–1752.
\bibitem[Tho]{DaTho} Damien Thomine,
Keplerian shear in ergodic theory,
\emph{Annales Henri Lebesgue}, Volume 3 (2020), pp. 649-676.
\end{thebibliography}
\end{document}